\newtheorem{theorem}{Theorem}[section]
\newtheorem{lem}[theorem]{Lemma}
\theoremstyle{Corollary}
\newtheorem{cor}[theorem]{Corollary}
\newtheorem{prop}[theorem]{Proposition}
\let\a\alpha
\let\b\beta
\let\d\delta
\let\f\varphi
\let\g\gamma
\let\l\lambda
\let\n\nu
\let\th\theta
\let\u\varepsilon
\def\CC{\mathbb C}
\def\RR{\mathbb R}
\def\HH{\mathbb H}
\def\NN{\mathbb N}
\def\hh{~}
\def\ii{\sqrt{-1}}
\def\VV{dV_{\th_0}}
\def\ww{2+\frac{2}{n}}
\def\tt{\int_M}
\def\ss{\sum_{k=1}^m}
\numberwithin{equation}{section}
\begin{document}

\title{Convergence of the CR Yamabe Flow}

\author{Pak Tung Ho}
\address{Department of Mathematics, Sogang University, Seoul
121-742, Korea}

\email{ptho@sogang.ac.kr, paktungho@yahoo.com.hk}

\author{Weimin Sheng}
\address{School of Mathematical Sciences, Zhejiang University, Hangzhou 310027, China.}
\email{weimins@zju.edu.cn}

\author{Kunbo Wang}
\address{School of Mathematical Sciences, Zhejiang University, Hangzhou 310027, China.}
\email{21235005@zju.edu.cn}

\thanks{
The first author was supported by the National Research Foundation of Korea (NRF) grant funded
by the Korea government (MEST) (No.201631023.01).
The second and  the third authors were supported in part by NSF in China Nos. 11131007, 11571304 and by Zhejiang Provincial NSF in China, No. LY14A010019.}

\subjclass[2000]{Primary 32V20, 53C44; Secondary 53C21, 35R03}

\date{February 13, 2017}

\keywords{CR Yamabe problem, CR Yamabe flow, CR positive mass theorem}

\begin{abstract}
We consider the CR Yamabe flow  on a  compact
strictly pseudoconvex CR manifold $M$ of real dimension $2n+1$.
We prove convergence of the CR Yamabe flow
when $n=1$ or $M$ is spherical.
\end{abstract}

\maketitle

\section{Introduction}

Suppose $(M,g_0)$ is a compact $n$-dimensional manifold without boundary where $n\geq 3$.
As a generalization of Uniformization theorem, the Yamabe problem \cite{Yamabe} is
to find a metric $g$ conformal to $g_0$ such that its scalar curvature $R_g$ is constant. This problem was solved by Yamabe, Trudinger, Aubin and Schoen
 \cite{Yamabe, Trudinger, Aubin0, Schoen}. See the survey article \cite{Lee&Parker} by Lee and Parker
for more details.

A different approach has been introduced to solve the Yamabe problem. Hamitlon \cite{Hamilton} introduced the Yamabe flow, which is defined by
$$\frac{\partial}{\partial t}g(t)=-(R_{g(t)}-r_{g(t)})g(t),$$
where $r_{g(t)}$ is the average of the scalar curvature $R_{g(t)}$ of $g(t)$.
The Yamabe flow was considered by Chow \cite{Chow},
Ye \cite{Ye},
Schwetlick and Struwe \cite{Schwetlick&Struwe}. Finally Brendle
\cite{Brendle4,Brendle5} showed that the Yamabe flow exists for all
time and converges to a metric of constant scalar curvature by using the positive mass theorem.

The Yamabe problem can also be formulated in the context of CR manifold.
Suppose that $(M,\theta_0)$ is a compact strongly pseudoconvex CR manifold of real dimension $2n+1$ with a given contact form $\theta_0$. The CR Yamabe problem is to find a contact form $\theta$ conformal to $\theta_0$ such that
its Webster scalar curvature $R_\theta$ is constant. This was introduced by Jerison and Lee in \cite{Jerison&Lee3}, and was solved by Jerison and Lee
for the case when $n\geq 2$ and $M$ is not spherical in \cite{Jerison&Lee1,Jerison&Lee2,Jerison&Lee3}.
We say that $M$ is spherical if and only if $M$ is locally CR equivalent to the CR sphere $S^{2n+1}$.
The remaining case, namely, when $n=1$ or  $M$ is spherical,  was solved by Gamara and Yacoub in \cite{Gamara2,Gamara1} by using critical point at infinity.
See also the recent work by Cheng-Chiu-Yang \cite{Cheng&Chiu&Yang} and Cheng-Malchiodi-Yang \cite{Cheng&Malchiodi&Yang} for these cases.

As an analogue to Yamabe flow, one can consider the CR Yamabe flow defined by
$$\frac{\partial}{\partial t}\theta(t)=-(R_{\theta(t)}-r_{\theta(t)})\theta(t),$$
where $r_{\theta(t)}$ is the average of the Webster scalar curvature $R_{\theta(t)}$ of $\theta(t)$.
Cheng and Chang \cite{Chang&Cheng} proved the short time existence of the CR Yamabe flow.
Zhang \cite{Zhang} proved the long time existence and convergence
of the CR Yamabe flow for the case
$Y(M,\theta_0)<0$  (see also \cite{Ho2} for the proof when $n=1$).
For the case $Y(M,\theta_0)>0$, Chang-Chiu-Wu \cite{Chang&Chiu&Wu} proved the convergence of the CR Yamabe flow when $M$ is spherical and $n=1$ and $\theta_0$ is torsion-free.
For general $n$,  the long time existence was proved by the first author in \cite{Ho} for the case when $Y(M,\theta_0)>0$.
In \cite{Ho}, the first author also proved
the convergence of the CR Yamabe flow when $M$ is
the CR sphere (see \cite{Ho1} for
an alternative proof).

In this paper, we prove the following convergence result of the CR Yamabe flow
when $n=1$ or $M$ is spherical.

\begin{theorem}\label{main}
Let $(M,\theta_0)$ be a compact strongly pseudoconvex CR manifold of real dimension $2n+1$ and $Y(M,\theta_0)>0$.
Suppose that $n=1$ or $M$ is spherical
such that $M$ is not CR equivalent to the CR sphere $S^{2n+1}$.
When $n = 1$, we also assume
that the CR Panetiz operator of $M$ is nonnegative.
When $n=2$, we also assume that the minimum exponent of the integrability of
the Green function satisfies
$s(M)<1$. Then
the Yamabe flow exists for all time and converges to a contact form with constant
Webster scalar curvature.
\end{theorem}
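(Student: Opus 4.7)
The strategy adapts Brendle's proof of convergence of the Riemannian Yamabe flow (\cite{Brendle4, Brendle5}) to the CR setting, combined with a CR positive mass theorem. Since long-time existence of the flow under $Y(M,\theta_0)>0$ is already established in \cite{Ho}, the task is to rule out escape of mass at infinity and then upgrade subconvergence to full convergence.

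I would begin by using the monotonicity of the Yamabe functional along the flow, together with the volume normalization, to show that a subsequence $\theta(t_\nu)$ either converges smoothly or concentrates. In the first case, the limit has constant Webster scalar curvature and a standard parabolic improvement, together with a Lojasiewicz-type inequality, promotes this to full convergence. In the concentration case, a blow-up analysis in Heisenberg normal coordinates, together with the Jerison--Lee classification of entire positive solutions of the CR Yamabe equation on $\mathbb{H}^n$, produces a bubble decomposition: the conformal factor $u(t_\nu)$ is written as a finite superposition of rescaled standard bubbles $\bar U_{(x_k, \varepsilon_{k,\nu})}$ centered at concentration points $x_k$, with scales $\varepsilon_{k,\nu}\to 0$, plus an error $w_\nu$ that is small in an appropriate Folland--Stein space.

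The heart of the argument is to contradict bubbling by expanding the CR Yamabe energy against this ansatz. Subtracting the bubble contributions and using the Green's function $G_{x_k}$ of the conformal sub-Laplacian at each concentration point as a correction, the expansion produces, after cancellation of the scale-invariant leading terms, a term proportional to the CR mass, which is read off from the constant term in the expansion of $G_{x_k}$ in CR normal coordinates. This term must be strictly positive in all cases considered: when $n=1$ the CR positive mass theorem of Cheng--Malchiodi--Yang under the nonnegativity of the Paneitz operator gives strict positivity (with equality only on the CR sphere, which is excluded); when $M$ is spherical and $n=2$, the hypothesis $s(M)<1$ controls the integrability of $G_{x_k}$ sufficiently to carry out the expansion and identify a positive mass-like quantity; when $M$ is spherical and $n\geq 3$, the algebraic expansion of $G_{x_k}$ closes directly. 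The resulting strict decrease of the functional beyond the infimum predicted by the bubble ansatz is incompatible with the supposed approach to the critical level, which rules out concentration.

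The main obstacle is the bubble-interaction analysis in the sub-Riemannian setting. Unlike the Riemannian case, the Jerison--Lee bubbles on $\mathbb{H}^n$ decay with a quartic gauge denominator, so interaction integrals between distinct bubbles, and between bubbles and the Green's function, must be estimated using the Heisenberg gauge rather than Euclidean distance. The remainder $w_\nu$ has to be controlled in a norm sharp enough that its contribution is dominated by the mass term, and the error from bubbles whose scales or centers become comparable must be tracked with precision. It is precisely at this step that the dimensional hypotheses on the Paneitz operator (for $n=1$) and on $s(M)$ (for $n=2$) enter decisively to make the mass term the leading nonvanishing quantity.
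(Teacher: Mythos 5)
Your proposal follows essentially the same route as the paper: concentration-compactness in the Folland--Stein space, Green's-function-corrected bubbles whose energy expansion produces the CR mass term, the positive mass theorems of Cheng--Malchiodi--Yang (for $n=1$, under nonnegativity of the Paneitz operator) and Cheng--Chiu--Yang (for $M$ spherical, with $s(M)<1$ when $n=2$) to make that term strictly positive, a Lojasiewicz inequality for the regular part of the decomposition, and parabolic Schauder estimates of Bramanti--Brandolini type to upgrade to smooth convergence. The only organizational difference is that the paper does not contradict bubbling directly: it proves the Lojasiewicz-type inequality for $r_{\theta(t)}-r_\infty$ uniformly across all bubbling configurations (the mass term absorbing the bubble errors) and then lets the flow's monotonicity rule out concentration a posteriori via uniform bounds on $u(t)$.
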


We refer the readers to (3.6) in \cite{Cheng&Chiu&Yang} for the precise definition of $s(M)$.
When $n=1$, the definition of the CR Panetiz operator  was included in section \ref{section2}, but
we also refer the readers to \cite{Cheng&Malchiodi&Yang} for more properties of
the CR Panetiz operator.
Similar to the Yamabe flow, we need to use the CR positive mass theorem to prove the convergence
of the CR Yamabe flow. The CR positive mass theorem when $M$ is spherical was obtained
by Cheng-Chiu-Yang in \cite{Cheng&Chiu&Yang} for $n\geq 2$, with further assumption
that $s(M)<1$ for $n=2$. On the other hand,
the CR positive mass theorem for the case when $n = 1$ was obtained by
Cheng-Malchiodi-Yang in \cite{Cheng&Malchiodi&Yang}, under the assumption that the CR Panetiz
operator of $M$ is nonnegative.
Therefore, we have included the assumptions in Theorem \ref{main} so that we
can apply the CR positive mass theorem.

Our proof of Theorem \ref{main} basically follows \cite{Brendle4}. We sketch the proof here. Using the concentration-compactness result (see Theorem \ref{Theorem4.1}), which is the CR version of the results of Struwe \cite{Struwe} and Bahri-Coron \cite{Bahri&Coron}, we can show that the solution of the CR Yamabe flow either converges, or else concentrates in finite number of bubbles. Concisely, if we write $\theta(t)=u(t)^{\frac{2}{n}}\theta_0$ for some positive function $u(t)$, then the CR Yamabe flow
can be written as follows: Let $\{t_\nu: \nu\in\mathbb{N}\}$ be a sequence of times such that $t_\nu\to\infty$ as $\nu\to\infty$ and $u_\nu=u(t_\nu)$. Then there exists an integer $m\geq 0$, a collection of positive numbers $\{\u_{i,\n}:1\leq i \leq m, \, \n\in\NN\}$, and a collection of points $\{p_{i,\n}:1\leq i\leq m, \hh\n\in\NN\}\subset M$, such that
$$u_{\n}-\ss\left(\frac{n(2n+2)}{r_{\infty}}\right)^{\frac{n}{2}}\cdot \left[\frac{\u_{i,\n}^2}{(t^2+(\u_{i,\n}^2+|z|^2)^2)}\right]^{\frac{n}{2}}\rightarrow u_{\infty}.$$
Here we denote $(z,t)$  the CR normal coordinate at  $p_{i,\n}$, and
$r_\infty=\lim_{t\to\infty}r_{\theta(t)}$.
Then we divide the proof into two cases, namely, $u_{\infty}\equiv0$ and $u_{\infty}>0$.
Following the estimates in \cite{Brendle4}, we can rule out the formation of bubbles and show that $u(t)$ is uniformly bounded from above and below. By using the estimate of Bramanti and Brandolini \cite{Bramanti}, we can obtain the uniform bounds for the higher-order derivatives of $u(t)$, which implies the convergence of the CR Yamabe flow.

This paper is organized as follows. In section \ref{section2}, we recall some basic concepts in CR geometry. In section \ref{section3}, we recall some basic propertities of the CR Yamabe flow. In section \ref{section4}, by assuming Proposition \ref{Prop3.3}, we give the proof of Theorem \ref{main}. We give the blow-up analysis in section \ref{section5A}, and then prove Proposition \ref{Prop3.3} in section \ref{section7}. In Appendix \ref{Appendix}, we give some basic estimates regarding the test functions. In Appendix \ref{AppendixB}, we give the proof of the concentration-compactness theorem, i.e. the proof of Theorem \ref{Theorem4.1}.

\textit{Acknowledgements.} The first author would like to thank Prof. Paul Yang, who encouraged him to work on the general convergence of the CR Yamabe flow, Prof. Simon Brendle who answered many of his questions, and Prof. Sai-Kee Yeung for helpful discussions.
The second and the third authors would also like to thank Prof. Paul Yang, who brought them into the field of CR geometry since his short course at Zhejiang University in June of 2014.
Part of the work was done when the first author visited Princeton University and Zhejiang University,
and he is grateful for their kind hospitality.

\section{Preliminaries and Notations}\label{section2}

In this section, we include some basic concepts in the CR geometry. Most of them can be found in \cite{Dragomir} or \cite{Lee}.
Let $M$ be an orientable smooth manifold of real dimension $2n+1$. A CR structure on $M$ is given by a complex $n$-dimensional
subbundle $T^{1,0}$ of the complexified tangent bundle ${\mathbb C}TM$ of $M$, satisfying $T^{1,0}\cap T^{0,1}=\{0\}$, where
$T^{0,1}=\overline{T^{1,0}}$. We assume the CR structure is integrable, that is, $T^{1,0}$ satisfies the formal Frobenius condition
$[T^{1,0}, T^{1,0}]\subset T^{1,0}$. We set $G=Re(T^{1,0}\oplus T^{0,1})$, so that $G$ is a real $2n$-dimensional subbundle of $TM$.
Then $G$ carries a natural complex structure map: $J: G\rightarrow G$ given by $J(V+\overline{V})=\ii(V-\overline{V})$ for $V\in T^{1,0}$.

Let $E\subset T^{\ast}M$ denote the real line bundle $G^{\bot}$. Because we assume $M$ is orientable and the complex structure $J$ induces an
orientation on $G$, $E$ has a global nonvanishing section. A choice of such a 1-form $\th$ is called a pseudohermitian structure on $M$. Associated with each such $\theta$ is the real symmetric bilinear form $\mathcal{L}_\theta$ on $G$:
\begin{equation*}
\mathcal{L}_\theta(V,W)=d\theta(V,JW),~~V,W\in G
\end{equation*}
called the Levi-form of $\theta$. $\mathcal{L}_\theta$ extends by complex linearity to $\mathbb{C}G$, and induces a Hermitian form on $T^{1,0}$, which we write
\begin{equation*}
\mathcal{L}_\theta(V,\overline{W})=-\ii d\theta(V,\overline{W}),~~V,W\in T^{1,0}
\end{equation*}
If $\theta$ is replaced by $\tilde\theta=f\theta$, $\mathcal{L}_\theta$ changes conformally by $\mathcal{L}_{\tilde\theta}=f\mathcal{L}_\theta$. We will assume that $M$ is strictly pseudoconvex, that is, $\mathcal{L}_\theta$ is positive definite for a suitable $\theta$. In this case, $\theta$ defines a contact structure on $M$, and we call $\theta$ a contact form. Then we define the volume form on $M$ as $dV_{\theta}=\theta\wedge d\theta^n$.
The volume of $M$  is denoted by $\mbox{Vol}(M,\theta)$, i.e.
$\mbox{Vol}(M,\theta)=\displaystyle\int_MdV_{\theta}$.

We can choose a unique $T$, which is called the characteristic direction, such that $\theta(T)=1$, $d\theta(T, \cdot)=0$, and $TM=G\oplus \mathbb{R}T$. Then we can define a coframe $\{\theta, \theta^1, \theta^2, \cdots, \theta^n\}$ that satisfies $\theta^\alpha(T)=0$, which is called admissible coframe. Its dual frame $\{T, Z_1, Z_2, \cdots, Z_n\}$ is called admissible frame. In the coframe, we have $d\theta=\ii h_{\alpha\bar\beta}\theta^\alpha\wedge\theta^{\bar\beta}$, where $h_{\alpha\bar\beta}$ is a Hermitian matrix.

The sub-Laplacian operator $\Delta_b$ is defined by
$$\int_M (\Delta_b u)fdV_{\th}=-\int_M\langle du,df\rangle_{\th}dV_{\th},$$
for all smooth function $f$. Here $\langle\,,\,\rangle_{\th}$ is the inner product induced by $\mathcal{L}_{\th}$. Tanaka \cite{T} and Webster \cite{W} showed that there is a natural connection on the bundle $T^{1,0}$ adapted to a pseudohermitian structure, which is called the Tanaka-Webster connection. To define this connection, we choose an admissible coframe $\{\th^{\a}\}$ and dual frame $\{Z_{\a}\}$ for $T^{1,0}$. Then there are uniquely determined 1-forms $\omega_{\a\overline{\b}}$, $\tau_{\a}$ on $M$, satisfying
\begin{eqnarray*}
d\theta^\alpha &=& \theta^\beta\wedge{\omega_\alpha}^\beta+\theta\wedge\tau^\alpha,\\
dh_{\alpha\bar\beta} &=& \omega_{\alpha\overline{\beta}}+\omega_{\overline{\beta}\alpha},\\
\tau_\alpha\wedge\theta^\alpha &=& 0.
\end{eqnarray*}
From the third equation, we can find $A_{\alpha\gamma}$ such that
$\tau_\alpha=A_{\alpha\gamma}\theta^\gamma$
and $A_{\alpha\gamma}=A_{\gamma\alpha}$. Here $A_{\alpha\gamma}$ is called the pseudohermitian torsion.
With this connection, the covariant differentiation is defined by
$$
D Z_\alpha={\omega_\alpha}^\beta\otimes Z_\beta,~~~~DZ_{\bar\alpha}={\omega_{\bar\alpha}}^{\bar\beta}\otimes Z_{\bar\beta},~~~~DT=0.
$$
$\{{\omega_{\alpha}}^\beta\}$ are called connection 1-forms.
For a smooth function $f$ on $M$, we write $f_\alpha=Z_\alpha f,~~f_{\bar\alpha}=Z_{\bar\alpha} f,~~f_0=Tf$, so that
$df=f_\alpha \theta_\alpha+f_{\bar\alpha} \theta_{\bar\alpha}+f_0 \theta$. The second covariant differential $D^2 f$ is the 2-tensor with components
\begin{equation*}
\begin{split}
f_{\alpha\beta} &=\overline{\overline{f}_{\bar\alpha\bar\beta}}=Z_\beta Z_\alpha f-{\omega_\alpha}^\gamma(Z_\beta) Z_\gamma f, ~~f_{\alpha\bar\beta} =\overline{\overline{f}_{\bar\alpha\beta}}=Z_{\bar\beta} Z_\alpha f-{\omega_\alpha}^\gamma(Z_{\bar\beta}) Z_\gamma f,\\
f_{0\alpha} &=\overline{\overline{f}_{0\bar\alpha}}=Z_\alpha Tf,~~f_{\alpha0}=\overline{\overline{f}_{\bar\alpha 0}}=TZ_\alpha f-{\omega_\alpha}^\gamma(T) Z_\gamma f,~~f_{00}=T^2 f.
\end{split}
\end{equation*}
$h_{\a\bar{\b}}$ and $h^{\a\bar{\b}}$ are used to lower and raise the indices. We have
$$
d{\omega_\beta}^\alpha-{\omega_\beta}^\gamma\wedge{\omega_\gamma}^\alpha=\frac{1}{2}R_{\beta~~\rho\sigma}^{~~\alpha}\theta^\rho\wedge\theta^{\sigma}+
\frac{1}{2}R_{\beta~~\bar\rho\bar\sigma}^{~~\alpha}\theta^{\bar\rho}\wedge\theta^{\bar\sigma}
+R_{\beta~~\rho\bar\sigma}^{~~\alpha}\theta^\rho\wedge\theta^{\bar\sigma}+R_{\beta~~\rho 0}^{~~\alpha}\theta^\rho\wedge\theta-R_{\beta~~\bar\sigma 0}^{~~\alpha}\theta^{\bar\sigma}\wedge\theta.
$$
We call $R_{\beta\bar\alpha\rho\bar\sigma}$ the pseudohermitian curvature. Contractions of the pseudohermitian curvature yield the pseudohermitian Ricci curvature $R_{\rho\bar\sigma}=R_{\alpha~~\rho\bar\sigma}^{~~\alpha}$, or $R_{\rho\bar\sigma}=h^{\alpha\bar\beta}R_{\alpha\bar\beta\rho\bar\sigma}$, and the pseudohermitian scalar curvature $R=h^{\rho\bar\sigma}R_{\rho\bar\sigma}$.

The sub-Laplacian operator in this connection can be expressed by
\begin{equation*}
\Delta_b u={u_\alpha}^\alpha+{u_{\bar\alpha}}^{\bar\alpha}
\end{equation*}
If we define $\widetilde{\th}=u^{\frac{2}{n}}\th$, then we have
$$\widetilde{\Delta}_b f=u^{-(1+\frac{2}{n})}(u\Delta_b f+2\langle du,df\rangle_{\th}),$$
where $\widetilde{\Delta}_b$ is the sub-Laplacian operator with respect to the contact form $\widetilde{\th}$ (see (2.4) in  \cite{Ho} for example). If we set $\widetilde{u}=r^{-1}u,$
then we have the following CR transformation law
$$(-(\ww)\widetilde{\Delta}_b +\widetilde{R})\tilde{u}=r^{-1-\frac{2}{n}}(-(\ww)\Delta_b +R)u.$$
In particular, if $r=u$, then we get the CR Yamabe equation
\begin{equation}\label{CRYE}
-(2+\frac{2}{n})\Delta_b u+Ru=\widetilde{R}u^{1+\frac{2}{n}},
\end{equation}

The Heisenberg group $\mathbb H ^n$ is a
Lie group whose underlying manifold is $\mathbb C^n\times \RR$ with coordinates $(z,t)=(z_1,z_2,\cdots,z_n,t)$ and $z=x+\ii y$. The group law of $\HH^n$ is given by
$$(z,t)(z',t')=(z+z',t+t'+2 \, {\rm{Im}}(z\cdot \bar{z'})).$$
The norm on $\mathbb H ^n$ is given by $|(z,t)|=(|z|^4+t^2)^{\frac{1}{4}}$ and $\d_{\l}:\HH^n\mapsto\HH^n$ is the dilation on $\HH^n$ given by
$$\d_{\l}(z,t)=(\l z,\l^2 t),$$
and $\tau_{\xi}:\HH^n\mapsto\HH^n$ is the translation on $\HH^n$ given by
$$\tau_{\xi}(x,y,t)=(x+x',y+y', t+t'+2(xy'-x'y)),\hh\hh \xi=(x', y', t')\in \HH^n.$$
The vector fields $Z_j=\frac{\partial}{\partial z_j} +\ii \overline{z_j}\frac{\partial}{\partial t}$, $j=1,2,...,n$, are invariant with respect to the group multiplication on the left. And $T^{1,0}=\mbox{span}\{Z_1,...,Z_n\}$ gives a left-invariant CR structure on  $\mathbb H ^n$. The contact form of $\mathbb H ^n$ is
$$\th_{\mathbb{H}^n}=dt+\sqrt{-1}\sum_{j=1}^n (z_jd\overline{z}_j-\overline{z}_jdz_j).$$

If $\{W_1,\cdots,W_n\}$ is a frame for $T^{1,0}$ over some open set $U\subset M$ which is orthonormal with respect to the given pseudohermitian structure on $M$, we call $\{W_1,\cdots,W_n\}$ a pseudohermitian frame. And $\{W_1,\cdots,W_n,\overline{W}_1,\cdots, \overline{W}_n, T\}$ forms a local frame for $\mathbb C TM$. Now let $U$ be a relatively compact open subset of a normal coordinate neighborhood, with contact form $\th$ and pseudo-hermitian frame
$\{W_1,\cdots, W_n\}.$ Let $X_j={\rm{Re}} W_j$ and $X_{j+n}={\rm{Im}} W_j$. Denote $X^{\a}=X_{\a_1}\cdots X_{\a_k}$, where $\a=(\a_1,\cdots,\a_k)$, and denote $l(\a)=k$.
Define the norm
$$\|f\|_{S_k^p(U)}=\sup_{l(\a)\leq k}\|X^{\a}f\|_{L^p(U)}.$$
The Folland-Stein space $S_k^p(U)$ is defined as the completion of $C_0^{\infty}$ with respect to the norm $\|\cdot\|_{S_k^p(U)}$.
(c.f. \cite{Folland&Stein}).
We have the following Folland-Stein embedding theorem, which is the CR version of Sobolev embedding theorem.

\begin{prop}{\rm{(\cite{Jerison&Lee3})}}\label{Theorem 2.1}
For $\frac{1}{s}=\frac{1}{r}-\frac{k}{2n+2}$, where $1<r<s<\infty$. Then we have
$$S_k^r(M)\subset L^s(M).$$
\end{prop}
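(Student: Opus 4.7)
The plan is to reduce the statement to its counterpart on the Heisenberg group $\HH^n$ by means of a partition of unity and CR normal coordinates, and then invoke the Heisenberg group version of the embedding. Since $M$ is compact, I would cover $M$ by finitely many relatively compact normal coordinate charts $U_\mu$ on each of which a pseudohermitian frame $\{W_1,\ldots,W_n\}$ is defined, and pick a subordinate smooth partition of unity $\{\chi_\mu\}$. An application of Leibniz's rule to the vector fields $X^\alpha$ gives
\[
\|\chi_\mu f\|_{S_k^r(U_\mu)} \leq C \|f\|_{S_k^r(M)}
\]
for every $f\in S_k^r(M)$, so it suffices to establish the embedding for functions supported in a single chart.

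In CR normal coordinates centered at a point of $M$, the frame vector fields $X_1,\ldots,X_{2n}$ agree with the standard left-invariant Heisenberg vector fields to high order, the difference being an error term of positive homogeneous degree with respect to the Heisenberg dilation $\delta_\lambda$. Choosing the coordinate neighborhoods small enough, the local Folland-Stein norms on $U_\mu$ and on the corresponding open set of $\HH^n$ are therefore equivalent, with the perturbative lower-order terms absorbable. Thus the problem reduces to: for every $f\in C_0^\infty(\HH^n)$,
\[
\|f\|_{L^s(\HH^n)} \leq C \|f\|_{S_k^r(\HH^n)}, \qquad \frac{1}{s}=\frac{1}{r}-\frac{k}{2n+2}.
\]

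For the Heisenberg case, the key input is Folland's explicit fundamental solution for the sub-Laplacian, namely $\Gamma(\xi)=c_n|\xi|^{-2n}$ with $|\xi|=(|z|^4+t^2)^{1/4}$, which satisfies $\Delta_b \Gamma=\delta_0$ in the sense of distributions on $\HH^n$. Consequently, any $f\in C_0^\infty(\HH^n)$ admits the representation $f=\Gamma\ast\Delta_b f$ with respect to the group convolution; iterating this identity and integrating by parts expresses $f$ as a group convolution of horizontal derivatives of $f$ of order $k$ against a kernel homogeneous of degree $k-Q$ with respect to the Heisenberg dilation, where $Q=2n+2$ is the homogeneous dimension. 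The desired estimate then follows from the Hardy-Littlewood-Sobolev inequality on the homogeneous group $\HH^n$, applied with fractional index $k$ and homogeneous dimension $Q$.

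The main obstacle is the Hardy-Littlewood-Sobolev inequality on $\HH^n$ itself: although the homogeneity count mirrors the Euclidean case exactly, one must verify that the standard Marcinkiewicz interpolation and weak-type $(r,s)$ bounds for Riesz-type potentials survive the passage to the noncommutative convolution structure and the Carnot-Caratheodory geometry. Once this inequality is available, the remaining ingredients (partition of unity, comparison of frames in CR normal coordinates, and absorbing the lower-order errors by shrinking the neighborhoods) are routine, and assembling them yields the stated embedding on $M$.
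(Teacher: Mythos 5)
The paper offers no proof of this proposition; it is quoted directly from Jerison--Lee, who in turn rely on Folland--Stein. Your outline is precisely the standard argument behind that citation: localization by a partition of unity, comparison of the CR frame with the osculating left-invariant Heisenberg frame in normal coordinates, the representation $f=\Gamma\ast(-\Delta_b f)$ via Folland's homogeneous fundamental solution $c_n|\xi|^{2-Q}$ (note the degree is $2-Q$, not $-2n=2-Q$ only after recalling $Q=2n+2$; your $|\xi|^{-2n}$ is the $n=1$ normalization of the same thing), and fractional integration on the homogeneous group. So the approach is the right one, and I have no objection to the architecture.

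Two caveats on completeness. First, the Hardy--Littlewood--Sobolev inequality for convolution against a kernel homogeneous of degree $k-Q$ on a homogeneous group is not something you can "verify survives the passage" in a line -- it is itself a theorem of Folland--Stein, proved by the usual weak-type estimate plus Marcinkiewicz interpolation, and your proof must either cite it or reproduce that argument. Second, and more seriously as a matter of rigor, the step where you declare the error terms between the CR frame $X_j$ and the Heisenberg fields "absorbable by shrinking the neighborhoods" is the technical heart of Folland--Stein's paper: the correct statement is that the difference is an operator that \emph{raises} the homogeneous degree of kernels (their "operators of type $\lambda$" calculus), and one needs this calculus to see that the parametrix built from the model fundamental solution still inverts the sub-Laplacian on $M$ modulo smoothing errors. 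Also, when you integrate by parts inside a group convolution to move derivatives from $\Gamma$ onto $f$, left-invariant fields on one factor become right-invariant fields on the other; this is routine but must be tracked, since the Folland--Stein norm is defined with the left-invariant frame. With those two inputs supplied (or cited), your reduction yields the stated embedding.
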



When $n=1$, we define the CR Paneitz operator $P$ by
$$P\f=4(\f_{\bar{1}\hh 1}^{\hh \bar{1}}+\ii A_{11}\f^1)^1.$$
We say $P$ is non-negative if
$$\tt \f P \f \VV\geq 0$$
for all real smooth functions $\f$.
The positivity of the CR Paneitz operator
and the conformal sub-Laplacian
guarantees that a compact three-dimensional CR manifold can be embedded into $\CC^{n}$ for some integer $n$ (see \cite{CCY}).
At the end of this section, we introduce the concept of Carnot-Carath\'{e}dory distance $d(\cdot,\cdot)$ between any two points $p, q\in M$.
A piecewise smooth curve $\g:[0,1]\rightarrow M$ is said to be a Legendrian curve if $\g'(t)\in G$ whenever $\g'(t)$ exists. And
$$l(\g)=\int_0^1 h(\g'(t),\g'(t))^{\frac{1}{2}}dt,$$
where $h(X,Y)=d\th(X,JY)$. We denote $C_{p,q}$ the set of all Legendrian curves which join $p$ and $q$. Then the Carnot-Carath\'{e}dory distance is defined as
$$d(p,q)=\inf\{l(\g):\g\in C_{p,q}\}.$$

\section{The CR Yamabe flow}\label{section3}

In this section, we recall the definition and some basic facts
of the CR Yamabe flow.
Throughout this paper, we assume that $(M,\theta_0)$ is a compact
strictly pseudoconvex CR manifold of real dimension $2n+1$ with a
contact form $\theta_0$.
Hereafter, we denote $R_{\theta_0}$ the
Webster scalar curvature with respect to the contact form
$\theta_0$. The CR Yamabe constant of $\theta_0$ is defined as
\begin{equation}\label{12}
Y(M,\theta_0)=\inf_{u\in C^\infty(M), u>0}\frac{\int_M\big((2+\frac{2}{n})|\nabla_{\theta_0}u|^2_{\theta_0}+R_{\theta_0}u^2\big)dV_{\theta_0}}
{\big(\int_Mu^{2+\frac{2}{n}}dV_{\theta_0}\big)^{\frac{n}{n+1}}}.
\end{equation}
The CR Yamabe flow is defined by
\begin{equation}\label{0}
\frac{\partial}{\partial
t}\theta(t)=-(R_{\theta(t)}-r_{\theta(t)})\theta(t)\mbox{ for }t\geq 0,\hspace{2mm}\theta(t)|_{t=0}=\theta_0,
\end{equation}
where
$R_{\theta(t)}$ is the Webster scalar curvature with
respect to the contact form
$\theta(t)$, and
$r_{\theta(t)}$ is the average value of the Webster scalar
curvature:
\begin{equation}\label{6}
r_{\theta(t)}=\frac{\int_MR_{\theta(t)}dV_{\theta(t)}}{\int_MdV_{\theta(t)}}.
\end{equation}
If we write $\theta(t)=u(t)^{\frac{2}{n}}\theta_0$ for some function $u(t)$, then the CR Yamabe flow in (\ref{0})
can be written as
\begin{equation}\label{0.5}
\frac{\partial}{\partial
t}u(t)=-\frac{n}{2}(R_{\theta(t)}-r_{\theta(t)})u(t)\mbox{ for }t\geq 0,\hspace{2mm} u(t)|_{t=0}=1.
\end{equation}
Since $\theta(t)=u(t)^{\frac{2}{n}}\theta_0$, it follows from (\ref{CRYE}) that we have the CR Yamabe equation:
\begin{equation}\label{9}
-(2+\frac{2}{n})\Delta_{\theta_0}u(t)+R_{\theta_0}u(t)=R_{\theta(t)}u(t)^{1+\frac{2}{n}}.
\end{equation}
By (\ref{0.5}) and (\ref{9}), the CR Yamabe flow in (\ref{0})
is equivalent to
\begin{equation}\label{10}
\frac{\partial}{\partial t}(u(t)^{\frac{n+2}{n}})=\frac{n+2}{2}\left((2+\frac{2}{n})\Delta_{\theta_0}u(t)-R_{\theta_0}u(t)+r_{\theta(t)}u(t)^{1+\frac{2}{n}}\right),
\end{equation}
which is a weakly parabolic partial differential equation. The short time existence of the CR Yamabe flow was proved
by Chang and Cheng in \cite{Chang&Cheng}.

As we have mentioned above,
the long time existence and convergence
of the CR Yamabe flow
were proved in \cite{Zhang}
when $Y(M,\theta_0)<0$. In this paper, we consider the case when the CR Yamabe constant is positive, i.e. $Y(M,\theta_0)>0$.
In this case, the long time existence has already been proved by the first author in \cite{Ho}. Therefore, to prove Theorem \ref{main},
 we will focus on proving the convergence part.
Since $Y(M,\theta_0)>0$, by choosing another contact form in the same conformal class if necessary, we may assume that $\theta_0$
has positive Webster scalar curvature, i.e.
\begin{equation}\label{11}
R_{\theta_0}>0.
\end{equation}
Without loss of generality, we can choose the initial contact form $\theta_0$ such that
\begin{equation}\label{2}
\mbox{Vol}(M,\theta_0)=\int_MdV_{\theta_0}=1.
\end{equation}
Since the CR Yamabe flow preserves volume (see Proposition 3.1 in \cite{Ho}), it follows from (\ref{2}) that
\begin{equation}\label{3}
\mbox{Vol}(M,\theta(t))=\int_MdV_{\theta(t)}=\int_Mu(t)^{2+\frac{2}{n}}dV_{\theta_0}=1\hspace{2mm}\mbox{ for all }t\geq 0.
\end{equation}
By Proposition 3.3 in \cite{Ho}, the function $t\mapsto r_{\theta(t)}$ is non-increasing. Indeed, we have (see (3.5) in \cite{Ho})
\begin{equation}\label{8}
\frac{d}{dt}r_{\theta(t)}=-n\int_M(R_{\theta(t)}-r_{\theta(t)})^2dV_{\theta(t)}.
\end{equation}
It follows from  (\ref{12}), (\ref{6}), and (\ref{9}) that $r_{\theta(t)}\geq Y(M,\theta_0)$. Hence, the following limit
exists and satisfies:
\begin{equation}\label{5}
r_\infty=\lim_{t\to\infty}r_{\theta(t)}\geq Y(M,\theta_0)>0.
\end{equation}
It was proved in \cite{Ho} that (see Proposition 4.1 and Corollary 4.1 in \cite{Ho})
\begin{equation}\label{4A}
\lim_{t\to\infty}\int_M|R_{\theta(t)}-r_{\theta(t)}|^pdV_{\theta(t)}=0.
\end{equation}
for all $1<p<n+2$, and
\begin{equation}\label{4}
\lim_{t\to\infty}\int_M|R_{\theta(t)}-r_{\infty}|^pdV_{\theta(t)}=0.
\end{equation}
for all $1<p<n+2$.


\section{Proof of the main result assuming Proposition \ref{Prop3.3}}\label{section4}

The proof of Theorem \ref{main} will be based on the following proposition.

\begin{prop}\label{Prop3.3}
Let $\{t_\nu: \nu\in\mathbb{N}\}$ be a sequence of times such that $t_\nu\to\infty$ as $\nu\to\infty$. Then we can
find a real number $0<\gamma<1$ and a constant $C$ such that,
after passing to a subsequence, we have
$$r_{\theta(t_\nu)}-r_\infty\leq C\left(\int_Mu(t_\nu)^{2+\frac{2}{n}}|R_{\theta(t_\nu)}-r_\infty|^{\frac{2n+2}{n+2}}dV_{\theta_0}\right)^{\frac{n+2}{2n+2}(1+\gamma)}$$
for all integer $\nu$ in that subsequence. Note that $\gamma$ and $C$ may depend on the sequence
$\{t_\nu: \nu\in\mathbb{N}\}$.
\end{prop}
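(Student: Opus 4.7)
The plan is to follow the strategy devised by Brendle \cite{Brendle4} for the Riemannian Yamabe flow, transported to the CR setting. The starting point is to apply Theorem \ref{Theorem4.1} to the sequence $u_\nu:=u(t_\nu)$. Because (\ref{3}), (\ref{4}) and the CR Yamabe equation (\ref{9}) exhibit $u_\nu$ as a bounded Palais--Smale sequence for the constrained CR Yamabe functional at level $r_\infty$, the concentration--compactness decomposition produces, after passing to a subsequence, an integer $m\geq 0$, scales $\varepsilon_{i,\nu}\to 0$, concentration points $p_{i,\nu}\in M$, and a non-negative weak limit $u_\infty$ such that
\begin{equation*}
u_\nu=u_\infty+\sum_{i=1}^m \left(\tfrac{n(2n+2)}{r_\infty}\right)^{n/2}\bar{U}_{(p_{i,\nu},\varepsilon_{i,\nu})}+o(1)
\end{equation*}
strongly in $L^{2+2/n}(M)$, where $\bar{U}_{(p_{i,\nu},\varepsilon_{i,\nu})}$ denotes the standard Jerison--Lee bubble pulled back via CR normal coordinates at $p_{i,\nu}$. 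The proof then splits according to whether $u_\infty>0$ or $u_\infty\equiv 0$.

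In the first case $u_\infty>0$, the weak limit is a smooth positive solution of $-(2+2/n)\Delta_{\theta_0}u_\infty+R_{\theta_0}u_\infty=r_\infty u_\infty^{1+2/n}$. I would first rule out the coexistence of a nontrivial background and bubbles: each bubble contributes at least $Y(S^{2n+1})^{n+1}$ to the Yamabe energy which, added to the positive energy of $u_\infty$, would force $r_\infty$ above a level incompatible with the monotonicity (\ref{8}) and the initial data. Hence $m=0$ and $u_\nu\to u_\infty$ strongly. One is then in the standard setting of a perturbative analysis around a constrained critical point, and a spectral-gap or Lojasiewicz--Simon inequality for the analytic CR Yamabe functional around $u_\infty$ yields the desired estimate with the improved exponent $1+\gamma$.

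In the second case $u_\infty\equiv 0$, or more generally whenever $m\geq 1$, the decisive input is the CR positive mass theorem, which under the hypotheses of Theorem \ref{main} supplies a strictly positive CR mass at each concentration point. Following Brendle, the plan is to build a refined test function by adding to each bubble a correction constructed from the Green's function of the conformal sub-Laplacian at $p_{i,\nu}$, so that the next-order term in its expansion is governed by the CR mass. Substituting this object in the variational characterization (\ref{12}) of $Y(M,\theta_0)$ and comparing with the evolution identity for $r_{\theta(t)}$ should yield, for some $c,C>0$ and some $\gamma>0$,
\begin{equation*}
r_{\theta(t_\nu)}-r_\infty\leq -c\sum_{i=1}^m m(p_{i,\nu})\,\varepsilon_{i,\nu}^{2n}+C\left(\int_M u_\nu^{2+2/n}|R_{\theta(t_\nu)}-r_\infty|^{\frac{2n+2}{n+2}}dV_{\theta_0}\right)^{\frac{n+2}{2n+2}(1+\gamma)}.
\end{equation*}
Since the leading term is non-positive while the left-hand side is non-negative (by the monotonicity of $r_{\theta(t)}$ together with (\ref{5})), the error term must dominate, which is precisely the inequality to be proved.

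The main obstacle, as one expects from the Riemannian case, lies in the test function analysis when several bubbles coexist: the interaction integrals between distinct bubbles must be estimated with enough precision that the positive mass contribution is not swallowed by the error, and the discrepancy between CR normal coordinates and the Heisenberg model must be controlled to sufficiently high order. These delicate estimates, which one expects to occupy the later sections and the appendices on test functions and on the proof of Theorem \ref{Theorem4.1}, form the technical core of the argument.
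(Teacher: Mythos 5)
Your overall strategy --- concentration--compactness, a case split on $u_\infty$, the CR positive mass theorem for the bubbles, and a Lojasiewicz--Simon inequality near $u_\infty$ --- is the correct one and is what the paper does. But there is a genuine error in your treatment of the case $u_\infty>0$: you claim that the coexistence of a positive background $u_\infty$ and $m\geq 1$ bubbles can be ruled out on energy grounds, concluding $m=0$ and reducing to a pure perturbation argument around $u_\infty$. This is false in the setting of arbitrary initial energy. The only constraint available is the volume identity (\ref{4.8}), namely $1=(E(u_\infty)/r_\infty)^{n+1}+m\,(Y(S^{2n+1})/r_\infty)^{n+1}$, which is perfectly consistent with $E(u_\infty)>0$ and $m\geq 1$ as soon as $r_\infty$ is large enough; and $r_\infty$ is only bounded above by $r_{\theta_0}$, which can be arbitrarily large. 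The mixed case is precisely the hard case and cannot be dismissed --- it is the content of the whole of Section \ref{section6}.

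Because of this, your ``second case'' does not cover what it must. When $u_\infty>0$ and $m\geq1$ one has to control two independent energy deficits simultaneously: the deviation of the background part of $u_\nu$ from the critical manifold through $u_\infty$ (this is where the exponent $1+\gamma$ comes from, via the real-analytic family $\overline{u}_z$ of Lemma \ref{Lemma6.4} and the Lojasiewicz inequality of Lemma \ref{Lemma6.5}), and the bubble energies (where the positive mass enters through Proposition \ref{PropB.3}). Your second case invokes only the positive-mass mechanism; without a Lojasiewicz-type bound on $E(\overline{u}_{z_\nu})-E(u_\infty)$ the term $(E(v_\nu)-r_\infty)$ in the expansion of $r_{\theta_\nu}$ cannot be dominated by the $(1+\gamma)$ power of the curvature deficit, and the claimed inequality does not follow. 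The paper carries out exactly this combination in Corollary \ref{Cor6.16} and in the final estimate (\ref{7.6}). Separately, note that your writeup defers essentially all quantitative content (the bubble interaction integrals, the spectral estimate for the remainder $w_\nu$, and the expansion of $r_{\theta_\nu}$ in terms of $E(v_\nu)$ and $w_\nu$), so it is a plan rather than a proof; but the structural point above is what would actually derail it.
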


By assuming Proposition \ref{Prop3.3}, we are going to prove Theorem \ref{main}.
The following result is an immediate consequence of Proposition \ref{Prop3.3}.

\begin{prop}\label{Prop3.4}
There exists $0<\gamma<1$ and $t_0>0$ such that
$$r_{\theta(t)}-r_\infty\leq C\left(\int_Mu(t)^{2+\frac{2}{n}}|R_{\theta(t)}-r_\infty|^{\frac{2n+2}{n+2}}dV_{\theta_0}\right)^{\frac{n+2}{2n+2}(1+\gamma)}$$
for all $t\geq t_0$.
\end{prop}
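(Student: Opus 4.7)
\emph{Proof plan.} I would argue by contradiction, promoting the ``subsequence'' conclusion of Proposition~\ref{Prop3.3} to the ``all large $t$'' conclusion of Proposition~\ref{Prop3.4} by letting the exponent in the failure inequality vary with the index.

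Suppose Proposition~\ref{Prop3.4} fails. Its negation is that for every $\gamma\in(0,1)$, every constant $C>0$, and every $t_0>0$, there exists some $t\geq t_0$ at which the stated inequality is violated. Applied with the triples $(\gamma,C,t_0)=(1/\nu,\nu,\nu)$ for each integer $\nu\geq 2$, this furnishes a sequence $t_\nu\to\infty$ with
$$r_{\theta(t_\nu)}-r_\infty > \nu\left(\int_M u(t_\nu)^{2+\frac{2}{n}}|R_{\theta(t_\nu)}-r_\infty|^{\frac{2n+2}{n+2}}dV_{\theta_0}\right)^{\frac{n+2}{2n+2}(1+\frac{1}{\nu})}.$$

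I next apply Proposition~\ref{Prop3.3} to this sequence $\{t_\nu\}$. It produces some $\gamma^*\in(0,1)$, some constant $C^*$, and, after passing to a subsequence that I continue to call $\{t_\nu\}$, the estimate
$$r_{\theta(t_\nu)}-r_\infty \leq C^* J_\nu^{\frac{n+2}{2n+2}(1+\gamma^*)},\qquad J_\nu:=\int_M u(t_\nu)^{2+\frac{2}{n}}|R_{\theta(t_\nu)}-r_\infty|^{\frac{2n+2}{n+2}}dV_{\theta_0}.$$
The second essential input is that the exponent $p=(2n+2)/(n+2)$ lies in $(1,n+2)$, so the $L^p$ convergence (\ref{4}) (and the fact that $J_\nu=\int_M|R_{\theta(t_\nu)}-r_\infty|^{p}dV_{\theta(t_\nu)}$) gives $J_\nu\to 0$.

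For all $\nu$ large enough I therefore have both $J_\nu\leq 1$ and $1/\nu<\gamma^*$, which combined imply $J_\nu^{\frac{n+2}{2n+2}(1+\gamma^*)}\leq J_\nu^{\frac{n+2}{2n+2}(1+\frac{1}{\nu})}$. Substituting this into the Proposition~\ref{Prop3.3} bound and comparing with the failure inequality forces $\nu\leq C^*$ for all large $\nu$, a contradiction. The only genuine subtlety is the choice to let $\gamma_\nu=1/\nu\to 0$ in the failure: since Proposition~\ref{Prop3.3} gives no lower bound on $\gamma^*$, a fixed $\gamma$ would not yield a contradiction, whereas against any fixed $\gamma^*>0$ the values $\gamma_\nu$ will eventually lie below it, which is exactly what drives the exponent comparison.
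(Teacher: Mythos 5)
Your argument is correct and is essentially identical to the paper's own proof: negate the statement with $(\gamma,C)=(1/\nu,\nu)$, invoke Proposition \ref{Prop3.3} on the resulting sequence, and use (\ref{4}) to compare the exponents $1+\frac{1}{\nu}$ and $1+\gamma^*$ for large $\nu$. The only cosmetic difference is that the paper divides the two inequalities to get $C_\nu\leq C J_\nu^{\frac{n+2}{2n+2}(\gamma-\frac{1}{\nu})}$ and then lets $J_\nu\to 0$, whereas you compare the powers of $J_\nu\leq 1$ directly; both are the same idea.
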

\begin{proof}
Suppose this is not true. Then there exists a sequence of times $\{t_\nu: \nu\in\mathbb{N}\}$
and $\{C_\nu:\nu\in\mathbb{N}\}$
such that $t_\nu\geq\nu$, $C_\nu\to\infty$ as $\nu\to\infty$ and
$$r_{\theta(t_\nu)}-r_\infty\geq C_\nu \left(\int_Mu(t_\nu)^{2+\frac{2}{n}}|R_{\theta(t_\nu)}-r_\infty|^{\frac{2n+2}{n+2}}dV_{\theta_0}\right)^{\frac{n+2}{2n+2}(1+\frac{1}{\nu})}$$
for all $\nu\in\mathbb{N}$. We now apply Proposition \ref{Prop3.3} to this sequence $\{t_\nu: \nu\in\mathbb{N}\}$. Hence,
there exists an infinite subset $I\subset\mathbb{N}$, a real number $0<\gamma<1$ and a positive constant $C$ such that
$$r_{\theta(t_\nu)}-r_\infty\leq C\left(\int_Mu(t_\nu)^{2+\frac{2}{n}}|R_{\theta(t_\nu)}-r_\infty|^{\frac{2n+2}{n+2}}dV_{\theta_0}\right)^{\frac{n+2}{2n+2}(1+\gamma)}$$
for all  $\nu\in I$. Thus we conclude that
$$C_\nu\leq C\left(\int_Mu(t_\nu)^{2+\frac{2}{n}}|R_{\theta(t_\nu)}-r_\infty|^{\frac{2n+2}{n+2}}dV_{\theta_0}\right)^{\frac{n+2}{2n+2}(\gamma-\frac{1}{\nu})}
$$ for all $\nu\in I$,
which is a contradiction
in view of
(\ref{4}) and the fact that $C_\nu\to\infty$ as $\nu\to\infty$.
This proves the assertion.
\end{proof}

\begin{prop}\label{Prop3.5}
We have
$$\int_0^\infty\left(\int_Mu(t)^{2+\frac{2}{n}}(R_{\theta(t)}-r_{\theta(t)})^2dV_{\theta_0}\right)^{\frac{1}{2}}dt\leq C.$$
\end{prop}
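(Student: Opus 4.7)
The plan is to deduce Proposition \ref{Prop3.5} from Proposition \ref{Prop3.4} by a {\L}ojasiewicz-type ODE argument. Let me introduce the notation
$$h(t) := r_{\theta(t)} - r_\infty \geq 0, \qquad F(t) := \int_M u(t)^{2+\frac{2}{n}}(R_{\theta(t)} - r_{\theta(t)})^2 \,dV_{\theta_0} = \int_M (R_{\theta(t)} - r_{\theta(t)})^2 \,dV_{\theta(t)},$$
so that by (\ref{8}) we have $h'(t) = -nF(t)$, and the quantity we wish to bound is $\int_0^\infty F(t)^{1/2}\,dt$. Since $h$ is non-increasing and $h \geq 0$, either $h \equiv 0$ eventually (in which case $F \equiv 0$ eventually and the claim is trivial past that time), or $h(t) > 0$ for all $t$; we work in the latter case.

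The key step is to upgrade Proposition \ref{Prop3.4} into the {\L}ojasiewicz-type inequality
\begin{equation*}
F(t) \geq c\, h(t)^{2/(1+\gamma)} \qquad\text{for all } t \geq t_0
\end{equation*}
(possibly after enlarging $t_0$). To do this, set $p = \frac{2n+2}{n+2} \in [\tfrac{4}{3},2)$. By H\"older's inequality with exponents $2/p$ and $2/(2-p)$, together with $\mbox{Vol}(M,\theta(t)) = 1$ from (\ref{3}),
$$\int_M |R_{\theta(t)} - r_\infty|^p \,dV_{\theta(t)} \leq \left(\int_M (R_{\theta(t)} - r_\infty)^2 \,dV_{\theta(t)}\right)^{p/2} = (F(t) + h(t)^2)^{p/2},$$
where in the last step I expanded the square and used $\int_M (R_{\theta(t)} - r_{\theta(t)})\,dV_{\theta(t)} = 0$. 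Inserting this into Proposition \ref{Prop3.4} yields $h(t) \leq C(F(t) + h(t)^2)^{(1+\gamma)/2}$. Since $h(t) \to 0$, the alternative $F(t) \leq h(t)^2$ would force $h(t) \leq C' h(t)^{1+\gamma}$, i.e.\ $h(t) \geq c > 0$, a contradiction for large $t$; hence $F(t) \geq h(t)^2$, and the displayed inequality follows.

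Setting $\alpha := \frac{2}{1+\gamma} \in (1,2)$ so that $1 - \alpha/2 > 0$, I can then write
$$F(t)^{1/2} = \frac{F(t)}{F(t)^{1/2}} \leq c^{-1/2}\,\frac{F(t)}{h(t)^{\alpha/2}} = -\frac{1}{nc^{1/2}}\,\frac{h'(t)}{h(t)^{\alpha/2}} = -\frac{1}{nc^{1/2}(1-\alpha/2)}\,\frac{d}{dt}\bigl(h(t)^{1-\alpha/2}\bigr).$$
Integrating from $t_0$ to $\infty$ and using $h(t) \to 0$ with $1 - \alpha/2 > 0$ gives $\int_{t_0}^\infty F(t)^{1/2}\,dt \leq \frac{h(t_0)^{1-\alpha/2}}{nc^{1/2}(1-\alpha/2)} < \infty$. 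On the bounded interval $[0,t_0]$, Cauchy--Schwarz together with $\int_0^{t_0} F\,dt = \frac{1}{n}(r_{\theta_0} - r_{\theta(t_0)}) < \infty$ (from (\ref{8})) controls $\int_0^{t_0} F^{1/2}\,dt$, completing the proof.

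The main obstacle is the Lojasiewicz-type step: one must verify that the exponent emerging from Proposition \ref{Prop3.4} (after converting the $L^p$ norm to an $L^2$ norm via H\"older) is strictly smaller than the one needed for the integration trick, which is precisely the reason one needs the factor $(1+\gamma)$ with $\gamma > 0$ in Proposition \ref{Prop3.3} rather than just $(1+0)$. Everything else is routine book-keeping once $F \geq c h^{2/(1+\gamma)}$ is established.
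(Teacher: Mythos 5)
Your proof is correct, and its core coincides with the paper's: both arguments convert Proposition \ref{Prop3.4} into the Lojasiewicz--Simon type differential inequality $\frac{d}{dt}(r_{\theta(t)}-r_\infty)\leq -C(r_{\theta(t)}-r_\infty)^{2/(1+\gamma)}$ by combining (\ref{8}), H\"older's inequality with exponent $\frac{2n+2}{n+2}<2$, and the unit-volume normalization (\ref{3}). The differences are in the bookkeeping, in two places. First, to pass from $|R_{\theta(t)}-r_\infty|$ to $|R_{\theta(t)}-r_{\theta(t)}|$ the paper applies the triangle inequality in $L^{\frac{2n+2}{n+2}}$ and absorbs the resulting $C(r_{\theta(t)}-r_\infty)^{1+\gamma}$ term into the left-hand side for large $t$ (yielding (\ref{3.1})), whereas you use the exact identity $\int_M(R_{\theta(t)}-r_\infty)^2\,dV_{\theta(t)}=F(t)+h(t)^2$ together with a dichotomy ruling out $F\leq h^2$; both work. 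Second, and more substantively, the paper integrates the differential inequality to get the polynomial decay (\ref{3.2}), $r_{\theta(t)}-r_\infty\leq Ct^{-\frac{1+\gamma}{1-\gamma}}$, and then estimates $\int_T^{2T}F^{1/2}\,dt$ by Cauchy--Schwarz on each dyadic block $[2^k,2^{k+1}]$ and sums the resulting convergent series; you instead note that $F^{1/2}\leq c^{-1/2}F\,h^{-\alpha/2}$ is, up to a constant, $-\frac{d}{dt}\bigl(h^{1-\alpha/2}\bigr)$ with $1-\alpha/2=\frac{\gamma}{1+\gamma}>0$, which gives the $L^1$ bound in one line and dispenses with the dyadic decomposition entirely. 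Your route is shorter; the paper's yields the explicit decay rate (\ref{3.2}) as a by-product, though that rate is not used elsewhere. Your handling of the degenerate case $r_{\theta(t_1)}=r_\infty$ (whence $F\equiv 0$ afterwards by (\ref{8}) and monotonicity) and of the initial interval $[0,t_0]$ is also correct.
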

\begin{proof}
It follows from (\ref{3}) and Proposition \ref{Prop3.4} that
\begin{equation*}
\begin{split}
r_{\theta(t)}-r_\infty&\leq C\left(\int_Mu(t)^{2+\frac{2}{n}}|R_{\theta(t)}-r_\infty|^{\frac{2n+2}{n+2}}dV_{\theta_0}\right)^{\frac{n+2}{2n+2}(1+\gamma)}\\
&\leq C\left(\int_Mu(t)^{2+\frac{2}{n}}|R_{\theta(t)}-r_{\theta(t)}|^{\frac{2n+2}{n+2}}dV_{\theta_0}\right)^{\frac{n+2}{2n+2}(1+\gamma)}
+C(r_{\theta(t)}-r_\infty)^{1+\gamma},
\end{split}
\end{equation*}
hence,
\begin{equation}\label{3.1}
r_{\theta(t)}-r_\infty\leq C\left(\int_Mu(t)^{2+\frac{2}{n}}|R_{\theta(t)}-r_{\theta(t)}|^{\frac{2n+2}{n+2}}dV_{\theta_0}\right)^{\frac{n+2}{2n+2}(1+\gamma)}
\end{equation}
if $t$ is sufficiently large in view of (\ref{5}). Therefore, we obtain
\begin{equation*}
\frac{d}{dt}(r_{\theta(t)}-r_\infty)
\leq -n\left(\int_Mu(t)^{2+\frac{2}{n}}|R_{\theta(t)}-r_{\theta(t)}|^{\frac{2n+2}{n+2}}dV_{\theta_0}\right)^{\frac{n+2}{n+1}}\leq -C(r_{\theta(t)}-r_\infty)^{\frac{2}{1+\gamma}}
\end{equation*}
where the first equality follows from (\ref{8}), and the first inequality follows from (\ref{3}) and H\"{o}lder's inequality,
and the last inequality follows from (\ref{3.1}). This implies that
$$\frac{d}{dt}(r_{\theta(t)}-r_\infty)^{-\frac{1-\gamma}{1+\gamma}}\geq C$$
where $C$ is a positive constant independent of $t$. From this, it follows that
\begin{equation}\label{3.2}
r_{\theta(t)}-r_\infty\leq Ct^{-\frac{1+\gamma}{1-\gamma}}
\end{equation}
if $t$ is sufficiently large. Therefore, we have
\begin{equation*}
\begin{split}
&\int_T^{2T}\left(\int_Mu(t)^{2+\frac{2}{n}}(R_{\theta(t)}-r_{\theta(t)})^2dV_{\theta_0}\right)^{\frac{1}{2}}dt\\
&\leq \left(T\int_T^{2T}\int_Mu(t)^{2+\frac{2}{n}}(R_{\theta(t)}-r_{\theta(t)})^2dV_{\theta_0}dt\right)^{\frac{1}{2}}\leq \left(\frac{T}{n}\big(r_{\theta(T)}-r_\infty\big)\right)^{\frac{1}{2}}\leq CT^{-\frac{\gamma}{1-\gamma}}
\end{split}
\end{equation*}
where the first inequality follows from H\"{o}lder's inequality, the last inequality follows from (\ref{8}) and (\ref{3.2}).
Since $0<\gamma<1$, we conclude that
\begin{equation*}
\begin{split}
&\int_0^{\infty}\left(\int_Mu(t)^{2+\frac{2}{n}}(R_{\theta(t)}-r_{\theta(t)})^2dV_{\theta_0}\right)^{\frac{1}{2}}dt\\
&=\int_0^{1}\left(\int_Mu(t)^{2+\frac{2}{n}}(R_{\theta(t)}-r_{\theta(t)})^2dV_{\theta_0}\right)^{\frac{1}{2}}dt\\
&\hspace{4mm}
+\sum_{k=0}^\infty\int_{2^k}^{2^{k+1}}\left(\int_Mu(t)^{2+\frac{2}{n}}(R_{\theta(t)}-r_{\theta(t)})^2dV_{\theta_0}\right)^{\frac{1}{2}}dt
\leq C\sum_{k=0}^\infty2^{-\frac{k\gamma}{1-\gamma}}\leq C.
\end{split}
\end{equation*}
This proves the assertion.
\end{proof}

\begin{prop}\label{Prop3.6}
Given any $\eta_0>0$, we can find a real number $r>0$ such that
$$\int_{B_{r}(x)}u(t)^{2+\frac{2}{n}}dV_{\theta_0}\leq\eta_0$$
for all $x\in M$ and $t\geq 0$.
Here
$B_r(x)=\{y\in M:d(x,y)<r\}$
and $d$ is the Carnot-Carath\'{e}odory distance on $M$
with respect to the contact form $\theta_0$.
\end{prop}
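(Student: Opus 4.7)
The plan is to treat $f_{x,r}(t) := \int_{B_r(x)} u(t)^{2+\frac{2}{n}}\,dV_{\theta_0}$, for fixed $x \in M$ and $r > 0$, as a function of $t$; bound its time derivative by a quantity independent of $(x,r)$ that is integrable on $[0,\infty)$; and then propagate smallness from a short initial window (where it follows from the smoothness of the flow) to all $t \geq 0$ via this uniform total variation bound.

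The first step would be the derivative computation. Applying the chain rule to (\ref{0.5}) gives $\partial_t u^{2+\frac{2}{n}} = -(n+1)(R_{\theta(t)} - r_{\theta(t)})\,u^{2+\frac{2}{n}}$, so
\[
\frac{d}{dt} f_{x,r}(t) \;=\; -(n+1) \int_{B_r(x)} (R_{\theta(t)} - r_{\theta(t)})\,u(t)^{2+\frac{2}{n}}\,dV_{\theta_0}.
\]
H\"older's inequality together with the volume normalization (\ref{3}) then yields
\[
|f_{x,r}'(t)| \;\leq\; (n+1) \left( \int_M u(t)^{2+\frac{2}{n}} (R_{\theta(t)} - r_{\theta(t)})^2\,dV_{\theta_0} \right)^{\frac{1}{2}},
\]
and the right-hand side depends on $t$ alone. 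Proposition \ref{Prop3.5} then supplies the uniform bound $\int_0^\infty |f_{x,r}'(t)|\,dt \leq C$ with $C$ independent of $(x,r)$.

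Given $\eta_0 > 0$, I would choose $T > 0$ so large that $\int_T^\infty |f_{x,r}'(t)|\,dt < \eta_0/2$ for every $(x,r)$. By the long-time existence established in \cite{Ho}, $u$ is smooth on the compact slab $M \times [0,T]$ and hence bounded there by some $C_T$, so on this slab
\[
f_{x,r}(t) \;\leq\; C_T^{\,2+\frac{2}{n}}\,\mathrm{Vol}_{\theta_0}(B_r(x)).
\]
Compactness of $M$ implies $\sup_{x \in M}\mathrm{Vol}_{\theta_0}(B_r(x)) \to 0$ as $r \to 0$, so a sufficiently small $r$ forces $f_{x,r}(t) \leq \eta_0/2$ on $M \times [0,T]$. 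For $t > T$, the estimate $f_{x,r}(t) \leq f_{x,r}(T) + \int_T^\infty |f_{x,r}'(s)|\,ds \leq \eta_0$ closes the argument.

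I do not anticipate any genuine obstacle here: the whole argument is essentially bookkeeping built on Proposition \ref{Prop3.5}, which is the substantive input. The one point deserving a remark is the uniform decay $\sup_{x \in M}\mathrm{Vol}_{\theta_0}(B_r(x)) \to 0$ as $r \to 0$, which follows from the continuity of $(x,r) \mapsto \mathrm{Vol}_{\theta_0}(B_r(x))$ together with compactness of $M$.
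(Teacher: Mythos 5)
Your proposal is correct and follows essentially the same route as the paper: differentiate $\int_{B_r(x)}u(t)^{2+\frac{2}{n}}dV_{\theta_0}$ in $t$ via (\ref{0.5}), bound the derivative by $(n+1)\big(\int_M u^{2+\frac{2}{n}}(R_{\theta(t)}-r_{\theta(t)})^2dV_{\theta_0}\big)^{1/2}$ using H\"older and (\ref{3}), invoke Proposition \ref{Prop3.5} to make the tail integral small beyond some $T$, and shrink $r$ to handle the compact time interval $[0,T]$. The only difference is that you spell out the chain-rule constant and the uniform smallness of $\mathrm{Vol}_{\theta_0}(B_r(x))$ in slightly more detail than the paper does.
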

\begin{proof}
Given any $\eta_0>0$, it follows from Proposition \ref{Prop3.5} that there exists
a real number $T>0$ such that
$$\int_T^\infty\left(\int_Mu(t)^{2+\frac{2}{n}}(R_{\theta(t)}-r_{\theta(t)})^2dV_{\theta_0}\right)^{\frac{1}{2}}dt\leq \frac{\eta_0}{2(n+1)}.$$
On the other hand, by (\ref{3}), we can choose a real number $r>0$ such that
$$\int_{B_r(x)}u(t)^{2+\frac{2}{n}}dV_{\theta_0}\leq\frac{\eta_0}{2}$$
for all $x\in M$ and $0\leq t\leq T$. Combining these with
(\ref{0.5}), we have
\begin{equation*}
\begin{split}
\int_{B_r(x)}u(t)^{2+\frac{2}{n}}dV_{\theta_0}&\leq
\int_{B_r(x)}u(T)^{2+\frac{2}{n}}dV_{\theta_0}\\
&\hspace{4mm}+
(n+1)\int_T^\infty\left(\int_Mu(t)^{2+\frac{2}{n}}(R_{\theta(t)}-r_{\theta(t)})^2dV_{\theta_0}\right)^{\frac{1}{2}}dt\\
&\leq \frac{\eta_0}{2}+(n+1)\cdot\frac{\eta_0}{2(n+1)}=\eta_0
\end{split}
\end{equation*}
for all all $x\in M$ and $t\geq T$. This proves the assertion.
\end{proof}

\begin{prop}\label{Prop3.7}
The function $u(t)$ satisfies
\begin{equation}\label{3.5}
\sup_Mu(t)\leq C
\end{equation}
and
\begin{equation}\label{3.6}
\inf_Mu(t)\geq c
\end{equation}
for all $t\geq 0$. Here, $C$ and $c$ are positive constants independent of $t$.
\end{prop}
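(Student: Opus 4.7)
The plan is to prove both bounds by contradiction, using the concentration-compactness decomposition (Theorem \ref{Theorem4.1}) together with the no-concentration estimate in Proposition \ref{Prop3.6}. First I would establish the upper bound \eqref{3.5}, since this rules out bubble formation; once that is in hand, the CR Yamabe equation \eqref{9} together with a subelliptic Harnack inequality will supply the lower bound \eqref{3.6}.

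\textbf{Upper bound.} Assume for contradiction that there exists a sequence $t_\nu \to \infty$ with $\sup_M u(t_\nu) \to \infty$, and set $u_\nu = u(t_\nu)$. By \eqref{3}, \eqref{4} and \eqref{5}, the hypotheses of Theorem \ref{Theorem4.1} are satisfied along this sequence, so one obtains a decomposition
$$u_\nu - \sum_{i=1}^m\left(\frac{n(2n+2)}{r_\infty}\right)^{n/2}\left[\frac{\varepsilon_{i,\nu}^2}{t^2+(\varepsilon_{i,\nu}^2+|z|^2)^2}\right]^{n/2} \longrightarrow u_\infty.$$
Because $\sup u_\nu \to \infty$, at least one bubble must form, i.e.\ $m \geq 1$ with $\varepsilon_{i,\nu} \to 0$. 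The $L^{2+2/n}$-norm of each Heisenberg bubble is scale-invariant and equal to a fixed $c_0 > 0$; as $\varepsilon_{1,\nu} \to 0$ this mass concentrates at the single point $p_{1,\nu}$, and by the separation of bubbles and orthogonality built into the conclusion of Theorem \ref{Theorem4.1}, for every $r>0$ one gets
$$\liminf_{\nu\to\infty}\int_{B_r(p_{1,\nu})}u_\nu^{2+2/n}\,dV_{\theta_0}\geq c_0.$$
Now choose $\eta_0 < c_0$ in Proposition \ref{Prop3.6} and let $r_0$ be the corresponding radius. Then $\int_{B_{r_0}(p_{1,\nu})}u_\nu^{2+2/n}\,dV_{\theta_0} \leq \eta_0 < c_0$ for all $\nu$, contradicting the lower bound above. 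This establishes \eqref{3.5}.

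\textbf{Lower bound.} With $u(t) \leq C$ in hand, rewrite the CR Yamabe equation \eqref{9} as a linear subelliptic equation
$$-(2+2/n)\Delta_{\theta_0}u(t) + R_{\theta_0}\,u(t) = V(t)\,u(t),\qquad V(t) := R_{\theta(t)}u(t)^{2/n}.$$
Thanks to the upper bound on $u(t)$ and the $L^p$ convergence of $R_{\theta(t)}$ provided by \eqref{4A} for any $p<n+2$, the potential $V(t)$ is uniformly bounded in $L^p$ for some $p$ exceeding half the homogeneous dimension $Q/2 = n+1$. Applying the subelliptic Harnack inequality of Bramanti-Brandolini to the nonnegative solution $u(t)$ then yields $\sup_M u(t) \leq C_H \inf_M u(t)$ with $C_H$ independent of $t$. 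Combined with the normalization $\int_M u(t)^{2+2/n}\,dV_{\theta_0}=1$ and $\mbox{Vol}(M,\theta_0)=1$, which forces $\sup_M u(t) \geq 1$, this gives $\inf_M u(t) \geq 1/C_H > 0$, proving \eqref{3.6}.

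\textbf{Main obstacle.} The delicate point in the upper-bound argument is the localization: one must be sure that the bubble at $p_{1,\nu}$ contributes a genuinely positive, non-canceling amount of $L^{2+2/n}$-mass inside a fixed small ball, despite the presence of the other bubbles and the residual $u_\infty$. This is the standard bubble-separation property, ultimately a consequence of the scale-invariance of the Heisenberg energy and is built into Theorem \ref{Theorem4.1}. For the lower bound the main concern is the uniformity in $t$ of the Harnack constant, which reduces to a uniform $L^p$ bound on $V(t)$; the $L^p$ convergence \eqref{4A} for any $p<n+2$ is more than adequate, so no further work is required beyond citing the subelliptic Harnack result.
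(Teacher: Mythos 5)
Your argument for the lower bound \eqref{3.6} is essentially sound and close in spirit to the paper's: the paper also reduces to a Harnack-type inequality (Proposition A.1 in \cite{Ho}, a weak Harnack inequality for nonnegative supersolutions of $-(2+\frac{2}{n})\Delta_{\theta_0}+P$ with $P$ made bounded via the inequality $R_{\theta(t)}+\sigma\geq 1$), combined with the volume normalization. Your variant, viewing \eqref{9} as $L_{\theta_0}u=Vu$ with $V=R_{\theta(t)}u^{\frac{2}{n}}$ uniformly in $L^p(dV_{\theta_0})$ for some $n+1<p<n+2$ (which does follow once $u\leq C$ is known, since $\int_M|V|^p\,dV_{\theta_0}=\int_M|R_{\theta(t)}|^pu^{\frac{2p}{n}-2-\frac{2}{n}}\,dV_{\theta(t)}$ and the exponent on $u$ is positive for $p>n+1$), works provided you cite a genuine subelliptic Harnack inequality with constant depending only on $\|V\|_{L^p}$, $p>Q/2$; the Bramanti--Brandolini results invoked in this paper are Schauder and $L^p$ estimates, not a Harnack inequality, so the reference should be to Proposition A.1 in \cite{Ho} or an equivalent.

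The upper bound \eqref{3.5}, however, has a genuine gap at the step ``because $\sup_Mu_\nu\to\infty$, at least one bubble must form, i.e.\ $m\geq 1$.'' This does not follow from Theorem \ref{Theorem4.1}. The decomposition there is in $S^2_1(M)$, and the Folland--Stein embedding of $S^2_1(M)$ reaches only $L^{2+\frac{2}{n}}(M)$, not $L^\infty(M)$; a sequence can converge in $S^2_1(M)$ to a smooth limit (the case $m=0$) and still have $\sup_Mu_\nu\to\infty$ on sets of shrinking measure. So your contradiction only excludes $m\geq 1$ (which is a correct and useful observation, and indeed your bubble-mass-versus-Proposition \ref{Prop3.6} comparison is valid), but it does not exclude the scenario $m=0$ with unbounded suprema. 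Upgrading ``no energy concentration'' to ``uniform $L^\infty$ bound'' is exactly the analytic content that is missing: because the nonlinearity in \eqref{9} is critical, the bootstrap from $L^{2+\frac{2}{n}}$ to $L^\infty$ works only under a smallness hypothesis on the local energy. The paper supplies precisely this: from \eqref{3} and \eqref{4} one gets $\int_M|R_{\theta(t)}|^p\,dV_{\theta(t)}\leq C$ for $n+1<p<n+2$, then H\"older together with Proposition \ref{Prop3.6} makes $\int_{B_r(x)}|R_{\theta(t)}|^q\,dV_{\theta(t)}$ as small as desired for some $q>n+1$, and an $\varepsilon$-regularity/Moser-iteration result (Proposition A.2 in \cite{Ho}) then yields the uniform upper bound. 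Alternatively one could repair your argument by a point-picking blow-up at the maximum of $u_\nu$, showing the rescaled limit is a nontrivial entire solution on $\mathbb{H}^n$ carrying energy at least $c_0$ in shrinking balls, and then invoking Proposition \ref{Prop3.6}; but that rescaling argument itself requires the same local elliptic estimates, so either way a substantive ingredient must be added.
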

\begin{proof}
We follow the proof of Proposition 5.5 in \cite{Ho}.
It follows from (\ref{3}) and (\ref{4}) that for $n+1<p<n+2$
\begin{equation}\label{3.3}
\int_{M}|R_{\theta(t)}|^{p}dV_{\theta(t)}\leq C
\end{equation}
for some constant $C$ independent of $t$. Using (\ref{3.3}), Proposition \ref{Prop3.6}, and H\"{o}lder's inequality, we obtain that
for $p>q>n+1$
\begin{equation}\label{3.4}
\int_{B_r(x)}|R_{\theta(t)}|^{q}dV_{\theta(t)}\leq \left(\int_{B_r(x)}dV_{\theta(t)}\right)^{\frac{p-q}{p}}\left(\int_{M}|R_{\theta(t)}|^{p}dV_{\theta(t)}\right)^{\frac{q}{p}}
\leq C\eta_0.
\end{equation}
Since $u(t)$ is smooth by the long time existence of the CR Yamabe flow,
we can choose $\eta_0$ sufficiently small in (\ref{3.4}) so that we can apply Proposition A.2 in \cite{Ho} to conclude that
$u(t)$ is uniformly bounded from above. This proves (\ref{3.5}).

Now, if we define
$$P=R_{\theta_0}+\sigma\left(\sup_{t\geq0}\sup_M u(t)\right)^{\frac{2}{n}}$$
where $\sigma$ is given by
$$\sigma=\max\Big\{\sup(1-R_{\theta_0}),1\Big\}.$$
It was proved in \cite{Ho} that (see Proposition 3.4 in \cite{Ho})
$$R_{\theta(t)}+\sigma\geq 1,$$
which implies that
\begin{equation*}
\begin{split}
-(2+\frac{2}{n})\Delta_{\theta_0}u(t)+Pu(t)
&\geq -(2+\frac{2}{n})\Delta_{\theta_0}u(t)+R_{\theta_0}u(t)+\sigma u(t)^{1+\frac{2}{n}}\\
&=(R_{\theta(t)}+\sigma) u(t)^{1+\frac{2}{n}}\geq 0.
\end{split}
\end{equation*}
Hence, we can apply Proposition A.1 in \cite{Ho} to conclude that
$$C\Big(\inf_Mu(t) \Big)\Big(\sup_M u(t)\Big)^{1+\frac{2}{n}}\geq \int_M u(t)^{2+\frac{2}{n}}dV_{\theta_0}$$
for some positive constant $C$. Hence, (\ref{3.6}) follows from (\ref{3}) and  (\ref{3.5}). This
proves the assertion.
\end{proof}

\begin{prop}\label{Prop3.8}
Let $0<\alpha<\frac{2}{n+2}$. There exists a constant $C$ such that
$$|u(x_2,t_2)-u(x_1,t_1)|\leq C\big((t_1-t_2)^\frac{\alpha}{2}+d(x_1,x_2)^\alpha\big)$$
for all  $x_1, x_2\in M$ and all $t_1, t_2\geq 0$ satisfying
$0<t_1-t_2<1$. Here $d$ is the Carnot-Carath\'{e}odory distance on $M$
with respect to the contact form $\theta_0$.
\end{prop}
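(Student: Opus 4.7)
The plan is to derive a uniform spatial Hölder estimate for $u(t)$ from the elliptic CR Yamabe equation, and then upgrade this to parabolic Hölder regularity via an interpolation argument using the flow equation. The two inputs are the uniform bounds $c \le u(t) \le C$ from Proposition~\ref{Prop3.7} and the uniform bound
$$\int_M |R_{\theta(t)}|^p \, dV_{\theta_0} \le C \quad \text{for } n+1 < p < n+2,$$
which follows from (\ref{3.3}) and Proposition~\ref{Prop3.7}, since $dV_{\theta(t)} = u(t)^{2+2/n}dV_{\theta_0}$ and $u(t)$ is bounded above and below.

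For the spatial regularity, I would rewrite the CR Yamabe equation (\ref{9}) as
$$-(2+\tfrac{2}{n})\Delta_{\theta_0} u(t) \;=\; R_{\theta(t)}\, u(t)^{1+\frac{2}{n}} \;-\; R_{\theta_0}\, u(t),$$
so that the right-hand side lies in $L^p(M,dV_{\theta_0})$ uniformly in $t$ for every $n+1 < p < n+2$. Applying the Folland--Stein $L^p$-regularity for the sub-Laplacian (the CR Calderón--Zygmund theory used by Bramanti--Brandolini) together with the uniform $L^\infty$ bound on $u(t)$ yields
$\|u(t)\|_{S_2^p(M)} \le C$
uniformly in $t$. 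Since the homogeneous dimension of $M$ is $Q = 2n+2$ and $p > Q/2$, the Folland--Stein Hölder embedding $S_2^p \hookrightarrow C^\alpha$ with $\alpha = 2 - (2n+2)/p$ produces $\|u(t)\|_{C^\alpha(M)} \le C$ uniformly; letting $p \uparrow n+2$ covers every $\alpha < 2/(n+2)$.

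For the temporal part, fix $x \in M$ and $0 < t_1 - t_2 < 1$, and let
$$\bar u(r,t) \;:=\; \frac{1}{V_{\theta_0}(B_r(x))} \int_{B_r(x)} u(y,t)\, dV_{\theta_0}(y).$$
The decomposition
$$|u(x,t_1) - u(x,t_2)| \le |u(x,t_1) - \bar u(r,t_1)| + |\bar u(r,t_1) - \bar u(r,t_2)| + |\bar u(r,t_2) - u(x,t_2)|$$
bounds the outer terms by $C r^\alpha$ via the spatial estimate. For the middle term, the flow equation (\ref{0.5}), Hölder's inequality in the spatial variable, and $V_{\theta_0}(B_r(x)) \sim r^Q$ give
$$|\bar u(r,t_1) - \bar u(r,t_2)| \le C\!\int_{t_2}^{t_1}\!\!\Bigl(\tfrac{1}{V_{\theta_0}(B_r(x))}\!\int_{B_r(x)}\!|R_{\theta(s)} - r_{\theta(s)}|^p dV_{\theta_0}\Bigr)^{1/p}\!ds \;\le\; C(t_1 - t_2)\, r^{-Q/p}.$$
With $\alpha = 2 - Q/p$, balancing $r^\alpha$ against $(t_1 - t_2)\, r^{-Q/p}$ by taking $r = (t_1 - t_2)^{1/2}$ (so $\alpha + Q/p = 2$) yields $|u(x,t_1) - u(x,t_2)| \le C(t_1 - t_2)^{\alpha/2}$, and combining with the spatial bound via the triangle inequality $|u(x_2,t_2) - u(x_1,t_1)| \le |u(x_2,t_2) - u(x_1,t_2)| + |u(x_1,t_2) - u(x_1,t_1)|$ gives the claim.

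The main technical point is to verify the uniformity of the constants in the Folland--Stein $L^p$-regularity and the Hölder embedding: both depend only on $\theta_0$, so together with the uniform two-sided bounds on $u(t)$ they transfer to each time slice. A minor calculation also shows that the choice $\alpha = 2 - Q/p$ forces $r \sim (t_1-t_2)^{1/2}$, which is exactly what matches the $(t_1-t_2)^{\alpha/2}$ in the statement, and the condition $\alpha < 2/(n+2)$ is precisely what allows $p$ to be chosen strictly below $n+2$, where our integrability of $R_{\theta(t)}$ holds.
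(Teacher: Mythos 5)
Your proposal is correct and follows essentially the same route as the paper: a uniform spatial H\"older bound from the elliptic equation (\ref{9}) via $L^p$ regularity for the sub-Laplacian and the Folland--Stein embedding with $\alpha=2-\frac{2n+2}{p}$, followed by a comparison of $u(x,t_i)$ with its average over the ball of radius $\sqrt{t_1-t_2}$, using (\ref{0.5}), (\ref{4A}) and H\"older's inequality to control the time variation of that average. The paper carries out the same balancing with $r=\sqrt{t_1-t_2}$ fixed from the outset rather than optimizing over $r$, but the computation and the exponent bookkeeping are identical.
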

\begin{proof}
Choose $\alpha=2-\displaystyle\frac{2n+2}{p}$ with $n+1<p<n+2$.
Then we have
\begin{equation*}
\begin{split}
&\int_{M}\Big|-(2+\frac{2}{n})\Delta_{\theta_0}u(t)+R_{\theta_0}
u(t)\Big|^pdV_{\theta_0}\\
&\leq C\int_{M}|R_{\theta(t)}|^pdV_{\theta(t)}\leq C\Big(\int_{M}|R_{\theta(t)}-r_{\infty}|^pdV_{\theta(t)}+\int_{M}r_{\infty}^pdV_{\theta(t)}\Big)\leq C
\end{split}
\end{equation*}
where the first inequality follows from Proposition \ref{Prop3.7}, and the last inequality follows from
(\ref{3}) and (\ref{4}). This implies that
$$|u(x_2,t)-u(x_1,t)|\leq Cd(x_1,x_2)^\alpha$$
for all $x_1, x_2\in M$ and all $t\geq 0$. On the other hand,
\begin{equation}\label{3.7}
\begin{split}
\int_{M}\Big|\frac{\partial}{\partial t}u(t)\Big|^pdV_{\theta_0}
&=\big(\frac{n}{2}\big)^p\int_{M}|(R_{\theta(t)}-r_{\theta(t)})u(t)|^pdV_{\theta_0}\\
&\leq C\int_{M}|R_{\theta(t)}-r_{\theta(t)}|^pdV_{\theta(t)}\leq C,
\end{split}
\end{equation}
where the first equality follows from (\ref{0.5}), and the second inequality follows from Proposition \ref{Prop3.7},
and the last inequality follows from (\ref{4A}). Using (\ref{3.7}), we get
\begin{equation*}
\begin{split}
&|u(x,t_1)-u(x,t_2)|\leq
C(t_1-t_2)^{-(n+1)}\int_{B_{\sqrt{t_1-t_2}}(x)}|u(x,t_1)-u(x,t_2)|dV_{\theta_0}\\
&\leq
C(t_1-t_2)^{-(n+1)}\int_{B_{\sqrt{t_1-t_2}}(x)}|u(t_1)-u(t_2)|dV_{\theta_0}+C(t_1-t_2)^{\frac{\alpha}{2}}\\
&\leq C(t_1-t_2)^{-n}\sup_{t_2\leq t\leq
t_1}\int_{B_{\sqrt{t_1-t_2}}(x)}\left|\frac{\partial}{\partial
t}u(t)\right|dV_{\theta_0}+C(t_1-t_2)^{\frac{\alpha}{2}}\\
&\leq C(t_1-t_2)^{\frac{\alpha}{2}}\sup_{t_2\leq t\leq
t_1}\left(\int_{M}\left|\frac{\partial}{\partial
t}u(t)\right|^pdV_{\theta_0}\right)^{\frac{1}{p}}+C(t_1-t_2)^{\frac{\alpha}{2}}\\
&\leq C(t_1-t_2)^{\frac{\alpha}{2}}
\end{split}
\end{equation*}
for all $x\in M$ and all $t_1,t_2\geq 0$ satisfying $0<t_1-t_2<1$.
This proves the assertion.
\end{proof}

In view of Proposition \ref{Prop3.8}, it is easy to see that all
derivatives of $u(x, t)$ are uniformly bounded on
$[0,\infty)$. Indeed, we can apply Theorem 1.1 in \cite{Bramanti}, which
says: let $X_1,X_2,\dots,X_q$ be a system of real smooth vector
fields satisfying H\"{o}rmander's condition in a bounded domain
$\Omega$ of $\mathbb{R}^n$. Let $A=\{a_{ij}(x,t)\}^q_{i,j=1}$ be a
symmetric, uniformly positive-definite matrix of real functions
defined in a domain $U\subset\Omega\times\mathbb{R}$. For operator of
the form
$$H=\partial_t-\sum^q_{i,j=1}a_{ij}(x,t)X_iX_j-\sum^q_{i=1}b_i(x,t)X_i-c(x,t)$$
we have a priori estimate of Schauder type in parabolic
H\"{o}rmander H\"{o}lder spaces $C^{k,\beta}_P(U)$. Namely, for
$a_{ij},b_i,c\in C^{k,\beta}_P(U)$ and $U'\Subset U$, we have
\begin{eqnarray}\label{3.8}
\| u\|_{C^{k+2,\beta}_P(U')}\le C\{\| Hu\|_{C^{k,\beta}_P(U)}+\|
u\|_{L^\infty(U)}\}.
\end{eqnarray}
Here, (see P.193-194 in \cite{Bramanti})
\begin{equation*}
\begin{split}
C^{k,\beta}_P(U)&=\{u:U\rightarrow\mathbb{R}:
\|u\|_{C^{k,\beta}_P(U)}<\infty\},\\
\|u\|_{C^{k,\beta}_P(U)}&=\sum_{|I|+2h\leq
k}\left\|\partial_t^hX^Iu\right\|_{C^\beta_P(U)},\\
\|u\|_{C^{\beta}_P(U)}&=|u|_{C^\beta_P(U)}+\|u\|_{L^\infty(U)},\\
|u|_{C^{\beta}_P(U)}&=\sup\left\{\frac{|u(t,x)-u(s,y)|}{d_P((x,t),(y,s))^{\beta}}:(x,t),(y,s)\in
U, (t,x)\neq(s,y)\right\},
\end{split}
\end{equation*}
where $d_P$ is the parabolic Carnot-Carath\'{e}odory distance (see
P. 189 in \cite{Bramanti}) which is given by
$$d_P((x_1,t_1),(t_2,x_2))=\sqrt{d(x_1,x_2)^2+|t_1-t_2|}.$$
Here $d$ is the Carnot-Carath\'{e}odory distance in $\Omega$.
Moreover, for any multiindex $I=(i_1,i_2,...,i_s)$, with $1\leq
i_j\leq q$, $X^Iu=X_{i_1}X_{i_2}\cdots X_{i_s}u$.

It follows from Proposition \ref{Prop3.8} that $u(x,t)\in
C_P^{0,\lambda}([0,\infty)\times M)$. Therefore, with the
estimate (\ref{3.8}), Proposition \ref{Prop3.7} and Proposition \ref{Prop3.8}, we can
now apply the standard regularity theory for the weakly parabolic
equation in (\ref{10}) to show that all higher order derivatives of $u(t)$ are
uniformly bounded on $[0,\infty)$. Therefore, $u(t)$ converges to a smooth function $u_\infty$,
which is positive in view of Proposition \ref{Prop3.7}, or equivalently,
$\theta(t)$ converges to a contact form $\theta_\infty=u_\infty^{\frac{2}{n}}\theta_0$
as $t\to\infty$. On the other hand, the contact form $\theta_\infty$ has constant
Webster scalar curvature thanks to (\ref{4}). This proves Theorem \ref{main}.


\section{Blow-up analysis}\label{section5A}

The remaining part of this paper will be concerned with the proof of Proposition \ref{Prop3.3}.
Let $\{t_\nu: \nu\in\mathbb{N}\}$ be a sequence of times such that $t_\nu\to\infty$ as $\nu\to\infty$.
For abbreviation, we write $u_\nu=u(t_\nu)$ and $\theta_\nu=\theta(t_\nu)=u(t_\nu)^{\frac{2}{n}}\theta_0=u_\nu^{\frac{2}{n}}\theta_0$,
it follows from
(\ref{3}) that
\begin{equation}\label{4.1}
\int_Mu_\nu^{2+\frac{2}{n}}dV_{\theta_0}=1\hspace{2mm}\mbox{ for all }\nu\in\mathbb{N}.
\end{equation}
On the other hand, it follows from (\ref{9}) and (\ref{4}) with $p=\frac{2n+2}{n+2}$ that
\begin{equation}\label{4.2}
\int_M\Big|-(2+\frac{2}{n})\Delta_{\theta_0}u_\nu+R_{\theta_0}u_\nu-r_\infty u_\nu^{1+\frac{2}{n}}\Big|^{\frac{2n+2}{n+2}}dV_{\theta_0}
\to 0\hspace{2mm}\mbox{ as }\nu\to \infty.
\end{equation}

At this point, we may apply the following concentration-compactness result,
which is the CR version of the results of Struwe \cite{Struwe} and Bahri and Coron \cite{Bahri&Coron}.
See also \cite{Citti} and \cite{Citti&Uguzzoni}.

\begin{theorem}\label{Theorem4.1}
Under the assumptions of Theorem \ref {main}, suppose that $\{u_\nu\}$ be a sequence of positive functions satisfying $(\ref{4.1})$ and $(\ref{4.2})$. After passing to a subsequence if necessary, we can find an integer $m$,
a smooth nonnegative function $u_\infty$ and a sequence of $m$-tuples $(x^*_{k,\nu}, \varepsilon^*_{k,\nu})_{1\leq k\leq m}$
with the following properties:\\
(i) The function $u_\infty$ satisfies
\begin{equation}\label{4.3}
(2+\frac{2}{n})\Delta_{\theta_0}u_\infty-R_{\theta_0}u_\infty+r_\infty u_\infty^{1+\frac{2}{n}}=0.
\end{equation}
(ii) For all $i\neq j$, we have
\begin{equation}\label{4.4}
\frac{\varepsilon^*_{i,\nu}}{\varepsilon^*_{j,\nu}}+\frac{\varepsilon^*_{j,\nu}}{\varepsilon^*_{i,\nu}}
+\frac{d(x^*_{i,\nu},x^*_{j,\nu})^2}{\varepsilon^*_{i,\nu}\varepsilon^*_{j,\nu}}\rightarrow\infty\hspace{2
mm}\mbox{ as }\hspace{2 mm}\nu\rightarrow\infty.
\end{equation}
(iii) We have
\begin{equation}\label{4.5}
\Big\|u_\nu-u_\infty-\sum_{k=1}^m\overline{u}_{(x^*_{k,\nu},\varepsilon^*_{k,\nu})}\Big\|_{S^2_1(M)}\rightarrow
0\hspace{2 mm}\mbox{ as }\hspace{2 mm}\nu\rightarrow\infty.
\end{equation}
Here $\overline{u}_{(x^*_{k,\nu},\varepsilon^*_{k,\nu})}$ are the standard test functions constructed
in $(\ref{testfcn})$ in Appendix \ref{Appendix}, and $d$ is Carnot-Carath\'{e}odory distance on $M$
with respect to the contact form $\theta_0$.
\end{theorem}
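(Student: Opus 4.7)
The plan is to adapt the classical Struwe profile decomposition to the CR setting, following Bahri--Coron and the earlier CR analogues of Citti and Citti--Uguzzoni cited just before the statement. The bound (\ref{4.1}), together with positivity of the CR Yamabe constant, yields a uniform $S^2_1(M)$ bound on $u_\nu$. After passing to a subsequence, $u_\nu \rightharpoonup u_\infty$ weakly in $S^2_1(M)$, strongly in $L^p$ for $p<2+2/n$, and a.e. The error equation (\ref{4.2}) then passes to the limit: all terms except the critical nonlinearity converge strongly, and a.e.\ convergence together with the Brezis--Lieb lemma forces $u_\infty$ to solve (\ref{4.3}); nonnegativity is preserved since $u_\nu>0$.

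Next, set $v_\nu^{(1)} = u_\nu - u_\infty$. Brezis--Lieb gives
$$\int_M u_\nu^{2+2/n}\,dV_{\theta_0} = \int_M u_\infty^{2+2/n}\,dV_{\theta_0} + \int_M |v_\nu^{(1)}|^{2+2/n}\,dV_{\theta_0} + o(1),$$
and the quadratic terms decouple similarly. If $v_\nu^{(1)} \to 0$ in $S^2_1(M)$ we stop with $m=0$. Otherwise concentration occurs: fix a threshold $\eta_0>0$ below the Sobolev quantum, let $\varepsilon_{1,\nu}^{*}$ be the smallest scale at which the concentration function $\sup_{x\in M}\int_{B_r(x)} |v_\nu^{(1)}|^{2+2/n}\,dV_{\theta_0}$ first reaches $\eta_0$, and let $x_{1,\nu}^{*}$ be a point attaining the supremum. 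In CR normal coordinates at $x_{1,\nu}^{*}$, rescale via the Heisenberg dilation $\delta_\lambda$ to obtain
$$\hat{v}_\nu^{(1)}(\xi) = (\varepsilon_{1,\nu}^{*})^n\, v_\nu^{(1)}\bigl(\delta_{\varepsilon_{1,\nu}^{*}}(\xi)\bigr).$$
On compact subsets of $\HH^n$ the rescaled contact forms and sub-Laplacians converge to those of the flat Heisenberg structure, the lower-order potential $R_{\theta_0}$ scales away, and the rescaled version of (\ref{4.2}) shows that any weak subsequential limit $U_1$ is a nonnegative finite-energy solution of $-(2+2/n)\Delta_b U_1 = r_\infty U_1^{1+2/n}$ on $\HH^n$. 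By the Jerison--Lee classification, $U_1$ is (up to the normalization fixed by $r_\infty$) a standard bubble, which in the original coordinates matches the test function $\overline{u}_{(x_{1,\nu}^{*},\varepsilon_{1,\nu}^{*})}$ constructed in Appendix \ref{Appendix} modulo $S^2_1(M)$ error $o(1)$.

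Iterate: set $v_\nu^{(k+1)} = v_\nu^{(k)} - \overline{u}_{(x_{k,\nu}^{*},\varepsilon_{k,\nu}^{*})}$. A further Brezis--Lieb decomposition yields
$$\|v_\nu^{(k+1)}\|^2_{S^2_1(M)} = \|v_\nu^{(k)}\|^2_{S^2_1(M)} - \|U_k\|^2_{S^2_1(\HH^n)} + o(1),$$
so each extracted bubble removes a fixed quantum of energy bounded below by the CR Yamabe constant of $S^{2n+1}$. The iteration terminates after finitely many steps $m$, and $v_\nu^{(m+1)}\to 0$ strongly in $S^2_1(M)$, giving (\ref{4.5}). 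The separation (\ref{4.4}) then follows from a standard argument: if two of the parameter sequences failed to separate in the sense (\ref{4.4}), the two extracted bubbles would appear as weak limits of the same rescaled sequence, contradicting the fact that the rescaled residual at the later step has zero weak limit (bubbles having been subtracted at earlier steps).

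The main technical obstacle is the rescaling near each $x_{k,\nu}^{*}$: one must show that at the scale $\varepsilon_{k,\nu}^{*}\to 0$ the CR geometry of $M$ converges to the Heisenberg structure strongly enough that (a) the rescaled Yamabe equation limits to the Heisenberg Yamabe equation, (b) the test functions $\overline{u}_{(x_{k,\nu}^{*},\varepsilon_{k,\nu}^{*})}$ constructed on $M$ agree with $U_k$ pulled back from $\HH^n$ in $S^2_1(M)$ up to $o(1)$, and (c) the Brezis--Lieb identities survive the passage from the curved volume form $dV_{\theta_0}$ to the Heisenberg Haar measure. These are exactly the issues addressed by the test-function estimates collected in Appendix \ref{Appendix}, whose careful application is the heart of the proof.
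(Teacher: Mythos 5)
Your outline is essentially the paper's argument: the paper also extracts the weak limit $u_\infty$, forms the residual $v_\nu=u_\nu-u_\infty$, shows it satisfies the approximate equation (via a Br\'ezis--Lieb-type splitting, Proposition B.2), locates concentration points, rescales in CR normal coordinates to the Heisenberg group, classifies the profile by Jerison--Lee, subtracts a bubble, and iterates with a fixed energy quantum; the only structural difference is the selection rule (you use the $L^{2+2/n}$ concentration function hitting a threshold $\eta_0$, the paper uses the minimal radius at which the local Dirichlet energy reaches a fixed fraction of $r_\infty^{-n}Y(M,\theta_0)^{n+1}$) and the fact that the paper subtracts the corrected bubbles $\widehat{\omega}(x,\lambda)$ solving a projected equation on $M$ and only identifies them with the test functions $\overline{u}_{(x^*_{k,\nu},\varepsilon^*_{k,\nu})}$ at the very end.

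Two points are glossed over and deserve attention. First, the residual $v_\nu=u_\nu-u_\infty$ (and a fortiori the later residuals $v_\nu^{(k)}$) is sign-changing, so the blow-up limit solves $(2+\frac{2}{n})\Delta_{\theta_{\mathbb{H}^n}}\widetilde{V}=r_\infty|\widetilde{V}|^{\frac{2}{n}}\widetilde{V}$ rather than the equation for a nonnegative function; your assertion that $U_1$ is nonnegative is unjustified, and the appeal to the Jerison--Lee classification must be made for this signed equation (the paper does the same without further comment, so this is inherited rather than introduced, but you should not claim nonnegativity). Second, and more substantively: after classification the profile is a bubble $W$ with an a priori unknown dilation parameter $\gamma_1$ relative to the selected scale, and one must prove $\gamma_1$ is bounded below by a constant depending only on $a_0$, $r_\infty$ and $M$ --- otherwise the extracted bubble could carry arbitrarily little energy in a fixed ball and the quantization step would fail. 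The paper devotes Propositions A.10--A.11 to exactly this (explicitly correcting the claim $\gamma_1=1$ in Gamara--Yacoub). With your selection rule the analogous fact would follow from the normalization $\int_{B_1}|U_1|^{2+2/n}\,dV_{\theta_{\mathbb{H}^n}}\geq c\,\eta_0$ forced on the limit profile, but this needs to be stated and proved; it is the one step in the induction that cannot be waved through as ``standard''.
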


The proof of
Theorem \ref{Theorem4.1} will be included in
Appendix \ref{AppendixB}.

\begin{prop}\label{Prop4.2}
If $u_\infty$ vanishes at one point in $M$, then $u_\infty$ vanishes everywhere.
\end{prop}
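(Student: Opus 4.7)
The plan is to observe that $u_\infty$, being a smooth nonnegative solution of the semilinear equation (\ref{4.3}) on the compact manifold $M$, in fact solves a \emph{linear} subelliptic equation with an $L^\infty$ zeroth-order coefficient, and to conclude by the strong maximum principle for the sub-Laplacian. Concretely, setting
$$c(x):=\frac{R_{\theta_0}(x)-r_\infty\, u_\infty(x)^{2/n}}{2+2/n},$$
which is bounded on $M$ because $u_\infty\in C^\infty(M)$ and $M$ is compact, equation (\ref{4.3}) rewrites as
$$\Delta_{\theta_0}u_\infty-c(x)\,u_\infty=0.$$

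Next I would fix any constant $K>\|c\|_{L^\infty(M)}$ and put $v:=-u_\infty\le 0$. A direct computation gives
$$(\Delta_{\theta_0}-K)\,v=(c-K)\,v\ge 0,$$
since $c-K\le 0$ and $v\le 0$. Strict pseudoconvexity of $(M,\theta_0)$ ensures that the horizontal vector fields associated to $\theta_0$, together with their first brackets, span $TM$, so $\Delta_{\theta_0}$ is a H\"{o}rmander-type operator and the perturbed operator $\Delta_{\theta_0}-K$ has nonpositive zeroth-order coefficient $-K$. Bony's strong maximum principle therefore applies: if $v$ attains a nonnegative maximum at some interior point of the connected manifold $M$, then $v$ is constant. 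Under the hypothesis $u_\infty(x_0)=0$ for some $x_0\in M$, the value $v(x_0)=0$ is exactly the nonnegative maximum of $v$, forcing $v\equiv 0$ and hence $u_\infty\equiv 0$.

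The principal, and essentially only, issue is invoking the right form of the strong maximum principle (equivalently, a subelliptic Harnack inequality) for $\Delta_{\theta_0}$ with a bounded zeroth-order perturbation. This is classical: Bony's theorem on sums of squares of H\"{o}rmander vector fields handles it directly as above; alternatively, the subelliptic Harnack inequality yields $\sup_{B_r(x_0)}u_\infty\le C\inf_{B_r(x_0)}u_\infty=0$ on a small Carnot-Carath\'{e}odory ball around $x_0$, and connectedness of $M$ propagates the vanishing to all of $M$. Beyond correctly citing this machinery, no further analytic obstacle is expected.
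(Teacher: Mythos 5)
Your proof is correct, but it follows a genuinely different route from the paper. The paper's proof is a one-liner given its toolbox: since (\ref{4.3}) and $R_{\theta_0}\geq 0$ give $-(2+\frac{2}{n})\Delta_{\theta_0}u_\infty+R_{\theta_0}u_\infty\geq 0$, Proposition A.1 of \cite{Ho} yields the global weak-Harnack-type bound $C\big(\inf_Mu_\infty\big)\big(\sup_Mu_\infty\big)^{1+\frac{2}{n}}\geq\int_Mu_\infty^{2+\frac{2}{n}}dV_{\theta_0}$, and a single interior zero forces the right-hand side to vanish. You instead linearize the equation as $\Delta_{\theta_0}u_\infty=c(x)u_\infty$ with $c\in L^\infty$ and invoke Bony's strong maximum principle (equivalently a subelliptic Harnack inequality) for the H\"ormander operator $\Delta_{\theta_0}-K$; the sign manipulation with $v=-u_\infty$ and $K>\|c\|_{L^\infty}$ is carried out correctly, and strict pseudoconvexity does guarantee the bracket-generating condition needed for the zero set to propagate to all of the (connected) manifold $M$. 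What the paper's route buys is economy: the inequality from \cite{Ho} is already cited and reused elsewhere (e.g.\ in Proposition \ref{Prop3.7}), so no new machinery enters. What your route buys is locality and generality: it needs neither the sign of $R_{\theta_0}$ nor the specific form of the nonlinearity, only smoothness of $u_\infty$ and boundedness of the zeroth-order coefficient. The only things you should make explicit are the connectedness of $M$ (needed to propagate the vanishing globally) and a precise citation for Bony's theorem or the subelliptic Harnack inequality, since neither appears in the paper's bibliography.
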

\begin{proof}
It follows from (\ref{4.3}) that
$$-(2+\frac{2}{n})\Delta_{\theta_0}u_\infty+R_{\theta_0}u_\infty=r_\infty u_\infty^{1+\frac{2}{n}}\geq 0.$$
Since $R_{\theta_0}\geq 0$ by (\ref{11}), we can apply Proposition A.1 in \cite{Ho} to conclude that
\begin{equation}\label{4.9}
C\Big(\inf_Mu_\infty \Big)\Big(\sup_M u_\infty\Big)^{1+\frac{2}{n}}\geq \int_M u_\infty^{2+\frac{2}{n}}dV_{\theta_0}
\end{equation}
for some positive constant $C$. Proposition \ref{Prop4.2} follows from (\ref{4.9}).
\end{proof}

The cases $u_\infty\equiv 0$ and $u_\infty>0$ will be discussed separately.
The case $u_\infty\equiv 0$ will be studied in section \ref{section5}.
The case $u_\infty>0$ will be studied in section \ref{section6}.

We define two functionals $E(u)$ and $F(u)$ by
\begin{equation}\label{4.6}
E(u)=\frac{\int_M\big((2+\frac{2}{n})|\nabla_{\theta_0}u|^2_{\theta_0}+R_{\theta_0}u^2\big)dV_{\theta_0}}
{\big(\int_Mu^{2+\frac{2}{n}}dV_{\theta_0}\big)^{\frac{n}{n+1}}}
\end{equation}
and
\begin{equation}\label{4.7}
F(u)=\frac{\int_M\big((2+\frac{2}{n})|\nabla_{\theta_0}u|^2_{\theta_0}+R_{\theta_0}u^2\big)dV_{\theta_0}}{\int_Mu^{2+\frac{2}{n}}dV_{\theta_0}}.
\end{equation}
Then we have
\begin{equation}\label{4.8}
\begin{split}
1&=\lim_{\nu\to\infty}\int_Mu_\nu^{2+\frac{2}{n}}dV_{\theta_0}\\
&=\lim_{\nu\to\infty}\left(\int_Mu_\infty^{2+\frac{2}{n}}dV_{\theta_0}
+\sum_{k=1}^m\int_M\overline{u}_{(x^*_{k,\nu},\varepsilon^*_{k,\nu})}^{2+\frac{2}{n}}dV_{\theta_0}\right)\\
&=\left(\frac{E(u_\infty)}{r_\infty}\right)^{n+1}+m\left(\frac{Y(S^{2n+1})}{r_\infty}\right)^{n+1}
\end{split}
\end{equation}
where we have used (\ref{4.1}), (\ref{4.3}), (\ref{4.5}), and (\ref{B.14}).

\subsection{The case $u_\infty\equiv 0$}\label{section5}

Throughout this subsection, we assume that $u_\infty\equiv 0$. For every $\nu\in\mathbb{N}$, we denote by $\mathcal{A}_\nu$ the set of
all $m$-tuples $(x_k,\varepsilon_k,\alpha_k)_{1\leq k\leq m}\in (M\times\mathbb{R}_+\times\mathbb{R}_+)^m$
such that
\begin{equation}\label{5.1}
d(x_k,x_{k,\nu}^*)\leq\varepsilon_{k,\nu}^*,\hspace{4mm}\frac{1}{2}\leq\frac{\varepsilon_{k}}{\varepsilon_{k,\nu}^*}\leq 2,
\hspace{4mm}  \frac{1}{2}\leq\alpha_k\leq 2
\end{equation}
for all $1\leq k\leq m$. Moreover, we can find
$(x_{k,\nu},\varepsilon_{k,\nu},\alpha_{k,\nu})_{1\leq k\leq m}\in \mathcal{A}_\nu$
such that
\begin{equation}\label{5.2}
\begin{split}
&\int_M\left((2+\frac{2}{n})\Big|\nabla_{\theta_0}\Big(u_\nu-\sum_{k=1}^m\alpha_{k,\nu}\overline{u}_{(x_{k,\nu},\varepsilon_{k,\nu})}\Big)\Big|_{\theta_0}^2
+R_{\theta_0}\Big(u_\nu-\sum_{k=1}^m\alpha_{k,\nu}\overline{u}_{(x_{k,\nu},\varepsilon_{k,\nu})}\Big)^2\right)dV_{\theta_0}\\
&\leq
\int_M\left((2+\frac{2}{n})\Big|\nabla_{\theta_0}\Big(u_\nu-\sum_{k=1}^m\alpha_{k}\overline{u}_{(x_{k},\varepsilon_{k})}\Big)\Big|_{\theta_0}^2
+R_{\theta_0}\Big(u_\nu-\sum_{k=1}^m\alpha_{k}\overline{u}_{(x_{k},\varepsilon_{k})}\Big)^2\right)dV_{\theta_0}
\end{split}
\end{equation}
for all $(x_k,\varepsilon_k,\alpha_k)_{1\leq k\leq m}\in \mathcal{A}_\nu$.

\begin{prop}\label{Prop5.1}
(i) For all $i\neq j$, we have
$$\frac{\varepsilon_{i,\nu}^2}{\varepsilon_{j,\nu}^2}+\frac{\varepsilon_{j,\nu}^2}{\varepsilon_{i,\nu}^2}
+\frac{d(x_{i,\nu},x_{j,\nu})^4}{\varepsilon_{i,\nu}^2\varepsilon_{j,\nu}^2}\rightarrow\infty\hspace{2
mm}\mbox{ as }\hspace{2 mm}\nu\rightarrow\infty,$$
(ii) We have
$$\Big\|u_\nu-\sum_{k=1}^m\alpha_{k,\nu}\overline{u}_{(x_{k,\nu},\varepsilon_{k,\nu})}\Big\|_{S^2_1(M)}\rightarrow
0\hspace{2 mm}\mbox{ as }\hspace{2 mm}\nu\rightarrow\infty.$$
\end{prop}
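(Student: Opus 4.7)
The triples $(x_{k,\nu},\varepsilon_{k,\nu},\alpha_{k,\nu})$ are by construction minimizers of the functional on the right-hand side of $(\ref{5.2})$ over the constraint set $\mathcal{A}_\nu$. The first step is to note that the minimizer exists: $\mathcal{A}_\nu$ is nonempty (it contains $(x^*_{k,\nu},\varepsilon^*_{k,\nu},1)_{1\le k\le m}$) and compact (a product of closed balls in $M$ with compact intervals for $\varepsilon_k$ and $\alpha_k$), and the integrand is continuous in the parameters via the smooth dependence of the standard bubbles $\bar{u}_{(x,\varepsilon)}$ constructed in Appendix \ref{Appendix}. The assertions (i) and (ii) are then \emph{a posteriori} versions of the concentration relations $(\ref{4.4})$ and $(\ref{4.5})$ of Theorem \ref{Theorem4.1}, but for the optimized bubble parameters in place of the initial ones.

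My plan for (ii) is a direct comparison argument. I would evaluate the right-hand side of $(\ref{5.2})$ at the admissible test point $(x^*_{k,\nu},\varepsilon^*_{k,\nu},1)_{1\le k\le m}\in\mathcal{A}_\nu$. Because $u_\infty \equiv 0$, assertion $(\ref{4.5})$ of Theorem \ref{Theorem4.1} gives $\|u_\nu - \sum_k \bar{u}_{(x^*_{k,\nu},\varepsilon^*_{k,\nu})}\|_{S^2_1(M)} \to 0$, which controls the value of the functional at this test point. Minimality of the triples then forces the same $o(1)$ bound at $(x_{k,\nu},\varepsilon_{k,\nu},\alpha_{k,\nu})$. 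The positivity $R_{\theta_0}>0$ supplied by $(\ref{11})$ makes the quadratic form $u\mapsto\int_M\big((2+\tfrac{2}{n})|\nabla_{\theta_0}u|^2_{\theta_0}+R_{\theta_0}u^2\big)\,dV_{\theta_0}$ equivalent to $\|\cdot\|_{S^2_1(M)}^2$, from which (ii) follows.

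For (i), the plan is to transfer the separation property $(\ref{4.4})$ to the optimized parameters using the constraints $(\ref{5.1})$. The bound $\tfrac{1}{2}\varepsilon^*_{k,\nu}\le\varepsilon_{k,\nu}\le 2\varepsilon^*_{k,\nu}$ gives $\varepsilon_{i,\nu}/\varepsilon_{j,\nu}$ comparable to $\varepsilon^*_{i,\nu}/\varepsilon^*_{j,\nu}$, so if the first two terms in $(\ref{4.4})$ are unbounded their squares are too and (i) follows. Otherwise those two ratios stay bounded, hence $\varepsilon^*_{i,\nu}$ and $\varepsilon^*_{j,\nu}$ are comparable and $\varepsilon^*_{i,\nu}+\varepsilon^*_{j,\nu}\le C\sqrt{\varepsilon^*_{i,\nu}\varepsilon^*_{j,\nu}}$, while $(\ref{4.4})$ forces $d(x^*_{i,\nu},x^*_{j,\nu})/\sqrt{\varepsilon^*_{i,\nu}\varepsilon^*_{j,\nu}} \to \infty$. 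Combining $d(x_{k,\nu},x^*_{k,\nu})\le\varepsilon^*_{k,\nu}$ with the triangle inequality then yields $d(x_{i,\nu},x_{j,\nu}) \ge \tfrac{1}{2}\,d(x^*_{i,\nu},x^*_{j,\nu})$ for large $\nu$, and squaring delivers $d(x_{i,\nu},x_{j,\nu})^4/(\varepsilon_{i,\nu}^2\varepsilon_{j,\nu}^2)\to\infty$.

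The main obstacle is not in the arguments above, which amount to direct bookkeeping — all of the analytic content lies in Theorem \ref{Theorem4.1} and in the positivity $(\ref{11})$. The only points requiring even mild care are the compactness of $\mathcal{A}_\nu$ and the parameter-continuity of the functional (both immediate from the explicit bubbles) and the triangle-inequality case analysis for (i), which has to cope with the fact that the Heisenberg geometry enters the statement through $d^4$ rather than $d^2$.
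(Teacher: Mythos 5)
Your proposal is correct and follows essentially the same route as the paper: part (ii) is exactly the paper's minimality comparison against the admissible test point $(x^*_{k,\nu},\varepsilon^*_{k,\nu},1)_{1\leq k\leq m}$ combined with $(\ref{4.5})$ and $u_\infty\equiv 0$, and part (i) is the same transfer of $(\ref{4.4})$ through the constraints $(\ref{5.1})$. The only cosmetic difference is in (i): the paper avoids your two-case split (and the attendant passage to subsequences it implicitly requires) by a single chain of inequalities, using $(d(x_{i,\nu},x_{j,\nu})+\varepsilon^*_{i,\nu}+\varepsilon^*_{j,\nu})^2\leq 2d(x_{i,\nu},x_{j,\nu})^2+4(\varepsilon^*_{i,\nu})^2+4(\varepsilon^*_{j,\nu})^2$ together with the triangle inequality to bound the starred quantity in $(\ref{4.4})$ by a fixed multiple of the unstarred one, and then Cauchy--Schwarz to pass from the quantity with $d^2/(\varepsilon_{i,\nu}\varepsilon_{j,\nu})$ to the stated one with $d^4/(\varepsilon_{i,\nu}^2\varepsilon_{j,\nu}^2)$.
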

\begin{proof}
(i) In view of (\ref{5.1}), we have
\begin{equation*}
\begin{split}
&32\frac{\varepsilon_{i,\nu}}{\varepsilon_{j,\nu}}+32\frac{\varepsilon_{j,\nu}}{\varepsilon_{i,\nu}}
+8\frac{d(x_{i,\nu},x_{j,\nu})^2}{\varepsilon_{i,\nu}\varepsilon_{j,\nu}}\\
&\geq
8\frac{\varepsilon_{i,\nu}^*}{\varepsilon_{j,\nu}^*}+8\frac{\varepsilon_{j,\nu}^*}{\varepsilon_{i,\nu}^*}
+2\frac{d(x_{i,\nu},x_{j,\nu})^2}{\varepsilon_{i,\nu}^*\varepsilon_{j,\nu}^*}\\
&\geq
4\frac{\varepsilon_{i,\nu}^*}{\varepsilon_{j,\nu}^*}+4\frac{\varepsilon_{j,\nu}^*}{\varepsilon_{i,\nu}^*}
+\frac{(d(x_{i,\nu},x_{j,\nu})+\varepsilon_{i,\nu}^*+\varepsilon_{j,\nu}^*)^2}{\varepsilon_{i,\nu}^*\varepsilon_{j,\nu}^*}\\
&\geq
4\frac{\varepsilon_{i,\nu}^*}{\varepsilon_{j,\nu}^*}+4\frac{\varepsilon_{j,\nu}^*}{\varepsilon_{i,\nu}^*}
+\frac{d(x_{i,\nu}^*,x_{j,\nu}^*)^2}{\varepsilon_{i,\nu}^*\varepsilon_{j,\nu}^*},
\end{split}
\end{equation*}
and the last expression tends to infinity as $\nu\rightarrow\infty$
by Theorem \ref{Theorem4.1}. Thus we have
$$\frac{\varepsilon_{i,\nu}}{\varepsilon_{j,\nu}}+\frac{\varepsilon_{j,\nu}}{\varepsilon_{i,\nu}}
+\frac{d(x_{i,\nu},x_{j,\nu})^2}{\varepsilon_{i,\nu}\varepsilon_{j,\nu}}\rightarrow\infty\hspace{2
mm}\mbox{ as }\hspace{2 mm}\nu\rightarrow\infty.$$
Now Proposition \ref{Prop5.1}(i) follows from this and Cauchy-Schwarz inequality.\\
(ii) By definition of
$(x_{k,\nu},\varepsilon_{k,\nu},\alpha_{k,\nu})_{1\leq k\leq m}$
in (\ref{5.2}), we have
\begin{equation*}
\begin{split}
&\int_M\left((2+\frac{2}{n})\Big|\nabla_{\theta_0}\Big(u_\nu-\sum_{k=1}^m\alpha_{k,\nu}\overline{u}_{(x_{k,\nu},\varepsilon_{k,\nu})}\Big)\Big|_{\theta_0}^2
+R_{\theta_0}\Big(u_\nu-\sum_{k=1}^m\alpha_{k,\nu}\overline{u}_{x_{(k,\nu},\varepsilon_{k,\nu})}\Big)^2\right)dV_{\theta_0}\\
&\leq
\int_M\left((2+\frac{2}{n})\Big|\nabla_{\theta_0}\Big(u_\nu-\sum_{k=1}^m\overline{u}_{(x_{k}^*,\varepsilon_{k}^*)}\Big)\Big|_{\theta_0}^2
+R_{\theta_0}\Big(u_\nu-\sum_{k=1}^m\overline{u}_{(x_{k}^*,\varepsilon_{k}^*)}\Big)^2\right)dV_{\theta_0}.
\end{split}
\end{equation*}
By Theorem \ref{Theorem4.1}, the expression on the right-hand side
tends to 0 as $\nu\rightarrow\infty$. This proves the assertion.
\end{proof}

\begin{prop}\label{Prop5.2}
We have
\begin{equation*}
d(x_{k,\nu},x_{k,\nu}^*)\leq o(1)\varepsilon_{k,\nu}^*,\hspace{4mm}\frac{\varepsilon_{k,\nu}}{\varepsilon_{k,\nu}^*}=1+o(1),
\hspace{4mm}  \alpha_{k,\nu}=1+o(1)
\end{equation*}
for all $1\leq k\leq m$. In particular, $(x_{k,\nu},\varepsilon_{k,\nu},\alpha_{k,\nu})_{1\leq k\leq m}$
is an interior point of $\mathcal{A}_\nu$ if $\nu$ is sufficiently large.
\end{prop}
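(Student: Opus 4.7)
The plan is to show that the optimal parameters $(x_{k,\nu},\varepsilon_{k,\nu},\alpha_{k,\nu})$ from \eqref{5.2} must closely approximate the starred parameters $(x^{*}_{k,\nu},\varepsilon^{*}_{k,\nu})$, with $\alpha_{k,\nu}$ close to $1$, by a bubble-decoupling argument in $S^2_1(M)$. First I would combine the two bubble decompositions: since $u_\infty\equiv 0$ by assumption of this subsection, the choice $(x^{*}_{k,\nu},\varepsilon^{*}_{k,\nu},1)_{1\leq k\leq m}$ is admissible in $\mathcal{A}_\nu$ (the ratios $\varepsilon_k/\varepsilon_{k,\nu}^*$ and $\alpha_k$ equal $1\in[1/2,2]$ and $d(x_k,x_{k,\nu}^*)=0\leq \varepsilon_{k,\nu}^*$), so the minimality \eqref{5.2} combined with Theorem \ref{Theorem4.1}(iii) and the triangle inequality yields
\begin{equation*}
\Big\|\sum_{k=1}^{m}\alpha_{k,\nu}\,\bar u_{(x_{k,\nu},\varepsilon_{k,\nu})}-\sum_{k=1}^{m}\bar u_{(x^{*}_{k,\nu},\varepsilon^{*}_{k,\nu})}\Big\|_{S^2_1(M)}=o(1).
\end{equation*}

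Next I would carry out the bubble decoupling. Thanks to Theorem \ref{Theorem4.1}(ii) and Proposition \ref{Prop5.1}(i), any two bubbles with distinct indices $i\neq j$ (in either family, and indeed mixed across the two families, since \eqref{5.1} forces $(x_{k,\nu},\varepsilon_{k,\nu})$ and $(x^{*}_{k,\nu},\varepsilon^{*}_{k,\nu})$ to have comparable scales and nearby centres) satisfy the separation condition that makes their $S^2_1$ inner products tend to zero; this is the standard Struwe-type orthogonality for test functions, whose CR version rests on the decay and normal-coordinate estimates gathered in Appendix \ref{Appendix}. Expanding the squared $S^2_1$-norm above and using this asymptotic orthogonality to kill the cross terms, I obtain the decoupled closeness
\begin{equation*}
\big\|\alpha_{k,\nu}\,\bar u_{(x_{k,\nu},\varepsilon_{k,\nu})}-\bar u_{(x^{*}_{k,\nu},\varepsilon^{*}_{k,\nu})}\big\|_{S^2_1(M)}=o(1)\qquad\text{for each }1\leq k\leq m.
\end{equation*}

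I would then pass to a subsequence on which
\begin{equation*}
\frac{\varepsilon_{k,\nu}}{\varepsilon^{*}_{k,\nu}}\to\lambda_k\in[1/2,2],\qquad \alpha_{k,\nu}\to\alpha_k\in[1/2,2],
\end{equation*}
and, in CR normal coordinates around $x^{*}_{k,\nu}$, $(\varepsilon^{*}_{k,\nu})^{-1}\cdot(x_{k,\nu}-x^{*}_{k,\nu})\to\xi_k\in\mathbb{H}^n$ (this is a bounded sequence by \eqref{5.1}). Rescaling by $\varepsilon^{*}_{k,\nu}$ and invoking the construction of the test functions in Appendix \ref{Appendix}, the $S^2_1(M)$ closeness above transfers to an $S^2_1(\mathbb{H}^n)$ identity on the Heisenberg group between the limiting Jerison–Lee bubbles: $\alpha_k\,U_{(\xi_k,\lambda_k)}=U_{(0,1)}$, where $U_{(\xi,\lambda)}$ denotes the standard ground state centred at $\xi$ with scale $\lambda$. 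The rigidity of the CR Yamabe equation on $\mathbb{H}^n$ (Jerison–Lee classification) then forces $\alpha_k=1$, $\lambda_k=1$, and $\xi_k=0$, which is exactly the claim
$$d(x_{k,\nu},x^{*}_{k,\nu})=o(\varepsilon^{*}_{k,\nu}),\quad \varepsilon_{k,\nu}/\varepsilon^{*}_{k,\nu}=1+o(1),\quad \alpha_{k,\nu}=1+o(1).$$
Since all three constraints in \eqref{5.1} are then eventually strict, $(x_{k,\nu},\varepsilon_{k,\nu},\alpha_{k,\nu})_{1\leq k\leq m}$ is an interior point of $\mathcal{A}_\nu$ for large $\nu$.

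The main obstacle will be Step 2: rigorously verifying the asymptotic orthogonality of the test functions, including the \emph{mixed} pairs built from the starred and unstarred parameters with different indices. This requires precise $S^2_1$ inner-product estimates for the cutoff-modified bubbles of Appendix \ref{Appendix}, relying on the non-concentration conditions \eqref{4.4} and Proposition \ref{Prop5.1}(i) to show that, after symmetric rescaling, each cross pair splits into one concentrating and one spreading piece whose $S^2_1$ pairing vanishes.
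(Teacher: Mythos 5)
Your proposal is correct and follows essentially the same route as the paper: the paper's proof consists of exactly your first step (the triangle inequality combining the minimality in (5.2), i.e.\ Proposition \ref{Prop5.1}(ii), with Theorem \ref{Theorem4.1}(iii) and $u_\infty\equiv 0$) and then simply states ``From this, the assertion follows.'' Your decoupling-via-asymptotic-orthogonality and bubble-rigidity steps are precisely the standard details the paper leaves implicit, and they are carried out correctly, including the observation that (\ref{5.1}) makes the mixed cross pairs separate.
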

\begin{proof}
Observe that
\begin{equation*}
\begin{split}
&\Big\|\sum_{k=1}^m\alpha_{k,\nu}\overline{u}_{(x_{k,\nu},\varepsilon_{k,\nu})}
-\sum_{k=1}^m\overline{u}_{(x_{k}^*,\varepsilon_{k}^*)}\Big\|_{S_1^2(M)}\\
&\leq
\Big\|u_\nu-\sum_{k=1}^m\overline{u}_{(x_{k}^*,\varepsilon_{k}^*)}\Big\|_{S_1^2(M)}
+\Big\|u_\nu-\sum_{k=1}^m\alpha_{k,\nu}\overline{u}_{(x_{k,\nu},\varepsilon_{k,\nu})}\Big\|_{S_1^2(M)}=o(1)
\end{split}
\end{equation*}
by Theorem \ref{Theorem4.1} and Proposition \ref{5.1}. From this, the assertion follows.
\end{proof}

Now we decompose the function $u_\nu$ as
$$u_\nu=v_\nu+w_\nu$$
where
\begin{equation}\label{5.3}
v_\nu=\sum_{k=1}^m\alpha_{k,\nu}\overline{u}_{(x_{k,\nu},\varepsilon_{k,\nu})}
\end{equation}
and
\begin{equation}\label{5.4}
w_\nu=u_\nu-\sum_{k=1}^m\alpha_{k,\nu}\overline{u}_{(x_{k,\nu},\varepsilon_{k,\nu})}.
\end{equation}
By Proposition \ref{Prop5.1}, the function $w_\nu$ satisfies
\begin{equation}\label{5.5}
\int_M\left((2+\frac{2}{n})|\nabla_{\theta_0}w_\nu|^2_{\theta_0}+R_{\theta_0}w_\nu^2\right)dV_{\theta_0}=o(1).
\end{equation}

\begin{prop}\label{Prop5.4}
If $\nu$ is sufficiently large, then
$$\frac{n+2}{n}r_\infty\int_M\sum_{k=1}^m\overline{u}_{(x_{k,\nu},\varepsilon_{k,\nu})}^{\frac{2}{n}}w_\nu^2\,dV_{\theta_0}\leq (1-c)
\int_M\left((2+\frac{2}{n})|\nabla_{\theta_0}w_\nu|^2_{\theta_0}+R_{\theta_0}w_\nu^2\right)dV_{\theta_0}$$
for some positive constant $c$ which is independent of $\nu$.
\end{prop}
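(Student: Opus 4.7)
I would argue by contradiction: suppose the inequality fails along a subsequence, so that
\begin{equation*}
\frac{n+2}{n}r_\infty\int_M\sum_{k=1}^m\overline{u}_{(x_{k,\nu},\varepsilon_{k,\nu})}^{\frac{2}{n}}w_\nu^2\,dV_{\theta_0}\;\geq\;\Bigl(1-\tfrac{1}{\nu}\Bigr)\int_M\Bigl((2+\tfrac{2}{n})|\nabla_{\theta_0}w_\nu|^2_{\theta_0}+R_{\theta_0}w_\nu^2\Bigr)dV_{\theta_0}.
\end{equation*}
Set $\mathcal{E}_\nu=\int_M((2+\frac{2}{n})|\nabla_{\theta_0}w_\nu|^2+R_{\theta_0}w_\nu^2)dV_{\theta_0}$ and normalize $\hat w_\nu=\mathcal{E}_\nu^{-1/2}w_\nu$, so $\hat w_\nu$ has unit energy. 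The minimality of $(x_{k,\nu},\varepsilon_{k,\nu},\alpha_{k,\nu})$ in \eqref{5.2}, combined with Proposition \ref{Prop5.2} which places this tuple in the interior of $\mathcal{A}_\nu$, gives the Euler--Lagrange orthogonality conditions: for each $k$ and each of the $2n+3$ infinitesimal bubble directions $\Phi_{k,\nu}^{(j)}\in\{\partial_{\alpha_k}\bar u_{(x_{k,\nu},\varepsilon_{k,\nu})},\partial_{\varepsilon_k}\bar u_{(x_{k,\nu},\varepsilon_{k,\nu})},\partial_{x_k}\bar u_{(x_{k,\nu},\varepsilon_{k,\nu})}\}$, one has
\begin{equation*}
\int_M\Bigl((2+\tfrac{2}{n})\langle\nabla_{\theta_0}w_\nu,\nabla_{\theta_0}\Phi_{k,\nu}^{(j)}\rangle_{\theta_0}+R_{\theta_0}w_\nu\Phi_{k,\nu}^{(j)}\Bigr)dV_{\theta_0}=0.
\end{equation*}

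The plan then is a standard concentration/linearization analysis around each bubble. Near $x_{k,\nu}$, I introduce CR normal coordinates and apply the Heisenberg dilation $\delta_{\varepsilon_{k,\nu}}$, so that the rescaled bubble $\bar u_{(x_{k,\nu},\varepsilon_{k,\nu})}$ converges on compact subsets of $\mathbb{H}^n$ to the Jerison--Lee extremal $U(z,t)=c_n[(1+|z|^2)^2+t^2]^{-n/2}$, and the rescaled $\hat w_\nu$, after passing to a subsequence, converges weakly in $S^2_1$ to some limit $W_k$ on $\mathbb{H}^n$. Because of the separation estimate of Proposition \ref{Prop5.1}(i), the rescalings associated to distinct indices $i\neq j$ decouple in the limit, and each $W_k$ solves the linearized equation
\begin{equation*}
-(2+\tfrac{2}{n})\Delta_{\mathbb{H}^n}W_k=(1+\tfrac{2}{n})r_\infty\,U^{\frac{2}{n}}W_k
\end{equation*}
on $\mathbb{H}^n$, while the Euler--Lagrange relations above pass to the limit and force $W_k$ to be orthogonal in the natural energy inner product to $\partial_\alpha U$, $\partial_\varepsilon U$, and $\partial_{x_k}U$.

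The crucial input is the non-degeneracy statement on $\mathbb{H}^n$: the kernel of the linearized CR Yamabe operator at $U$ consists precisely of the conformal Killing directions $\partial_\alpha U$, $\partial_\varepsilon U$, $\partial_{x_k}U$. This is the CR analogue of the classification used by Brendle in \cite{Brendle4} and is where the argument genuinely uses the CR sphere structure; I would invoke it via Jerison--Lee's classification of Yamabe extremals on the sphere together with conformal equivalence. Hence each $W_k\equiv 0$. Together with the fact that away from the concentration set the weight $\sum_k\bar u_{(x_{k,\nu},\varepsilon_{k,\nu})}^{2/n}$ collapses in $L^{n+1}$ to zero (so it cannot support any positive fraction of the $L^{2+2/n}$--mass of $\hat w_\nu$), this forces
\begin{equation*}
\int_M\sum_{k=1}^m\bar u_{(x_{k,\nu},\varepsilon_{k,\nu})}^{2/n}\hat w_\nu^2\,dV_{\theta_0}\longrightarrow 0,
\end{equation*}
which contradicts the assumed reverse inequality on $\hat w_\nu$.

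The principal obstacle I expect is handling the multi--bubble case when the scales $\varepsilon_{k,\nu}$ are widely separated or the centers collide at differing rates, since the bubble weights then interact in overlapping annular regions; the separation estimate of Proposition \ref{Prop5.1}(i) is exactly what makes the cross terms $\int_M \bar u_{(x_{i,\nu},\varepsilon_{i,\nu})}^{2/n}\bar u_{(x_{j,\nu},\varepsilon_{j,\nu})}^2\,dV_{\theta_0}$ negligible (by the estimates to be developed in Appendix \ref{Appendix}), and one must be careful to quantify this so that the rescaled limits at different indices decouple cleanly. A secondary technical point is the non-degeneracy of the linearization on $\mathbb{H}^n$ itself, for which I would rely on the transport of the sphere result to $\mathbb{H}^n$ via the Cayley transform.
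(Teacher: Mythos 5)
Your overall architecture --- contradiction, unit-energy normalization, Euler--Lagrange orthogonality coming from the minimality in (\ref{5.2}), blow-up at the bubble scales, and a spectral identification on $S^{2n+1}$ via the Cayley transform --- is the same as the paper's. But there is one genuine gap: the claim that the blow-up limit $W_k$ \emph{solves} the linearized equation $-(2+\frac{2}{n})\Delta_{\mathbb{H}^n}W_k=(1+\frac{2}{n})r_\infty U^{\frac{2}{n}}W_k$, so that non-degeneracy of the kernel finishes the argument. No such equation is available for the limit. The remainder $w_\nu$ satisfies at best $L_{\theta_0}w_\nu=(1+\frac{2}{n})r_\infty v_\nu^{\frac{2}{n}}w_\nu+E_\nu$ with $\|E_\nu\|\to 0$ at an uncontrolled rate (the errors come from (\ref{4.2}) and from the test functions being only approximate solutions); after multiplying by the normalizing factor $\mathcal{E}_\nu^{-1/2}$, which blows up at an unknown rate because $\mathcal{E}_\nu=o(1)$ by (\ref{5.5}), the term $\mathcal{E}_\nu^{-1/2}E_\nu$ need not vanish, so the weak limit cannot be asserted to satisfy the linearized PDE.

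What the contradiction hypothesis actually yields is only the one-sided quadratic-form inequality (\ref{5.18}), $\int_{\mathbb{H}^n}|\nabla_{\theta_{\mathbb{H}^n}}\widehat w|^2\,dV_{\theta_{\mathbb{H}^n}}\le (n+2)n\int_{\mathbb{H}^n}(t^2+(1+|z|^2)^2)^{-1}\widehat w^2\,dV_{\theta_{\mathbb{H}^n}}$, together with nontriviality (\ref{5.17}). The paper's proof accordingly never uses the equation: it converts the Euler--Lagrange relations into weighted $L^2$ orthogonality of $\widehat w$ to the functions generated by $\partial_{\alpha}$, $\partial_{\varepsilon}$, $T$, $Z_l$, $\overline{Z}_l$ applied to the standard bubble ((\ref{5.19})--(\ref{5.21})), transports everything to $S^{2n+1}$ where these become orthogonality to the $\lambda_0=0$ and $\lambda_1=n/2$ eigenspaces (\ref{5.22}), and then invokes the \emph{strict} gap $\lambda_2>n/2$ to contradict the Rayleigh-quotient bound. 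Note that your non-degeneracy statement (kernel of the linearization equals the span of the conformal directions) is not by itself what is needed: since $\widehat w$ only satisfies an inequality, you must rule out the whole cone where the Rayleigh quotient is at most $(1+\frac{2}{n})$ intersected with the orthogonal complement, and for that you need the eigenvalue gap, not merely the kernel identification. A secondary imprecision: you run the blow-up at every index $k$ simultaneously and sum the conclusions, whereas the total energy being $1$ only permits the argument at one index; the paper's pigeonhole selection of a single $j$ satisfying (\ref{5.15}) and (\ref{5.16}), using the disjoint regions $\Omega_{j,\nu}$ built from (\ref{5.13}), is what makes that bookkeeping legitimate.
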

\begin{proof}
Suppose this is not true. Upon rescaling, we obtain a sequence
of functions $\{\widetilde{w}_\nu:\nu\in\mathbb{N}\}$ such that
\begin{equation}\label{5.10}
\int_M\left((2+\frac{2}{n})|\nabla_{\theta_0}\widetilde{w}_\nu|^2_{\theta_0}+R_{\theta_0}\widetilde{w}_\nu^2\right)dV_{\theta_0}=1
\end{equation}
and
\begin{equation}\label{5.11}
\lim_{\nu\to\infty}\frac{n+2}{n}r_\infty\int_M\sum_{k=1}^m\overline{u}_{(x_{k,\nu},\varepsilon_{k,\nu})}^{\frac{2}{n}}\widetilde{w}_\nu^2\,dV_{\theta_0}\geq 1.
\end{equation}
Note that
\begin{equation}\label{5.12}
\int_M|\widetilde{w}_\nu|^{2+\frac{2}{n}}dV_{\theta_0}\leq Y(M,\theta_0)^{-\frac{n+1}{n}}
\end{equation}
by (\ref{5.10}). In view of Proposition \ref{Prop5.1}, we can find a sequence $\{N_\nu:\nu\in\mathbb{N}\}$ such that
$N_\nu\to\infty$, $N_\nu\varepsilon_{j,\nu}\to 0$ for all $1\leq j\leq m$, and
\begin{equation}\label{5.13}
\frac{1}{N_\nu}\frac{\varepsilon_{j,\nu}+d(x_{i,\nu},x_{j,\nu})}{\varepsilon_{i,\nu}}\to\infty
\end{equation}
for all $i<j$. Let
\begin{equation}\label{5.14}
\Omega_{j,\nu}=B_{N_\nu\varepsilon_{j,\nu}}\setminus\bigcup_{i=1}^{j-1}B_{N_\nu\varepsilon_{i,\nu}}(x_{i,\nu})
\end{equation}
for every $1\leq j\leq m$. In view of (\ref{5.10}) and (\ref{5.11}), we can find an integer $1\leq j\leq m$ such that
\begin{equation}\label{5.15}
\lim_{\nu\to\infty}\int_M\overline{u}_{(x_{j,\nu},\varepsilon_{j,\nu})}^{\frac{2}{n}}\widetilde{w}_\nu^2\,dV_{\theta_0}>0
\end{equation}
and
\begin{equation}\label{5.16}
\lim_{\nu\to\infty}\int_{\Omega_{j,\nu}}\left((2+\frac{2}{n})|\nabla_{\theta_0}\widetilde{w}_\nu|^2_{\theta_0}+R_{\theta_0}\widetilde{w}_\nu^2\right)dV_{\theta_0}
\leq\lim_{\nu\to\infty}\frac{n+2}{n}r_\infty\int_M\overline{u}_{(x_{j,\nu},\varepsilon_{j,\nu})}^{\frac{2}{n}}\widetilde{w}_\nu^2\,dV_{\theta_0}.
\end{equation}
We now define a sequence of functions $\widehat{w}_\nu:T_{x_{j,\nu}}M\to\mathbb{R}$ by
$$\widehat{w}_\nu(z,t)=\varepsilon_{j,\nu}^n\widetilde{w}_\nu(\exp_{x_{j,\nu}}(\varepsilon_{j,\nu}z,\varepsilon_{j,\nu}^2t))$$
for $(z,t)\in T_{x_{j,\nu}}M$. The sequence $\{\widehat{w}_\nu:\nu\in\mathbb{N}\}$ satisfies
\begin{equation*}
\lim_{\nu\to\infty}\int_{\{(z,t)\in\mathbb{H}^n: |(z,t)|\leq N_\nu\}}(2+\frac{2}{n})|\nabla_{\theta_{\mathbb{H}^n}}\widehat{w}_\nu(z,t)|^2_{\theta_{\mathbb{H}^n}}dV_{\theta_{\mathbb{H}^n}}\leq 1
\end{equation*}
and
\begin{equation*}
\lim_{\nu\to\infty}\int_{\{(z,t)\in\mathbb{H}^n: |(z,t)|\leq N_\nu\}}|\widehat{w}_\nu(z,t)|^{2+\frac{2}{n}}dV_{\theta_{\mathbb{H}^n}}\leq Y(M,\theta_0)^{-\frac{n+1}{n}}
\end{equation*}
in view of (\ref{5.10}) and (\ref{5.12}). Hence, we can take the weak limit to obtain a function
$\widehat{w}:\mathbb{H}^n\to\mathbb{R}$ such that
\begin{equation}\label{5.17}
\int_{\mathbb{H}^n}\frac{1}{t^2+(1+|z|^2)^2}\widehat{w}(z,t)^2dV_{\theta_{\mathbb{H}^n}}>0
\end{equation}
and
\begin{equation}\label{5.18}
\int_{\mathbb{H}^n}|\nabla_{\theta_{\mathbb{H}^n}}\widehat{w}(z,t)|^2_{\theta_{\mathbb{H}^n}}dV_{\theta_{\mathbb{H}^n}}
\leq(n+2)n\int_{\mathbb{H}^n}\frac{1}{t^2+(1+|z|^2)^2}\widehat{w}(z,t)^2dV_{\theta_{\mathbb{H}^n}}
\end{equation}
by (\ref{5.15}), (\ref{5.16}), and the definition of $\overline{u}_{(x_{j,\nu},\varepsilon_{j,\nu})}$.
By definition of $(x_{k,\nu},\varepsilon_{k,\nu},\alpha_{k,\nu})_{1\leq k\leq m}$ in (\ref{5.2}), we have
\begin{equation*}
\begin{split}
&\frac{d}{d\alpha_k}
\int_M\left((2+\frac{2}{n})\Big|\nabla_{\theta_0}\Big(u_\nu-\sum_{k=1}^m\alpha_{k}\overline{u}_{(x_{k,\nu},\varepsilon_{k,\nu})}\Big)\Big|_{\theta_0}^2
\right.
\\
&\left.\left.\hspace{2cm}+R_{\theta_0}\Big(u_\nu-\sum_{k=1}^m\alpha_{k}\overline{u}_{(x_{k,\nu},\varepsilon_{k,\nu})}\Big)^2\right)dV_{\theta_0}
\right|_{\alpha_k=\alpha_{k,\nu}}=0,
\end{split}
\end{equation*}
which implies
\begin{equation*}
\begin{split}
0&=\int_M\left((2+\frac{2}{n})\Delta_{\theta_0}\overline{u}_{(x_{k,\nu},\varepsilon_{k,\nu})}
-R_{\theta_0}\overline{u}_{(x_{k,\nu},\varepsilon_{k,\nu})}\right)w_\nu dV_{\theta_0}.
\end{split}
\end{equation*}
Using the estimate
$$\Big\|(2+\frac{2}{n})\Delta_{\theta_0}\overline{u}_{(x_{k,\nu},\varepsilon_{k,\nu})}
-R_{\theta_0}\overline{u}_{(x_{k,\nu},\varepsilon_{k,\nu})}
+r_\infty\overline{u}_{(x_{k,\nu},\varepsilon_{k,\nu})}^{1+\frac{2}{n}}\Big\|_{L^{\frac{2n+2}{n+2}}(M)}=o(1)$$
and H\"{o}lder's inequality,
we conclude that
\begin{equation*}
\begin{split}
&r_\infty\left|\int_M\overline{u}_{(x_{k,\nu},\varepsilon_{k,\nu})}^{1+\frac{2}{n}}w_\nu dV_{\theta_0}\right|\\
&\leq \Big\|(2+\frac{2}{n})\Delta_{\theta_0}\overline{u}_{(x_{k,\nu},\varepsilon_{k,\nu})}
-R_{\theta_0}\overline{u}_{(x_{k,\nu},\varepsilon_{k,\nu})}
+r_\infty\overline{u}_{(x_{k,\nu},\varepsilon_{k,\nu})}^{1+\frac{2}{n}}\Big\|_{L^{\frac{2n+2}{n+2}}(M)}\\
&\hspace{4mm}
\cdot
\left(\int_M|w_\nu|^{2+\frac{2}{n}} dV_{\theta_0}\right)^{\frac{n}{2n+2}}\\
&= o(1)
\left(\int_M|w_\nu|^{2+\frac{2}{n}} dV_{\theta_0}\right)^{\frac{n}{2n+2}}
\end{split}
\end{equation*}
for all $1\leq k\leq m$. Since $r_\infty>0$,  we have
\begin{equation*}
\left|\int_M\overline{u}_{(x_{k,\nu},\varepsilon_{k,\nu})}^{1+\frac{2}{n}}w_\nu dV_{\theta_0}\right|\leq o(1)
\left(\int_M|w_\nu|^{2+\frac{2}{n}} dV_{\theta_0}\right)^{\frac{n}{2n+2}}
\end{equation*}
for all $1\leq k\leq m$. Taking the weak limit yields
\begin{equation}\label{5.19}
\int_{\mathbb{H}^n}\frac{1}{(t^2+(1+|z|^2)^2)^{\frac{n+2}{2}}}\widehat{w}(z,t)dV_{\theta_{\mathbb{H}^n}}=0
\end{equation}
by the definition of $\overline{u}_{(x_{j,\nu},\varepsilon_{j,\nu})}$, $\widehat{w}_\nu$,  and $\widehat{w}$.
By definition of $(x_{k,\nu},\varepsilon_{k,\nu},\alpha_{k,\nu})_{1\leq k\leq m}$ in (\ref{5.2}), we also have
\begin{equation*}
\begin{split}
&\frac{d}{d\varepsilon_{k}}
\int_M\left((2+\frac{2}{n})\Big|\nabla_{\theta_0}\Big(u_\nu-\sum_{k=1}^m\alpha_{k}\overline{u}_{(x_{k,\nu},\varepsilon_{k})}\Big)\Big|_{\theta_0}^2
\right.
\\
&\left.\left.\hspace{2cm}+R_{\theta_0}\Big(u_\nu-\sum_{k=1}^m\alpha_{k}\overline{u}_{(x_{k,\nu},\varepsilon_{k})}\Big)^2\right)dV_{\theta_0}
\right|_{\varepsilon_k=\varepsilon_{k,\nu}}=0,
\end{split}
\end{equation*}
which implies that
\begin{equation*}
\begin{split}
0&=\int_M\left((2+\frac{2}{n})\Delta_{\theta_0}\Big(\frac{\overline{u}_{(x_{k,\nu},\varepsilon_{k,\nu})}}{\partial\varepsilon_{k,\nu}}\Big)
-R_{\theta_0}\Big(\frac{\overline{u}_{(x_{k,\nu},\varepsilon_{k,\nu})}}{\partial\varepsilon_{k,\nu}}\Big)\right)w_\nu dV_{\theta_0}.
\end{split}
\end{equation*}
Using the estimate
$$\Big\|(2+\frac{2}{n})\Delta_{\theta_0}\Big(\frac{\overline{u}_{(x_{k,\nu},\varepsilon_{k,\nu})}}{\partial\varepsilon_{k,\nu}}\Big)
-R_{\theta_0}\Big(\frac{\overline{u}_{(x_{k,\nu},\varepsilon_{k,\nu})}}{\partial\varepsilon_{k,\nu}}\Big)
+r_\infty\Big(\frac{\overline{u}_{(x_{k,\nu},\varepsilon_{k,\nu})}}{\partial\varepsilon_{k,\nu}}\Big)^{1+\frac{2}{n}}\Big\|_{L^{\frac{2n+2}{n+2}}(M)}
=o(\varepsilon_{k,\nu}^{-1})$$
and H\"{o}lder's inequality,
we conclude that
\begin{equation*}
\begin{split}
&(1+\frac{2}{n})r_\infty\left|\int_M\overline{u}_{(x_{k,\nu},\varepsilon_{k,\nu})}^{\frac{2}{n}}
\Big(\frac{\overline{u}_{(x_{k,\nu},\varepsilon_{k,\nu})}}{\partial\varepsilon_{k,\nu}}\Big)w_\nu dV_{\theta_0}\right|\\
&\leq \Big\|(2+\frac{2}{n})\Delta_{\theta_0}\Big(\frac{\overline{u}_{(x_{k,\nu},\varepsilon_{k,\nu})}}{\partial\varepsilon_{k,\nu}}\Big)
-R_{\theta_0}\Big(\frac{\overline{u}_{(x_{k,\nu},\varepsilon_{k,\nu})}}{\partial\varepsilon_{k,\nu}}\Big)
+r_\infty\Big(\frac{\overline{u}_{(x_{k,\nu},\varepsilon_{k,\nu})}}{\partial\varepsilon_{k,\nu}}\Big)^{1+\frac{2}{n}}\Big\|_{L^{\frac{2n+2}{n+2}}(M)}\\
&\hspace{4mm}
\cdot
\left(\int_M|w_\nu|^{2+\frac{2}{n}} dV_{\theta_0}\right)^{\frac{n}{2n+2}}\\
&= o(\varepsilon_{k,\nu}^{-1})
\left(\int_M|w_\nu|^{2+\frac{2}{n}} dV_{\theta_0}\right)^{\frac{n}{2n+2}}
\end{split}
\end{equation*}
for all $1\leq k\leq m$. Since $r_\infty>0$,  we have
\begin{equation*}
\left|
\int_M\overline{u}_{(x_{k,\nu},\varepsilon_{k,\nu})}^{\frac{2}{n}}
\Big(\frac{\overline{u}_{(x_{k,\nu},\varepsilon_{k,\nu})}}{\partial\varepsilon_{k,\nu}}\Big)w_\nu dV_{\theta_0}\right|\leq o(\varepsilon_{k,\nu}^{-1})
\left(\int_M|w_\nu|^{2+\frac{2}{n}} dV_{\theta_0}\right)^{\frac{n}{2n+2}}
\end{equation*}
for all $1\leq k\leq m$. Taking the weak limit yields
\begin{equation}\label{5.20}
\int_{\mathbb{H}^n}\frac{1-|z|^4-t^2}{(t^2+(1+|z|^2)^2)^{\frac{n+4}{2}}}\widehat{w}(z,t)dV_{\theta_{\mathbb{H}^n}}=0
\end{equation}
by the definition of $\overline{u}_{(x_{j,\nu},\varepsilon_{j,\nu})}$, $\widehat{w}_\nu$,  and $\widehat{w}$.
Similarly, we have
\begin{equation*}
\begin{split}
\left|
\int_M\overline{u}_{(x_{k,\nu},\varepsilon_{k,\nu})}^{\frac{2}{n}}
T(\overline{u}_{(x_{k,\nu},\varepsilon_{k,\nu})})w_\nu dV_{\theta_0}\right|&\leq o(\varepsilon_{k,\nu}^{-2})
\left(\int_M|w_\nu|^{2+\frac{2}{n}} dV_{\theta_0}\right)^{\frac{n}{2n+2}},\\
\left|
\int_M\overline{u}_{(x_{k,\nu},\varepsilon_{k,\nu})}^{\frac{2}{n}}
Z_l(\overline{u}_{(x_{k,\nu},\varepsilon_{k,\nu})})w_\nu dV_{\theta_0}\right|&\leq o(\varepsilon_{k,\nu}^{-1})
\left(\int_M|w_\nu|^{2+\frac{2}{n}} dV_{\theta_0}\right)^{\frac{n}{2n+2}},\\
\left|
\int_M\overline{u}_{(x_{k,\nu},\varepsilon_{k,\nu})}^{\frac{2}{n}}
\overline{Z}_l(\overline{u}_{(x_{k,\nu},\varepsilon_{k,\nu})})w_\nu dV_{\theta_0}\right|&\leq o(\varepsilon_{k,\nu}^{-1})
\left(\int_M|w_\nu|^{2+\frac{2}{n}} dV_{\theta_0}\right)^{\frac{n}{2n+2}}
\end{split}
\end{equation*}
for all $1\leq k,l\leq n$, where $T=\displaystyle\frac{\partial}{\partial t}$,
$Z_l=\displaystyle\frac{\partial}{\partial z_l}+\sqrt{-1}\overline{z}_l\frac{\partial}{\partial t}$,
and $\overline{Z}_l=\displaystyle\frac{\partial}{\partial \overline{z}_l}-\sqrt{-1}z_l\frac{\partial}{\partial t}$ are tangent vector fields in $\mathbb{H}^n$.
Taking the weak limit yields
\begin{equation}\label{5.21}
\begin{split}
\int_{\mathbb{H}^n}\frac{t}{(t^2+(1+|z|^2)^2)^{\frac{n+4}{2}}}\,\widehat{w}(z,t)dV_{\theta_{\mathbb{H}^n}}&=0,\\
\int_{\mathbb{H}^n}\frac{(1+|z|^2)\overline{z}_l+\sqrt{-1}\overline{z}_lt}{(t^2+(1+|z|^2)^2)^{\frac{n+4}{2}}}\,\widehat{w}(z,t)dV_{\theta_{\mathbb{H}^n}}&=0\mbox{ for }1\leq k\leq n,\\
\int_{\mathbb{H}^n}\frac{(1+|z|^2)z_l-\sqrt{-1}z_lt}{(t^2+(1+|z|^2)^2)^{\frac{n+4}{2}}}\,\widehat{w}(z,t)dV_{\theta_{\mathbb{H}^n}}&=0\mbox{ for }1\leq k\leq n.
\end{split}
\end{equation}

Now we define $\widetilde{w}(z,t)=\widehat{w}(\overline{z},t)$ for $(z,t)\in\mathbb{H}^n\subset\mathbb{C}^n\times\mathbb{R}$. Then
it follows from (\ref{5.17})-(\ref{5.21})
that
\begin{eqnarray}\label{5.26}
&&\int_{\mathbb{H}^n}\frac{1}{t^2+(1+|z|^2)^2}\,\widetilde{w}(z,t)^2dV_{\theta_{\mathbb{H}^n}}>0,\\
\label{5.27}
&&\int_{\mathbb{H}^n}|\nabla_{\theta_{\mathbb{H}^n}}\widetilde{w}(z,t)|^2_{\theta_{\mathbb{H}^n}}dV_{\theta_{\mathbb{H}^n}}
\leq(n+2)n\int_{\mathbb{H}^n}\frac{1}{t^2+(1+|z|^2)^2}\,\widetilde{w}(z,t)^2dV_{\theta_{\mathbb{H}^n}},
\end{eqnarray}
and
\begin{equation}\label{5.25}
\begin{split}
&\int_{\mathbb{H}^n}\frac{1}{(t^2+(1+|z|^2)^2)^{\frac{n+2}{2}}}\,\widetilde{w}(z,t)dV_{\theta_{\mathbb{H}^n}}=0,\\
&\int_{\mathbb{H}^n}\frac{1-|z|^4-t^2}{(t^2+(1+|z|^2)^2)^{\frac{n+4}{2}}}\,\widetilde{w}(z,t)dV_{\theta_{\mathbb{H}^n}}=0,\\
&\int_{\mathbb{H}^n}\frac{t}{(t^2+(1+|z|^2)^2)^{\frac{n+4}{2}}}\,\widetilde{w}(z,t)dV_{\theta_{\mathbb{H}^n}}=0,\\
&\int_{\mathbb{H}^n}\frac{(1+|z|^2)\overline{z}_l-\sqrt{-1}\overline{z}_lt}{(t^2+(1+|z|^2)^2)^{\frac{n+4}{2}}}\,\widetilde{w}(z,t)dV_{\theta_{\mathbb{H}^n}}=0\mbox{ for }1\leq k\leq n,\\
&\int_{\mathbb{H}^n}\frac{(1+|z|^2)z_l+\sqrt{-1}z_lt}{(t^2+(1+|z|^2)^2)^{\frac{n+4}{2}}}\,\widetilde{w}(z,t)dV_{\theta_{\mathbb{H}^n}}=0\mbox{ for }1\leq k\leq n.
\end{split}
\end{equation}
We claim that any $\widetilde{w}$ satisfying (\ref{5.27}) and (\ref{5.25}) must be zero identically.
Indeed, if we define $\widetilde{u}(x)=|1+x_{n+1}|^{-n}\widetilde{w}\circ F(x)$ where
$F:S^{2n+1}\rightarrow \mathbb{H}^n$ is given by
$$F(x)=\left(\frac{x_1}{1+x_{n+1}},\cdots,\frac{x_n}{1+x_{n+1}},
Re\big(\sqrt{-1}\frac{1-x_{n+1}}{1+x_{n+1}}\big)\right),$$
then we can rewrite (\ref{5.25}) as (see p.177 in \cite{Jerison&Lee3})
\begin{equation}\label{5.22}
\begin{split}
&\int_{S^{2n+1}}\widetilde{u}\,dV_{\theta_{S^{2n+1}}}=0,\\
&\int_{S^{2n+1}}x_k\widetilde{u}\,dV_{\theta_{S^{2n+1}}}=\int_{S^{2n+1}}\overline{x}_k\widetilde{u}\,dV_{\theta_{S^{2n+1}}}=0\mbox{ for }1\leq k\leq n+1,
\end{split}
\end{equation}
where $x=(x_1,\cdots,x_{n+1})\in S^{2n+1}\subset\mathbb{C}^{n+1}$, because
$F^{-1}:\mathbb{H}^n\rightarrow S^{2n+1}$ is given by
\begin{equation*}
F^{-1}(z,t)=\left(\frac{2 z}{1+|z|^2-\sqrt{-1}t},
\frac{1-|z|^2+\sqrt{-1}t}{1+|z|^2-\sqrt{-1}t}\right)
\end{equation*} where
$(z,t)\in\mathbb{H}^n\subset\mathbb{C}^n\times\mathbb{R}$. The eigenfunctions and eigenvalues of
$\Delta_{\theta_{S^{2n+1}}}$ are well-known----see \cite{Folland}. Namely, $1$ is
an eigenfunction with respect to the eigenvalue $\lambda_0=0$, and
$x_k, \overline{x}_k$, where $1\leq k\leq n+1$ are
eigenfunctions with respect to the eigenvalue $\lambda_1=n/2$.
Using (\ref{5.22}), we see that
\begin{equation}\label{5.23}
\int_{S^{2n+1}}|\nabla_{\theta_{S^{2n+1}}}\widetilde{u}|_{\theta_{S^{2n+1}}}^2dV_{\theta_{S^{2n+1}}}\geq \lambda_2\int_{S^{2n+1}}\widetilde{u}^2dV_{\theta_{S^{2n+1}}},
\end{equation}
where $\lambda_2>n/2$ is the second eigenvalue of $\lambda_{\theta_{S^{2n+1}}}$. On the other hand,
(\ref{5.27}) can be written as (see p.177 in \cite{Jerison&Lee3})
\begin{equation*}
\int_{S^{2n+1}}\left(|\nabla_{\theta_{S^{2n+1}}}\widetilde{u}|_{\theta_{S^{2n+1}}}^2+\frac{n^2}{4}\widetilde{u}^2\right)dV_{\theta_{S^{2n+1}}}\leq
\frac{(n+2)n}{4}\int_{S^{2n+1}}\widetilde{u}^2dV_{\theta_{S^{2n+1}}}.
\end{equation*}
Combining this with (\ref{5.23}), we conclude that $\widetilde{u}$ is zero identically, which implies that
$\widetilde{w}$ is zero identically. But this contradicts (\ref{5.26}). This proves the assertion.
\end{proof}

\begin{cor}\label{Cor5.5}
If $\nu$ is sufficiently large, then
$$\frac{n+2}{n}r_\infty\int_Mv_\nu^{\frac{2}{n}}w_\nu^2\,dV_{\theta_0}\leq (1-c)
\int_M\left((2+\frac{2}{n})|\nabla_{\theta_0}w_\nu|^2_{\theta_0}+R_{\theta_0}w_\nu^2\right)dV_{\theta_0}$$
for some positive constant $c$ which is independent of $\nu$.
\end{cor}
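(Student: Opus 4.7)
The plan is to deduce Corollary \ref{Cor5.5} from Proposition \ref{Prop5.4} by replacing the sum of bubble densities $\sum_{k} \overline{u}_{(x_{k,\nu},\varepsilon_{k,\nu})}^{2/n}$ with $v_\nu^{2/n}$ up to a controllable error. Write $\mathcal{U}_\nu = \sum_{k=1}^m \overline{u}_{(x_{k,\nu},\varepsilon_{k,\nu})}$, so $v_\nu = \sum_{k=1}^m \alpha_{k,\nu}\,\overline{u}_{(x_{k,\nu},\varepsilon_{k,\nu})}$. Proposition \ref{Prop5.2} gives $\alpha_{k,\nu} = 1 + o(1)$ uniformly in $k$; combined with the nonnegativity of each $\overline{u}_{(x_{k,\nu},\varepsilon_{k,\nu})}$, this yields the uniform pointwise estimate $v_\nu = (1+o(1))\mathcal{U}_\nu$ on $M$, and hence $v_\nu^{2/n} = (1+o(1))\mathcal{U}_\nu^{2/n}$. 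It therefore suffices to compare $\int_M \mathcal{U}_\nu^{2/n} w_\nu^2\, dV_{\theta_0}$ with $\int_M \sum_k \overline{u}_{(x_{k,\nu},\varepsilon_{k,\nu})}^{2/n} w_\nu^2\, dV_{\theta_0}$.

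When $n \geq 2$, the exponent $2/n$ lies in $(0,1]$, so the map $s\mapsto s^{2/n}$ is subadditive on $[0,\infty)$. Hence $\mathcal{U}_\nu^{2/n} \leq \sum_{k=1}^m \overline{u}_{(x_{k,\nu},\varepsilon_{k,\nu})}^{2/n}$ pointwise, no cross-term correction is needed, and the comparison is immediate.

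The case $n=1$ is the main obstacle, since then $2/n = 2$ and subadditivity fails. Here we expand
$$\mathcal{U}_\nu^2 = \sum_{k=1}^m \overline{u}_{(x_{k,\nu},\varepsilon_{k,\nu})}^2 + 2\sum_{i<j}\overline{u}_{(x_{i,\nu},\varepsilon_{i,\nu})}\,\overline{u}_{(x_{j,\nu},\varepsilon_{j,\nu})},$$
and must show that each cross-term integral is $o(1)$ times the energy of $w_\nu$. H\"older's inequality with conjugate exponents $(2,2)$ and the fact that the critical Folland-Stein exponent equals $2+2/n=4$ when $n=1$ give
$$\int_M \overline{u}_{(x_{i,\nu},\varepsilon_{i,\nu})}\,\overline{u}_{(x_{j,\nu},\varepsilon_{j,\nu})}\, w_\nu^2\, dV_{\theta_0} \leq \left(\int_M \overline{u}_{(x_{i,\nu},\varepsilon_{i,\nu})}^2 \overline{u}_{(x_{j,\nu},\varepsilon_{j,\nu})}^2\, dV_{\theta_0}\right)^{1/2} \|w_\nu\|_{L^4(M)}^2.$$
Since $R_{\theta_0}>0$ by (\ref{11}), Proposition \ref{Theorem 2.1} controls $\|w_\nu\|_{L^4(M)}^2$ by a constant multiple of the energy $\int_M\big((2+\tfrac{2}{n})|\nabla_{\theta_0} w_\nu|_{\theta_0}^2 + R_{\theta_0} w_\nu^2\big)\,dV_{\theta_0}$. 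It remains to verify the interaction estimate
$$\int_M \overline{u}_{(x_{i,\nu},\varepsilon_{i,\nu})}^2\,\overline{u}_{(x_{j,\nu},\varepsilon_{j,\nu})}^2\, dV_{\theta_0}\longrightarrow 0 \quad (\nu\to\infty,\; i\neq j),$$
which follows from a direct calculation using the explicit asymptotic form of the standard test functions in Appendix \ref{Appendix} and Proposition \ref{Prop5.1}(i); the latter forces the ratio parameters $\varepsilon_{i,\nu}/\varepsilon_{j,\nu}+\varepsilon_{j,\nu}/\varepsilon_{i,\nu}+ d(x_{i,\nu},x_{j,\nu})^4/(\varepsilon_{i,\nu}^2\varepsilon_{j,\nu}^2)$ to diverge, making the two bubbles asymptotically disjoint in $L^2\cdot L^2$.

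Combining these ingredients with Proposition \ref{Prop5.4} yields, for $\nu$ sufficiently large,
$$\tfrac{n+2}{n} r_\infty \int_M v_\nu^{2/n} w_\nu^2\, dV_{\theta_0} \leq \big(1-c+o(1)\big) \int_M\Big((2+\tfrac{2}{n})|\nabla_{\theta_0} w_\nu|_{\theta_0}^2 + R_{\theta_0} w_\nu^2\Big)\,dV_{\theta_0},$$
and replacing $c$ by $c/2$ absorbs the $o(1)$ and completes the proof.
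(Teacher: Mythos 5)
Your proof is correct and follows the same strategy as the paper: deduce Corollary \ref{Cor5.5} from Proposition \ref{Prop5.4} by showing that replacing $\sum_k\overline{u}_{(x_{k,\nu},\varepsilon_{k,\nu})}^{2/n}$ with $v_\nu^{2/n}$ in the weighted integral costs only $o(1)$ times the energy of $w_\nu$. In fact your version is more carefully executed than the paper's one-line argument (which asserts only an $L^1$ bound on the difference of the two weights, by itself insufficient to control the pairing against $w_\nu^2$): your Cauchy--Schwarz pairing against $\|w_\nu\|_{L^{2+\frac{2}{n}}(M)}^2$, combined with the vanishing of the balanced interaction $\int_M\overline{u}_{(x_{i,\nu},\varepsilon_{i,\nu})}^2\overline{u}_{(x_{j,\nu},\varepsilon_{j,\nu})}^2\,dV_{\theta_0}$ forced by Proposition \ref{Prop5.1}(i) when $n=1$, and the subadditivity of $s\mapsto s^{2/n}$ when $n\geq 2$, is exactly what is needed to close the estimate.
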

\begin{proof}
Note that
$$\int_M\Big|v_\nu^{\frac{2}{n}}-\sum_{k=1}^m\overline{u}_{(x_{k,\nu},\varepsilon_{k,\nu})}^{\frac{2}{n}}\Big|dV_{\theta_0}=o(1)$$
by (\ref{5.3}) and Proposition \ref{Prop5.2}. Thus, Corollary \ref{Cor5.5}
follows from Proposition \ref{Prop5.4}.
\end{proof}

\begin{prop}\label{Prop5.6}
If $\nu$ is sufficiently large, the energy of $v_\nu$ satisfies the estimate
$$E(v_\nu)\leq\Big(\sum_{k=1}^mE(\overline{u}_{(x_{k,\nu},\varepsilon_{k,\nu})})^{n+1}\Big)^{\frac{1}{n+1}}.$$
\end{prop}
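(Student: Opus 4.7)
The plan is to expand both the numerator and the denominator of the functional $E(v_\nu)$ into contributions from the individual bubbles, use the test function estimates of Appendix \ref{Appendix} together with the separation property of Proposition \ref{Prop5.1}(i) to show that the cross interactions are $o(1)$, and then conclude by H\"older's inequality. Write $\overline{u}_k=\overline{u}_{(x_{k,\nu},\varepsilon_{k,\nu})}$ for brevity, and set
\begin{equation*}
A_{k,\nu}=\int_M\!\Big((2+\tfrac{2}{n})|\nabla_{\theta_0}\overline{u}_k|_{\theta_0}^2+R_{\theta_0}\overline{u}_k^2\Big)dV_{\theta_0}, \quad B_{k,\nu}=\int_M\overline{u}_k^{2+\frac{2}{n}}dV_{\theta_0},
\end{equation*}
so that $E(\overline{u}_k)=A_{k,\nu}/B_{k,\nu}^{n/(n+1)}$.

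For the denominator, the elementary pointwise inequality $(\sum_k c_k)^p\geq\sum_k c_k^p$, valid for $c_k\geq 0$ and $p\geq 1$, yields at once
\begin{equation*}
\int_M v_\nu^{2+\frac{2}{n}}dV_{\theta_0}\geq\sum_{k=1}^m\alpha_{k,\nu}^{2+\frac{2}{n}}B_{k,\nu}.
\end{equation*}
For the numerator, expanding $v_\nu=\sum_k\alpha_{k,\nu}\overline{u}_k$ and integrating by parts produces diagonal terms $\sum_k\alpha_{k,\nu}^2 A_{k,\nu}$ together with cross terms of the form $\alpha_{i,\nu}\alpha_{j,\nu}\int_M\overline{u}_j\bigl(-(2+\tfrac{2}{n})\Delta_{\theta_0}\overline{u}_i+R_{\theta_0}\overline{u}_i\bigr)dV_{\theta_0}$ for $i\neq j$. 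Using the CR Yamabe-type identity satisfied asymptotically by the test functions $\overline{u}_k$ (from Appendix \ref{Appendix}), these cross terms reduce, up to lower order, to $r_\infty\int_M\overline{u}_i^{1+\frac{2}{n}}\overline{u}_j\,dV_{\theta_0}$; the bubble separation of Proposition \ref{Prop5.1}(i) then forces this interaction integral to be $o(1)$. Since $\sum_k\alpha_{k,\nu}^2 A_{k,\nu}$ is bounded away from $0$ (recalling $\alpha_{k,\nu}\to 1$ and $E(\overline{u}_k)\to Y(S^{2n+1})$ by (\ref{B.14})), we conclude
\begin{equation*}
\int_M\!\Big((2+\tfrac{2}{n})|\nabla_{\theta_0}v_\nu|_{\theta_0}^2+R_{\theta_0}v_\nu^2\Big)dV_{\theta_0}\leq(1+o(1))\sum_{k=1}^m\alpha_{k,\nu}^2 A_{k,\nu}.
\end{equation*}

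Combining both bounds and applying H\"older's inequality with conjugate exponents $n+1$ and $(n+1)/n$, in the form
\begin{equation*}
\sum_{k=1}^m\alpha_{k,\nu}^2 A_{k,\nu}=\sum_{k=1}^m E(\overline{u}_k)\cdot\alpha_{k,\nu}^2 B_{k,\nu}^{\frac{n}{n+1}}\leq\Big(\sum_{k=1}^m E(\overline{u}_k)^{n+1}\Big)^{\frac{1}{n+1}}\Big(\sum_{k=1}^m\alpha_{k,\nu}^{2+\frac{2}{n}}B_{k,\nu}\Big)^{\frac{n}{n+1}},
\end{equation*}
yields the stated estimate for $\nu$ sufficiently large, after the $o(1)$ slack from the numerator bound is absorbed using that the right-hand side is bounded and bounded below.

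The main obstacle is the sharp control of the cross interactions in the numerator. This requires the explicit test function expansions in Appendix \ref{Appendix} together with the strict separation of scales and centers ensured by Proposition \ref{Prop5.1}(i); specifically, one must establish that $\int_M\overline{u}_i^{1+\frac{2}{n}}\overline{u}_j\,dV_{\theta_0}\to 0$ whenever $i\neq j$. Once this quantitative decay is in place, H\"older's inequality, which becomes asymptotically sharp in the CR sphere limit $\alpha_{k,\nu}\to 1$, $E(\overline{u}_k)\to Y(S^{2n+1})$, produces the desired bound.
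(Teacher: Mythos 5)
Your skeleton (expand numerator and denominator into bubble contributions, then apply H\"older with exponents $n+1$ and $\tfrac{n+1}{n}$) is the same as the paper's, and your treatment of the diagonal terms is fine. The gap is in how you handle the cross terms, and it is fatal to the stated inequality. You bound the denominator from below by the crude pointwise inequality $(\sum_k c_k)^{p}\geq\sum_k c_k^{p}$, thereby discarding all interaction contributions there, and you then try to dispose of the numerator's cross terms by arguing they are $o(1)$ and that "the $o(1)$ slack \ldots is absorbed using that the right-hand side is bounded and bounded below." That last step is not a valid deduction: from $E(v_\nu)\leq(1+o(1))\big(\sum_kE(\overline{u}_{(x_{k,\nu},\varepsilon_{k,\nu})})^{n+1}\big)^{\frac{1}{n+1}}$ you cannot conclude the clean inequality without the $(1+o(1))$ factor, because there is no strict slack anywhere to absorb it into. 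And the clean inequality is exactly what is needed downstream: Corollary \ref{Cor5.7} combined with (\ref{4.8}) gives $E(v_\nu)\leq r_\infty$ on the nose, which is what allows the term $(E(v_\nu)-r_\infty)\big(\int_Mv_\nu^{2+\frac{2}{n}}dV_{\theta_0}\big)^{\frac{n}{n+1}}$ to be dropped in (\ref{7.6}); an additive or multiplicative $o(1)$ error there would wreck the Lojasiewicz-type argument of Section \ref{section7}.

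The paper's proof avoids this by \emph{not} throwing away the denominator's interaction terms. In (\ref{5.9}) the H\"older lower bound is applied so as to retain the off-diagonal products $\overline{u}_{(x_{i,\nu},\varepsilon_{i,\nu})}\overline{u}_{(x_{j,\nu},\varepsilon_{j,\nu})}$, and the pointwise estimate leading to (\ref{save}) extracts from each such product a strictly positive surplus of size $C\big(\tfrac{\varepsilon_{i,\nu}^2\varepsilon_{j,\nu}^2}{\varepsilon_{j,\nu}^4+d(x_{i,\nu},x_{j,\nu})^4}\big)^{\frac{n}{2}}$ beyond the term $F(\overline{u}_{(x_{j,\nu},\varepsilon_{j,\nu})})\overline{u}_{(x_{i,\nu},\varepsilon_{i,\nu})}\overline{u}_{(x_{j,\nu},\varepsilon_{j,\nu})}^{1+\frac{2}{n}}$. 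Meanwhile the numerator's cross terms, computed exactly in (\ref{5.8}) by integration by parts, differ from $2F(\overline{u}_{(x_{j,\nu},\varepsilon_{j,\nu})})\int_M\overline{u}_{(x_{i,\nu},\varepsilon_{i,\nu})}\overline{u}_{(x_{j,\nu},\varepsilon_{j,\nu})}^{1+\frac{2}{n}}dV_{\theta_0}$ by an error which, by Lemmas \ref{LemmaB.4}--\ref{LemmaB.6}, is at most $C(\delta^4+\delta^{2n}+\varepsilon_{j,\nu}^2\delta^{-2}+o(1))$ times that \emph{same} interaction quantity. Both the surplus and the error are of order $\big(\tfrac{\varepsilon_{i,\nu}^2\varepsilon_{j,\nu}^2}{\varepsilon_{j,\nu}^4+d(x_{i,\nu},x_{j,\nu})^4}\big)^{\frac{n}{2}}$ --- calling both ``$o(1)$'' loses the comparison --- and the proof closes only because choosing $\delta$ small makes the surplus dominate the error term by term. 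To repair your argument you would need to reinstate this quantitative, same-order comparison rather than appeal to boundedness.
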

\begin{proof}
By H\"{o}lder's inequality, we have
\begin{equation}\label{5.9}
\begin{split}
&\left(\sum_{k=1}^mE(\overline{u}_{(x_{k,\nu},\varepsilon_{k,\nu})})^{n+1}\right)^{\frac{1}{n+1}}
\left(\int_Mv_\nu^{2+\frac{2}{n}}dV_{\theta_0}\right)^{\frac{n}{n+1}}\\
&\geq \int_M\left(\sum_{k=1}^mF(\overline{u}_{(x_{k,\nu},\varepsilon_{k,\nu})})^{n+1}
\,\overline{u}_{(x_{k,\nu},\varepsilon_{k,\nu})}^{2+\frac{2}{n}}\right)^{\frac{1}{n+1}}v_{\nu}^2\,dV_{\theta_0}\\
&\geq \int_M\sum_{k=1}^m\alpha_{k,\nu}^2F(\overline{u}_{(x_{k,\nu},\varepsilon_{k,\nu})})
\overline{u}_{(x_{k,\nu},\varepsilon_{k,\nu})}^{2+\frac{2}{n}}dV_{\theta_0}\\
&\hspace{4mm}+
2\int_M\sum_{1\leq i<j\leq m}\alpha_{i,\nu}\alpha_{j,\nu}
 \Big(F(\overline{u}_{(x_{i,\nu},\varepsilon_{i,\nu})})^{n+1}
\,\overline{u}_{(x_{i,\nu},\varepsilon_{i,\nu})}^{2+\frac{2}{n}}\\
&\hspace{12mm}
+F(\overline{u}_{(x_{j,\nu},\varepsilon_{j,\nu})})^{n+1}
\,\overline{u}_{(x_{j,\nu},\varepsilon_{j,\nu})}^{2+\frac{2}{n}}\Big)^{\frac{1}{n+1}}
\overline{u}_{(x_{i,\nu},\varepsilon_{i,\nu})}\overline{u}_{(x_{j,\nu},\varepsilon_{j,\nu})}dV_{\theta_0}.
\end{split}
\end{equation}
Consider a pair $i<j$. We can find positive constants $c$ and $C$ independent of $\nu$ such that
\begin{equation*}
\begin{split}
&\overline{u}_{(x_{i,\nu},\varepsilon_{i,\nu})}(x)^{1+\frac{2}{n}}\overline{u}_{(x_{j,\nu},\varepsilon_{j,\nu})}(x)
\geq c\left(\frac{\varepsilon_{i,\nu}^2\varepsilon_{j,\nu}^2}{\varepsilon_{j,\nu}^4+d(x_{i,\nu},x_{j,\nu})^4}\right)^{\frac{n}{2}}\varepsilon_{i,\nu}^{-2n-2}\mbox{ and }\\
&\overline{u}_{(x_{i,\nu},\varepsilon_{i,\nu})}(x)\overline{u}_{(x_{j,\nu},\varepsilon_{j,\nu})}(x)^{1+\frac{2}{n}}
\leq C\left(\frac{\varepsilon_{i,\nu}^2\varepsilon_{j,\nu}^2}{\varepsilon_{j,\nu}^4+d(x_{i,\nu},x_{j,\nu})^4}\right)^{\frac{n+2}{2}}\varepsilon_{i,\nu}^{-2n-2}
\end{split}
\end{equation*}
if $d(x_{i,\nu},x)\leq\varepsilon_{i,\nu}$ and $\nu$ is sufficiently large. From this, it follows that
\begin{equation*}
\begin{split}
&\Big(F(\overline{u}_{(x_{i,\nu},\varepsilon_{i,\nu})})^{n+1}
\,\overline{u}_{(x_{i,\nu},\varepsilon_{i,\nu})}^{2+\frac{2}{n}}
+F(\overline{u}_{(x_{j,\nu},\varepsilon_{j,\nu})})^{n+1}
\,\overline{u}_{(x_{j,\nu},\varepsilon_{j,\nu})}^{2+\frac{2}{n}}\Big)^{\frac{1}{n+1}}
\overline{u}_{(x_{i,\nu},\varepsilon_{i,\nu})}\overline{u}_{(x_{j,\nu},\varepsilon_{j,\nu})}\\
&\geq C \left(\frac{\varepsilon_{i,\nu}^2\varepsilon_{j,\nu}^2}{\varepsilon_{j,\nu}^4+d(x_{i,\nu},x_{j,\nu})^4}\right)^{\frac{n}{2}}\varepsilon_{i,\nu}^{-2n-2} 1_{\{d(x_{i,\nu},x)\leq\varepsilon_{i,\nu}\}}
+F(\overline{u}_{(x_{j,\nu},\varepsilon_{j,\nu})})\overline{u}_{(x_{i,\nu},\varepsilon_{i,\nu})}\overline{u}_{(x_{j,\nu},\varepsilon_{j,\nu})}^{1+\frac{2}{n}}
\end{split}
\end{equation*}
if $\nu$ is sufficiently large.
Integrating it over $M$, we obtain
\begin{equation}\label{save}
\begin{split}
&\int_M\Big(F(\overline{u}_{(x_{i,\nu},\varepsilon_{i,\nu})})^{n+1}
\,\overline{u}_{(x_{i,\nu},\varepsilon_{i,\nu})}^{2+\frac{2}{n}}
+F(\overline{u}_{(x_{j,\nu},\varepsilon_{j,\nu})})^{n+1}
\,\overline{u}_{(x_{j,\nu},\varepsilon_{j,\nu})}^{2+\frac{2}{n}}\Big)^{\frac{1}{n+1}}
\overline{u}_{(x_{i,\nu},\varepsilon_{i,\nu})}\overline{u}_{(x_{j,\nu},\varepsilon_{j,\nu})}dV_{\theta_0}\\
&\geq C \left(\frac{\varepsilon_{i,\nu}^2\varepsilon_{j,\nu}^2}{\varepsilon_{j,\nu}^4+d(x_{i,\nu},x_{j,\nu})^4}\right)^{\frac{n}{2}}
+ \int_M
F(\overline{u}_{(x_{j,\nu},\varepsilon_{j,\nu})})\overline{u}_{(x_{i,\nu},\varepsilon_{i,\nu})}
\overline{u}_{(x_{j,\nu},\varepsilon_{j,\nu})}^{1+\frac{2}{n}}dV_{\theta_0}.
\end{split}
\end{equation}
if $\nu$ is sufficiently large. Combining this with (\ref{5.9}),
we get
\begin{equation}\label{5.10.0}
\begin{split}
&\left(\sum_{k=1}^mE(\overline{u}_{(x_{k,\nu},\varepsilon_{k,\nu})})^{n+1}\right)^{\frac{1}{n+1}}
\left(\int_Mv_\nu^{2+\frac{2}{n}}dV_{\theta_0}\right)^{\frac{n}{n+1}}\\
&\geq \int_M\sum_{k=1}^m\alpha_{k,\nu}^2F(\overline{u}_{(x_{k,\nu},\varepsilon_{k,\nu})})
\overline{u}_{(x_{k,\nu},\varepsilon_{k,\nu})}^{2+\frac{2}{n}}dV_{\theta_0}\\
&\hspace{4mm}+
2\int_M\sum_{1\leq i<j\leq m}\alpha_{i,\nu}\alpha_{j,\nu}
F(\overline{u}_{(x_{j,\nu},\varepsilon_{j,\nu})})\overline{u}_{(x_{i,\nu},\varepsilon_{i,\nu})}
\overline{u}_{(x_{j,\nu},\varepsilon_{j,\nu})}^{1+\frac{2}{n}}dV_{\theta_0}\\
&\hspace{4mm}+C\sum_{1\leq i<j\leq m}
\left(\frac{\varepsilon_{i,\nu}^2\varepsilon_{j,\nu}^2}{\varepsilon_{j,\nu}^4+d(x_{i,\nu},x_{j,\nu})^4}\right)^{\frac{n}{2}}.
\end{split}
\end{equation}
By the definition of $v_\nu$ in (\ref{5.3}), we have
\begin{equation}\label{5.8}
\begin{split}
&E(v_\nu)\left(\int_Mv_\nu^{2+\frac{2}{n}}dV_{\theta_0}\right)^{\frac{n}{n+1}}\\
&=\int_M\sum_{k=1}^m\alpha_{k,\nu}^2F(\overline{u}_{(x_{k,\nu},\varepsilon_{k,\nu})})
\overline{u}_{(x_{k,\nu},\varepsilon_{k,\nu})}^{2+\frac{2}{n}}dV_{\theta_0}\\
&\hspace{4mm}-
2\int_M\sum_{1\leq i<j\leq m}\alpha_{i,\nu}\alpha_{j,\nu}
 \overline{u}_{(x_{i,\nu},\varepsilon_{i,\nu})}\Big((2+\frac{2}{n})\Delta_{\theta_0}
 \overline{u}_{(x_{j,\nu},\varepsilon_{j,\nu})}
-R_{\theta_0}\overline{u}_{(x_{j,\nu},\varepsilon_{j,\nu})}\Big)dV_{\theta_0}.
\end{split}
\end{equation}
Substituting  (\ref{5.10.0}) into (\ref{5.8}), we obtain
\begin{equation*}
\begin{split}
&E(v_\nu)\left(\int_Mv_\nu^{2+\frac{2}{n}}dV_{\theta_0}\right)^{\frac{n}{n+1}}\\
&\leq \left(\sum_{k=1}^mE(\overline{u}_{(x_{k,\nu},\varepsilon_{k,\nu})})^{n+1}\right)^{\frac{1}{n+1}}
\left(\int_Mv_\nu^{2+\frac{2}{n}}dV_{\theta_0}\right)^{\frac{n}{n+1}}\\
&\hspace{4mm}-
2\int_M\sum_{1\leq i<j\leq m}\alpha_{i,\nu}\alpha_{j,\nu}
 \overline{u}_{(x_{i,\nu},\varepsilon_{i,\nu})}\\
&\hspace{1.5cm}
\cdot\left((2+\frac{2}{n})\Delta_{\theta_0}
 \overline{u}_{(x_{j,\nu},\varepsilon_{j,\nu})}-R_{\theta_0}\overline{u}_{(x_{j,\nu},\varepsilon_{j,\nu})}
+F(\overline{u}_{(x_{j,\nu},\varepsilon_{j,\nu})})
\overline{u}_{(x_{j,\nu},\varepsilon_{j,\nu})}^{1+\frac{2}{n}}\right)dV_{\theta_0}\\
&\hspace{4mm}-C\sum_{1\leq i<j\leq m}
\left(\frac{\varepsilon_{i,\nu}^2\varepsilon_{j,\nu}^2}{\varepsilon_{j,\nu}^4+d(x_{i,\nu},x_{j,\nu})^4}\right)^{\frac{n}{2}}.
\end{split}
\end{equation*}
Since $F(\overline{u}_{(x_{j,\nu},\varepsilon_{j,\nu})})=r_\infty+o(1)$ by Lemma \ref{LemmaB.6}, it follows from Lemma \ref{LemmaB.4} and
\ref{LemmaB.5} that
\begin{equation*}
\begin{split}
&\int_M
 \overline{u}_{(x_{i,\nu},\varepsilon_{i,\nu})}\Big|(2+\frac{2}{n})\Delta_{\theta_0}
 \overline{u}_{(x_{j,\nu},\varepsilon_{j,\nu})}\\
&\hspace{2cm}
-R_{\theta_0}\overline{u}_{(x_{j,\nu},\varepsilon_{j,\nu})}
+F(\overline{u}_{(x_{j,\nu},\varepsilon_{j,\nu})})
\overline{u}_{(x_{j,\nu},\varepsilon_{j,\nu})}^{1+\frac{2}{n}}\Big|\,dV_{\theta_0}\\
&\leq \int_M
 \overline{u}_{(x_{i,\nu},\varepsilon_{i,\nu})}\Big|(2+\frac{2}{n})\Delta_{\theta_0}
 \overline{u}_{(x_{j,\nu},\varepsilon_{j,\nu})}
-R_{\theta_0}\overline{u}_{(x_{j,\nu},\varepsilon_{j,\nu})}
+r_\infty
\overline{u}_{(x_{j,\nu},\varepsilon_{j,\nu})}^{1+\frac{2}{n}}\Big|\,dV_{\theta_0}\\
&\hspace{4mm}+|F(\overline{u}_{(x_{j,\nu},\varepsilon_{j,\nu})})-r_\infty|
\int_M\overline{u}_{(x_{i,\nu},\varepsilon_{i,\nu})}\overline{u}_{(x_{j,\nu},\varepsilon_{j,\nu})}^{1+\frac{2}{n}}dV_{\theta_0}\\
&\leq C(\delta^4+\delta^{2n}+\frac{\varepsilon_{j,\nu}^2}{\delta^2})\left(\frac{\varepsilon_{i,\nu}^2\varepsilon_{j,\nu}^2}{\varepsilon_{j,\nu}^4+d(x_{i,\nu},x_{j,\nu})^4}\right)^{\frac{n}{2}}
+
o(1)\left(\frac{\varepsilon_{i,\nu}^2\varepsilon_{j,\nu}^2}{\varepsilon_{j,\nu}^4+d(x_{i,\nu},x_{j,\nu})^4}\right)^{\frac{n}{2}}
\end{split}
\end{equation*}
for $i<j$. Hence, if we choose $\delta$ sufficiently small, the assertion follows.
\end{proof}

\begin{cor}\label{Cor5.7}
The energy of $v_\nu$ satisfies the estimate
$$E(v_\nu)\leq\Big(m Y(S^{2n+1})^{n+1}\Big)^{\frac{1}{n+1}}$$
if $\nu$ is sufficiently large.
\end{cor}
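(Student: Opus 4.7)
My plan is to deduce Corollary \ref{Cor5.7} directly from Proposition \ref{Prop5.6}, combined with the asymptotic sharpness of the energy of the standard test functions constructed in Appendix \ref{Appendix}.

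First I would apply Proposition \ref{Prop5.6} to reduce the problem to bounding $E(\overline{u}_{(x_{k,\nu},\varepsilon_{k,\nu})})$ from above for each $k$. The input I need is the scale collapse $\varepsilon_{k,\nu} \to 0$ as $\nu \to \infty$. This follows from Theorem \ref{Theorem4.1} — the original bubble parameters $\varepsilon_{k,\nu}^*$ tend to zero, since otherwise the sequence $\{u_\nu\}$ would subconverge strongly in $S_1^2(M)$ and no bubbles could be present — together with Proposition \ref{Prop5.2}, which yields $\varepsilon_{k,\nu}=(1+o(1))\varepsilon_{k,\nu}^*$.

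Next I would establish the energy asymptotic
$$E(\overline{u}_{(x_{k,\nu},\varepsilon_{k,\nu})}) \;=\; Y(S^{2n+1}) + o(1) \qquad \text{as } \nu \to \infty.$$
By construction, the test functions are cutoff versions of the Jerison--Lee extremals transplanted via CR normal coordinates at $x_{k,\nu}$, so that the rescaling $(z,t)\mapsto (\varepsilon_{k,\nu}z,\varepsilon_{k,\nu}^2 t)$ maps them to compactly supported perturbations of the standard extremal on $\mathbb{H}^n$, whose conformal Webster energy equals $Y(S^{2n+1})$. Dominated convergence on $\mathbb{H}^n$ applied separately to the numerator and denominator of $E(\cdot)$ then produces the stated asymptotic. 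The convergence of the denominator is exactly (\ref{B.14}), already invoked in (\ref{4.8}); the convergence of the numerator is the corresponding estimate packaged in Lemma \ref{LemmaB.4}--\ref{LemmaB.6}, applied with the interaction terms between distinct bubbles replaced by the single-bubble estimate ($R_{\theta_0}\overline{u}^2$ is lower order in $\varepsilon_{k,\nu}$, while the sub-Laplacian of $\overline{u}_{(x_{k,\nu},\varepsilon_{k,\nu})}$ approaches the Heisenberg sub-Laplacian of the extremal).

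Substituting into the inequality from Proposition \ref{Prop5.6} then gives
$$E(v_\nu) \;\leq\; \bigl(m\,(Y(S^{2n+1})+o(1))^{n+1}\bigr)^{\frac{1}{n+1}},$$
which yields the claimed bound once $\nu$ is sufficiently large. The only subtlety is bookkeeping of the $o(1)$ terms so that the strict inequality in the statement is preserved; there is no genuine analytic obstacle here, because the hard work, namely the interaction cancellations between distinct bubbles and the absorption of the cross term $\left(\frac{\varepsilon_{i,\nu}^2\varepsilon_{j,\nu}^2}{\varepsilon_{j,\nu}^4+d(x_{i,\nu},x_{j,\nu})^4}\right)^{n/2}$, was already performed inside the proof of Proposition \ref{Prop5.6}.
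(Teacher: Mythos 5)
Your reduction to Proposition \ref{Prop5.6} matches the paper, but the way you bound the single-bubble energies contains a genuine gap, and it is located exactly where you dismiss the issue as ``bookkeeping.'' Your argument only yields $E(\overline{u}_{(x_{k,\nu},\varepsilon_{k,\nu})})=Y(S^{2n+1})+o(1)$, with an error term of \emph{indeterminate sign}. Feeding that into Proposition \ref{Prop5.6} gives $E(v_\nu)\leq\bigl(mY(S^{2n+1})^{n+1}\bigr)^{\frac{1}{n+1}}+o(1)$, which is strictly weaker than the Corollary and useless downstream: in Section \ref{section7}, Corollary \ref{Cor5.7} is combined with (\ref{4.8}) (which, in the case $u_\infty\equiv 0$, says precisely $mY(S^{2n+1})^{n+1}=r_\infty^{n+1}$) to conclude $E(v_\nu)\leq r_\infty$ \emph{exactly}, and this exact inequality is what makes the Lojasiewicz-type estimate of Proposition \ref{Prop3.3} close. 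An uncontrolled additive $o(1)$ would destroy that argument.

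The missing ingredient is the CR positive mass theorem. The paper's proof invokes Proposition \ref{PropB.3}, which expands the test-function energy to the next order:
$$E(\overline{u}_{(x_k,\varepsilon_k)})\leq Y(S^{2n+1})-cA_{x_k}\varepsilon_k^{2n}+C\delta^2\varepsilon_k^{2n}+C\delta\varepsilon_k^{2n}+C\delta^{-2n-2}\varepsilon_k^{2n+2},$$
where $A_{x_k}$ is the CR mass appearing in the Green's function expansion (\ref{B.3}). Since $\inf_{x}A_x>0$ by the positive mass theorem (this is where the hypotheses of Theorem \ref{main} enter), one first fixes $\delta$ small so that $C\delta^2+C\delta<\tfrac{c}{2}\inf_x A_x$ and then uses $\varepsilon_k\ll\delta$ to absorb the last term, obtaining the clean inequality $E(\overline{u}_{(x_{k,\nu},\varepsilon_{k,\nu})})\leq Y(S^{2n+1})$ with no error. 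A dominated-convergence computation on $\mathbb{H}^n$, as you propose, sees only the leading term and cannot detect the sign of the $\varepsilon_k^{2n}$ correction; determining that sign is the central analytic point of the whole construction, not a formality.
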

\begin{proof}
Using Proposition \ref{PropB.3}, we obtain
$$E(\overline{u}_{(x_{k,\nu},\varepsilon_{k,\nu})})\leq Y(S^{2n+1})$$
for all $1\leq k\leq m$. Now Corollary \ref{Cor5.7} follows from Proposition \ref{Prop5.6}.
\end{proof}


\subsection{The case $u_\infty>0$}\label{section6}

Next we discuss the case $u_\infty>0$.

\begin{prop}\label{Prop6.1}
There exists a sequence of smooth functions $\{\psi_a: a\in\mathbb{N}\}$ and a sequence of
real numbers $\{\lambda_a: a\in\mathbb{N}\}$ with the following properties:\\
(i) For every $a\in\mathbb{N}$, the function $\psi_a$ satisfies
\begin{equation}\label{6.1}
(2+\frac{2}{n})\Delta_{\theta_0}\psi_a
-R_{\theta_0}\psi_a
+\lambda_au_\infty^{\frac{2}{n}}\psi_a=0.
\end{equation}
(ii) For all $a,b\in\mathbb{N}$, we have
\begin{equation}\label{6.2}
\int_Mu_\infty^{\frac{2}{n}}\psi_a\psi_b\,dV_{\theta_0}=\left\{
                                                          \begin{array}{ll}
                                                            1, & \hbox{if $a=b$,} \\
                                                            0, & \hbox{if $a\neq b$.}
                                                          \end{array}
                                                        \right.
\end{equation}
(iii) The span of $\{\psi_a:a\in\mathbb{N}\}$ is dense in $L^2(M)$.\\
(iv) $\lambda_a\to\infty$ as $a\to\infty$.
\end{prop}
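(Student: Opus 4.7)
The statement is a standard spectral decomposition for the linearization of the CR Yamabe operator at the positive solution $u_\infty$, weighted by $u_\infty^{2/n}$. The plan is to realize $\{\psi_a\}$ and $\{\lambda_a\}$ as the eigenfunctions and eigenvalues of a compact self-adjoint operator obtained by inverting this linearization, and then read off properties (i)--(iv) from the spectral theorem.

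First, I would set up the Hilbert space framework. Define the bilinear form
$$a(\phi,\psi)=\int_M\Big((2+\tfrac{2}{n})\langle\nabla_{\theta_0}\phi,\nabla_{\theta_0}\psi\rangle_{\theta_0}+R_{\theta_0}\phi\psi\Big)dV_{\theta_0}$$
on $S_1^2(M)$. Since $R_{\theta_0}>0$ by (\ref{11}), the form $a$ is symmetric, bounded, and coercive on $S_1^2(M)$, endowing it with a Hilbert structure equivalent to the standard one. Because $u_\infty>0$ on all of $M$ (a consequence of Proposition \ref{Prop4.2} together with the hypothesis of this subsection) and $u_\infty\in C^\infty(M)$, the weight $u_\infty^{2/n}$ is bounded above and below by positive constants, so the weighted inner product $\langle f,g\rangle_*:=\int_M u_\infty^{2/n}fg\,dV_{\theta_0}$ defines a norm equivalent to the standard $L^2$ norm.

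Next, for each $f\in L^2(M)$ the Lax--Milgram lemma produces a unique $\phi=Kf\in S_1^2(M)$ with $a(\phi,\psi)=\langle f,\psi\rangle_*$ for all $\psi\in S_1^2(M)$. By the CR Rellich--Kondrakov theorem (a consequence of the Folland--Stein embedding Proposition \ref{Theorem 2.1} via interpolation), the inclusion $S_1^2(M)\hookrightarrow L^2(M)$ is compact, so $K\colon L^2(M)\to L^2(M)$ is compact. One checks that $K$ is self-adjoint and positive with respect to $\langle\cdot,\cdot\rangle_*$, since $\langle Kf,g\rangle_*=a(Kf,Kg)=\langle f,Kg\rangle_*$, and that $K$ is injective (if $Kf=0$ then $\langle f,g\rangle_*=0$ for all $g\in S_1^2$, which is dense in $L^2$, so $f=0$).

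Now I would apply the spectral theorem to the compact, self-adjoint, positive, injective operator $K$ on the weighted Hilbert space $(L^2(M),\langle\cdot,\cdot\rangle_*)$. This yields a sequence of positive eigenvalues $\mu_1\geq\mu_2\geq\cdots>0$ with $\mu_a\to 0$, together with corresponding eigenfunctions $\psi_a$ forming a complete orthonormal basis with respect to $\langle\cdot,\cdot\rangle_*$. Setting $\lambda_a=\mu_a^{-1}$ immediately gives (iv), while the eigenvalue equation $K\psi_a=\mu_a\psi_a$ translates, via the definition of $K$ and integration by parts, into the weak form of (\ref{6.1}), yielding (i). The normalization and orthogonality with respect to $\langle\cdot,\cdot\rangle_*$ give (ii), and completeness of $\{\psi_a\}$ in the weighted $L^2$ is completeness in $L^2(M)$ by norm equivalence, giving (iii). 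Smoothness of each $\psi_a$ follows by rewriting (\ref{6.1}) as $-(2+\tfrac{2}{n})\Delta_{\theta_0}\psi_a=(R_{\theta_0}-\lambda_a u_\infty^{2/n})\psi_a$ and bootstrapping through the Folland--Stein spaces using the hypoelliptic regularity of $\Delta_{\theta_0}$.

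The only non-soft ingredient is the compactness of the embedding $S_1^2(M)\hookrightarrow L^2(M)$ on a compact CR manifold; this is the CR analogue of the Rellich--Kondrakov theorem and follows from the Folland--Stein theory. Once this is available, everything else is routine application of the spectral theorem for compact self-adjoint operators on a Hilbert space.
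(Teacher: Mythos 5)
Your proposal is correct and is essentially the paper's own argument: the authors simply observe that the operator $\psi\mapsto u_\infty^{-2/n}\big((2+\frac{2}{n})\Delta_{\theta_0}\psi-R_{\theta_0}\psi\big)$ is symmetric with respect to the weighted inner product $\int_M u_\infty^{2/n}\psi_1\psi_2\,dV_{\theta_0}$ and invoke the spectral theorem, which is precisely what you make rigorous via Lax--Milgram, the compact Folland--Stein embedding $S_1^2(M)\hookrightarrow L^2(M)$, and the spectral theorem for compact self-adjoint operators. Your additional details (coercivity from $R_{\theta_0}>0$, equivalence of the weighted and unweighted $L^2$ norms since $u_\infty>0$, and the hypoelliptic regularity bootstrap for smoothness) are exactly the steps the paper leaves implicit.
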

\begin{proof}
Consider the linear operator
$$\psi\mapsto u_\infty^{-\frac{2}{n}}\Big((2+\frac{2}{n})\Delta_{\theta_0}\psi-R_{\theta_0}\psi\Big).$$
The operator is symmetric with respect to the inner product
$$(\psi_1,\psi_2)\mapsto \int_Mu_\infty^{\frac{2}{n}}\psi_1\psi_2\,dV_{\theta_0}$$
on $L^2(M)$. Hence, the assertion follows from the spectral theorem.
\end{proof}

Let
$$A=\{a\in\mathbb{N}: \lambda_a\leq \frac{n+2}{n}r_\infty\}$$
which is a finite subset of $\mathbb{N}$ by Proposition \ref{Prop6.1}(iv). We denote $\Pi$ the projection
operator
\begin{equation}\label{6.3}
\begin{split}
\Pi f&=\sum_{a\not\in A}\left(\int_M\psi_a f\,dV_{\theta_0}\right) u_\infty^{\frac{2}{n}}\psi_a=f-\sum_{a\in A}\left(\int_M\psi_a f\,dV_{\theta_0}\right) u_\infty^{\frac{2}{n}}\psi_a.
\end{split}
\end{equation}

\begin{lem}\label{Lemma6.2}
For every $1\leq p<\infty$, we can find a constant $C$ such that
\begin{equation*}
\|f\|_{L^p(M)}\leq C \Big\|(2+\frac{2}{n})\Delta_{\theta_0}f
-R_{\theta_0}f
+\frac{n+2}{n}r_\infty u_\infty^{\frac{2}{n}}f\Big\|_{L^p(M)}
+C\sup_{a\in A}\left|\int_Mu_\infty^{\frac{2}{n}}\psi_a f\,dV_{\theta_0}\right|.
\end{equation*}
\end{lem}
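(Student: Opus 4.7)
The plan is to prove the estimate by a standard compactness-and-contradiction argument built on the spectral structure of the operator
$$L\varphi := \Big(2+\tfrac{2}{n}\Big)\Delta_{\theta_0}\varphi - R_{\theta_0}\varphi + \tfrac{n+2}{n}r_\infty u_\infty^{\frac{2}{n}}\varphi.$$
By (\ref{6.1}), each eigenfunction $\psi_a$ satisfies $L\psi_a = \mu_a\, u_\infty^{\frac{2}{n}}\psi_a$ with $\mu_a := \tfrac{n+2}{n}r_\infty - \lambda_a$, so the exceptional set $A$ is precisely $\{a : \mu_a \geq 0\}$. Supposing the inequality failed, I would select a sequence $\{f_j\}\subset L^p(M)$ with $\|f_j\|_{L^p(M)}=1$ while $\|Lf_j\|_{L^p(M)}\to 0$ and $\sup_{a\in A}\big|\int_M u_\infty^{\frac{2}{n}}\psi_a f_j\,dV_{\theta_0}\big|\to 0$, and seek to derive $f_j\to 0$ in $L^p(M)$ as a contradiction.

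First I would apply sub-elliptic $L^p$ regularity to $L$, whose principal part is the sub-Laplacian $(2+\tfrac{2}{n})\Delta_{\theta_0}$ on the compact CR manifold $(M,\theta_0)$ and whose zeroth-order coefficients are smooth (since in the present case $u_\infty>0$ is smooth). The Folland--Stein estimates of \cite{Folland&Stein} (or the sub-elliptic Calder\'on--Zygmund theory of \cite{Bramanti}) then yield
$$\|f_j\|_{S^p_2(M)}\leq C\big(\|Lf_j\|_{L^p(M)}+\|f_j\|_{L^p(M)}\big) \leq C.$$
By compactness of the Folland--Stein embedding $S^p_2(M)\hookrightarrow L^p(M)$, I may extract a subsequence along which $f_j\to f$ strongly in $L^p(M)$, so $\|f\|_{L^p(M)}=1$. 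Taking distributional limits gives $Lf=0$ and $\int_M u_\infty^{\frac{2}{n}}\psi_a f\,dV_{\theta_0}=0$ for every $a\in A$, and hypoellipticity of $L$ makes $f$ smooth, in particular square-integrable.

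The final ingredient is spectral. Since $\{\psi_a\}$ is an orthonormal basis of $L^2(M,u_\infty^{\frac{2}{n}}dV_{\theta_0})$ by Proposition \ref{Prop6.1}, I would expand $f=\sum_a c_a\psi_a$ with $c_a = \int_M u_\infty^{\frac{2}{n}}\psi_a f\,dV_{\theta_0}$. Pairing $Lf=0$ against $\psi_b$ in the unweighted $L^2(M)$-inner product and using $L\psi_a=\mu_a u_\infty^{\frac{2}{n}}\psi_a$ together with Proposition \ref{Prop6.1}(ii), I would obtain
$$0 = \int_M\psi_b\,Lf\,dV_{\theta_0} = c_b\,\mu_b$$
for every $b$. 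Hence $c_b=0$ whenever $\mu_b\neq 0$; the vanishing assumption forces $c_b=0$ also for $b\in A$, which absorbs the remaining modes (those with $\mu_b=0$ automatically lie in $A$). Thus $f\equiv 0$, contradicting $\|f\|_{L^p(M)}=1$.

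The main technical obstacle is verifying that the sub-elliptic $L^p$ regularity and the compact Folland--Stein embedding hold uniformly for all $p\in[1,\infty)$; both are standard on a compact strongly pseudoconvex CR manifold thanks to the Calder\'on--Zygmund theory for H\"ormander systems, but they are what do the real work in turning the purely spectral $L^2$ dichotomy into an $L^p$ estimate.
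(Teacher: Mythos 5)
Your proposal follows essentially the same route as the paper: a contradiction argument producing a limit $f$ with $\|f\|_{L^p}=1$ satisfying the homogeneous equation and the orthogonality conditions, followed by pairing against the eigenfunctions $\psi_a$ to show every coefficient $\int_M u_\infty^{2/n}\psi_a f\,dV_{\theta_0}$ vanishes (for $a\notin A$ from the spectral gap $\lambda_a>\frac{n+2}{n}r_\infty$, for $a\in A$ from the hypothesis), whence $f\equiv 0$ by completeness. Your treatment is in fact more explicit than the paper's, which compresses the extraction of the limit into the phrase ``by compactness''; the only point you rightly flag but do not fully resolve is the endpoint $p=1$, where the sub-elliptic Calder\'on--Zygmund estimate is not available in the standard form, though the paper itself is silent on this as well.
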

\begin{proof}
Suppose it is not true. By compactness, we can find
a function $f\in L^p(M)$ satisfying $\|f\|_{L^p(M)}=1$,
\begin{equation}\label{6.4}
\int_Mu_\infty^{\frac{2}{n}}\psi_a f\,dV_{\theta_0}=0\hspace{2mm}\mbox{ for all }a\in A,
\end{equation}
and
\begin{equation}\label{6.5}
(2+\frac{2}{n})\Delta_{\theta_0}f
-R_{\theta_0}f
+\frac{n+2}{n}r_\infty u_\infty^{\frac{2}{n}}f=0
\end{equation}
in the sense of distributions. Multiply (\ref{6.5}) by $\psi_a$ and integrate over $M$, we obtain
for all $a\in\mathbb{N}$
\begin{equation}\label{6.6}
\begin{split}
0&=\int_M\psi_a\Big((2+\frac{2}{n})\Delta_{\theta_0}f
-R_{\theta_0}f
+\frac{n+2}{n}r_\infty u_\infty^{\frac{2}{n}}f\Big)dV_{\theta_0}\\
&=\Big(\frac{n+2}{n}r_\infty-\lambda_a\Big)\int_Mu_\infty^{\frac{2}{n}}\psi_a f\,dV_{\theta_0}
\end{split}
\end{equation}
by (\ref{6.1}). In particular, for $a\not\in A$, we have $\lambda_a>\displaystyle\frac{n+2}{n}r_\infty$, which implies that
$$\int_Mu_\infty^{\frac{2}{n}}\psi_a f\,dV_{\theta_0}=0\hspace{2mm}\mbox{ for all }a\not\in A.$$
Combining this with (\ref{6.4}), we have $f\equiv 0$ which contradicts $\|f\|_{L^p(M)}=1$.
This proves the assertion.
\end{proof}

\begin{lem}\label{Lemma6.3}
(i) There exists a constant $C$ such that
\begin{equation*}
\begin{split}
\|f\|_{L^{1+\frac{2}{n}}(M)}&\leq C
\Big\|\Pi\Big((2+\frac{2}{n})\Delta_{\theta_0}f
-R_{\theta_0}f
+\frac{n+2}{n}r_\infty u_\infty^{\frac{2}{n}}f\Big)\Big\|_{L^{\frac{(n+1)(n+2)}{n^2+2n+2}}(M)}\\
&\hspace{4mm}+C\sup_{a\in A}\left|\int_Mu_\infty^{\frac{2}{n}}\psi_a f\,dV_{\theta_0}\right|.
\end{split}
\end{equation*}
(ii) There exists a constant $C$ such that
\begin{equation*}
\begin{split}
\|f\|_{L^{1}(M)}&\leq C
\Big\|\Pi\Big((2+\frac{2}{n})\Delta_{\theta_0}f
-R_{\theta_0}f
+\frac{n+2}{n}r_\infty u_\infty^{\frac{2}{n}}f\Big)\Big\|_{L^1(M)}\\
&\hspace{4mm}+C\sup_{a\in A}\left|\int_Mu_\infty^{\frac{2}{n}}\psi_a f\,dV_{\theta_0}\right|.
\end{split}
\end{equation*}
\end{lem}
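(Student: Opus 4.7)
Both parts reduce to Lemma \ref{Lemma6.2} once we relate $\mathcal{L}f$ to $\Pi\mathcal{L}f$, where $\mathcal{L}f:=(2+\frac{2}{n})\Delta_{\theta_0}f-R_{\theta_0}f+\frac{n+2}{n}r_\infty u_\infty^{\frac{2}{n}}f$. Using (\ref{6.1}) and integration by parts, $\int_M\psi_a\mathcal{L}f\,dV_{\theta_0}=(\frac{n+2}{n}r_\infty-\lambda_a)\int_M u_\infty^{\frac{2}{n}}\psi_a f\,dV_{\theta_0}$, so from (\ref{6.3})
\begin{equation*}
\mathcal{L}f-\Pi\mathcal{L}f=\sum_{a\in A}\Big(\frac{n+2}{n}r_\infty-\lambda_a\Big)\Big(\int_M u_\infty^{\frac{2}{n}}\psi_a f\,dV_{\theta_0}\Big)u_\infty^{\frac{2}{n}}\psi_a.
\end{equation*}
Since $A$ is finite and each $\psi_a$ is smooth, $\|\mathcal{L}f-\Pi\mathcal{L}f\|_{L^q(M)}\leq C\sup_{a\in A}\big|\int_M u_\infty^{\frac{2}{n}}\psi_a f\,dV_{\theta_0}\big|$ for every $q\geq 1$. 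Statement (ii) is then an immediate combination of this bound (with $q=1$) and Lemma \ref{Lemma6.2} applied with $p=1$.

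For (i), set $r=\frac{(n+1)(n+2)}{n^2+2n+2}\in(1,1+\frac{2}{n})$; the choice is dictated by $\frac{1}{r}-\frac{2}{2n+2}=\frac{n}{n+2}$, so Proposition \ref{Theorem 2.1} supplies the critical (non-compact) Folland-Stein embedding $S_2^r(M)\hookrightarrow L^{1+\frac{2}{n}}(M)$, together with compact embeddings into each subcritical $L^{s'}(M)$. The plan is to argue by contradiction: assume a sequence $\{f_j\}$ with $\|f_j\|_{L^{1+\frac{2}{n}}(M)}=1$ and $\|\Pi\mathcal{L}f_j\|_{L^r(M)}+\sup_{a\in A}\big|\int_M u_\infty^{\frac{2}{n}}\psi_a f_j\,dV_{\theta_0}\big|\to 0$. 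By the relation above $\mathcal{L}f_j\to 0$ in $L^r(M)$, and the subelliptic Calder\'{o}n-Zygmund estimate applied to $-(2+\frac{2}{n})\Delta_{\theta_0}f_j=-\mathcal{L}f_j-R_{\theta_0}f_j+\frac{n+2}{n}r_\infty u_\infty^{\frac{2}{n}}f_j$ gives $\|f_j\|_{S_2^r(M)}\leq C(\|\mathcal{L}f_j\|_{L^r}+\|f_j\|_{L^r})\leq C$.

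By the compact subcritical embedding, after passing to a subsequence $f_j\to f$ strongly in each $L^{s'}(M)$ with $s'<1+\frac{2}{n}$. The limit satisfies $\mathcal{L}f=0$ in the distributional sense and $\int_M u_\infty^{\frac{2}{n}}\psi_a f\,dV_{\theta_0}=0$ for every $a\in A$. Expanding $f=\sum_a d_a\psi_a$ in the eigenbasis of Proposition \ref{Prop6.1} and using $\mathcal{L}\psi_a=(\frac{n+2}{n}r_\infty-\lambda_a)u_\infty^{\frac{2}{n}}\psi_a$, the equation forces $d_a=0$ for every $a$ with $\lambda_a\neq\frac{n+2}{n}r_\infty$, while the orthogonality conditions kill the remaining $d_a$'s (which correspond to $a\in A$), so $f\equiv 0$.

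The main obstacle is that the critical embedding $S_2^r\hookrightarrow L^{1+\frac{2}{n}}$ is \emph{not} compact, so the strong subcritical convergence $f_j\to 0$ does not immediately rule out $\|f_j\|_{L^{1+\frac{2}{n}}}$ staying close to $1$. I would overcome this by a one-step bootstrap: since $f_j\to 0$ in every $L^{s'}$ with $s'<1+\frac{2}{n}$, in particular in $L^r$, the lower-order terms $R_{\theta_0}f_j$ and $u_\infty^{\frac{2}{n}}f_j$ both tend to $0$ in $L^r$; combined with $\mathcal{L}f_j\to 0$ in $L^r$, the PDE forces $\Delta_{\theta_0}f_j\to 0$ in $L^r$. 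A second application of the subelliptic regularity estimate yields $\|f_j\|_{S_2^r(M)}\to 0$, and the continuous embedding $S_2^r\hookrightarrow L^{1+\frac{2}{n}}$ then gives $\|f_j\|_{L^{1+\frac{2}{n}}}\to 0$, contradicting $\|f_j\|_{L^{1+\frac{2}{n}}}=1$.
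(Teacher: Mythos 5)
Your proof is correct, and it rests on exactly the same ingredients as the paper's: the identity $\mathcal{L}f-\Pi\mathcal{L}f=\sum_{a\in A}(\tfrac{n+2}{n}r_\infty-\lambda_a)\big(\int_Mu_\infty^{2/n}\psi_afdV_{\theta_0}\big)u_\infty^{2/n}\psi_a$ (the paper's (6.9)), the Bramanti--Brandolini subelliptic estimate $\|f\|_{S_2^r(M)}\leq C(\|\mathcal{L}f\|_{L^r(M)}+\|f\|_{L^r(M)})$, and the Folland--Stein embedding at the exponent $r=\tfrac{(n+1)(n+2)}{n^2+2n+2}$. Part (ii) is word-for-word the paper's argument. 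For part (i) the paper is more direct: it controls the lower-order term $\|f\|_{L^r(M)}$ by simply invoking Lemma \ref{Lemma6.2} with $p=r$, whereas you re-derive that control inside a compactness--contradiction argument and then add a bootstrap step to upgrade to the critical norm. Your bootstrap is precisely the substitution of Lemma \ref{Lemma6.2}'s conclusion into the a priori estimate, so the two routes are mathematically equivalent; yours is self-contained but longer, and it silently re-proves a lemma you already have at your disposal. One small point worth making explicit in your limit identification: the weak limit $f$ a priori lies only in $L^{1+2/n}(M)$, which for $n\geq 3$ is not contained in $L^2(M)$, so the expansion $f=\sum_ad_a\psi_a$ needs the observation that $\mathcal{L}f=0$ forces $f$ to be smooth by subelliptic regularity (the paper's own proof of Lemma \ref{Lemma6.2} elides the same point).
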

\begin{proof}
To prove (i), we apply the result of Bramanti-Braudolini in \cite{Bramanti1} to conclude that
$$\|f\|_{S_2^p(M)}
\leq
\Big\|(2+\frac{2}{n})\Delta_{\theta_0}f
-R_{\theta_0}f
+\frac{n+2}{n}r_\infty u_\infty^{\frac{2}{n}}f\Big\|_{L^{p}(M)}
+C\|f\|_{L^p(M)}$$
for $1<p<\infty$. By Folland-Stein embedding theorem in Proposition \ref{Theorem 2.1},
$S_2^p(M)$ is continuous embedded in $L^{p^*}(M)$ where
$\displaystyle\frac{1}{p^*}=\frac{1}{p}-\frac{2}{2n+2}$.
Therefore, if we choose $p=\displaystyle\frac{(n+1)(n+2)}{n^2+2n+2}>1$,
then $p^*=1+\displaystyle\frac{2}{n}$ and we have
\begin{equation}\label{6.7}
\begin{split}
\|f\|_{L^{1+\frac{2}{n}}(M)}
&\leq
\Big\|(2+\frac{2}{n})\Delta_{\theta_0}f
-R_{\theta_0}f
+\frac{n+2}{n}r_\infty u_\infty^{\frac{2}{n}}f\Big\|_{L^{\frac{(n+1)(n+2)}{n^2+2n+2}}(M)}\\
&\hspace{4mm}
+C\|f\|_{L^{\frac{(n+1)(n+2)}{n^2+2n+2}}(M)}.
\end{split}
\end{equation}
By (\ref{6.7}) and Lemma \ref{Lemma6.2} with $p=\displaystyle\frac{(n+1)(n+2)}{n^2+2n+2}$, we get
\begin{equation}\label{6.8}
\begin{split}
\|f\|_{L^{1+\frac{2}{n}}(M)}
&\leq
\Big\|(2+\frac{2}{n})\Delta_{\theta_0}f
-R_{\theta_0}f
+\frac{n+2}{n}r_\infty u_\infty^{\frac{2}{n}}f\Big\|_{L^{\frac{(n+1)(n+2)}{n^2+2n+2}}(M)}\\
&\hspace{4mm}
+C\sup_{a\in A}\left|\int_Mu_\infty^{\frac{2}{n}}\psi_a f\,dV_{\theta_0}\right|.
\end{split}
\end{equation}

By (\ref{6.6}) and the definition of $\Pi$, we have
\begin{equation*}
\begin{split}
&\Pi\Big((2+\frac{2}{n})\Delta_{\theta_0}f
-R_{\theta_0}f
+\frac{n+2}{n}r_\infty u_\infty^{\frac{2}{n}}f\Big)\\
&=(2+\frac{2}{n})\Delta_{\theta_0}f
-R_{\theta_0}f
+\frac{n+2}{n}r_\infty u_\infty^{\frac{2}{n}}f\\
&\hspace{4mm}-\sum_{a\in A}
\Big(\frac{n+2}{n}r_\infty-\lambda_a\Big)\left(\int_Mu_\infty^{\frac{2}{n}}\psi_a f\,dV_{\theta_0}\right)
u_\infty^{\frac{2}{n}}\psi_a,
\end{split}
\end{equation*}
which implies that
\begin{equation}\label{6.9}
\begin{split}
&\Big\|(2+\frac{2}{n})\Delta_{\theta_0}f
-R_{\theta_0}f
+\frac{n+2}{n}r_\infty u_\infty^{\frac{2}{n}}f\Big\|_{L^q(M)}\\
&\leq \Big\|\Pi\Big((2+\frac{2}{n})\Delta_{\theta_0}f
-R_{\theta_0}f
+\frac{n+2}{n}r_\infty u_\infty^{\frac{2}{n}}f\Big)\Big\|_{L^q(M)}+C\sup_{a\in A}
\left|\int_Mu_\infty^{\frac{2}{n}}\psi_a f\,dV_{\theta_0}\right|
\end{split}
\end{equation}
for $1\leq q<\infty$. Now combining (\ref{6.8}) and (\ref{6.9}) with $q=\displaystyle\frac{(n+1)(n+2)}{n^2+2n+2}$,
we prove (i).
To prove (ii), we note that by Lemma \ref{Lemma6.2} with $p=1$ we have
\begin{equation*}
\|f\|_{L^1(M)}\leq C \Big\|(2+\frac{2}{n})\Delta_{\theta_0}f
-R_{\theta_0}f
+\frac{n+2}{n}r_\infty u_\infty^{\frac{2}{n}}f\Big\|_{L^1(M)}
+C\sup_{a\in A}\left|\int_Mu_\infty^{\frac{2}{n}}\psi_a f\,dV_{\theta_0}\right|.
\end{equation*}
Combining this with (\ref{6.9}) with $q=1$, we prove (ii).
\end{proof}

\begin{lem}\label{Lemma6.4}
There exists a positive real number $\zeta$ with the following property: for every
vector $z=(z_1,z_2,...,z_{|A|})\in\mathbb{R}^{|A|}$ with $|z|\leq\zeta$, there exists a smooth function $\overline{u}_z$ such that \begin{equation*}
\int_Mu_\infty^{\frac{2}{n}}(\overline{u}_z-u_\infty)\psi_a\,dV_{\theta_0}=z_a
\end{equation*}
for all $a\in A$ and
\begin{equation*}
\Pi\Big((2+\frac{2}{n})\Delta_{\theta_0}\overline{u}_z
-R_{\theta_0}\overline{u}_z
+\frac{n+2}{n}r_\infty u_\infty^{\frac{2}{n}}\overline{u}_z\Big)=0.
\end{equation*}
Furthermore, the map $z\mapsto \overline{u}_z$ defined in $(i)$ is real analytic.
\end{lem}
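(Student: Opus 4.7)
The plan is to invoke the (real analytic) implicit function theorem. I would parametrize candidate solutions as
$\bar u_z = u_\infty + \sum_{a\in A} z_a\psi_a + v$,
where $v$ is constrained to lie in the closed subspace
$X = \{v \in S^p_2(M) : \int_M u_\infty^{2/n}\psi_a v\,dV_{\theta_0} = 0 \text{ for all } a\in A\}$
of a Folland-Stein space, with $p$ chosen large enough that $S^p_2(M)$ embeds continuously in $L^\infty(M)$ via Proposition 2.1. Thanks to the orthonormality relation (6.2), this parametrization automatically enforces the first condition, so it remains to produce $v = v(z) \in X$, depending real analytically on $z$, solving the projected equation.

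Set $F(z,v) = \Pi\bigl((2+\tfrac{2}{n})\Delta_{\theta_0}\bar u_z - R_{\theta_0}\bar u_z + r_\infty \bar u_z^{1+2/n}\bigr)$, viewed as a map from $\mathbb{R}^{|A|} \times X$ into $\Pi(L^p(M))$ (reading the displayed equation as the natural Yamabe-type nonlinear one, whose linearization at $u_\infty$ is exactly the operator $L$ appearing in Lemma 6.3). Then $F(0,0)=0$ by (4.3), and the partial derivative at the origin is $D_v F(0,0)\,h = \Pi L h$, where $Lh = (2+\tfrac{2}{n})\Delta_{\theta_0}h - R_{\theta_0}h + \tfrac{n+2}{n}r_\infty u_\infty^{2/n}h$. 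Using (6.1) together with (6.2), a direct computation gives $\Pi L\psi_a = 0$ for $a\in A$ and $\Pi L\psi_a = (\tfrac{n+2}{n}r_\infty - \lambda_a)u_\infty^{2/n}\psi_a$ for $a\notin A$, so $\Pi L$ has trivial kernel on $X$. Combining the spectral gap $\lambda_a - \tfrac{n+2}{n}r_\infty \geq c > 0$ for $a\notin A$ (immediate from the definition of $A$ and Proposition 6.1(iv)) with the sub-elliptic estimate of Bramanti-Brandolini already invoked in Lemma 6.3, I would obtain an a priori bound $\|v\|_{S^p_2(M)} \leq C\|\Pi L v\|_{L^p(M)}$ on $X$, upgrading $\Pi L : X \to \Pi(L^p(M))$ to a Banach space isomorphism.

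The nonlinear map $\bar u_z \mapsto \bar u_z^{1+2/n}$ is real analytic on an $L^\infty$-neighborhood of $u_\infty$ (which is strictly positive on $M$ by Proposition 4.2), so $F$ is real analytic. The real analytic implicit function theorem then produces $\zeta > 0$ and a real analytic map $z \mapsto v(z)\in X$ with $F(z,v(z))=0$ for $|z|\leq \zeta$; setting $\bar u_z = u_\infty + \sum_{a\in A}z_a\psi_a + v(z)$ gives the required family. Smoothness of each $\bar u_z$ follows by bootstrapping the full equation $(2+\tfrac{2}{n})\Delta_{\theta_0}\bar u_z - R_{\theta_0}\bar u_z + r_\infty\bar u_z^{1+2/n} = \sum_{a\in A}c_a(z)\,u_\infty^{2/n}\psi_a$ using Bramanti-Brandolini sub-elliptic regularity, the finite-dimensional remainder on the right being forced by the definition of $\Pi$. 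I expect the main technical obstacle to be promoting the formal spectral-gap injectivity to a genuine Banach-space isomorphism of $\Pi L$: the eigenfunction calculation is immediate, but deriving a uniform $S^p_2 \to L^p$ estimate on $X$ requires combining Lemma 6.3 with a standard Rellich-type compactness argument to absorb lower-order terms, and careful matching of the range of $\Pi L$ with the image of $\Pi$ on the $L^p$ side.
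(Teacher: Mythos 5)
Your argument is correct and is precisely what the paper compresses into the single sentence ``this is the consequence of the implicit function theorem'': the ansatz $\overline{u}_z=u_\infty+\sum_{a\in A}z_a\psi_a+v$ with $v$ in the $u_\infty^{2/n}$-orthogonal complement of $\mathrm{span}\{\psi_a\}_{a\in A}$, the identification of the linearization at $(0,0)$ with $\Pi\circ L$, its invertibility on that complement via the spectral gap $\lambda_a>\frac{n+2}{n}r_\infty$ for $a\notin A$ together with the Bramanti--Brandolini estimate, and the real analytic implicit function theorem are exactly the standard details being omitted. Your decision to read the second constraint as the projected \emph{nonlinear} equation $\Pi\big((2+\frac{2}{n})\Delta_{\theta_0}\overline{u}_z-R_{\theta_0}\overline{u}_z+r_\infty\overline{u}_z^{1+\frac{2}{n}}\big)=0$, rather than the linearized operator literally printed in the statement, is also the right call: it is the only reading for which the implicit function theorem is genuinely needed, and it is what the later uses of the lemma (the identity (6.28) in Lemma \ref{Lemma6.11}, whose right-hand side is a superlinear Taylor remainder in $u_\nu-\overline{u}_{z_\nu}$, and the gradient computation in Lemma \ref{Lemma6.5}) actually require.
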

\begin{proof}
This is the consequence of the implicit function theorem.
\end{proof}


\begin{lem}\label{Lemma6.5}
There exists a real number $0<\gamma<1$ such that
\begin{equation*}
\begin{split}
E(\overline{u}_z)-E(u_\infty)
\leq C\sup_{a\in A}\left|\int_M\psi_a
\Big((2+\frac{2}{n})\Delta_{\theta_0}\overline{u}_z
-R_{\theta_0}\overline{u}_z
+r_\infty \overline{u}_z^{1+\frac{2}{n}}\Big)dV_{\theta_0}\right|^{1+\gamma}
\end{split}
\end{equation*}
if $z$ is sufficiently small.
\end{lem}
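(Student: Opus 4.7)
The proof is a standard application of the \L{}ojasiewicz gradient inequality for real-analytic functions to the finite-dimensional composite $g(z) := E(\overline{u}_z)$, regarded as a function on a neighborhood of $0 \in \mathbb{R}^{|A|}$. By Lemma \ref{Lemma6.4} the map $z \mapsto \overline{u}_z$ is real-analytic, hence so is $g$. Differentiating the two defining relations of Lemma \ref{Lemma6.4} at $z = 0$ shows that $\phi_a(z) := \partial_{z_a}\overline{u}_z$ satisfies $\phi_a(0) = \psi_a$: the orthogonality (\ref{6.2}) handles the first constraint, while (\ref{6.1}) together with the defining property of $\Pi$ in (\ref{6.3}) ensures the linearized projection constraint is satisfied by $\psi_a$ for $a \in A$. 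Combined with the Euler-Lagrange equation (\ref{4.3}) for $u_\infty$, this yields $\nabla g(0) = 0$, and the \L{}ojasiewicz theorem produces $\gamma \in (0,1)$ and $C > 0$ such that
$$g(z) - g(0) \leq C|\nabla g(z)|^{1+\gamma}$$
for all sufficiently small $z$.

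The next step is to identify $|\nabla g(z)|$ with $\sup_{a \in A}|H_a(z)|$, where
$$H_a(z) := \int_M \psi_a \Big((2+\tfrac{2}{n})\Delta_{\theta_0}\overline{u}_z - R_{\theta_0}\overline{u}_z + r_\infty\overline{u}_z^{1+\frac{2}{n}}\Big)\, dV_{\theta_0}.$$
Writing $E = N/V^{n/(n+1)}$, setting $F := N/V$, and putting $S_0(u) := -(2+\frac{2}{n})\Delta_{\theta_0}u + R_{\theta_0}u - r_\infty u^{1+\frac{2}{n}}$, the chain rule gives
$$\partial_{z_a}g(z) = -\frac{2}{V(\overline{u}_z)^{n/(n+1)}}\int_M \Big[S_0(\overline{u}_z) + (F(\overline{u}_z)-r_\infty)\overline{u}_z^{1+\frac{2}{n}}\Big]\phi_a(z)\, dV_{\theta_0}.$$
Substituting $\phi_a(z) = \psi_a + (\phi_a(z) - \psi_a)$ isolates the leading term $-2V(\overline{u}_z)^{-n/(n+1)} H_a(z)$ plus an error involving $\int_M S_0(\overline{u}_z)(\phi_a(z)-\psi_a)\, dV_{\theta_0}$ and $(F(\overline{u}_z)-r_\infty)\int_M \overline{u}_z^{1+\frac{2}{n}}\phi_a(z)\, dV_{\theta_0}$.

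The main obstacle is absorbing this error term. Real-analytic dependence of $\overline{u}_z$ on $z$ gives $\phi_a(z) - \psi_a = O(|z|)$ in $L^{2+\frac{2}{n}}(M)$; the identity $S_0(u_\infty) = 0$ from (\ref{4.3}) gives $\|S_0(\overline{u}_z)\|_{L^{(2n+2)/(n+2)}(M)} = O(|z|)$; and since $u_\infty$ is a critical point of $F$, one has $F(\overline{u}_z) - r_\infty = O(|z|^2)$. The error is therefore $O(|z|^2)$. Taylor expanding $H_a(z)$ at $z = 0$ by means of (\ref{6.1}) and (\ref{6.2}) yields $H_a(z) = (\frac{n+2}{n}r_\infty - \lambda_a)z_a + O(|z|^2)$. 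When $\lambda_a < \frac{n+2}{n}r_\infty$ for every $a \in A$, inverting this gives $|z| \leq C \sup_{a \in A}|H_a(z)|$, whence the $O(|z|^2)$ error is dominated by $\sup_{a\in A}|H_a(z)|^2$ and is absorbed. In the degenerate subcase $\lambda_a = \frac{n+2}{n}r_\infty$ for some $a \in A$, a further \L{}ojasiewicz inequality applied to the real-analytic map $z \mapsto \sup_{a \in A}|H_a(z)|$ (which vanishes at $z = 0$), together with a possible reduction of $\gamma$, delivers the claimed estimate.
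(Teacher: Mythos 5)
Your opening steps are sound and agree with the paper: the \L{}ojasiewicz inequality is applied to the real-analytic function $g(z)=E(\overline{u}_z)$, the identification $\partial_{z_a}\overline{u}_z|_{z=0}=\psi_a$ is correct, and so is the linearization $H_a(z)=\bigl(\tfrac{n+2}{n}r_\infty-\lambda_a\bigr)z_a+O(|z|^2)$. The difficulties are in your error analysis, where there are two genuine gaps.

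First, the claim that $F(\overline{u}_z)-r_\infty=O(|z|^2)$ because ``$u_\infty$ is a critical point of $F$'' is false. The functional $F$ in (\ref{4.7}) is homogeneous of degree $-\tfrac{2}{n}$, so it is not critical at $u_\infty$ in the radial direction; explicitly, $\tfrac{d}{dz_a}F(\overline{u}_z)\big|_{z=0}=-\tfrac{2r_\infty}{n}\bigl(\int_Mu_\infty^{2+\frac{2}{n}}dV_{\theta_0}\bigr)^{-1}\int_Mu_\infty^{1+\frac{2}{n}}\psi_a\,dV_{\theta_0}$, and since $u_\infty>0$ is itself an eigenfunction with eigenvalue $r_\infty<\tfrac{n+2}{n}r_\infty$, hence lies in the span of $\{\psi_a\}_{a\in A}$, at least one of these derivatives is nonzero. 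Thus $F(\overline{u}_z)-r_\infty$ is only $O(|z|)$, it multiplies the $O(1)$ quantity $\int_M\overline{u}_z^{1+\frac{2}{n}}\phi_a\,dV_{\theta_0}$, and your ``error'' is therefore $O(|z|)$ --- the same order as the main term $H_a(z)$ --- so the absorption collapses. This can be repaired, but only by the route the paper takes: by (\ref{6.16}), $F(\overline{u}_z)-r_\infty$ equals $-\bigl(\int\overline{u}_z^{2+\frac{2}{n}}\bigr)^{-1}$ times the pairing of the Yamabe defect with $\overline{u}_z$, and expanding $\overline{u}_z=u_\infty+O(|z|)$ with $u_\infty=\sum_{b\in A}\alpha_b\psi_b$ shows this is $O(\sup_b|H_b(z)|)+O(|z|^2)$. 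You need that identity; criticality of $F$ does not do the job.

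Second, the degenerate case $\lambda_a=\tfrac{n+2}{n}r_\infty$, which is permitted by the definition of $A$, is not actually handled. The map $z\mapsto\sup_{a\in A}|H_a(z)|$ is not real-analytic, and a \L{}ojasiewicz-type inequality for the analytic map $H$ only bounds $\mathrm{dist}(z,H^{-1}(0))^N$ by $|H(z)|$ for some uncontrolled $N$, with $H^{-1}(0)$ possibly larger than $\{0\}$; neither fact yields $|z|^2\leq C\sup_a|H_a(z)|$, so the $O(|z|^2)$ remainders cannot be absorbed, and ``a possible reduction of $\gamma$'' goes the wrong way (the exponent $1+\gamma$ must exceed $1$). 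This is exactly why the paper does not Taylor-expand and invert: it uses the exact structural identities (\ref{6.12}), (\ref{6.15}) and (\ref{6.16}) coming from the constraints defining $\overline{u}_z$ to rewrite $\partial_{z_a}E(\overline{u}_z)$ directly in terms of the integrals $\int_M\psi_b(\cdots)\,dV_{\theta_0}$, leaving no $|z|$-remainder to absorb. To complete your argument you must either establish the clean bound $\sup_a|\partial_{z_a}E(\overline{u}_z)|\leq C\sup_a|H_a(z)|$ by such identities, or add a hypothesis excluding the borderline eigenvalue.
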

\begin{proof}
Note that the function $z\mapsto E(\overline{u}_z)$ is real analytic by Lemma \ref{Lemma6.4}.
According to the results of Lojasiewicz (see \cite{Simon}), there exists a real number $0<\gamma<1$ such that
\begin{equation}\label{6.10}
|E(\overline{u}_z)-E(u_\infty)|\leq C\sup_{a\in A}\left|\frac{\partial}{\partial z_a}E(\overline{u}_z)\right|^{1+\gamma}
\end{equation}
if $z$ is sufficiently small.
The partial derivatives of the function $z\mapsto E(\overline{u}_z)$ are given by
\begin{equation}\label{6.11}
\begin{split}
\frac{\partial}{\partial z_a}E(\overline{u}_z)
&=-2\frac{\int_M\widetilde{\psi}_{a,z}
\Big((2+\frac{2}{n})\Delta_{\theta_0}\overline{u}_z
-R_{\theta_0}\overline{u}_z
+r_\infty \overline{u}_z^{1+\frac{2}{n}}\Big)dV_{\theta_0}}{\Big(\int_M \overline{u}_z^{2+\frac{2}{n}}dV_{\theta_0}\Big)^{\frac{n}{n+1}}}\\
&\hspace{4mm}-2(F(\overline{u}_z)-r_\infty)\frac{\int_M \overline{u}_z^{1+\frac{2}{n}}\widetilde{\psi}_{a,z}\,dV_{\theta_0}}{\Big(\int_M \overline{u}_z^{2+\frac{2}{n}}dV_{\theta_0}\Big)^{\frac{n}{n+1}}},
\end{split}
\end{equation}
where $\widetilde{\psi}_{a,z}=\frac{\partial}{\partial z_a}\overline{u}_z$ for $a\in A$.
Since
$\displaystyle\int_Mu_\infty^{\frac{2}{n}}(\overline{u}_z-u_\infty)\psi_b\,dV_{\theta_0}=z_b$
for all $b\in A$ by Lemma \ref{Lemma6.4}, by differentiating it with respect to $z_a$, we obtain
\begin{equation}\label{6.12}
\int_Mu_\infty^{\frac{2}{n}}\widetilde{\psi}_{a,z}\psi_b\,dV_{\theta_0}=
\left\{
  \begin{array}{ll}
    1, & \hbox{if $a=b$,} \\
    0, & \hbox{if $a\neq b$.}
  \end{array}
\right.
\end{equation}
On the other hand, by Lemma \ref{Lemma6.4}, we have
\begin{equation}\label{6.13}
\Pi\Big((2+\frac{2}{n})\Delta_{\theta_0}\overline{u}_z
-R_{\theta_0}\overline{u}_z
+\frac{n+2}{n}r_\infty u_\infty^{\frac{2}{n}}\overline{u}_z\Big)=0.
\end{equation}
Differentiate it with respect to $z_a$, we get
\begin{equation}\label{6.14}
\Pi\Big((2+\frac{2}{n})\Delta_{\theta_0}\widetilde{\psi}_{a,z}
-R_{\theta_0}\widetilde{\psi}_{a,z}
+\frac{n+2}{n}r_\infty u_\infty^{\frac{2}{n}}\widetilde{\psi}_{a,z}\Big)=0.
\end{equation}
By definition of $\Pi$ in (\ref{6.3}) and (\ref{6.13}),
we have
\begin{equation}\label{6.15}
\begin{split}
&(2+\frac{2}{n})\Delta_{\theta_0}\overline{u}_z
-R_{\theta_0}\overline{u}_z
+\frac{n+2}{n}r_\infty u_\infty^{\frac{2}{n}}\overline{u}_z\\
&=\sum_{a\in A}\left(\int_M\psi_a\Big((2+\frac{2}{n})\Delta_{\theta_0}\overline{u}_z
-R_{\theta_0}\overline{u}_z
+\frac{n+2}{n}r_\infty u_\infty^{\frac{2}{n}}\overline{u}_z\Big)dV_{\theta_0}\right)u_\infty^{\frac{2}{n}}\psi_a.
\end{split}
\end{equation}
By definition of $F$, we also have
\begin{equation}\label{6.16}
\begin{split}
F(\overline{u}_z)-r_\infty
&=
-\frac{\int_M\big((2+\frac{2}{n})\Delta_{\theta_0}\overline{u}_z-R_{\theta_0}\overline{u}_z+r_\infty \overline{u}_z^{1+\frac{2}{n}}\big)\overline{u}_zdV_{\theta_0}}
{\int_M\overline{u}_z^{2+\frac{2}{n}}dV_{\theta_0}}.
\end{split}
\end{equation}
Substituting (\ref{6.16}) into (\ref{6.11}), we obtain
\begin{equation*}
\begin{split}
&\frac{\partial}{\partial z_a}E(\overline{u}_z)\\
&=-2\frac{\int_M\psi_a\Big((2+\frac{2}{n})\Delta_{\theta_0}\overline{u}_z
-R_{\theta_0}\overline{u}_z
+\frac{n+2}{n}r_\infty u_\infty^{\frac{2}{n}}\overline{u}_z\Big)dV_{\theta_0}}{\Big(\int_M \overline{u}_z^{2+\frac{2}{n}}dV_{\theta_0}\Big)^{\frac{n}{n+1}}}\\
&\hspace{4mm}+2
\frac{\sum_{b\in A}\left(\int_M\psi_b\Big((2+\frac{2}{n})\Delta_{\theta_0}\overline{u}_z
-R_{\theta_0}\overline{u}_z
+\frac{n+2}{n}r_\infty u_\infty^{\frac{2}{n}}\overline{u}_z\Big)dV_{\theta_0}\right)\int_Mu_\infty^{\frac{2}{n}}\overline{u}_z\psi_bdV_{\theta_0}}
{\int_M\overline{u}_z^{2+\frac{2}{n}}dV_{\theta_0}}\\
&\hspace{4mm}
\cdot\frac{\int_M \overline{u}_z^{1+\frac{2}{n}}\widetilde{\psi}_{a,z}\,dV_{\theta_0}}{\Big(\int_M \overline{u}_z^{2+\frac{2}{n}}dV_{\theta_0}\Big)^{\frac{n}{n+1}}},
\end{split}
\end{equation*}
where the second equality follows from (\ref{6.15}), and the last equality follows from (\ref{6.12}).
Thus we conclude that
$$\sup_{a\in A}\left|\frac{\partial}{\partial z_a}E(\overline{u}_z)\right|
\leq C\sup_{a\in A}
\left|\int_M\psi_a
\Big((2+\frac{2}{n})\Delta_{\theta_0}\overline{u}_z
-R_{\theta_0}\overline{u}_z
+r_\infty \overline{u}_z^{1+\frac{2}{n}}\Big)dV_{\theta_0}\right|.$$
Combining this with (\ref{6.10}), we prove Lemma \ref{Lemma6.5}.
\end{proof}

Now, for every $\nu\in\mathbb{N}$, we denote by $\mathcal{A}_\nu$ the set of all pairs
$(z, (x_k,\varepsilon_k,\alpha_k)_{1\leq k\leq m})\in \mathbb{R}^{|A|}\times(M\times\mathbb{R}_+\times\mathbb{R}_+)^m$
such that
\begin{equation}\label{6.17}
|z|\leq\zeta
\end{equation}
and
\begin{equation}\label{6.18}
d(x_k,x_{k,\nu}^*)\leq\varepsilon_{k,\nu}^*,\hspace{4mm}\frac{1}{2}\leq\frac{\varepsilon_{k}}{\varepsilon_{k,\nu}^*}\leq 2,
\hspace{4mm}  \frac{1}{2}\leq\alpha_k\leq 2
\end{equation}
for all $1\leq k\leq m$. Moreover, we can find
$(z_\nu, (x_{k,\nu},\varepsilon_{k,\nu},\alpha_{k,\nu})_{1\leq k\leq m})\in \mathcal{A}_\nu$
such that
\begin{equation}\label{6.19}
\begin{split}
&\int_M\left((2+\frac{2}{n})\Big|\nabla_{\theta_0}\Big(u_\nu-\overline{u}_{z_\nu}-\sum_{k=1}^m\alpha_{k,\nu}\overline{u}_{(x_{k,\nu},\varepsilon_{k,\nu})}\Big)\Big|_{\theta_0}^2
\right.\\
&\hspace{2cm}\left.
+R_{\theta_0}\Big(u_\nu-\overline{u}_{z_\nu}-\sum_{k=1}^m\alpha_{k,\nu}\overline{u}_{(x_{k,\nu},\varepsilon_{k,\nu})}\Big)^2\right)dV_{\theta_0}\\
&\leq
\int_M\left((2+\frac{2}{n})\Big|\nabla_{\theta_0}\Big(u_\nu-\overline{u}_{z}-\sum_{k=1}^m\alpha_{k}\overline{u}_{(x_{k},\varepsilon_{k})}\Big)\Big|_{\theta_0}^2
\right.\\
&\hspace{2cm}\left.
+R_{\theta_0}\Big(u_\nu-\overline{u}_{z}-\sum_{k=1}^m\alpha_{k}\overline{u}_{(x_{k},\varepsilon_{k})}\Big)^2\right)dV_{\theta_0}
\end{split}
\end{equation}
for all $(z,(x_k,\varepsilon_k,\alpha_k)_{1\leq k\leq m})\in \mathcal{A}_\nu$.
Then we can obtain the following two propositions, which are similar to Proposition \ref{Prop5.1} and Proposition \ref{Prop5.2}.

\begin{prop}\label{Prop6.6}
(i)  For all $i\neq j$, we have
$$\frac{\varepsilon_{i,\nu}^2}{\varepsilon_{j,\nu}^2}+\frac{\varepsilon_{j,\nu}^2}{\varepsilon_{i,\nu}^2}
+\frac{d(x_{i,\nu},x_{j,\nu})^4}{\varepsilon_{i,\nu}^2\varepsilon_{j,\nu}^2}\rightarrow\infty\hspace{2
mm}\mbox{ as }\hspace{2 mm}\nu\rightarrow\infty,$$ %
(ii) We have
$$\Big\|u_\nu-\overline{u}_{z_\nu}-\sum_{k=1}^m\alpha_{k,\nu}\overline{u}_{(x_{k,\nu},\varepsilon_{k,\nu})}\Big\|_{S^2_1(M)}\rightarrow
0\hspace{2 mm}\mbox{ as }\hspace{2 mm}\nu\rightarrow\infty.$$
\end{prop}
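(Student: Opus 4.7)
The plan is to mimic the proof of Proposition \ref{Prop5.1}, incorporating the extra term $\overline{u}_{z_\nu}$ in a transparent way. Both parts will be deduced from the minimizing property (\ref{6.19}) by a carefully chosen trial competitor drawn from $\mathcal{A}_\nu$.

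For part (i), the constraints (\ref{6.18}) have exactly the same form as (\ref{5.1}), so the chain of elementary inequalities used in the proof of Proposition \ref{Prop5.1}(i) carries over verbatim. Using $d(x_{k,\nu},x_{k,\nu}^*)\leq \varepsilon_{k,\nu}^*$ and $\tfrac{1}{2}\leq \varepsilon_{k,\nu}/\varepsilon_{k,\nu}^*\leq 2$, together with the triangle inequality for the Carnot--Carath\'eodory distance $d(x^*_{i,\nu},x^*_{j,\nu}) \leq d(x_{i,\nu},x_{j,\nu}) + \varepsilon^*_{i,\nu} + \varepsilon^*_{j,\nu}$, I obtain
$$
32\Bigl(\frac{\varepsilon_{i,\nu}}{\varepsilon_{j,\nu}}+\frac{\varepsilon_{j,\nu}}{\varepsilon_{i,\nu}}\Bigr)+8\frac{d(x_{i,\nu},x_{j,\nu})^2}{\varepsilon_{i,\nu}\varepsilon_{j,\nu}}\geq \frac{\varepsilon_{i,\nu}^*}{\varepsilon_{j,\nu}^*}+\frac{\varepsilon_{j,\nu}^*}{\varepsilon_{i,\nu}^*}+\frac{d(x_{i,\nu}^*,x_{j,\nu}^*)^2}{\varepsilon_{i,\nu}^*\varepsilon_{j,\nu}^*},
$$
whose right-hand side tends to $\infty$ by Theorem \ref{Theorem4.1}(ii). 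The Cauchy--Schwarz inequality then upgrades this to the squared version appearing in the statement.

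For part (ii), I compare the minimizing tuple against the trial competitor $(z,(x_k,\varepsilon_k,\alpha_k))=(z^*,(x^*_{k,\nu},\varepsilon^*_{k,\nu},1)_{1\leq k\leq m})$, where $z^*\in\mathbb{R}^{|A|}$ is the fixed value supplied by Lemma \ref{Lemma6.4} with $\overline{u}_{z^*}=u_\infty$ (in the natural implicit-function-theorem reading, $z^*=0$). By (\ref{6.19}) the expression to be controlled is then dominated by
$$
\int_M\left((2+\tfrac{2}{n})\Bigl|\nabla_{\theta_0}\Bigl(u_\nu-u_\infty-\sum_{k=1}^m \overline{u}_{(x^*_{k,\nu},\varepsilon^*_{k,\nu})}\Bigr)\Bigr|_{\theta_0}^2+R_{\theta_0}\Bigl(u_\nu-u_\infty-\sum_{k=1}^m \overline{u}_{(x^*_{k,\nu},\varepsilon^*_{k,\nu})}\Bigr)^2\right)dV_{\theta_0},
$$
which tends to zero by Theorem \ref{Theorem4.1}(iii).

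The only real subtlety, and the main place requiring care, is to ensure that the trial value $z^*$ is admissible, that is, that it lies in the ball $|z|\leq\zeta$ from (\ref{6.17}). Since $u_\infty$ and the family $\overline{u}_z$ are $\nu$-independent, $z^*$ is a fixed constant, so admissibility is a once-and-for-all condition that may be arranged by choosing $\zeta$ in Lemma \ref{Lemma6.4} appropriately. Beyond this, the argument is pure bookkeeping on top of the proof of Proposition \ref{Prop5.1}: no new analytic input is required.
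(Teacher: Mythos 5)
Your proof is correct and is exactly the adaptation of the proof of Proposition \ref{Prop5.1} that the paper intends (the paper omits the argument, merely noting the analogy). Your choice of trial competitor is the right one: since $\overline{u}_0=u_\infty$ by the construction in Lemma \ref{Lemma6.4} (note that $u_\infty$ is itself proportional to an eigenfunction $\psi_{a_0}$ with $\lambda_{a_0}=r_\infty$, so the projection condition holds), the value $z^*=0$ is trivially admissible and no adjustment of $\zeta$ is needed.
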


\begin{prop}\label{Prop6.7}
We have
$$|z_\nu|=o(1)$$
and
\begin{equation*}
d(x_{k,\nu},x_{k,\nu}^*)\leq o(1)\varepsilon_{k,\mu}^*,\hspace{4mm}\frac{\varepsilon_{k,\nu}}{\varepsilon_{k,\mu}^*}=1+o(1),
\hspace{4mm}  \alpha_{k,\nu}=1+o(1)
\end{equation*}
for all $1\leq k\leq m$. In particular, $(z_\nu,(x_{k,\nu},\varepsilon_{k,\nu},\alpha_{k,\nu})_{1\leq k\leq m})$
is an interior point of $\mathcal{A}_\nu$ if $\nu$ is sufficiently large.
\end{prop}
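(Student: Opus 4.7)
The plan is to mimic the strategy of Proposition \ref{Prop5.2}, with the extra ingredient that we must control the finite-dimensional parameter $z_\nu$ separately. The key principle is that we have two $S^2_1$--convergent descriptions of $u_\nu$: the one from the concentration--compactness Theorem \ref{Theorem4.1}, and the one from the minimization (\ref{6.19}) encoded in Proposition \ref{Prop6.6}(ii). Subtracting them gives
\begin{equation*}
(\overline{u}_{z_\nu}-u_\infty) + \sum_{k=1}^m\bigl(\alpha_{k,\nu}\overline{u}_{(x_{k,\nu},\varepsilon_{k,\nu})}-\overline{u}_{(x^*_{k,\nu},\varepsilon^*_{k,\nu})}\bigr)\to 0\quad\text{in } S^2_1(M),
\end{equation*}
and I will mine this identity successively for each parameter.

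First I would extract $|z_\nu|=o(1)$ by testing the displayed identity against the fixed smooth $L^\infty$ function $u_\infty^{\frac{2}{n}}\psi_a$ for each $a\in A$. The pairing of this function with any bubble $\overline{u}_{(y,\varepsilon)}$ tends to $0$ as $\varepsilon\to 0$: this is a routine Hölder/scaling computation using the explicit form of the test functions in Appendix \ref{Appendix} and the fact that $\overline{u}_{(y,\varepsilon)}$ converges weakly to $0$ in $L^{2+\frac{2}{n}}$. Since $\varepsilon^*_{k,\nu}\to 0$ and (by Proposition \ref{Prop6.6}(i)) also $\varepsilon_{k,\nu}\to 0$, each bubble term in the identity contributes $o(1)$ to the pairing. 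On the other hand, the defining property of $\overline{u}_{z_\nu}$ from Lemma \ref{Lemma6.4} gives $\int_M u_\infty^{\frac{2}{n}}(\overline{u}_{z_\nu}-u_\infty)\psi_a\,dV_{\theta_0}=z_{\nu,a}$. Combining, $z_{\nu,a}\to 0$ for every $a\in A$, hence $|z_\nu|\to 0$.

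With $|z_\nu|=o(1)$ in hand, the real--analyticity of $z\mapsto \overline{u}_z$ (Lemma \ref{Lemma6.4}) yields $\overline{u}_{z_\nu}\to u_\infty$ in $S^2_1(M)$. Substituting into the subtracted identity reduces matters to
\begin{equation*}
\Bigl\|\sum_{k=1}^m\bigl(\alpha_{k,\nu}\overline{u}_{(x_{k,\nu},\varepsilon_{k,\nu})}-\overline{u}_{(x^*_{k,\nu},\varepsilon^*_{k,\nu})}\bigr)\Bigr\|_{S^2_1(M)}\to 0,
\end{equation*}
which is exactly the input used in the proof of Proposition \ref{Prop5.2}. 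I then invoke that same argument verbatim---relying on the near--orthogonality of distinct bubbles given by Proposition \ref{Prop6.6}(i) and the explicit scaling of the test functions---to conclude $d(x_{k,\nu},x^*_{k,\nu})=o(\varepsilon^*_{k,\nu})$, $\varepsilon_{k,\nu}/\varepsilon^*_{k,\nu}=1+o(1)$, and $\alpha_{k,\nu}=1+o(1)$. Because $|z_\nu|\to 0 < \zeta$ and the remaining inequalities in (\ref{6.17})--(\ref{6.18}) become strict, $(z_\nu,(x_{k,\nu},\varepsilon_{k,\nu},\alpha_{k,\nu}))$ lies in the interior of $\mathcal{A}_\nu$ for large $\nu$.

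The only genuinely new step compared to Proposition \ref{Prop5.2} is the proof that $z_\nu\to 0$, and the main obstacle there is verifying the bubble--pairing estimate $\int_M u_\infty^{\frac{2}{n}}\psi_a\,\overline{u}_{(y_\nu,\varepsilon_\nu)}\,dV_{\theta_0}\to 0$ when $\varepsilon_\nu\to 0$. Since $u_\infty^{\frac{2}{n}}\psi_a\in L^\infty(M)$ (by elliptic regularity applied to (\ref{6.1})), this follows from Hölder's inequality together with $\|\overline{u}_{(y_\nu,\varepsilon_\nu)}\|_{L^1(M)}=O(\varepsilon_\nu^n)=o(1)$, a direct consequence of the scaling of the test functions. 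Beyond this minor point, the rest of the argument is structurally identical to the $u_\infty\equiv 0$ case.
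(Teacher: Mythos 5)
Your proposal is correct and follows the route the paper intends: the paper omits the proof of Proposition \ref{Prop6.7}, stating only that it is obtained as in Proposition \ref{Prop5.2} by subtracting the two $S^2_1$-decompositions of $u_\nu$ coming from Theorem \ref{Theorem4.1} and from the minimization (\ref{6.19}). The one genuinely new step you identify and supply --- extracting $z_\nu=o(1)$ by pairing with $u_\infty^{\frac{2}{n}}\psi_a$ and using $\int_M u_\infty^{\frac{2}{n}}\psi_a\,\overline{u}_{(x_{k,\nu},\varepsilon_{k,\nu})}\,dV_{\theta_0}=O(\varepsilon_{k,\nu}^n)$ --- is exactly the estimate the paper itself invokes in (\ref{6.34}), so your argument is consistent with, and slightly more complete than, the paper's treatment.
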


Now we decompose the function $u_\nu$ as
\begin{equation}\label{6.20.5}
u_\nu=v_\nu+w_\nu
\end{equation}
where
\begin{equation}\label{6.20}
v_\nu=\overline{u}_{z_\nu}+\sum_{k=1}^m\alpha_{k,\nu}\overline{u}_{(x_{k,\nu},\varepsilon_{k,\nu})}
\end{equation}
and
\begin{equation}\label{6.21}
w_\nu=u_\nu-\overline{u}_{z_\nu}-\sum_{k=1}^m\alpha_{k,\nu}\overline{u}_{(x_{k,\nu},\varepsilon_{k,\nu})}.
\end{equation}
By Proposition \ref{Prop6.6}(ii), the function $w_\nu$ satisfies
\begin{equation}\label{6.22}
\int_M\left((2+\frac{2}{n})|\nabla_{\theta_0}w_\nu|^2_{\theta_0}+R_{\theta_0}w_\nu^2\right)dV_{\theta_0}=o(1).
\end{equation}

\begin{prop}\label{Prop6.8}
For every $a\in A$, we have
\begin{equation}\label{6.23}
\left|\int_Mu_\infty^{\frac{2}{n}}\psi_a w_\nu\, dV_{\theta_0}\right|\leq o(1)
\int_M|w_\nu| dV_{\theta_0}.
\end{equation}
\end{prop}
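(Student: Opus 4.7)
The plan is to exploit the first-order optimality condition from (\ref{6.19}) in the direction of the variable $z_a$. By Proposition \ref{Prop6.7}, the minimizer $(z_\nu,(x_{k,\nu},\varepsilon_{k,\nu},\alpha_{k,\nu})_{1\leq k\leq m})$ is an interior point of $\mathcal{A}_\nu$ for $\nu$ large, so differentiating the integrand in (\ref{6.19}) with respect to $z_a$ at this point, and using that the argument of the gradient equals $w_\nu$ at the minimizer, yields
\begin{equation*}
0=\int_M\Big((2+\tfrac{2}{n})\langle\nabla_{\theta_0}w_\nu,\nabla_{\theta_0}\widetilde{\psi}_{a,z_\nu}\rangle_{\theta_0}+R_{\theta_0}w_\nu\widetilde{\psi}_{a,z_\nu}\Big)dV_{\theta_0},
\end{equation*}
where $\widetilde{\psi}_{a,z}=\partial \overline{u}_z/\partial z_a$. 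Integration by parts then gives $\int_M w_\nu\bigl((2+\tfrac{2}{n})\Delta_{\theta_0}\widetilde{\psi}_{a,z_\nu}-R_{\theta_0}\widetilde{\psi}_{a,z_\nu}\bigr)dV_{\theta_0}=0$.

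Next I would add and subtract $\tfrac{n+2}{n}r_\infty u_\infty^{2/n}\widetilde{\psi}_{a,z_\nu}w_\nu$ inside the integrand so that the operator appearing becomes $L\phi=(2+\tfrac{2}{n})\Delta_{\theta_0}\phi-R_{\theta_0}\phi+\tfrac{n+2}{n}r_\infty u_\infty^{2/n}\phi$. By (\ref{6.14}), $\Pi(L\widetilde{\psi}_{a,z_\nu})=0$, so by the definition (\ref{6.3}) of $\Pi$ there exist constants $c^\nu_{ab}=\int_M\psi_b\,L\widetilde{\psi}_{a,z_\nu}\,dV_{\theta_0}$ with
\begin{equation*}
L\widetilde{\psi}_{a,z_\nu}=\sum_{b\in A}c^\nu_{ab}\,u_\infty^{2/n}\psi_b.
\end{equation*}
Writing $q_b^\nu=\int_M u_\infty^{2/n}\psi_b w_\nu\,dV_{\theta_0}$, the optimality identity therefore becomes the finite linear system $\sum_{b\in A}c^\nu_{ab}q_b^\nu=\tfrac{n+2}{n}r_\infty\int_M u_\infty^{2/n}\widetilde{\psi}_{a,z_\nu}w_\nu\,dV_{\theta_0}$ for each $a\in A$.

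The final step is to pass to the limit $\nu\to\infty$. Since $|z_\nu|=o(1)$ by Proposition \ref{Prop6.7} and $z\mapsto\overline{u}_z$ is real analytic by Lemma \ref{Lemma6.4}, one has $\widetilde{\psi}_{a,z_\nu}\to\widetilde{\psi}_{a,0}$ uniformly. A short calculation identifies $\widetilde{\psi}_{a,0}=\psi_a$: differentiating the defining constraints of Lemma \ref{Lemma6.4} at $z=0$ gives $\int_M u_\infty^{2/n}\widetilde{\psi}_{a,0}\psi_b\,dV_{\theta_0}=\delta_{ab}$ for $b\in A$ together with $\Pi(L\widetilde{\psi}_{a,0})=0$, and expanding $\widetilde{\psi}_{a,0}$ in the orthonormal basis $\{\psi_c\}$ and using (\ref{6.1}) together with the fact that $\lambda_c-\tfrac{n+2}{n}r_\infty\neq 0$ for $c\notin A$ forces the expansion to collapse to $\psi_a$. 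Consequently $c^\nu_{ab}\to(-\lambda_a+\tfrac{n+2}{n}r_\infty)\delta_{ab}$, and substituting into the linear system yields
\begin{equation*}
-\lambda_a q_a^\nu=o(1)\Big(\int_M|w_\nu|\,dV_{\theta_0}+\sum_{b\in A}|q_b^\nu|\Big).
\end{equation*}
Since $R_{\theta_0}>0$ by (\ref{11}), every eigenvalue $\lambda_a$ of the operator in (\ref{6.1}) is strictly positive, so $\min_{a\in A}\lambda_a>0$; taking the maximum over $a\in A$ and absorbing the last term into the left-hand side for $\nu$ large produces the desired bound $\max_{a\in A}|q_a^\nu|\leq o(1)\int_M|w_\nu|\,dV_{\theta_0}$, which is (\ref{6.23}).

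The main obstacle is the identification $\widetilde{\psi}_{a,0}=\psi_a$, without which the limiting system might be singular and fail to deliver a useful bound on $q_a^\nu$; this identification rests critically on the eigenvalue gap built into the definition of $A$ and on the orthogonality constraints enforced by Lemma \ref{Lemma6.4}, together with the positivity of the Webster scalar curvature (\ref{11}) that ensures the eigenvalues $\lambda_a$ are bounded away from zero.
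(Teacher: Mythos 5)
Your proof is correct, and it starts from the same place as the paper's: the first-variation identity in the $z_a$ direction, which by Proposition \ref{Prop6.7} is legitimate because the minimizer is interior, and which yields $\int_M w_\nu\bigl((2+\tfrac{2}{n})\Delta_{\theta_0}\widetilde{\psi}_{a,z_\nu}-R_{\theta_0}\widetilde{\psi}_{a,z_\nu}\bigr)dV_{\theta_0}=0$. Where you diverge is in the middle: the paper simply subtracts the eigenvalue equation (\ref{6.1}) for $\psi_a$ from this identity, obtaining $\lambda_a\int_M u_\infty^{2/n}\psi_a w_\nu\,dV_{\theta_0}=\int_M\bigl((2+\tfrac{2}{n})\Delta_{\theta_0}(\widetilde{\psi}_{a,z_\nu}-\psi_a)-R_{\theta_0}(\widetilde{\psi}_{a,z_\nu}-\psi_a)\bigr)w_\nu\,dV_{\theta_0}$ and concluding at once from $\lambda_a>0$ and the smallness of $\widetilde{\psi}_{a,z_\nu}-\psi_a$; you instead route the identity through the projection relation (\ref{6.14}), expand $L\widetilde{\psi}_{a,z_\nu}$ over the finite set $\{u_\infty^{2/n}\psi_b\}_{b\in A}$, and solve a finite linear system whose coefficient matrix converges to the diagonal matrix $(-\lambda_a+\tfrac{n+2}{n}r_\infty)\delta_{ab}$. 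The two arguments are equivalent in substance and rest on the same three ingredients (optimality, $\widetilde{\psi}_{a,z_\nu}\to\psi_a$, and $\lambda_a>0$), but your detour has one genuine payoff: you make explicit the identification $\widetilde{\psi}_{a,0}=\psi_a$ via the differentiated constraints of Lemma \ref{Lemma6.4} and the eigenvalue gap $\lambda_c>\tfrac{n+2}{n}r_\infty$ for $c\notin A$, a step the paper uses tacitly when it treats $\widetilde{\psi}_{a,z_\nu}-\psi_a$ as negligible. Two small remarks: the convergence $\widetilde{\psi}_{a,z_\nu}\to\psi_a$ must be strong enough to control the second-order quantities $c^\nu_{ab}$ (respectively $L_0(\widetilde{\psi}_{a,z_\nu}-\psi_a)$ in the paper's version), which follows from the real analyticity of $z\mapsto\overline{u}_z$ as a map into a suitable function space rather than from mere uniform convergence; and your final absorption step can be shortcut, since $|q_b^\nu|\le C\int_M|w_\nu|\,dV_{\theta_0}$ trivially, so the term $o(1)\sum_{b\in A}|q_b^\nu|$ is already of the desired form.
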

\begin{proof}
As above, let $\widetilde{\psi}_{a,z}=\frac{\partial}{\partial z_a}\overline{u}_z$. By definition
of
$(z_\nu, (x_{k,\nu},\varepsilon_{k,\nu},\alpha_{k,\nu})_{1\leq k\leq m})$
in (\ref{6.19}), we have
\begin{equation*}
\begin{split}
0&=\frac{1}{2}\frac{d}{dz_a}
\int_M\left((2+\frac{2}{n})\Big|\nabla_{\theta_0}\Big(u_\nu-\overline{u}_{z_\nu}-\sum_{k=1}^m\alpha_{k,\nu}\overline{u}_{(x_{k,\nu},\varepsilon_{k,\nu})}\Big)\Big|_{\theta_0}^2
\right.
\\
&\left.\left.\hspace{2cm}+R_{\theta_0}\Big(u_\nu-\overline{u}_{z_\nu}-\sum_{k=1}^m\alpha_{k,\nu}\overline{u}_{(x_{k,\nu},\varepsilon_{k,\nu})}\Big)^2\right)dV_{\theta_0}
\right|_{z=z_\nu}\\
&=\int_M\Big((2+\frac{2}{n})\Delta_{\theta_0}\widetilde{\psi}_{a,z}
-R_{\theta_0}\widetilde{\psi}_{a,z}\Big)w_\nu\,dV_{\theta_0}.
\end{split}
\end{equation*}
This implies together with Proposition \ref{Prop6.1}(i) that
\begin{equation*}
\begin{split}
\lambda_a\int_Mu_\infty^{\frac{2}{n}}\psi_a w_\nu\, dV_{\theta_0}
&=\int_M\Big((2+\frac{2}{n})\Delta_{\theta_0}(\widetilde{\psi}_{a,z}-\psi_a)
-R_{\theta_0}(\widetilde{\psi}_{a,z}-\psi_a)\Big)w_\nu\,dV_{\theta_0}
\end{split}
\end{equation*}
Since $\lambda_a>0$, we conclude that
$$\left|\int_Mu_\infty^{\frac{2}{n}}\psi_a w_\nu\, dV_{\theta_0}\right|\leq o(1)
\int_M|w_\nu| dV_{\theta_0}$$
for all $a\in A$. This proves the assertion.
\end{proof}

\begin{prop}\label{Prop6.9}
If $\nu$ is sufficiently large, then
$$\frac{n+2}{n}r_\infty\int_M\Big(u_\infty^{\frac{2}{n}}+\sum_{k=1}^m\overline{u}_{(x_{k,\nu},\varepsilon_{k,\nu})}^{\frac{2}{n}}\Big)w_\nu^2\,dV_{\theta_0}\leq (1-c)
\int_M\left((2+\frac{2}{n})|\nabla_{\theta_0}w_\nu|^2_{\theta_0}+R_{\theta_0}w_\nu^2\right)dV_{\theta_0}$$
for some positive constant $c$ which is independent of $\nu$.
\end{prop}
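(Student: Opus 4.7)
The proof will follow the contradiction strategy of Proposition \ref{Prop5.4}, extended to accommodate the additional background term coming from $u_\infty>0$. Assume the inequality fails; then after rescaling we obtain a sequence $\{\widetilde{w}_\nu\}$ satisfying
\begin{equation*}
\int_M\Big((2+\tfrac{2}{n})|\nabla_{\theta_0}\widetilde{w}_\nu|^2_{\theta_0}+R_{\theta_0}\widetilde{w}_\nu^2\Big)dV_{\theta_0}=1
\end{equation*}
and
\begin{equation*}
\lim_{\nu\to\infty}\frac{n+2}{n}r_\infty\int_M\Big(u_\infty^{\frac{2}{n}}+\sum_{k=1}^m\overline{u}_{(x_{k,\nu},\varepsilon_{k,\nu})}^{\frac{2}{n}}\Big)\widetilde{w}_\nu^2\,dV_{\theta_0}\geq 1.
\end{equation*}
As in Proposition \ref{Prop5.4}, I choose a sequence $N_\nu\to\infty$ with $N_\nu\varepsilon_{k,\nu}\to 0$ and with the separation property (\ref{5.13}), and split $M$ into the $m$ bubble regions $\Omega_{j,\nu}$ defined by (\ref{5.14}) together with the exterior region $\Omega_{0,\nu}=M\setminus\bigcup_{j=1}^m B_{N_\nu\varepsilon_{j,\nu}}(x_{j,\nu})$. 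Passing to a subsequence, at least one of the following holds: (a) some bubble index $j\in\{1,\dots,m\}$ carries a definite share of the weighted mass against $\overline{u}_{(x_{j,\nu},\varepsilon_{j,\nu})}^{2/n}$, or (b) a definite share comes from $\Omega_{0,\nu}$ weighted by $u_\infty^{2/n}$.

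In case (a) the analysis is word-for-word the bubble analysis of Proposition \ref{Prop5.4}. Rescale $\widetilde{w}_\nu$ at scale $\varepsilon_{j,\nu}$ around $x_{j,\nu}$ to obtain a weak limit $\widehat{w}:\mathbb{H}^n\to\mathbb{R}$. The optimality conditions coming from differentiating (\ref{6.19}) in $(\alpha_{k},\varepsilon_{k},x_k)$ at $(\alpha_{k,\nu},\varepsilon_{k,\nu},x_{k,\nu})$ yield the five orthogonality identities (\ref{5.19})--(\ref{5.21}) for $\widehat{w}$; note that the extra $\overline{u}_{z_\nu}$ term in $v_\nu$ contributes only lower-order pieces since $u_\infty$ is bounded while $\overline{u}_{(x_{j,\nu},\varepsilon_{j,\nu})}$ blows up at scale $\varepsilon_{j,\nu}^{-n}$. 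Pulling back to $S^{2n+1}$ via the Cayley transform, the limit $\widetilde{u}$ is orthogonal to $\{1,x_k,\overline{x}_k\}_{k=1}^{n+1}$ and satisfies a reversed Poincar\'e inequality with constant $\tfrac{(n+2)n}{4}$; the CR spectral gap $\lambda_2>n/2$ then forces $\widetilde{u}\equiv 0$, a contradiction.

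In case (b) I pass to a weak limit $\widehat{w}_\infty$ in $S^2_1(M)$ of $\widetilde{w}_\nu$ \emph{without} rescaling. The bubble contribution disappears in this limit because Proposition \ref{Prop6.6}(i) forces $\sum_k\overline{u}_{(x_{k,\nu},\varepsilon_{k,\nu})}^{2/n}\widetilde{w}_\nu^2$ to concentrate at shrinking scales around the $x_{k,\nu}$, so the weak limit gives
\begin{equation*}
\int_M\!\Big((2+\tfrac{2}{n})|\nabla_{\theta_0}\widehat{w}_\infty|^2_{\theta_0}+R_{\theta_0}\widehat{w}_\infty^2\Big)dV_{\theta_0}\leq \frac{n+2}{n}r_\infty\int_M u_\infty^{\frac{2}{n}} \widehat{w}_\infty^2\,dV_{\theta_0},\quad \int_M u_\infty^{\frac{2}{n}}\widehat{w}_\infty^2\,dV_{\theta_0}>0.
\end{equation*}
Proposition \ref{Prop6.8}, applied with $w_\nu$ replaced by its rescaling and combined with $\int_M|\widetilde{w}_\nu|\,dV_{\theta_0}\leq C$, yields in the weak limit the orthogonality conditions
\begin{equation*}
\int_M u_\infty^{\frac{2}{n}}\psi_a\widehat{w}_\infty\,dV_{\theta_0}=0 \quad\text{for every } a\in A.
\end{equation*}
Expanding $\widehat{w}_\infty=\sum_a c_a\psi_a$ in the $L^2(u_\infty^{2/n}\,dV_{\theta_0})$-orthonormal basis of Proposition \ref{Prop6.1} and using $c_a=0$ for $a\in A$ together with $\lambda_a>\tfrac{n+2}{n}r_\infty$ for $a\notin A$, the inequality above forces $c_a=0$ for all $a\notin A$ as well, hence $\widehat{w}_\infty\equiv 0$, again a contradiction.

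The main obstacle is the bookkeeping in the dichotomy: one must track that in case (a) the background term $u_\infty^{2/n}\widetilde{w}_\nu^2$ is negligible at the bubble scale (because $u_\infty$ is bounded while $\overline{u}_{(x_{j,\nu},\varepsilon_{j,\nu})}^{2/n}$ scales like $\varepsilon_{j,\nu}^{-2}$), while in case (b) the bubble terms contribute nothing to the background weak limit; and that the orthogonality in Proposition \ref{Prop6.8} survives the passage to the weak limit in the correct normalization, so that the spectral gap argument can close the loop.
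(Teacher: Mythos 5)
Your proposal is correct and follows essentially the same route as the paper: the same rescaling and contradiction setup, the same dichotomy between a bubble region carrying the weighted mass (handled exactly as in Proposition \ref{Prop5.4}) and the exterior region weighted by $u_\infty^{2/n}$ (handled by passing to the weak limit, invoking Proposition \ref{Prop6.8} for the orthogonality against $\psi_a$, $a\in A$, and closing with the spectral gap). The only difference is cosmetic ordering of the two cases.
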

\begin{proof}
Suppose this is not true. Upon rescaling, we obtain a sequence
of functions $\{\widetilde{w}_\nu:\nu\in\mathbb{N}\}$ such that
\begin{equation}\label{6.39}
\int_M\left((2+\frac{2}{n})|\nabla_{\theta_0}\widetilde{w}_\nu|^2_{\theta_0}+R_{\theta_0}\widetilde{w}_\nu^2\right)dV_{\theta_0}=1
\end{equation}
and
\begin{equation}\label{6.40}
\lim_{\nu\to\infty}\frac{n+2}{n}r_\infty\int_M
\Big(u_\infty^{\frac{2}{n}}+\sum_{k=1}^m\overline{u}_{(x_{k,\nu},\varepsilon_{k,\nu})}^{\frac{2}{n}}\Big)\widetilde{w}_\nu^2\,dV_{\theta_0}\geq 1.
\end{equation}
Note that
\begin{equation}\label{6.41}
\int_M|\widetilde{w}_\nu|^{2+\frac{2}{n}}dV_{\theta_0}\leq Y(M,\theta_0)^{-\frac{n+1}{n}}
\end{equation}
by (\ref{6.39}). In view of Proposition \ref{Prop6.6}, we can find a sequence $\{N_\nu:\nu\in\mathbb{N}\}$ such that
$N_\nu\to\infty$, $N_\nu\varepsilon_{j,\nu}\to 0$ for all $1\leq j\leq m$, and
\begin{equation}\label{6.42}
\frac{1}{N_\nu}\frac{\varepsilon_{j,\nu}+d(x_{i,\nu},x_{j,\nu})}{\varepsilon_{i,\nu}}\to\infty
\end{equation}
for all $i<j$. Let
\begin{equation}\label{6.43}
\Omega_{j,\nu}=B_{N_\nu\varepsilon_{j,\nu}}\setminus\bigcup_{i=1}^{j-1}B_{N_\nu\varepsilon_{i,\nu}}(x_{i,\nu})
\end{equation}
for every $1\leq j\leq m$. In view of (\ref{6.39}) and (\ref{6.40}), there are two cases to be considered:

\textit{Case 1.} Suppose that
\begin{equation}\label{6.44}
\lim_{\nu\to\infty}\int_Mu_\infty^{\frac{2}{n}}\widetilde{w}_\nu^2\,dV_{\theta_0}>0
\end{equation}
and
\begin{equation}\label{6.45}
\lim_{\nu\to\infty}\int_{M\setminus\bigcup_{j=1}^m\Omega_{j,\nu}}\left((2+\frac{2}{n})|\nabla_{\theta_0}\widetilde{w}_\nu|^2_{\theta_0}+R_{\theta_0}\widetilde{w}_\nu^2\right)dV_{\theta_0}
\leq\lim_{\nu\to\infty}\frac{n+2}{n}r_\infty\int_Mu_\infty^{\frac{2}{n}}\widetilde{w}_\nu^2\,dV_{\theta_0}.
\end{equation}
Let $\widetilde{w}$ be the weak limit of the sequence $\{\widetilde{w}_\nu: \nu\in\mathbb{N}\}$. Then, by
(\ref{6.44}) and (\ref{6.45}),
the function $\widetilde{w}$ satisfies
\begin{equation}\label{6.46}
\int_Mu_\infty^{\frac{2}{n}}\widetilde{w}^2\,dV_{\theta_0}>0
\end{equation}
and
$$\int_{M}\left((2+\frac{2}{n})|\nabla_{\theta_0}\widetilde{w}|^2_{\theta_0}+R_{\theta_0}\widetilde{w}^2\right)dV_{\theta_0}
\leq\frac{n+2}{n}r_\infty\int_Mu_\infty^{\frac{2}{n}}\widetilde{w}^2\,dV_{\theta_0}.$$
This implies
$$\sum_{a\in\mathbb{N}}\lambda_a\left(
\int_Mu_\infty^{\frac{2}{n}}\psi_a\widetilde{w}\,dV_{\theta_0}\right)^2
\leq\sum_{a\in\mathbb{N}}\frac{n+2}{n}r_\infty
\left(\int_Mu_\infty^{\frac{2}{n}}\psi_a\widetilde{w}\,dV_{\theta_0}\right)^2.$$
Using Proposition \ref{Prop6.8}, we obtain
$$
\int_Mu_\infty^{\frac{2}{n}}\psi_a\widetilde{w}\,dV_{\theta_0}=0$$
for all $a\in A$. Thus, we conclude that $\widetilde{w}(x)=0$ for all $x\in M$,
which contradicts to (\ref{6.46}).

\textit{Case 2.} Suppose that there exists an integer $1\leq j\leq m$ such that
\begin{equation*}
\lim_{\nu\to\infty}\int_M\overline{u}_{(x_{j,\nu},\varepsilon_{j,\nu})}^{\frac{2}{n}}\widetilde{w}_\nu^2\,dV_{\theta_0}>0
\end{equation*}
and
\begin{equation*}
\lim_{\nu\to\infty}\int_{\Omega_{j,\nu}}\left((2+\frac{2}{n})|\nabla_{\theta_0}\widetilde{w}_\nu|^2_{\theta_0}+R_{\theta_0}\widetilde{w}_\nu^2\right)dV_{\theta_0}
\leq\lim_{\nu\to\infty}\frac{n+2}{n}r_\infty\int_M\overline{u}_{(x_{j,\nu},\varepsilon_{j,\nu})}^{\frac{2}{n}}\widetilde{w}_\nu^2\,dV_{\theta_0}.
\end{equation*}
Now we can follow the same arguments in the proof of Proposition \ref{Prop5.4} to finish the proof.
More precisely, we can construct a function $\widehat{w}:\mathbb{H}^n\to\mathbb{R}$
which satisfies (\ref{5.17})-(\ref{5.21})
as in the proof of Proposition \ref{Prop5.4}, which is a contraction.
This proves the assertion.
\end{proof}

\begin{cor}\label{Cor6.10}
If $\nu$ is sufficiently large, then
$$\frac{n+2}{n}r_\infty\int_Mv_\nu^{\frac{2}{n}}w_\nu^2\,dV_{\theta_0}\leq (1-c)
\int_M\left((2+\frac{2}{n})|\nabla_{\theta_0}w_\nu|^2_{\theta_0}+R_{\theta_0}w_\nu^2\right)dV_{\theta_0}$$
for some positive constant $c$ which is independent of $\nu$.
\end{cor}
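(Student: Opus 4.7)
The plan is to derive Corollary \ref{Cor6.10} from Proposition \ref{Prop6.9} by showing that
$$\int_M \Bigl|v_\nu^{\frac{2}{n}} - u_\infty^{\frac{2}{n}} - \sum_{k=1}^m \overline{u}_{(x_{k,\nu},\varepsilon_{k,\nu})}^{\frac{2}{n}}\Bigr|\, w_\nu^2\, dV_{\theta_0} = o(1)\,\int_M\!\Bigl((2+\tfrac{2}{n})|\nabla_{\theta_0} w_\nu|_{\theta_0}^2 + R_{\theta_0} w_\nu^2\Bigr)\,dV_{\theta_0}.$$
This is the analogue of the reduction used to pass from Proposition \ref{Prop5.4} to Corollary \ref{Cor5.5} in the case $u_\infty\equiv 0$.

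First, I would expand $v_\nu$ using (\ref{6.20}) as $v_\nu = \overline{u}_{z_\nu} + \sum_{k=1}^m \alpha_{k,\nu}\,\overline{u}_{(x_{k,\nu},\varepsilon_{k,\nu})}$. By Proposition \ref{Prop6.7}, $z_\nu \to 0$ and $\alpha_{k,\nu} \to 1$; since Lemma \ref{Lemma6.4} asserts that $z\mapsto \overline{u}_z$ is real-analytic and clearly $\overline{u}_0 = u_\infty$, we obtain $\overline{u}_{z_\nu}\to u_\infty$ uniformly on $M$. Combined with the scale separation from Proposition \ref{Prop6.6}(i), this lets us control the difference
$$D_\nu := v_\nu^{\frac{2}{n}} - u_\infty^{\frac{2}{n}} - \sum_{k=1}^m \overline{u}_{(x_{k,\nu},\varepsilon_{k,\nu})}^{\frac{2}{n}}$$
in $L^{n+1}(M)$. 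Indeed, splitting $M$ into the complement of small neighbourhoods of the concentration points $x_{k,\nu}$ (where $\overline{u}_{z_\nu}$ dominates and is close to $u_\infty$, while each bubble is small) and thin concentration regions around each $x_{k,\nu}$ (where one bubble dominates and $u_\infty$ plus the other bubbles are negligible), the contributions of the cross products $u_\infty \cdot \overline{u}_{(x_{k,\nu},\varepsilon_{k,\nu})}$ and $\overline{u}_{(x_{i,\nu},\varepsilon_{i,\nu})} \cdot \overline{u}_{(x_{j,\nu},\varepsilon_{j,\nu})}$ vanish in $L^{n+1}$ as $\nu\to\infty$. This step uses Proposition \ref{Prop6.6}(i) together with the explicit decay estimates for the test functions from Appendix \ref{Appendix}.

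Once $\|D_\nu\|_{L^{n+1}(M)}=o(1)$ is established, I apply H\"older's inequality with conjugate exponents $(n+1,\tfrac{n+1}{n})$ to obtain
$$\int_M |D_\nu|\, w_\nu^2\, dV_{\theta_0} \leq \|D_\nu\|_{L^{n+1}(M)}\, \|w_\nu\|_{L^{2+\frac{2}{n}}(M)}^{2} = o(1)\, \|w_\nu\|_{L^{2+\frac{2}{n}}(M)}^{2}.$$
By the Folland--Stein embedding (Proposition \ref{Theorem 2.1}) and (\ref{6.22}),
$$\|w_\nu\|_{L^{2+\frac{2}{n}}(M)}^{2}\leq C\int_M\Bigl((2+\tfrac{2}{n})|\nabla_{\theta_0} w_\nu|_{\theta_0}^2 + R_{\theta_0}w_\nu^2\Bigr)\,dV_{\theta_0}.$$
Combining with Proposition \ref{Prop6.9} and absorbing the $o(1)$ error into $c$ yields the claim.

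The main obstacle is the $L^{n+1}$ approximation $\|D_\nu\|_{L^{n+1}(M)}=o(1)$. Here a dichotomy arises with respect to $n$: for $n\geq 2$ the exponent $2/n\leq 1$ makes $x\mapsto x^{2/n}$ subadditive, which simplifies the bookkeeping; for $n=1$ the exponent $2/n=2$ is superlinear, and one must handle the genuine cross terms $2\,\overline{u}_{z_\nu}\cdot \alpha_{k,\nu}\overline{u}_{(x_{k,\nu},\varepsilon_{k,\nu})}$ and $2\alpha_{i,\nu}\alpha_{j,\nu}\overline{u}_{(x_{i,\nu},\varepsilon_{i,\nu})}\overline{u}_{(x_{j,\nu},\varepsilon_{j,\nu})}$ explicitly. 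In both cases the decay of bubbles away from their centres and the vanishing interaction in Proposition \ref{Prop6.6}(i) suffice, but the calculation in the case $n=1$ requires slightly more care than the purely bubbling case of Corollary \ref{Cor5.5}, where $u_\infty$ does not appear.
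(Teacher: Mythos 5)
Your proposal is correct and follows the same route as the paper: reduce Corollary \ref{Cor6.10} to Proposition \ref{Prop6.9} by showing that the discrepancy $D_\nu=v_\nu^{2/n}-u_\infty^{2/n}-\sum_k\overline{u}_{(x_{k,\nu},\varepsilon_{k,\nu})}^{2/n}$ is negligible, using (\ref{6.20}), Proposition \ref{Prop6.7}, Proposition \ref{Prop6.6}(i) and the decay of the bubbles. The one genuine difference is the norm in which you measure $D_\nu$: the paper asserts only $\|D_\nu\|_{L^1(M)}=o(1)$, which by itself does not control $\int_M|D_\nu|\,w_\nu^2\,dV_{\theta_0}$ since $w_\nu$ is not known to be uniformly bounded; your $L^{n+1}$ estimate is exactly the dual exponent to $w_\nu^2\in L^{\frac{n+1}{n}}(M)$ furnished by Folland--Stein, and matches the $L^{n/2}$ bound used at the corresponding step of Brendle's Riemannian argument. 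So your version is, if anything, the more careful one, and your remarks on the cross terms for $n=1$ (where $x\mapsto x^{2/n}$ is no longer subadditive) address the only point where the $L^{n+1}$ smallness requires real work.
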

\begin{proof}
By (\ref{6.20}) and Proposition \ref{Prop6.7}, we have
$$\int_M\Big|v_\nu^{\frac{2}{n}}-u_\infty^{\frac{2}{n}}-\sum_{k=1}^m\overline{u}_{(x_{k,\nu},\varepsilon_{k,\nu})}^{\frac{2}{n}}\Big|dV_{\theta_0}=o(1).$$
Therefore, Corollary \ref{Cor6.10}
follows from Proposition \ref{Prop6.9}.
\end{proof}

\begin{lem}\label{Lemma6.11}
The difference $u_\nu-\overline{u}_{z_\nu}$ satisfies
the estimate
\begin{equation}\label{6.26.5}
\|u_\nu-\overline{u}_{z_\nu}\|_{L^{1+\frac{2}{n}}(M)}^{1+\frac{2}{n}}\leq
C\big\|u_\nu^{1+\frac{2}{n}}(R_{\theta_\nu}-r_\infty)\big\|_{L^{\frac{2n+2}{n+2}}(M)}^{1+\frac{2}{n}}
+C\sum_{k=1}^m\varepsilon_{k,\nu}^{n}
\end{equation}
if $\nu$ is sufficiently large.
\end{lem}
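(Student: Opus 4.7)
The plan is to apply Lemma \ref{Lemma6.3}(i) with $f = u_\nu - \overline{u}_{z_\nu}$ and unpack $\Pi L f$ using the CR Yamabe equation together with the characterization of $\overline{u}_{z_\nu}$ from Lemma \ref{Lemma6.4}. Write $L := (2+\tfrac{2}{n})\Delta_{\theta_0} - R_{\theta_0} + \tfrac{n+2}{n} r_\infty u_\infty^{\frac{2}{n}}$. Since Lemma \ref{Lemma6.4} gives $\Pi L\overline{u}_{z_\nu} = 0$, one has $\Pi L(u_\nu - \overline{u}_{z_\nu}) = \Pi L u_\nu$, and the CR Yamabe equation (\ref{9}) yields
$$L u_\nu \;=\; -\,u_\nu^{1+\frac{2}{n}}(R_{\theta_\nu} - r_\infty) \;+\; \Big(\tfrac{n+2}{n} r_\infty u_\infty^{\frac{2}{n}} u_\nu - r_\infty u_\nu^{1+\frac{2}{n}}\Big).$$

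Applying Lemma \ref{Lemma6.3}(i) to $f=u_\nu-\overline{u}_{z_\nu}$ and raising to the power $1+\tfrac{2}{n}$, the contribution of the first piece is controlled via H\"older's inequality (since $p := \tfrac{(n+1)(n+2)}{n^2+2n+2} < \tfrac{2n+2}{n+2}$) and boundedness of $\Pi$ on $L^p(M)$ by $C\|u_\nu^{1+\frac{2}{n}}(R_{\theta_\nu} - r_\infty)\|_{L^{(2n+2)/(n+2)}(M)}^{1+\frac{2}{n}}$, which is exactly the first term on the right-hand side of \eqref{6.26.5}. It remains to bound the residual $G_\nu := \tfrac{n+2}{n} r_\infty u_\infty^{\frac{2}{n}} u_\nu - r_\infty u_\nu^{1+\frac{2}{n}}$ in $L^p$ and the test term $\sup_{a \in A}\big|\int_M u_\infty^{\frac{2}{n}}\psi_a(u_\nu - \overline{u}_{z_\nu}) dV_{\theta_0}\big|$ by $C\sum_k\varepsilon_{k,\nu}^n$ after raising to $1+\tfrac{2}{n}$.

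For this, substitute the decomposition \eqref{6.20.5}, $u_\nu = \overline{u}_{z_\nu} + V_\nu + w_\nu$ with $V_\nu = \sum_k \alpha_{k,\nu}\overline{u}_{(x_{k,\nu},\varepsilon_{k,\nu})}$, and Taylor-expand $s \mapsto s^{1+\frac{2}{n}}$ around $\overline{u}_{z_\nu}$. This splits $G_\nu$ into three pieces: (i) a smooth background piece involving only $\overline{u}_{z_\nu}$ and $u_\infty$, whose projection $\Pi(\cdot)$ is $o(1)$ because $\overline{u}_{z_\nu} \to u_\infty$ (Proposition \ref{Prop6.7}) and $\overline{u}_{z_\nu}$ is chosen in Lemma \ref{Lemma6.4} so that $\Pi L\overline{u}_{z_\nu} = 0$; (ii) a linear term in $w_\nu$, whose $L^p$-contribution is absorbable into the left-hand side via Corollary \ref{Cor6.10}; and (iii) a bubble-concentrated nonlinear remainder dominated by $r_\infty V_\nu^{1+\frac{2}{n}}$ inside the bubble regions, whose $L^p$-norm, raised to $1+\tfrac{2}{n}$, gives exactly $O(\sum_k \varepsilon_{k,\nu}^n)$ via the test-function integrals of Appendix \ref{Appendix}. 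The test term is treated analogously: its $V_\nu$ contribution is estimated directly by pairing $\psi_a$ with $\overline{u}_{(x_{k,\nu},\varepsilon_{k,\nu})}$, and its $w_\nu$ contribution satisfies $|\int u_\infty^{\frac{2}{n}}\psi_a w_\nu\,dV_{\theta_0}| \leq o(1)\|w_\nu\|_{L^1(M)}$ by Proposition \ref{Prop6.8} and is absorbable.

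The main obstacle is the analysis of $G_\nu$: away from the concentration points one has $G_\nu \approx \tfrac{2}{n} r_\infty u_\infty^{1+\frac{2}{n}}$, which is not itself small, so the defining property of $\overline{u}_{z_\nu}$ from Lemma \ref{Lemma6.4} must be invoked to show that the ``background'' contribution to $\Pi G_\nu$ is genuinely lower order rather than $O(1)$. Simultaneously, the bubble-region computation requires the sharp asymptotics $\int_M \overline{u}_{(x_{k,\nu},\varepsilon_{k,\nu})}^{q} dV_{\theta_0} \asymp \varepsilon_{k,\nu}^{2n+2 - nq}$ from Appendix \ref{Appendix}, tuned through the pair of exponents $p$ and $1+\tfrac{2}{n}$ so that the arithmetic $(2n+2-nq)\cdot\tfrac{n+2}{np}$ collapses to the required power $n$ in the final sum.
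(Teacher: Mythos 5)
Your proposal follows essentially the same route as the paper's proof: apply Lemma \ref{Lemma6.3}(i) to $f=u_\nu-\overline{u}_{z_\nu}$, use the CR Yamabe equation together with $\Pi L\overline{u}_{z_\nu}=0$ to isolate the source term $u_\nu^{1+\frac{2}{n}}(R_{\theta_\nu}-r_\infty)$, Taylor-expand the nonlinearity so that the bubble contributions give $\varepsilon_{k,\nu}^{\frac{n^2}{n+2}}$ in $L^{\frac{(n+1)(n+2)}{n^2+2n+2}}$ (hence $\varepsilon_{k,\nu}^{n}$ after raising to the power $1+\frac{2}{n}$), absorb the small remainders, and treat the test term via Proposition \ref{Prop6.8} and the bound $\int_M u_\infty^{\frac{2}{n}}\psi_a\overline{u}_{(x_{k,\nu},\varepsilon_{k,\nu})}\,dV_{\theta_0}\le C\varepsilon_{k,\nu}^{n}$. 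The one inaccuracy is the appeal to Corollary \ref{Cor6.10} to absorb the $w_\nu$-contribution --- that corollary is a quadratic-form (spectral gap) inequality, whereas the paper absorbs via the H\"older splitting (\ref{6.31})--(\ref{6.32}) into the bubble neighborhoods $B_{N\varepsilon_{k,\nu}}(x_{k,\nu})$ and their complement, using (\ref{6.22}) and choosing $N$ large --- but this is a misattribution of the tool rather than a gap in the strategy.
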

\begin{proof}
It follows from Lemma \ref{Lemma6.4} that
\begin{equation}\label{6.27}
\Pi\Big((2+\frac{2}{n})\Delta_{\theta_0}\overline{u}_{z_\nu}
-R_{\theta_0}\overline{u}_{z_\nu}
+\frac{n+2}{n}r_\infty u_\infty^{\frac{2}{n}}\overline{u}_{z_\nu}\Big)=0.
\end{equation}
Then we have
\begin{equation}\label{6.28}
\begin{split}
&\Pi\Big((2+\frac{2}{n})\Delta(u_\nu-\overline{u}_{z_\nu})-R_{\theta_0}(u_\nu-\overline{u}_{z_\nu})
+\frac{n+2}{n}r_\infty u_\infty^{\frac{2}{n}}(u_\nu-\overline{u}_{z_\nu})\Big)\\
&=\Pi\Big(-u_\nu^{1+\frac{2}{n}}(R_{\theta_\nu}-r_\infty)
+r_\infty\big(u_\nu^{1+\frac{2}{n}}+\frac{n+2}{n}\overline{u}_{z_\nu}^{\frac{2}{n}}(u_\nu-\overline{u}_{z_\nu})-\overline{u}_{z_\nu}^{1+\frac{2}{n}}\big)\\
&\hspace{8mm}-\frac{n+2}{n}r_\infty (\overline{u}_{z_\nu}^{\frac{2}{n}}-u_\infty^{\frac{2}{n}})(u_\nu-\overline{u}_{z_\nu})\Big)
\end{split}
\end{equation}
where the second equality follows from (\ref{6.27}). Applying Lemma \ref{Lemma6.3}(i) with $f=u_\nu-\overline{u}_{z_\nu}$, we get
\begin{equation}\label{6.29}
\begin{split}
&\|u_\nu-\overline{u}_{z_\nu}\|_{L^{1+\frac{2}{n}}(M)}\\
&\leq C\big\|u_\nu^{1+\frac{2}{n}}(R_{\theta_\nu}-r_\infty)\big\|_{L^{\frac{(n+1)(n+2)}{n^2+2n+2}}(M)}
+ C\big\|(\overline{u}_{z_\nu}^{\frac{2}{n}}-u_\infty^{\frac{2}{n}})(u_\nu-\overline{u}_{z_\nu})\big\|_{L^{\frac{(n+1)(n+2)}{n^2+2n+2}}(M)}\\
&\hspace{4mm}+ C\Big\|u_\nu^{1+\frac{2}{n}}+\frac{n+2}{n}\overline{u}_{z_\nu}^{\frac{2}{n}}(u_\nu-\overline{u}_{z_\nu})-\overline{u}_{z_\nu}^{1+\frac{2}{n}}
\Big\|_{L^{\frac{(n+1)(n+2)}{n^2+2n+2}}(M)}
\\
&\hspace{4mm}+C\sup_{a\in A}\left|\int_Mu_\infty^{\frac{2}{n}}\psi_a (u_\nu-\overline{u}_{z_\nu})dV_{\theta_0}\right|,
\end{split}
\end{equation}
where we have used (\ref{6.28}). Using the pointwise estimate
\begin{equation*}
\begin{split}
&\Big|u_\nu^{1+\frac{2}{n}}+\frac{n+2}{n}\overline{u}_{z_\nu}^{\frac{2}{n}}(u_\nu-\overline{u}_{z_\nu})-\overline{u}_{z_\nu}^{1+\frac{2}{n}}\Big|\\
&\leq C\overline{u}_{z_\nu}^{\max\{0,\frac{2}{n}-1\}}|u_\nu-\overline{u}_{z_\nu}|^{\min\{1+\frac{2}{n},2\}}+
C|u_\nu-\overline{u}_{z_\nu}|^{1+\frac{2}{n}},
\end{split}
\end{equation*}
we obtain
\begin{equation}\label{6.30}
\begin{split}
&\Big\|u_\nu^{1+\frac{2}{n}}+\frac{n+2}{n}\overline{u}_{z_\nu}^{\frac{2}{n}}(u_\nu-\overline{u}_{z_\nu})-\overline{u}_{z_\nu}^{1+\frac{2}{n}}
\Big\|_{L^{\frac{(n+1)(n+2)}{n^2+2n+2}}(M)}\\
&\leq C\Big\||u_\nu-\overline{u}_{z_\nu}|^{\min\{1+\frac{2}{n},2\}}+
|u_\nu-\overline{u}_{z_\nu}|^{1+\frac{2}{n}}\Big\|_{L^{\frac{(n+1)(n+2)}{n^2+2n+2}}(M)}.
\end{split}
\end{equation}
By H\"{o}lder's inequality, we have
\begin{equation}\label{6.31}
\begin{split}
&\Big\||u_\nu-\overline{u}_{z_\nu}|^{\min\{1+\frac{2}{n},2\}}+
|u_\nu-\overline{u}_{z_\nu}|^{1+\frac{2}{n}}\Big\|_{L^{\frac{(n+1)(n+2)}{n^2+2n+2}}(M)}\\
&\leq C\sum_{k=1}^m(N\varepsilon_{k,\nu})^{\frac{n^2}{n+2}}\,
\Big\||u_\nu-\overline{u}_{z_\nu}|^{\min\{1+\frac{2}{n},2\}}+
|u_\nu-\overline{u}_{z_\nu}|^{1+\frac{2}{n}}\Big\|_{L^{\frac{2n+2}{n+2}}(M)}\\
&\hspace{4mm}+C\Big\||u_\nu-\overline{u}_{z_\nu}|^{\min\{\frac{2}{n},1\}}+
|u_\nu-\overline{u}_{z_\nu}|^{\frac{2}{n}}\Big\|_{L^{n+1}(M\setminus\bigcup_{k=1}^mB_{N\varepsilon_{k,\nu}}(x_{k,\nu}))}
\|u_\nu-\overline{u}_{z_\nu}\|_{L^{1+\frac{2}{n}}(M)}.
\end{split}
\end{equation}
On the other hand, we have
\begin{equation}\label{6.32}
\begin{split}
&\|u_\nu-\overline{u}_{z_\nu}\|_{L^{2+\frac{2}{n}}(M\setminus\bigcup_{k=1}^mB_{N\varepsilon_{k,\nu}}(x_{k,\nu}))}\\
&\leq\sum_{k=1}^m\alpha_{k,\nu}\|\overline{u}_{(x_{k,\nu},\varepsilon_{k,\nu})}
\|_{L^{2+\frac{2}{n}}(M\setminus B_{N\varepsilon_{k,\nu}}(x_{k,\nu}))}
+\|w_\nu\|_{L^{2+\frac{2}{n}}(M)}\leq CN^{-n}+o(1),
\end{split}
\end{equation}
where the first equality follows from (\ref{6.21}), and the last inequality follows from (\ref{6.22}). Therefore, using (\ref{6.30})-(\ref{6.32}),
we get
\begin{equation}\label{6.33}
\begin{split}
&\Big\|u_\nu^{1+\frac{2}{n}}+\frac{n+2}{n}\overline{u}_{z_\nu}^{\frac{2}{n}}(u_\nu-\overline{u}_{z_\nu})-\overline{u}_{z_\nu}^{1+\frac{2}{n}}
\Big\|_{L^{\frac{(n+1)(n+2)}{n^2+2n+2}}(M)}\\
&\leq C\sum_{k=1}^m(N\varepsilon_{k,\nu})^{\frac{n^2}{n+2}}
+C\big(N^{-n\cdot\frac{2}{n}}+o(1)\big)\|u_\nu-\overline{u}_{z_\nu}\|_{L^{1+\frac{2}{n}}(M)}
\end{split}
\end{equation}
Moreover, we have
\begin{equation}\label{6.34}
\begin{split}
&\sup_{a\in A}\left|\int_Mu_\infty^{\frac{2}{n}}\psi_a (u_\nu-\overline{u}_{z_\nu})dV_{\theta_0}\right|\\
&=\sup_{a\in A}\left|\,\sum_{k=1}^m\alpha_{k,\nu}\int_Mu_\infty^{\frac{2}{n}}\psi_a \overline{u}_{(x_{k,\nu},\varepsilon_{k,\nu})}dV_{\theta_0}
+\int_Mu_\infty^{\frac{2}{n}}\psi_a w_\nu\,dV_{\theta_0}\right|\\
&\leq C\sum_{k=1}^m\varepsilon_{k,\nu}^n+o(1)\|w_\nu\|_{L^1(M)}\\
&= C\sum_{k=1}^m\varepsilon_{k,\nu}^n+o(1)\Big\|u_\nu-\overline{u}_{z_\nu}-\sum_{k=1}^m\alpha_{k,\nu}\overline{u}_{(x_{k,\nu},\varepsilon_{k,\nu})}\Big\|_{L^1(M)}\\
&\leq C\sum_{k=1}^m\varepsilon_{k,\nu}^n+o(1)\|u_\nu-\overline{u}_{z_\nu}\|_{L^1(M)}
\end{split}
\end{equation}
where the first and second equality follows from (\ref{6.21}). Combining (\ref{6.29}), (\ref{6.33}), and (\ref{6.34})
\begin{equation*}
\begin{split}
&\|u_\nu-\overline{u}_{z_\nu}\|_{L^{1+\frac{2}{n}}(M)}\\
&\leq C\big\|u_\nu^{1+\frac{2}{n}}(R_{\theta_\nu}-r_\infty)\big\|_{L^{\frac{(n+1)(n+2)}{n^2+2n+2}}(M)}
+ C\big\|(\overline{u}_{z_\nu}^{\frac{2}{n}}-u_\infty^{\frac{2}{n}})(u_\nu-\overline{u}_{z_\nu})\big\|_{L^{\frac{(n+1)(n+2)}{n^2+2n+2}}(M)}\\
&\hspace{4mm}
+C\sum_{k=1}^m(N\varepsilon_{k,\nu})^{\frac{n^2}{n+2}}
+C\big(N^{-2}+o(1)\big)\|u_\nu-\overline{u}_{z_\nu}\|_{L^{1+\frac{2}{n}}(M)}\\
&\hspace{4mm}+C\sum_{k=1}^m\varepsilon_{k,\nu}^n+o(1)\|u_\nu-\overline{u}_{z_\nu}\|_{L^1(M)}\\
&\leq C\big\|u_\nu^{1+\frac{2}{n}}(R_{\theta_\nu}-r_\infty)\big\|_{L^{\frac{2n+2}{n+2}}(M)}
+C\sum_{k=1}^m(N\varepsilon_{k,\nu})^{\frac{n^2}{n+2}}+C\sum_{k=1}^m\varepsilon_{k,\nu}^n\\
&\hspace{4mm}
+C\big(N^{-2}+o(1)\big)\|u_\nu-\overline{u}_{z_\nu}\|_{L^{1+\frac{2}{n}}(M)},
\end{split}
\end{equation*}
where the last inequality follows from
H\"{o}lder's inequality and the fact that $|z_\nu|=o(1)$ by Proposition \ref{Prop6.7}. Hence, if we choose $N$ sufficiently large,
we obtain
\begin{equation*}
\|u_\nu-\overline{u}_{z_\nu}\|_{L^{1+\frac{2}{n}}(M)}\leq C\big\|u_\nu^{1+\frac{2}{n}}(R_{\theta_\nu}-r_\infty)\big\|_{L^{\frac{2n+2}{n+2}}(M)}
+C\sum_{k=1}^m\varepsilon_{k,\nu}^{\frac{n^2}{n+2}},
\end{equation*}
which implies (\ref{6.26.5}). This proves the assertion.
\end{proof}

 Then, we have the following lemma. The proof takes the similar procedure as Lemma 6.12 in \cite{Brendle4}, we omit its proof here.

\begin{lem}\label{Lemma6.12}
The difference $u_\nu-\overline{u}_{z_\nu}$ satisfies
the estimate
\begin{equation*}
\|u_\nu-\overline{u}_{z_\nu}\|_{L^{1}(M)}\leq
C\big\|u_\nu^{1+\frac{2}{n}}(R_{\theta_\nu}-r_\infty)\big\|_{L^{\frac{2n+2}{n+2}}(M)}^{1+\frac{n}{2}}
+C\sum_{k=1}^m\varepsilon_{k,\nu}^{n}
\end{equation*}
if $\nu$ is sufficiently large.
\end{lem}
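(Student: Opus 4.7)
The proof runs parallel to that of Lemma~\ref{Lemma6.11}, with the crucial change of applying Lemma~\ref{Lemma6.3}(ii) in place of Lemma~\ref{Lemma6.3}(i) so as to produce an $L^1$ bound rather than an $L^{1+2/n}$ bound. Applying Lemma~\ref{Lemma6.3}(ii) to $f = u_\nu - \overline{u}_{z_\nu}$ one obtains
$$\|u_\nu - \overline{u}_{z_\nu}\|_{L^1(M)} \leq C\|\Pi L_\nu\|_{L^1(M)} + C\sup_{a \in A}\left|\int_M u_\infty^{\frac{2}{n}}\psi_a(u_\nu - \overline{u}_{z_\nu})\,dV_{\theta_0}\right|,$$
where $L_\nu$ denotes the expression $(2+\frac{2}{n})\Delta_{\theta_0}(u_\nu - \overline{u}_{z_\nu}) - R_{\theta_0}(u_\nu - \overline{u}_{z_\nu}) + \frac{n+2}{n}r_\infty u_\infty^{\frac{2}{n}}(u_\nu - \overline{u}_{z_\nu})$. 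The identity (\ref{6.28}) then decomposes $\Pi L_\nu$ into three pieces: the curvature error $-u_\nu^{1+2/n}(R_{\theta_\nu} - r_\infty)$, the Taylor remainder from linearizing $u \mapsto u^{1+2/n}$ at $\overline{u}_{z_\nu}$, and the lower-order correction $-\frac{n+2}{n}r_\infty(\overline{u}_{z_\nu}^{2/n} - u_\infty^{2/n})(u_\nu - \overline{u}_{z_\nu})$.

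The lower-order correction is controlled by $o(1)\|u_\nu - \overline{u}_{z_\nu}\|_{L^1(M)}$ since $\|\overline{u}_{z_\nu}^{2/n} - u_\infty^{2/n}\|_{L^\infty(M)} = o(1)$ by Proposition~\ref{Prop6.7}. The Taylor remainder is handled by the pointwise estimate already used in the proof of Lemma~\ref{Lemma6.11}, combined with the $L^{1+2/n}$ bound furnished by Lemma~\ref{Lemma6.11} itself; this contributes $C\|u_\nu^{1+2/n}(R_{\theta_\nu}-r_\infty)\|_{L^{(2n+2)/(n+2)}(M)}^{1+2/n} + C\sum_k\varepsilon_{k,\nu}^n$, both of which are dominated by the right-hand side of the target inequality. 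The sup-over-$A$ projection piece is treated exactly as in (\ref{6.34}), producing $C\sum_{k=1}^m\varepsilon_{k,\nu}^n + o(1)\|u_\nu - \overline{u}_{z_\nu}\|_{L^1(M)}$, which is then absorbed into the left-hand side once $\nu$ is sufficiently large.

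The main obstacle is the sharp estimate for the curvature term, namely the inequality
$$\|u_\nu^{1+\frac{2}{n}}(R_{\theta_\nu} - r_\infty)\|_{L^1(M)} \leq C\|u_\nu^{1+\frac{2}{n}}(R_{\theta_\nu} - r_\infty)\|_{L^{\frac{2n+2}{n+2}}(M)}^{1+\frac{n}{2}}.$$
Naive H\"older's inequality yields only exponent $1$ on the right-hand side, so the improvement by the factor $Q_\nu^{n/2}$ (with $Q_\nu$ denoting the $L^{(2n+2)/(n+2)}$ norm on the right) is the delicate technical point. The strategy, following the corresponding step of Lemma 6.12 in \cite{Brendle4}, is to split the manifold into the concentration annuli $B_{N\varepsilon_{k,\nu}}(x_{k,\nu})$ and their complement, use the uniform identity $\int_M u_\nu^{2+2/n}\,dV_{\theta_0}=1$ together with the bubble-profile decay of $u_\nu$ off the concentration regions, and exploit the smallness of $\int_M |R_{\theta_\nu} - r_\infty|^p\,dV_{\theta(t_\nu)}$ for $p < n+2$ established in (\ref{4A}) and (\ref{4}) via a H\"older interpolation adapted to the concentration scales $\varepsilon_{k,\nu}$.

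Combining these four estimates in the inequality produced by Lemma~\ref{Lemma6.3}(ii) and absorbing the $o(1)\|u_\nu - \overline{u}_{z_\nu}\|_{L^1(M)}$ contributions into the left-hand side, one arrives at the bound claimed in Lemma~\ref{Lemma6.12}.
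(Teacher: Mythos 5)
The framework you set up---Lemma \ref{Lemma6.3}(ii) applied to $f=u_\nu-\overline{u}_{z_\nu}$ together with the decomposition (\ref{6.28}), absorbing the $o(1)\|u_\nu-\overline{u}_{z_\nu}\|_{L^1(M)}$ contributions---is the right one (the paper omits its own proof and defers to Lemma 6.12 of \cite{Brendle4}, which proceeds exactly this way). But the step you yourself flag as the main obstacle is a genuine gap, and the strategy you sketch cannot close it. Writing $Q=\|u_\nu^{1+\frac2n}(R_{\theta_\nu}-r_\infty)\|_{L^{\frac{2n+2}{n+2}}(M)}$, the exponents conspire so that
$$u_\nu^{1+\frac2n}|R_{\theta_\nu}-r_\infty|=\Big(u_\nu^{2+\frac2n}|R_{\theta_\nu}-r_\infty|^{\frac{2n+2}{n+2}}\Big)^{\frac{n+2}{2n+2}},$$
whence H\"older together with (\ref{3}) gives exactly $\|u_\nu^{1+\frac2n}(R_{\theta_\nu}-r_\infty)\|_{L^1(M)}\leq Q$: exponent one, with no leftover power of $u_\nu$ to exploit. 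Interpolating against the $L^p(dV_{\theta_\nu})$-norms of $R_{\theta_\nu}-r_\infty$, which by (\ref{4}) are merely bounded for $p<n+2$, can only push the exponent below one, never up to $1+\frac n2$; and the inequality $\|g\|_{L^1(M)}\leq C\|g\|_{L^{(2n+2)/(n+2)}(M)}^{1+\frac n2}$ fails for a generic $g$ of small norm (e.g.\ a small constant), so any proof of it must use structural input you have not supplied. A second, independent problem: you assert that the Taylor-remainder contribution $CQ^{1+\frac2n}+C\sum_k\varepsilon_{k,\nu}^n$ is dominated by the right-hand side of the lemma; but since $Q\to0$ and $1+\frac2n<1+\frac n2$ for $n\geq3$, one has $Q^{1+\frac2n}\gg Q^{1+\frac n2}$, so in the spherical case with $n\geq3$ that term is not absorbed. (For $n=1$ the other half of the pointwise Taylor estimate gives $\int_M\overline{u}_{z_\nu}|u_\nu-\overline{u}_{z_\nu}|^2dV_{\theta_0}$, which handled as you propose yields $(\sum_k\varepsilon_{k,\nu})^{2/3}$ rather than $\sum_k\varepsilon_{k,\nu}$; one must instead split into the concentration balls and their complement as in (\ref{6.30})--(\ref{6.33}) and absorb an $o(1)\|u_\nu-\overline{u}_{z_\nu}\|_{L^1(M)}$ term.)

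It is worth noting that what your argument essentially does deliver, namely $\|u_\nu-\overline{u}_{z_\nu}\|_{L^1(M)}\leq CQ+C\sum_k\varepsilon_{k,\nu}^n$, is all that is used downstream: in (\ref{6.37.5}) the $L^1$ contribution is in any case dominated by the first-power term $C\|u_\nu^{1+\frac2n}(R_{\theta_\nu}-r_\infty)\|_{L^{\frac{2n+2}{n+2}}(M)}$, so any exponent $\geq1$ suffices for Lemma \ref{Lemma6.13} and Proposition \ref{Prop6.14}. The defect in your proposal is therefore in reaching the exponent $1+\frac n2$ actually claimed by the lemma, for which you would need either a genuinely new estimate on the curvature term or a different route to the $L^1$ bound; as written, the key inequality is asserted rather than proved.
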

Now, we can prove the following lemma.

\begin{lem}\label{Lemma6.13}
We have
\begin{equation*}
\begin{split}
&\sup_{a\in A}\left|\int_M\psi_a\Big((2+\frac{2}{n})\Delta_{\theta_0}\overline{u}_{z_\nu}
-R_{\theta_0}\overline{u}_{z_\nu}
+r_\infty\overline{u}_{z_\nu}^{1+\frac{2}{n}}\Big)dV_{\theta_0}\right|\\
&\leq C\left(\int_Mu_\nu^{2+\frac{2}{n}}|R_{\theta_\nu}-r_\infty|^{\frac{2n+2}{n+2}}dV_{\theta_0}\right)^{\frac{n+2}{2n+2}}
+C\sum_{k=1}^m\varepsilon_{k,\nu}^n
\end{split}
\end{equation*}
if $\nu$ is sufficiently large.
\end{lem}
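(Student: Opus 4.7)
The plan is to split the target integral into a piece involving $u_\nu$, which plugs directly into the CR Yamabe equation, plus a correction involving $u_\nu-\overline{u}_{z_\nu}$ that can be absorbed using Lemmas \ref{Lemma6.11} and \ref{Lemma6.12} together with Proposition \ref{Prop6.8}. Concretely, I would write
$$\int_M \psi_a \Big((2+\frac{2}{n})\Delta_{\theta_0}\overline{u}_{z_\nu} - R_{\theta_0}\overline{u}_{z_\nu} + r_\infty \overline{u}_{z_\nu}^{1+\frac{2}{n}}\Big)\,dV_{\theta_0} = J_a + K_a,$$
where $J_a$ denotes the same integral with $\overline{u}_{z_\nu}$ replaced by $u_\nu$. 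By the CR Yamabe equation (\ref{9}), $J_a = -\int_M \psi_a u_\nu^{1+\frac{2}{n}}(R_{\theta_\nu}-r_\infty)\,dV_{\theta_0}$, and H\"{o}lder's inequality with exponents $\frac{2n+2}{n}$ and $\frac{2n+2}{n+2}$ (noting that $\psi_a$ is smooth and hence in $L^{(2n+2)/n}(M)$) produces exactly the leading term on the right-hand side.

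For the correction $K_a$, an integration by parts using the eigenvalue equation (\ref{6.1}) for $\psi_a$ yields
$$K_a = -\lambda_a\int_M u_\infty^{\frac{2}{n}}(\overline{u}_{z_\nu} - u_\nu)\psi_a\,dV_{\theta_0} + r_\infty\int_M \psi_a\big(\overline{u}_{z_\nu}^{1+\frac{2}{n}} - u_\nu^{1+\frac{2}{n}}\big)dV_{\theta_0}.$$
Using the pointwise Taylor estimate
$$\Big|\overline{u}_{z_\nu}^{1+\frac{2}{n}} - u_\nu^{1+\frac{2}{n}} + \frac{n+2}{n}\overline{u}_{z_\nu}^{\frac{2}{n}}(u_\nu - \overline{u}_{z_\nu})\Big| \leq C\overline{u}_{z_\nu}^{\max\{0,\frac{2}{n}-1\}}|u_\nu-\overline{u}_{z_\nu}|^{\min\{1+\frac{2}{n},2\}} + C|u_\nu-\overline{u}_{z_\nu}|^{1+\frac{2}{n}},$$
I would then rearrange $K_a$ into three pieces: (i) a linear term proportional to $\big(\lambda_a - \frac{n+2}{n}r_\infty\big)\int_M u_\infty^{\frac{2}{n}}(u_\nu-\overline{u}_{z_\nu})\psi_a\,dV_{\theta_0}$, (ii) a term $-\frac{n+2}{n}r_\infty\int_M\psi_a\big(\overline{u}_{z_\nu}^{\frac{2}{n}} - u_\infty^{\frac{2}{n}}\big)(u_\nu-\overline{u}_{z_\nu})\,dV_{\theta_0}$ capturing the discrepancy between $\overline{u}_{z_\nu}$ and $u_\infty$, and (iii) a genuinely nonlinear remainder coming from the pointwise bound above.

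Each piece is estimated via the decomposition $u_\nu - \overline{u}_{z_\nu} = w_\nu + \sum_k \alpha_{k,\nu}\overline{u}_{(x_{k,\nu},\varepsilon_{k,\nu})}$ from (\ref{6.21}). For (i), the bubble contributions produce the $C\sum_k \varepsilon_{k,\nu}^n$ term as in (\ref{6.34}), while Proposition \ref{Prop6.8} bounds the $w_\nu$-integral by $o(1)\|w_\nu\|_{L^1(M)}$, and Lemma \ref{Lemma6.12} controls $\|w_\nu\|_{L^1(M)}$ in terms of $X := \big(\int_M u_\nu^{2+\frac{2}{n}}|R_{\theta_\nu}-r_\infty|^{\frac{2n+2}{n+2}}dV_{\theta_0}\big)^{\frac{n+2}{2n+2}}$ plus a bubble remainder. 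For (ii), Proposition \ref{Prop6.7} gives $|z_\nu| = o(1)$; since $z\mapsto\overline{u}_z$ is real analytic by Lemma \ref{Lemma6.4}, this forces $\|\overline{u}_{z_\nu}^{\frac{2}{n}} - u_\infty^{\frac{2}{n}}\|_{L^\infty(M)} = o(1)$, which combines with Lemma \ref{Lemma6.12} applied to $\|u_\nu - \overline{u}_{z_\nu}\|_{L^1(M)}$. For (iii), Lemma \ref{Lemma6.11} supplies $\|u_\nu - \overline{u}_{z_\nu}\|_{L^{1+\frac{2}{n}}(M)}^{1+\frac{2}{n}} \leq CX^{1+\frac{2}{n}} + C\sum_k \varepsilon_{k,\nu}^n$; since $X\to 0$ by (\ref{4}), the super-linear power $X^{1+\frac{2}{n}}$ is dominated by $X$ for $\nu$ sufficiently large.

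The main technical obstacle is the bookkeeping in the correction term: the two linear contributions from the Taylor expansion do not cancel exactly because $\lambda_a$ is in general different from $\frac{n+2}{n}r_\infty$, so Proposition \ref{Prop6.8} (providing quantitative ``approximate orthogonality'' of $w_\nu$ to $u_\infty^{\frac{2}{n}}\psi_a$ with error $o(1)\|w_\nu\|_{L^1}$) is doing essential work in absorbing the residual term. A secondary subtlety is the case $n=1$, where the pointwise Taylor remainder in (iii) carries an extra factor $\overline{u}_{z_\nu}|u_\nu-\overline{u}_{z_\nu}|^2$ that is not of the form $|u_\nu-\overline{u}_{z_\nu}|^{1+\frac{2}{n}}$; this is handled by a short interpolation argument using that $\overline{u}_{z_\nu}$ is uniformly bounded on compact $M$ (by the real analyticity of $z\mapsto\overline{u}_z$), so that $\|u_\nu-\overline{u}_{z_\nu}\|_{L^2(M)}^2$ can be controlled by the $L^1$- and $L^{1+\frac{2}{n}}$-bounds from Lemmas \ref{Lemma6.11} and \ref{Lemma6.12}.
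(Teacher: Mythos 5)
Your proposal is correct and follows the same skeleton as the paper: split the integral into the piece with $u_\nu$ (handled by the CR Yamabe equation (\ref{9}) and H\"{o}lder, giving the leading term) plus a correction, use integration by parts against the eigenvalue equation (\ref{6.1}) to express the correction as in (\ref{6.37}), and close with Lemmas \ref{Lemma6.11} and \ref{Lemma6.12} together with the fact that $X\to 0$ by (\ref{4}) so that superlinear powers of $X$ are dominated by $X$. Where you diverge is in the treatment of the correction term $K_a$, and there you do substantially more work than is needed. The paper does not perform the second-order Taylor expansion, does not exploit any cancellation between $\lambda_a$ and $\frac{n+2}{n}r_\infty$, and does not invoke Proposition \ref{Prop6.8} at all: it simply applies the crude pointwise bound $|\overline{u}_{z_\nu}^{1+\frac{2}{n}}-u_\nu^{1+\frac{2}{n}}|\leq C\overline{u}_{z_\nu}^{\frac{2}{n}}|\overline{u}_{z_\nu}-u_\nu|+C|\overline{u}_{z_\nu}-u_\nu|^{1+\frac{2}{n}}$, so that the entire correction is controlled by $C\|\overline{u}_{z_\nu}-u_\nu\|_{L^1(M)}+C\|\overline{u}_{z_\nu}-u_\nu\|_{L^{1+\frac{2}{n}}(M)}^{1+\frac{2}{n}}$ and then by the two preceding lemmas. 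In particular, your assessment that Proposition \ref{Prop6.8} is ``doing essential work'' here is a misreading: that proposition is needed for Proposition \ref{Prop6.9}, not for this lemma, since the $\lambda_a$-term is bounded brutally by $C\|\overline{u}_{z_\nu}-u_\nu\|_{L^1(M)}$ using only the boundedness of $u_\infty^{\frac{2}{n}}\psi_a$. Likewise, the $n=1$ interpolation you describe for the $L^2$-type remainder is a complication you created by subtracting the linear term; with the paper's one-sided bound the issue never arises. Your longer route is valid (the bubble $L^1$-norms are indeed $O(\varepsilon_{k,\nu}^n)$, the continuity of $z\mapsto\overline{u}_z$ does give $\|\overline{u}_{z_\nu}^{\frac{2}{n}}-u_\infty^{\frac{2}{n}}\|_{L^\infty(M)}=o(1)$, and the $L^1$--$L^{1+\frac{2}{n}}$ interpolation closes), but it buys nothing over the paper's one-line estimate.
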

\begin{proof}
Note that
\begin{equation}\label{6.37}
\begin{split}
&\int_M\psi_a\Big((2+\frac{2}{n})\Delta_{\theta_0}\overline{u}_{z_\nu}
-R_{\theta_0}\overline{u}_{z_\nu}
+r_\infty\overline{u}_{z_\nu}^{1+\frac{2}{n}}\Big)dV_{\theta_0}\\
&=\int_M\psi_a\Big((2+\frac{2}{n})\Delta_{\theta_0}u_\nu
-R_{\theta_0}u_\nu
+r_\infty u_\nu^{1+\frac{2}{n}}\Big)dV_{\theta_0}\\
&\hspace{4mm}-\lambda_a\int_Mu_\infty^{\frac{2}{n}}\psi_a(\overline{u}_{z_\nu}-u_\nu)dV_{\theta_0}
+r_\infty\int_M\psi_a(\overline{u}_{z_\nu}^{1+\frac{2}{n}}-u_\nu^{1+\frac{2}{n}})dV_{\theta_0}
\end{split}
\end{equation}
where we have used integration by parts and (\ref{6.1}). Using the identity
$$(2+\frac{2}{n})\Delta_{\theta_0}u_\nu
-R_{\theta_0}u_\nu
+r_\infty u_\nu^{1+\frac{2}{n}}=-u_\nu^{1+\frac{2}{n}}(R_{\theta_\nu}-r_\infty),$$
we can rewrite (\ref{6.37}) as
\begin{equation*}
\begin{split}
&\int_M\psi_a\Big((2+\frac{2}{n})\Delta_{\theta_0}\overline{u}_{z_\nu}
-R_{\theta_0}\overline{u}_{z_\nu}
+r_\infty\overline{u}_{z_\nu}^{1+\frac{2}{n}}\Big)dV_{\theta_0}\\
&=-\int_M\psi_au_\nu^{1+\frac{2}{n}}(R_{\theta_\nu}-r_\infty)dV_{\theta_0}-\lambda_a\int_Mu_\infty^{\frac{2}{n}}\psi_a(\overline{u}_{z_\nu}-u_\nu)dV_{\theta_0}\\
&\hspace{4mm}
+r_\infty\int_M\psi_a(\overline{u}_{z_\nu}^{1+\frac{2}{n}}-u_\nu^{1+\frac{2}{n}})dV_{\theta_0}.
\end{split}
\end{equation*}
From this, together with the pointwise estimate
$$|\overline{u}_{z_\nu}^{1+\frac{2}{n}}-u_\nu^{1+\frac{2}{n}}|\leq C\overline{u}_{z_\nu}^{\frac{2}{n}}|\overline{u}_{z_\nu}-u_\nu|+C|\overline{u}_{z_\nu}-u_\nu|^{1+\frac{2}{n}},$$
we conclude
\begin{equation*}
\begin{split}
&\sup_{a\in A}\left|\int_M\psi_a\Big((2+\frac{2}{n})\Delta_{\theta_0}\overline{u}_{z_\nu}
-R_{\theta_0}\overline{u}_{z_\nu}
+r_\infty\overline{u}_{z_\nu}^{1+\frac{2}{n}}\Big)dV_{\theta_0}\right|\\
&\leq C\|u_\nu^{1+\frac{2}{n}}(R_{\theta_\nu}-r_\infty)\|_{L^{\frac{2n+2}{n+2}}(M)}
+C\|\overline{u}_{z_\nu}-u_\nu\|_{L^1(M)}
+C\|\overline{u}_{z_\nu}-u_\nu\|^{1+\frac{2}{n}}_{L^{1+\frac{2}{n}}(M)}.
\end{split}
\end{equation*}
Hence, it follows from Lemma \ref{Lemma6.11} and \ref{Lemma6.12} that
\begin{equation}\label{6.37.5}
\begin{split}
&\sup_{a\in A}\left|\int_M\psi_a\Big((2+\frac{2}{n})\Delta_{\theta_0}\overline{u}_{z_\nu}
-R_{\theta_0}\overline{u}_{z_\nu}
+r_\infty\overline{u}_{z_\nu}^{1+\frac{2}{n}}\Big)dV_{\theta_0}\right|\\
&\leq C\|u_\nu^{1+\frac{2}{n}}(R_{\theta_\nu}-r_\infty)\|_{L^{\frac{2n+2}{n+2}}(M)}
+C\|u_\nu^{1+\frac{2}{n}}(R_{\theta_\nu}-r_\infty)\|_{L^{\frac{2n+2}{n+2}}(M)}^{1+\frac{2}{n}}
+C\sum_{k=1}^m\varepsilon_{k,\nu}^n.
\end{split}
\end{equation}
Now Lemma \ref{Lemma6.13} follows from (\ref{6.37.5}) because
\begin{equation*}
\begin{split}
\|u_\nu^{1+\frac{2}{n}}(R_{\theta_\nu}-r_\infty)\|_{L^{\frac{2n+2}{n+2}}(M)}
&=\int_M|R_{\theta_\nu}-r_\infty|^{\frac{2n+2}{n+2}}dV_{\theta_\nu}\to 0
\end{split}
\end{equation*}
as $\nu\to\infty$ by (\ref{4}).
\end{proof}

\begin{prop}\label{Prop6.14}
The energy of $\overline{u}_{z_\nu}$ satisfies the estimate
\begin{equation*}
E(\overline{u}_{z_\nu})-E(u_\infty)\leq
 C\left(\int_Mu_\nu^{2+\frac{2}{n}}|R_{\theta_\nu}-r_\infty|^{\frac{2n+2}{n+2}}dV_{\theta_0}\right)^{\frac{n+2}{2n+2}(1+\gamma)}
+C\sum_{k=1}^m\varepsilon_{k,\nu}^{n(1+\gamma)}
\end{equation*}
if $\nu$ is sufficiently large.
\end{prop}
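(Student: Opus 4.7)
The plan is to combine Lemma \ref{Lemma6.5} with Lemma \ref{Lemma6.13}. The Łojasiewicz-type estimate in Lemma \ref{Lemma6.5} bounds $E(\overline{u}_z)-E(u_\infty)$ by a power $1+\gamma$ of the supremum over $a\in A$ of the functional derivative of the Euler-Lagrange equation tested against $\psi_a$; Lemma \ref{Lemma6.13} precisely estimates this supremum in terms of the $L^{\frac{2n+2}{n+2}}$-norm of $u_\nu^{1+\frac{2}{n}}(R_{\theta_\nu}-r_\infty)$ plus a contribution from the bubbles $\varepsilon_{k,\nu}^n$.

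First I would verify the applicability of Lemma \ref{Lemma6.5} at $z=z_\nu$: by Proposition \ref{Prop6.7}, $|z_\nu|=o(1)$, so for $\nu$ sufficiently large the vector $z_\nu$ lies in the neighborhood of $0$ on which Lemma \ref{Lemma6.5} holds. This gives
\begin{equation*}
E(\overline{u}_{z_\nu})-E(u_\infty)\leq C\sup_{a\in A}\left|\int_M\psi_a\Big((2+\tfrac{2}{n})\Delta_{\theta_0}\overline{u}_{z_\nu}-R_{\theta_0}\overline{u}_{z_\nu}+r_\infty \overline{u}_{z_\nu}^{1+\frac{2}{n}}\Big)dV_{\theta_0}\right|^{1+\gamma}.
\end{equation*}

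Next I would insert the bound from Lemma \ref{Lemma6.13} into the right-hand side above, raise both sides to the $(1+\gamma)$-th power, and use the elementary inequality $(a+b)^{1+\gamma}\leq 2^\gamma(a^{1+\gamma}+b^{1+\gamma})$ (for $a,b\geq 0$) to distribute the exponent over the two terms. Since $\gamma\in(0,1)$ is fixed (depending on the subsequence), this produces exactly the stated estimate
\begin{equation*}
E(\overline{u}_{z_\nu})-E(u_\infty)\leq C\left(\int_Mu_\nu^{2+\frac{2}{n}}|R_{\theta_\nu}-r_\infty|^{\frac{2n+2}{n+2}}dV_{\theta_0}\right)^{\frac{n+2}{2n+2}(1+\gamma)}+C\sum_{k=1}^m\varepsilon_{k,\nu}^{n(1+\gamma)}.
\end{equation*}

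The step requiring care is simply checking that $z_\nu$ falls in the range of validity of Lemma \ref{Lemma6.5}, which as noted follows from Proposition \ref{Prop6.7}; beyond that, the proof is a direct composition of the two preceding lemmas together with the concavity-type inequality for $(a+b)^{1+\gamma}$. No additional new estimates or blow-up arguments are needed, since the heavy lifting---namely the Łojasiewicz inequality for real analytic energies and the detailed $L^p$ estimates for $u_\nu-\overline{u}_{z_\nu}$ via the projection $\Pi$ and Folland-Stein embedding---has already been carried out in Lemmas \ref{Lemma6.5} and \ref{Lemma6.13}.
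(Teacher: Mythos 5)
Your proposal is correct and follows exactly the paper's argument, which proves Proposition \ref{Prop6.14} as an immediate consequence of Lemma \ref{Lemma6.5} and Lemma \ref{Lemma6.13}; your added checks (that $|z_\nu|=o(1)$ by Proposition \ref{Prop6.7} places $z_\nu$ in the range of validity of Lemma \ref{Lemma6.5}, and the power inequality $(a+b)^{1+\gamma}\leq 2^{\gamma}(a^{1+\gamma}+b^{1+\gamma})$, which is a convexity rather than concavity inequality since $1+\gamma>1$) are exactly the details the paper leaves implicit.
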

\begin{proof}
It follows immediately from Lemma \ref{Lemma6.5} and \ref{Lemma6.13}.
\end{proof}

\begin{prop}\label{Prop6.15}
The energy of $v_\nu$ satisfies the estimate
\begin{equation}\label{6.38}
E(v_\nu)\leq \Big(E(\overline{u}_{z_\nu})^{n+1}+\sum_{k=1}^mE(\overline{u}_{(x_{k,\nu},\varepsilon_{k,\nu})})^{n+1}\Big)^{\frac{1}{n+1}}
-C\sum_{k=1}^m\varepsilon_{k,\nu}^{n}
\end{equation}
if $\nu$ is sufficiently large.
\end{prop}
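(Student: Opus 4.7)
The plan is to extend the argument of Proposition \ref{Prop5.6} by treating $\overline{u}_{z_\nu}$ as an additional atom alongside the $m$ bubbles, and to extract the strict negative correction $-C\sum_{k=1}^m\varepsilon_{k,\nu}^n$ from the cross integrals between $\overline{u}_{z_\nu}$ and each bubble $\overline{u}_{(x_{k,\nu},\varepsilon_{k,\nu})}$. This is exactly where the CR positive mass theorem enters.

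First I will apply H\"older's inequality as in (\ref{5.9}), now with $m+1$ atoms, to obtain
\begin{equation*}
\Bigl(E(\overline{u}_{z_\nu})^{n+1}+\sum_{k=1}^m E(\overline{u}_{(x_{k,\nu},\varepsilon_{k,\nu})})^{n+1}\Bigr)^{\frac{1}{n+1}}\Bigl(\int_M v_\nu^{2+\frac{2}{n}}dV_{\theta_0}\Bigr)^{\frac{n}{n+1}}\ge\int_M H_\nu^{\frac{1}{n+1}}v_\nu^2\,dV_{\theta_0},
\end{equation*}
where $H_\nu:=F(\overline{u}_{z_\nu})^{n+1}\overline{u}_{z_\nu}^{2+\frac{2}{n}}+\sum_{k=1}^m F(\overline{u}_{(x_{k,\nu},\varepsilon_{k,\nu})})^{n+1}\overline{u}_{(x_{k,\nu},\varepsilon_{k,\nu})}^{2+\frac{2}{n}}$. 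Expanding $v_\nu^2$ via (\ref{6.20}), the diagonal parts recover the self-energies, the bubble--bubble cross pairs (for $1\le i<j\le m$) are controlled by the pointwise bound derived in (\ref{save}) of Proposition \ref{Prop5.6}, and the new $\overline{u}_{z_\nu}$--$\overline{u}_{(x_{k,\nu},\varepsilon_{k,\nu})}$ cross pairs are bounded below by using $H_\nu^{1/(n+1)}\ge F(\overline{u}_{(x_{k,\nu},\varepsilon_{k,\nu})})\overline{u}_{(x_{k,\nu},\varepsilon_{k,\nu})}^{2/n}$.

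In parallel, I will expand $E(v_\nu)(\int_M v_\nu^{2+\frac{2}{n}}dV_{\theta_0})^{\frac{n}{n+1}}$ directly via (\ref{6.20}) and integration by parts, as in (\ref{5.8}). Writing $\overline{u}_k:=\overline{u}_{(x_{k,\nu},\varepsilon_{k,\nu})}$ for brevity, the new cross contributions are
\begin{equation*}
J_{k,\nu}:=\int_M\overline{u}_{z_\nu}\bigl((2+\tfrac{2}{n})\Delta_{\theta_0}\overline{u}_k-R_{\theta_0}\overline{u}_k\bigr)dV_{\theta_0},\qquad k=1,\dots,m.
\end{equation*}
Subtracting the two expansions and handling the bubble--bubble pairs exactly as at the end of Proposition \ref{Prop5.6} (via Lemmas \ref{LemmaB.4}, \ref{LemmaB.5}, and $F(\overline{u}_k)=r_\infty+o(1)$ from Lemma \ref{LemmaB.6}), the inequality (\ref{6.38}) reduces to showing the strict lower bound
\begin{equation*}
\int_M\overline{u}_{z_\nu}\bigl((2+\tfrac{2}{n})\Delta_{\theta_0}\overline{u}_k-R_{\theta_0}\overline{u}_k+r_\infty\overline{u}_k^{1+\frac{2}{n}}\bigr)dV_{\theta_0}\ge c\,\varepsilon_{k,\nu}^n
\end{equation*}
for some $c>0$ independent of $k$ and $\nu$, and for each $k=1,\dots,m$.

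The main obstacle is establishing this strict lower bound, which is precisely where the CR positive mass hypothesis of Theorem \ref{main} is used. Conformally changing contact form to $\hat\theta_\nu=\overline{u}_{z_\nu}^{2/n}\theta_0$, so that $(M,\hat\theta_\nu)$ has Webster scalar curvature close to $r_\infty$, I will substitute the explicit asymptotic expansion of the test function $\overline{u}_k$ at $x_{k,\nu}$ (from Appendix \ref{Appendix}) together with the Green's function expansion of the associated conformal sub-Laplacian. The leading local geometric corrections vanish to the required order in Heisenberg normal coordinates, leaving as the first nontrivial term the CR mass at $x_{k,\nu}$. Under the hypotheses of Theorem \ref{main}, this mass is strictly positive: by \cite{Cheng&Chiu&Yang} when $M$ is spherical with $n\ge 2$ (and $s(M)<1$ for $n=2$), and by \cite{Cheng&Malchiodi&Yang} when $n=1$ under nonnegativity of the CR Paneitz operator. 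The strict positivity of the mass supplies the required $c\,\varepsilon_{k,\nu}^n$ gain and yields (\ref{6.38}).
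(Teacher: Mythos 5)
Your overall architecture (H\"older's inequality with $m+1$ atoms, expansion of $E(v_\nu)\big(\int_M v_\nu^{2+\frac{2}{n}}dV_{\theta_0}\big)^{\frac{n}{n+1}}$, subtraction, and absorption of the bubble--bubble pairs via (\ref{save}) and Lemmas \ref{LemmaB.4}--\ref{LemmaB.6}) matches the paper's proof. But you have misidentified the source of the crucial term $-C\sum_{k=1}^m\varepsilon_{k,\nu}^{n}$, and the mechanism you propose in its place does not work. In the paper the gain comes from the \emph{strict defect in the H\"older-type pointwise inequality on the concentration ball}: one keeps
\begin{equation*}
\Big(F(\overline{u}_{z_\nu})^{n+1}\overline{u}_{z_\nu}^{2+\frac{2}{n}}+F(\overline{u}_{(x_{k,\nu},\varepsilon_{k,\nu})})^{n+1}\overline{u}_{(x_{k,\nu},\varepsilon_{k,\nu})}^{2+\frac{2}{n}}\Big)^{\frac{1}{n+1}}\overline{u}_{z_\nu}\overline{u}_{(x_{k,\nu},\varepsilon_{k,\nu})}
\geq F(\overline{u}_{z_\nu})\overline{u}_{z_\nu}^{1+\frac{2}{n}}\overline{u}_{(x_{k,\nu},\varepsilon_{k,\nu})}+C\varepsilon_{k,\nu}^{-n-2}1_{\{d(x_{k,\nu},x)\leq\varepsilon_{k,\nu}\}},
\end{equation*}
whose indicator term integrates to $C\varepsilon_{k,\nu}^n$ (this is (\ref{6.53})). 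The only input is that $\overline{u}_{z_\nu}$ is bounded below by a positive constant, i.e.\ $u_\infty>0$; no positive mass theorem is involved. The cross integrals you single out as the key are instead pure error terms: the paper bounds $\int_M\overline{u}_{(x_{k,\nu},\varepsilon_{k,\nu})}\big|(2+\frac{2}{n})\Delta_{\theta_0}\overline{u}_{z_\nu}-R_{\theta_0}\overline{u}_{z_\nu}+F(\overline{u}_{z_\nu})\overline{u}_{z_\nu}^{1+\frac{2}{n}}\big|dV_{\theta_0}$ by $o(1)\varepsilon_{k,\nu}^n$ in (\ref{6.57}), and these are then absorbed by the $C\varepsilon_{k,\nu}^n$ gain already secured.

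Concretely, your proposed key lemma $\int_M\overline{u}_{z_\nu}\big((2+\frac{2}{n})\Delta_{\theta_0}\overline{u}_k-R_{\theta_0}\overline{u}_k+r_\infty\overline{u}_k^{1+\frac{2}{n}}\big)dV_{\theta_0}\geq c\,\varepsilon_{k,\nu}^n$ cannot be established the way you describe, and is most likely false with a uniform $c>0$. Testing Corollary \ref{CorB.2} against the bounded function $\overline{u}_{z_\nu}\varphi_{x_{k,\nu}}^{-1}$ only yields $|J_{k,\nu}|\leq C(\delta^{\min\{4,2n\}}+\varepsilon_{k,\nu}^2\delta^{-2})\varepsilon_{k,\nu}^n$, i.e.\ a quantity of uncontrolled sign that becomes an arbitrarily small multiple of $\varepsilon_{k,\nu}^n$ once $\delta$ is chosen small; and the leading constant at order $\varepsilon_{k,\nu}^n$, computed via self-adjointness of the conformal sub-Laplacian, is essentially $r_\infty\big(\int_M u_\infty\overline{u}_k^{1+\frac{2}{n}}-\int_M u_\infty^{1+\frac{2}{n}}\overline{u}_k\big)$, which has no definite sign. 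Moreover the CR mass is dimensionally the wrong object here: the term $\varepsilon_{k,\nu}^nA_{x_{k,\nu}}\Delta_{\widehat{\theta}}\chi_\delta$ from Proposition \ref{PropB.1} produces a contribution of order $\varepsilon_{k,\nu}^{2n}$ (as in (\ref{B.7}) and Proposition \ref{PropB.3}), an order of magnitude too small to generate an $\varepsilon_{k,\nu}^n$ term. The positive mass theorem is indeed essential to the argument, but it enters only through the self-energy bound $E(\overline{u}_{(x_{k,\nu},\varepsilon_{k,\nu})})\leq Y(S^{2n+1})$ of Proposition \ref{PropB.3}, used in Corollary \ref{Cor6.16}, not in Proposition \ref{Prop6.15}.
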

\begin{proof}
By H\"{o}lder's inequality, we have
\begin{equation}\label{6.52}
\begin{split}
&\left(E(\overline{u}_{z_\nu})^{n+1}+\sum_{k=1}^mE(\overline{u}_{(x_{k,\nu},\varepsilon_{k,\nu})})^{n+1}\right)^{\frac{1}{n+1}}
\left(\int_Mv_\nu^{2+\frac{2}{n}}dV_{\theta_0}\right)^{\frac{n}{n+1}}\\
&\geq \int_M\left(F(\overline{u}_{z_\nu})^{n+1}\overline{u}_{z_\nu}^{2+\frac{2}{n}}+\sum_{k=1}^mF(\overline{u}_{(x_{k,\nu},\varepsilon_{k,\nu})})^{n+1}
\,\overline{u}_{(x_{k,\nu},\varepsilon_{k,\nu})}^{2+\frac{2}{n}}\right)^{\frac{1}{n+1}}v_{\nu}^2\,dV_{\theta_0}\\
&\geq \int_MF(\overline{u}_{z_\nu})
\overline{u}_{z_\nu}^{2+\frac{2}{n}}dV_{\theta_0}+\int_M\sum_{k=1}^m\alpha_{k,\nu}^2F(\overline{u}_{(x_{k,\nu},\varepsilon_{k,\nu})})
\overline{u}_{(x_{k,\nu},\varepsilon_{k,\nu})}^{2+\frac{2}{n}}dV_{\theta_0}\\
&\hspace{4mm}+
2\int_M\sum_{k=1}^m\alpha_{k,\nu}
 \Big(F(\overline{u}_{z_\nu})^{n+1}
\,\overline{u}_{z_\nu}^{2+\frac{2}{n}}
+F(\overline{u}_{(x_{k,\nu},\varepsilon_{k,\nu})})^{n+1}
\,\overline{u}_{(x_{k,\nu},\varepsilon_{k,\nu})}^{2+\frac{2}{n}}\Big)^{\frac{1}{n+1}}
\overline{u}_{z_\nu}\overline{u}_{(x_{k,\nu},\varepsilon_{k,\nu})}dV_{\theta_0}\\
&\hspace{4mm}+
2\int_M\sum_{1\leq i<j\leq m}\alpha_{i,\nu}\alpha_{j,\nu}
 \Big(F(\overline{u}_{(x_{i,\nu},\varepsilon_{i,\nu})})^{n+1}
\,\overline{u}_{(x_{i,\nu},\varepsilon_{i,\nu})}^{2+\frac{2}{n}}\\
&\hspace{12mm}
+F(\overline{u}_{(x_{j,\nu},\varepsilon_{j,\nu})})^{n+1}
\,\overline{u}_{(x_{j,\nu},\varepsilon_{j,\nu})}^{2+\frac{2}{n}}\Big)^{\frac{1}{n+1}}
\overline{u}_{(x_{i,\nu},\varepsilon_{i,\nu})}\overline{u}_{(x_{j,\nu},\varepsilon_{j,\nu})}dV_{\theta_0}.
\end{split}
\end{equation}
Using the inequality
\begin{equation*}
\begin{split}
&\Big(F(\overline{u}_{z_\nu})^{n+1}
\,\overline{u}_{z_\nu}^{2+\frac{2}{n}}
+F(\overline{u}_{(x_{k,\nu},\varepsilon_{k,\nu})})^{n+1}
\,\overline{u}_{(x_{k,\nu},\varepsilon_{k,\nu})}^{2+\frac{2}{n}}\Big)^{\frac{1}{n+1}}
\overline{u}_{z_\nu}\overline{u}_{(x_{k,\nu},\varepsilon_{k,\nu})}\\
&\geq F(\overline{u}_{z_\nu})\overline{u}_{z_\nu}^{1+\frac{2}{n}}\overline{u}_{(x_{k,\nu},\varepsilon_{k,\nu})}
+C\,\varepsilon_{k,\nu}^{-n-2}1_{\{d(x_{k,\nu},x)\leq \varepsilon_{k,\nu}\}},
\end{split}
\end{equation*}
we obtain
\begin{equation}\label{6.53}
\begin{split}
&\int_M\Big(F(\overline{u}_{z_\nu})^{n+1}
\,\overline{u}_{z_\nu}^{2+\frac{2}{n}}
+F(\overline{u}_{(x_{k,\nu},\varepsilon_{k,\nu})})^{n+1}
\,\overline{u}_{(x_{k,\nu},\varepsilon_{k,\nu})}^{2+\frac{2}{n}}\Big)^{\frac{1}{n+1}}
\overline{u}_{z_\nu}\overline{u}_{(x_{k,\nu},\varepsilon_{k,\nu})}dV_{\theta_0}\\
&\geq \int_MF(\overline{u}_{z_\nu})\overline{u}_{z_\nu}^{1+\frac{2}{n}}\overline{u}_{(x_{k,\nu},\varepsilon_{k,\nu})}dV_{\theta_0}
+C\,\varepsilon_{k,\nu}^{n}
\end{split}
\end{equation}
if $\nu$ is sufficiently large. We next consider $i<j$.
Again we get (\ref{save}).
Substituting (\ref{save}) and (\ref{6.53}) into (\ref{6.52}), we get
 \begin{equation}\label{6.55}
\begin{split}
&\left(E(\overline{u}_{z_\nu})^{n+1}+\sum_{k=1}^mE(\overline{u}_{(x_{k,\nu},\varepsilon_{k,\nu})})^{n+1}\right)^{\frac{1}{n+1}}
\left(\int_Mv_\nu^{2+\frac{2}{n}}dV_{\theta_0}\right)^{\frac{n}{n+1}}\\
&\geq \int_MF(\overline{u}_{z_\nu})
\overline{u}_{z_\nu}^{2+\frac{2}{n}}dV_{\theta_0}+\int_M\sum_{k=1}^m\alpha_{k,\nu}^2F(\overline{u}_{(x_{k,\nu},\varepsilon_{k,\nu})})
\overline{u}_{(x_{k,\nu},\varepsilon_{k,\nu})}^{2+\frac{2}{n}}dV_{\theta_0}\\
&\hspace{4mm}+
2\int_M\sum_{k=1}^m\alpha_{k,\nu}
F(\overline{u}_{z_\nu})
\overline{u}_{z_\nu}^{1+\frac{2}{n}}\overline{u}_{(x_{k,\nu},\varepsilon_{k,\nu})}dV_{\theta_0}\\
&\hspace{4mm}+
2\int_M\sum_{1\leq i<j\leq m}\alpha_{i,\nu}\alpha_{j,\nu}
F(\overline{u}_{(x_{j,\nu},\varepsilon_{j,\nu})})
\overline{u}_{(x_{i,\nu},\varepsilon_{i,\nu})}\overline{u}_{(x_{j,\nu},\varepsilon_{j,\nu})}^{1+\frac{2}{n}}dV_{\theta_0}\\
&\hspace{4mm}
+C\sum_{k=1}^m\alpha_{k,\nu}\varepsilon_{k,\nu}^{n}
+C \sum_{1\leq i<j\leq m}\left(\frac{\varepsilon_{i,\nu}^2\varepsilon_{j,\nu}^2}{\varepsilon_{j,\nu}^4+d(x_{i,\nu},x_{j,\nu})^4}\right)^{\frac{n}{2}}.
\end{split}
\end{equation}
By the definition of $v_\nu$ in (\ref{6.20}), we have
\begin{equation}\label{6.51}
\begin{split}
&E(v_\nu)\left(\int_Mv_\nu^{2+\frac{2}{n}}dV_{\theta_0}\right)^{\frac{n}{n+1}}\\
&=\int_MF(\overline{u}_{z_\nu})
\overline{u}_{z_\nu}^{2+\frac{2}{n}}dV_{\theta_0}+
\int_M\sum_{k=1}^m\alpha_{k,\nu}^2F(\overline{u}_{(x_{k,\nu},\varepsilon_{k,\nu})})
\overline{u}_{(x_{k,\nu},\varepsilon_{k,\nu})}^{2+\frac{2}{n}}dV_{\theta_0}\\
&\hspace{4mm}-
2\int_M\sum_{k=1}^m\alpha_{k,\nu}
 \overline{u}_{(x_{k,\nu},\varepsilon_{k,\nu})}\Big((2+\frac{2}{n})\Delta_{\theta_0}
\overline{u}_{z_\nu}
-R_{\theta_0}\overline{u}_{z_\nu}\Big)dV_{\theta_0}\\
&\hspace{4mm}-
2\int_M\sum_{1\leq i<j\leq m}\alpha_{i,\nu}\alpha_{j,\nu}
 \overline{u}_{(x_{i,\nu},\varepsilon_{i,\nu})}\Big((2+\frac{2}{n})\Delta_{\theta_0}
 \overline{u}_{(x_{j,\nu},\varepsilon_{j,\nu})}
-R_{\theta_0}\overline{u}_{(x_{j,\nu},\varepsilon_{j,\nu})}\Big)dV_{\theta_0}.
\end{split}
\end{equation}
Substituting (\ref{6.55}) into (\ref{6.51}), we obtain
\begin{equation}\label{6.56}
\begin{split}
&E(v_\nu)\left(\int_Mv_\nu^{2+\frac{2}{n}}dV_{\theta_0}\right)^{\frac{n}{n+1}}\\
&\leq \left(E(\overline{u}_{z_\nu})^{n+1}+\sum_{k=1}^mE(\overline{u}_{(x_{k,\nu},\varepsilon_{k,\nu})})^{n+1}\right)^{\frac{1}{n+1}}
\left(\int_Mv_\nu^{2+\frac{2}{n}}dV_{\theta_0}\right)^{\frac{n}{n+1}}\\
&\hspace{4mm}-
2\int_M\sum_{k=1}^m\alpha_{k,\nu}
 \overline{u}_{(x_{k,\nu},\varepsilon_{k,\nu})}\Big((2+\frac{2}{n})\Delta_{\theta_0}
\overline{u}_{z_\nu}
-R_{\theta_0}\overline{u}_{z_\nu}+F(\overline{u}_{z_\nu})\overline{u}_{z_\nu}^{1+\frac{2}{n}}\Big)dV_{\theta_0}\\
&\hspace{4mm}-
2\int_M\sum_{1\leq i<j\leq m}\alpha_{i,\nu}\alpha_{j,\nu}
 \overline{u}_{(x_{i,\nu},\varepsilon_{i,\nu})}\\
&\hspace{1.5cm}
\cdot\left((2+\frac{2}{n})\Delta_{\theta_0}
 \overline{u}_{(x_{j,\nu},\varepsilon_{j,\nu})}
-R_{\theta_0}\overline{u}_{(x_{j,\nu},\varepsilon_{j,\nu})}
+F(\overline{u}_{(x_{j,\nu},\varepsilon_{j,\nu})})\overline{u}_{(x_{j,\nu},\varepsilon_{j,\nu})}^{1+\frac{2}{n}}\right)dV_{\theta_0}\\
&\hspace{4mm}
-C\sum_{k=1}^m\alpha_{k,\nu}\varepsilon_{k,\nu}^{n}
-C \sum_{1\leq i<j\leq m}\left(\frac{\varepsilon_{i,\nu}^2\varepsilon_{j,\nu}^2}{\varepsilon_{j,\nu}^4+d(x_{i,\nu},x_{j,\nu})^4}\right)^{\frac{n}{2}}.
\end{split}
\end{equation}
Note that
\begin{equation}\label{6.57}
\int_M
 \overline{u}_{(x_{k,\nu},\varepsilon_{k,\nu})}\Big|(2+\frac{2}{n})\Delta_{\theta_0}
\overline{u}_{z_\nu}
-R_{\theta_0}\overline{u}_{z_\nu}+F(\overline{u}_{z_\nu})\overline{u}_{z_\nu}^{1+\frac{2}{n}}\Big|\,dV_{\theta_0}
\leq o(1)\varepsilon_{k,\nu}^{n}.
\end{equation}
Moreover, since $F(\overline{u}_{(x_{j,\nu},\varepsilon_{j,\nu})})=r_\infty+o(1)$ by Lemma \ref{LemmaB.6}, it follows from Lemma \ref{LemmaB.4}
and \ref{LemmaB.5} that
\begin{equation}\label{6.58}
\begin{split}
&\int_M
 \overline{u}_{(x_{i,\nu},\varepsilon_{i,\nu})}\Big|(2+\frac{2}{n})\Delta_{\theta_0}
 \overline{u}_{(x_{j,\nu},\varepsilon_{j,\nu})}
-R_{\theta_0}\overline{u}_{(x_{j,\nu},\varepsilon_{j,\nu})}
+F(\overline{u}_{(x_{j,\nu},\varepsilon_{j,\nu})})
\overline{u}_{(x_{j,\nu},\varepsilon_{j,\nu})}^{1+\frac{2}{n}}\Big|\,dV_{\theta_0}\\
&\leq \int_M
 \overline{u}_{(x_{i,\nu},\varepsilon_{i,\nu})}\Big|(2+\frac{2}{n})\Delta_{\theta_0}
 \overline{u}_{(x_{j,\nu},\varepsilon_{j,\nu})}
-R_{\theta_0}\overline{u}_{(x_{j,\nu},\varepsilon_{j,\nu})}
+r_\infty
\overline{u}_{(x_{j,\nu},\varepsilon_{j,\nu})}^{1+\frac{2}{n}}\Big|\,dV_{\theta_0}\\
&\hspace{4mm}+|F(\overline{u}_{(x_{j,\nu},\varepsilon_{j,\nu})})-r_\infty|
\int_M\overline{u}_{(x_{i,\nu},\varepsilon_{i,\nu})}\overline{u}_{(x_{j,\nu},\varepsilon_{j,\nu})}^{1+\frac{2}{n}}dV_{\theta_0}\\
&\leq C(\delta^4+\delta^{2n}+\frac{\varepsilon_{j,\nu}^2}{\delta^2})\left(\frac{\varepsilon_{i,\nu}^2\varepsilon_{j,\nu}^2}{\varepsilon_{j,\nu}^4+d(x_{i,\nu},x_{j,\nu})^4}\right)^{\frac{n}{2}}
+
o(1)\left(\frac{\varepsilon_{i,\nu}^2\varepsilon_{j,\nu}^2}{\varepsilon_{j,\nu}^4+d(x_{i,\nu},x_{j,\nu})^4}\right)^{\frac{n}{2}}
\end{split}
\end{equation}
for $i<j$. Therefore, the assertion follows from (\ref{6.56}), (\ref{6.57}) and (\ref{6.58})
by choosing $\delta$ sufficiently small.
\end{proof}

\begin{cor}\label{Cor6.16}
We have
\begin{equation*}
E(v_\nu)\leq \Big(E(u_\infty)^{n+1}+mY(S^{2n+1})^{n+1}\Big)^{\frac{1}{n+1}}
+C\left(\int_Mu_\nu^{2+\frac{2}{n}}|R_{\theta_\nu}-r_\infty|^{\frac{2n+2}{n+2}}dV_{\theta_0}\right)^{\frac{n+2}{2n+2}(1+\gamma)}
\end{equation*}
if $\nu$ is sufficiently large.
\end{cor}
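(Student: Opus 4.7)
The plan is to combine the three energy bounds already available: Proposition \ref{Prop6.15} for $E(v_\nu)$ in terms of $E(\overline{u}_{z_\nu})$ and the $E(\overline{u}_{(x_{k,\nu},\varepsilon_{k,\nu})})$, Proposition \ref{Prop6.14} for the deviation $E(\overline{u}_{z_\nu})-E(u_\infty)$, and Proposition \ref{PropB.3} (as in Corollary \ref{Cor5.7}) giving $E(\overline{u}_{(x_{k,\nu},\varepsilon_{k,\nu})})\leq Y(S^{2n+1})$ for each bubble. The key mechanism is that the \emph{negative} $-C\sum_k\varepsilon_{k,\nu}^{n}$ term in Proposition \ref{Prop6.15} will absorb all the error terms of order $\varepsilon_{k,\nu}^{n(1+\gamma)}$ coming from Proposition \ref{Prop6.14}, since $\varepsilon_{k,\nu}\to 0$ and $n(1+\gamma)>n$.

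First I would raise Proposition \ref{Prop6.14} to the $(n+1)$-st power. Writing $\Lambda_\nu$ for the expression $\bigl(\int_Mu_\nu^{2+2/n}|R_{\theta_\nu}-r_\infty|^{(2n+2)/(n+2)}dV_{\theta_0}\bigr)^{(n+2)/(2n+2)(1+\gamma)}$, Proposition \ref{Prop6.14} gives $E(\overline{u}_{z_\nu})\leq E(u_\infty)+C\Lambda_\nu+C\sum_k\varepsilon_{k,\nu}^{n(1+\gamma)}$, and since $E(u_\infty)$ is a bounded constant and $\Lambda_\nu,\varepsilon_{k,\nu}^{n(1+\gamma)}\to 0$, expanding yields
\begin{equation*}
E(\overline{u}_{z_\nu})^{n+1}\leq E(u_\infty)^{n+1}+C\Lambda_\nu+C\sum_{k=1}^m\varepsilon_{k,\nu}^{n(1+\gamma)}.
\end{equation*}
Combined with $E(\overline{u}_{(x_{k,\nu},\varepsilon_{k,\nu})})^{n+1}\leq Y(S^{2n+1})^{n+1}$, this gives
\begin{equation*}
E(\overline{u}_{z_\nu})^{n+1}+\sum_{k=1}^mE(\overline{u}_{(x_{k,\nu},\varepsilon_{k,\nu})})^{n+1}\leq E(u_\infty)^{n+1}+mY(S^{2n+1})^{n+1}+C\Lambda_\nu+C\sum_{k=1}^m\varepsilon_{k,\nu}^{n(1+\gamma)}.
\end{equation*}

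Next, taking $(n+1)$-st roots and using concavity (i.e.\ $(A+B)^{1/(n+1)}\leq A^{1/(n+1)}+CB$ for $A$ bounded below by a positive constant, which is the case since $Y(S^{2n+1})>0$) produces
\begin{equation*}
\Bigl(E(\overline{u}_{z_\nu})^{n+1}+\sum_{k=1}^mE(\overline{u}_{(x_{k,\nu},\varepsilon_{k,\nu})})^{n+1}\Bigr)^{\frac{1}{n+1}}\leq \bigl(E(u_\infty)^{n+1}+mY(S^{2n+1})^{n+1}\bigr)^{\frac{1}{n+1}}+C\Lambda_\nu+C\sum_{k=1}^m\varepsilon_{k,\nu}^{n(1+\gamma)}.
\end{equation*}
Inserting this into Proposition \ref{Prop6.15} and rearranging gives
\begin{equation*}
E(v_\nu)\leq \bigl(E(u_\infty)^{n+1}+mY(S^{2n+1})^{n+1}\bigr)^{\frac{1}{n+1}}+C\Lambda_\nu+C\sum_{k=1}^m\varepsilon_{k,\nu}^{n(1+\gamma)}-C\sum_{k=1}^m\varepsilon_{k,\nu}^{n}.
\end{equation*}

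The only thing left is to handle the last two $\varepsilon$-terms. Since $\varepsilon_{k,\nu}\to 0$ by Proposition \ref{Prop5.1} (applied through Proposition \ref{Prop6.6}) and $n(1+\gamma)>n$, we have $\varepsilon_{k,\nu}^{n(1+\gamma)}=\varepsilon_{k,\nu}^{n}\cdot\varepsilon_{k,\nu}^{n\gamma}=o(\varepsilon_{k,\nu}^{n})$, so for $\nu$ sufficiently large the quantity $C\sum_k\varepsilon_{k,\nu}^{n(1+\gamma)}-C\sum_k\varepsilon_{k,\nu}^{n}$ is nonpositive and can be dropped. This yields the claimed inequality. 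I do not anticipate any genuine obstacle here: the statement is essentially an algebraic consequence of the three inputs, and the only delicate point is the comparison of the $\varepsilon^{n}$ and $\varepsilon^{n(1+\gamma)}$ powers, which is straightforward once one observes that the coefficient $C$ appearing in front of $\sum\varepsilon_{k,\nu}^{n}$ in Proposition \ref{Prop6.15} is independent of the coefficient in Proposition \ref{Prop6.14}.
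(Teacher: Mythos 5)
Your argument is correct and is essentially the paper's own proof: the paper likewise feeds Proposition \ref{Prop6.14} and the bound $E(\overline{u}_{(x_{k,\nu},\varepsilon_{k,\nu})})\leq Y(S^{2n+1})$ from Proposition \ref{PropB.3} into Proposition \ref{Prop6.15}, merely stating ``substituting these into (\ref{6.38}), we get the result.'' You have simply written out the algebra the paper leaves implicit (the $(n+1)$-st power expansion, the concavity step, and the absorption of $C\sum_k\varepsilon_{k,\nu}^{n(1+\gamma)}$ by $-C\sum_k\varepsilon_{k,\nu}^{n}$), and those details are all sound.
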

\begin{proof}
By Proposition \ref{Prop6.14}, we have
\begin{equation*}
E(\overline{u}_{z_\nu})-E(u_\infty)\leq
 C\left(\int_Mu_\nu^{2+\frac{2}{n}}|R_{\theta_\nu}-r_\infty|^{\frac{2n+2}{n+2}}dV_{\theta_0}\right)^{\frac{n+2}{2n+2}(1+\gamma)}
+C\sum_{k=1}^m\varepsilon_{k,\nu}^{n(1+\gamma)}
\end{equation*}
if $\nu$ is sufficiently large.
By Proposition \ref{PropB.3}, we have
$$E(\overline{u}_{(x_{k,\nu},\varepsilon_{k,\nu})})\leq Y(S^{2n+1}).$$
Substituting these into (\ref{6.38}), we get the result.
\end{proof}

\section{Proof of Proposition \ref{Prop3.3}}\label{section7}

In this section, we give the proof of Proposition \ref{Prop3.3}.
Note that
\begin{equation*}
\begin{split}
r_{\theta_\nu}
&=\int_M\Big((2+\frac{2}{n})|\nabla_{\theta_0}v_\nu|^2_{\theta_0}+R_{\theta_0}v_\nu^2\Big)dV_{\theta_0}+2\int_MR_{\theta_\nu}u_\nu^{1+\frac{2}{n}}w_\nu\,dV_{\theta_0}\\
&\hspace{4mm}-\int_M\Big((2+\frac{2}{n})|\nabla_{\theta_0}w_\nu|^2_{\theta_0}+R_{\theta_0}w_\nu^2\Big)dV_{\theta_0},
\end{split}
\end{equation*}
where the first equality follows from (\ref{6}), (\ref{9}), (\ref{3}), and integration by parts, and the second equality
follows from (\ref{6.20.5}).
This implies that
\begin{equation}\label{7.1}
\begin{split}
r_{\theta_\nu}
&=E(v_\nu)\left(\int_M v_\nu^{2+\frac{2}{n}}dV_{\theta_0}
\right)^{\frac{n}{n+1}}+2\int_Mu_\nu^{1+\frac{2}{n}}(R_{\theta_\nu}-r_\infty)w_\nu\,dV_{\theta_0}\\
&\hspace{4mm}-\int_M\Big((2+\frac{2}{n})|\nabla_{\theta_0}w_\nu|^2_{\theta_0}+R_{\theta_0}w_\nu^2-\frac{n+2}{n}r_\infty v_\nu^{\frac{2}{n}}w_\nu^2\Big)dV_{\theta_0}\\
&\hspace{4mm}+r_\infty\int_M\Big(-\frac{n+2}{n}r_\infty v_\nu^{\frac{2}{n}}w_\nu^2+2(v_\nu+w_\nu)^{1+\frac{2}{n}}w_\nu\Big)dV_{\theta_0}.
\end{split}
\end{equation}
Note that $\displaystyle x^{\frac{n}{n+1}}-1\leq \frac{n}{n+1}x-\frac{n}{n+1}$ for $x\geq 0$. Therefore,
\begin{equation*}
\begin{split}
\left(\int_M v_\nu^{2+\frac{2}{n}}dV_{\theta_0}
\right)^{\frac{n}{n+1}}-1
&\leq
\int_M\Big(\frac{n}{n+1}v_\nu^{2+\frac{2}{n}}-\frac{n}{n+1}(v_\nu+w_\nu)^{2+\frac{2}{n}}\Big)dV_{\theta_0}
\end{split}
\end{equation*}
where we have used (\ref{3}). Multiplying this by $r_\infty$ and adding it to (\ref{7.1}), we obtain
\begin{equation}\label{7.2}
\begin{split}
r_{\theta_\nu}
&\leq r_\infty+(E(v_\nu)-r_\infty)\left(\int_M v_\nu^{2+\frac{2}{n}}dV_{\theta_0}
\right)^{\frac{n}{n+1}}+2\int_Mu_\nu^{1+\frac{2}{n}}(R_{\theta_\nu}-r_\infty)w_\nu\,dV_{\theta_0}\\
&\hspace{4mm}-\int_M\Big((2+\frac{2}{n})|\nabla_{\theta_0}w_\nu|^2_{\theta_0}+R_{\theta_0}w_\nu^2-\frac{n+2}{n}r_\infty v_\nu^{\frac{2}{n}}w_\nu^2\Big)dV_{\theta_0}\\
&\hspace{4mm}+r_\infty\int_M\Big(\frac{n}{n+1}v_\nu^{2+\frac{2}{n}}-\frac{n+2}{n}r_\infty v_\nu^{\frac{2}{n}}w_\nu^2\\
&\hspace{2cm}+2(v_\nu+w_\nu)^{1+\frac{2}{n}}w_\nu
-\frac{n}{n+1}(v_\nu+w_\nu)^{2+\frac{2}{n}}\Big)dV_{\theta_0}.
\end{split}
\end{equation}
Using H\"{o}lder's inequality, we obtain
\begin{equation}\label{7.3}
\begin{split}
&\int_Mu_\nu^{1+\frac{2}{n}}(R_{\theta_\nu}-r_\infty)w_\nu\,dV_{\theta_0}\\
&\leq\left(\int_Mu_\nu^{2+\frac{2}{n}}|R_{\theta_\nu}-r_\infty|^{\frac{2n+2}{n+2}}dV_{\theta_0}\right)^{\frac{n+2}{2n+2}}
\left(\int_M|w_\nu|^{2+\frac{2}{n}}dV_{\theta_0}\right)^{\frac{n}{2n+2}}.
\end{split}
\end{equation}
Moreover, it follows from from Corollary \ref{Cor5.5} and \ref{Cor6.10} that
\begin{equation}\label{7.4}
\begin{split}
&\int_M\Big((2+\frac{2}{n})|\nabla_{\theta_0}w_\nu|^2_{\theta_0}+R_{\theta_0}w_\nu^2-\frac{n+2}{n}r_\infty v_\nu^{\frac{2}{n}}w_\nu^2\Big)dV_{\theta_0}\\
&\geq c\int_M\left((2+\frac{2}{n})|\nabla_{\theta_0}w_\nu|^2_{\theta_0}+R_{\theta_0}w_\nu^2\right)dV_{\theta_0}\\
&\geq cY(M,\theta_0)\left(\int_M|w_\nu|^{2+\frac{2}{n}}dV_{\theta_0}\right)^{\frac{n}{n+1}}.
\end{split}
\end{equation}
Finally it follows from the pointwise estimate
\begin{equation*}
\begin{split}
&\left|\frac{n}{n+1}v_\nu^{2+\frac{2}{n}}-\frac{n+2}{n}r_\infty v_\nu^{\frac{2}{n}}w_\nu^2+2(v_\nu+w_\nu)^{1+\frac{2}{n}}w_\nu
-\frac{n}{n+1}(v_\nu+w_\nu)^{2+\frac{2}{n}}\right|\\
&\leq C v_\nu^{\max\{0,2+\frac{2}{n}-3\}}|w_\nu|^{\min\{3,2+\frac{2}{n}\}}+C|w_\nu|^{2+\frac{2}{n}}
\end{split}
\end{equation*}
that
\begin{equation}\label{7.5}
\begin{split}
&\int_M\Big(\frac{n}{n+1}v_\nu^{2+\frac{2}{n}}-\frac{n+2}{n}r_\infty v_\nu^{\frac{2}{n}}w_\nu^2+2(v_\nu+w_\nu)^{1+\frac{2}{n}}w_\nu
-\frac{n}{n+1}(v_\nu+w_\nu)^{2+\frac{2}{n}}\Big)dV_{\theta_0}\\
&\leq C\int_Mv_\nu^{\max\{0,2+\frac{2}{n}-3\}}|w_\nu|^{\min\{3,2+\frac{2}{n}\}}dV_{\theta_0}+C\int_M|w_\nu|^{2+\frac{2}{n}}dV_{\theta_0}\\
&\leq C\left(\int_M|w_\nu|^{2+\frac{2}{n}}dV_{\theta_0}\right)^{\frac{n}{n+1}\min\{\frac{3}{2},\frac{n+1}{n}\}}.
\end{split}
\end{equation}
Substituting (\ref{7.3}), (\ref{7.4}), and  (\ref{7.5}) into (\ref{7.2}), we obtain
\begin{equation}\label{7.6}
\begin{split}
r_{\theta_\nu}
&\leq r_\infty+(E(v_\nu)-r_\infty)\left(\int_M v_\nu^{2+\frac{2}{n}}dV_{\theta_0}
\right)^{\frac{n}{n+1}}\\
&\hspace{4mm}+2\left(\int_Mu_\nu^{2+\frac{2}{n}}|R_{\theta_\nu}-r_\infty|^{\frac{2n+2}{n+2}}dV_{\theta_0}\right)^{\frac{n+2}{2n+2}}
\left(\int_M|w_\nu|^{2+\frac{2}{n}}dV_{\theta_0}\right)^{\frac{n}{2n+2}}\\
&\hspace{4mm}-cY(M,\theta_0)\left(\int_M|w_\nu|^{2+\frac{2}{n}}dV_{\theta_0}\right)^{\frac{n}{n+1}}
+C\left(\int_M|w_\nu|^{2+\frac{2}{n}}dV_{\theta_0}\right)^{\frac{n}{n+1}\min\{\frac{3}{2},\frac{n+1}{n}\}}\\
&\leq r_\infty+(E(v_\nu)-r_\infty)\left(\int_M v_\nu^{2+\frac{2}{n}}dV_{\theta_0}
\right)^{\frac{n}{n+1}}
+\left(\int_Mu_\nu^{2+\frac{2}{n}}|R_{\theta_\nu}-r_\infty|^{\frac{2n+2}{n+2}}dV_{\theta_0}\right)^{\frac{n+2}{n+1}}
\end{split}
\end{equation}
where we have used Young's inequality. By (\ref{4.8}), Corollary \ref{Cor5.7}, and Corollary \ref{Cor6.16}, we
get
\begin{equation*}
E(v_\nu)\leq r_\infty
+C\left(\int_Mu_\nu^{2+\frac{2}{n}}|R_{\theta_\nu}-r_\infty|^{\frac{2n+2}{n+2}}dV_{\theta_0}\right)^{\frac{n+2}{2n+2}(1+\gamma)}.
\end{equation*}
Substituting this into (\ref{7.6}), we obtain
\begin{equation*}
r_{\theta_\nu}\leq r_\infty
+C\left(\int_Mu_\nu^{2+\frac{2}{n}}|R_{\theta_\nu}-r_\infty|^{\frac{2n+2}{n+2}}dV_{\theta_0}\right)^{\frac{n+2}{2n+2}(1+\gamma)}
\end{equation*}
since $\displaystyle\int_Mu_\nu^{2+\frac{2}{n}}|R_{\theta_\nu}-r_\infty|^{\frac{2n+2}{n+2}}dV_{\theta_0}\to 0$ as $\nu\to\infty$ by (\ref{4}).
This completes the proof of Proposition \ref{Prop3.3}.

\appendix

\section{}\label{Appendix}

First we consider the case when $n=1$.
We review the definition of CR normal coordinates
and recall some of its properties.
Give any $x\in M$, we can find a contact form $\widehat{\theta}_x=\varphi_x^{\frac{2}{n}}\theta_0$ conformal to $\theta_0$,
where $\widehat{\theta}_x$ is a contact form defined
in $(z,t)$ which is the CR normal coordinates centered at $x$.
On the other hand, $\widehat{\theta}_x$
satisfies the following properties: (see Theorem 3.7 in P.172 of \cite{Dragomir} and Proposition 6.5 in \cite{Cheng&Malchiodi&Yang})
\begin{equation}\label{B.0}
\begin{split}
\widehat{\theta}_x&=\big(1+O(\rho_x(y))\big)\theta_{\mathbb{H}^n},\\
dV_{\widehat{\theta}_x}&=\big(1+O(\rho_x(y))\big)dV_{\theta_{\mathbb{H}^n}},\\
W_{k}&=\big(1+O(\rho_x(y)^4)\big)Z_k+O(\rho_x(y)^4)\overline{Z}_k+O(\rho_x(y)^5)\frac{\partial}{\partial t}\hspace{2mm}\mbox{ for }1\leq k\leq n,
\end{split}
\end{equation}
in $\{(z,t):(t^2+|z|^4)^{\frac{1}{4}}<\widehat{\rho}\}$ for some $\widehat{\rho}>0$.
Here $(z,t)$ represents the point  $y\in M$
in the CR normal coordinates centered at $x$,
$\rho_x(y)$ is the distance in the CR normal coordinates at $x$,
which implies that
\begin{equation}\label{B.4.5}
\rho_{x_k}(y)=(t^2+|z|^4)^{\frac{1}{4}}.
\end{equation}
Also, $\theta_{\mathbb{H}^n}=dt + \sqrt{-1}\sum_{j=1}^n(z_jd\overline{z}_j-\overline{z}_jdz_j)$ is the standard contact form of the Heisenberg group $\mathbb{H}^n=\{(z,t)=(z_1,...,z_n,t)\in\mathbb{C}^n\times\mathbb{R}\}$.
Moreover, $\{W_{k}\}$ is a pseudo-Hermitian frame, i.e. $\{W_{k}\}$ is a local frame of $\widehat{\theta}_x$ satisfying $-\sqrt{-1}d\widehat{\theta}_x(W_{k},\overline{W}_{l})=\delta_{kl}$
(see P.165 in \cite{Dragomir}), and
$Z_k=\frac{\partial}{\partial z_k}+\sqrt{-1}\overline{z}_k\frac{\partial}{\partial t}$.
We also have the following expression for the CR conformal sub-Laplacian: (see P.114 in \cite{Gamara2})
\begin{equation}\label{B.1.5}
-(2+\frac{2}{n})\Delta_{\widehat{\theta}_x}+R_{\widehat{\theta}_x}=-(2+\frac{2}{n})\Delta_{\theta_{\mathbb{H}^n}}+O(\rho_x(y)^2).
\end{equation}
 On the other hand,
the Webster scalar curvature of $\widehat{\theta}_x$ satisfies (see P.38 in \cite{Cheng&Malchiodi&Yang})
\begin{equation}\label{B.1}
|R_{\widehat{\theta}_x}(y)|\leq C\rho_x(y)^2,
\end{equation}
where $\rho_x(y)$ is the distance in the CR normal coordinates at $x$.
Let $G_x$ be the Green's function with pole at $x$. Then we have (see (105) in \cite{Cheng&Malchiodi&Yang})
\begin{equation}\label{B.2}
-(2+\frac{2}{n})\Delta_{\widehat{\theta}_x}G_x(y)+R_{\widehat{\theta}_x} G_x(y)=0\hspace{2mm}\mbox{ for }y\neq x.
\end{equation}
Moreover, the Green's function satisfies the estimates (see Proposition 5.2 and Proposition 5.3 in \cite{Cheng&Malchiodi&Yang})
\begin{equation}\label{B.3}
|G_x(y)-\rho_x(y)^{-2n}-A_x|\leq C\rho_x(y)
\end{equation}
and
\begin{equation}\label{B.4}
|\nabla_{\widehat{\theta}_x}(G_x(y)-\rho_x(y)^{-2n})|_{\widehat{\theta}_x}\leq C,
\end{equation}
where $A_x$ is the CR mass. Note that
$\rho_x(y)$, the distance in the CR normal coordinates at $x$,
and $d(x,y)$, the Carnot-Carath\'{e}odory distance on $M$
with respect to the contact form $\theta_0$
are equivalent, i.e. there exists a uniform constant $C_0$ such that
\begin{equation}\label{B.5.5}
\frac{1}{C_0}d(x,y)\leq \rho_x(y)\leq C_0 d(x,y)\hspace{2mm}\mbox{ for all }x, y\in M.
\end{equation}

When $M$ is spherical, give any $x\in M$,
we can find a smooth function $\varphi_x$ such that
$\theta_{\mathbb{H}^n}=\varphi_x^{\frac{2}{n}}\theta_0$ in a neighborhood of $x$. That is,
\begin{equation}\label{B.0a}
\widehat{\theta}_x=\theta_{\mathbb{H}^n}
\end{equation} in the above notation.
Note also that
$R_{\widehat{\theta}_x}=R_{\theta_{\mathbb{H}^n}}\equiv 0$ in this case. Therefore,  (\ref{B.0})
and (\ref{B.1})
are still true. On the other hand, the Green's function $G_x(y)$ defined as in (\ref{B.2}) satisfies
(\ref{B.3}) and (\ref{B.4}) by Lemma 5.1 in \cite{Cheng&Chiu&Yang}.

Under the assumptions of Theorem \ref{main}, the CR mass is positive, i.e.
$A_x>0$ for all $x\in M$ by the CR positive mass theorem (see Theorem 1.1 in \cite{Cheng&Malchiodi&Yang} and Corollary C in \cite{Cheng&Chiu&Yang}).
Note that the function $x\mapsto A_x$ is continuous.
See Proposition 3.3 in \cite{Wang} for the proof when
$M$ is spherical.
See also Remark I.1.2 in \cite{Habermann&Jost}
and Proposition 3.5 in \cite{Habermann}
for the proof of the corresponding statement in the Riemannian case.
Hence, we have
$$\inf_{x\in M}A_x>0.$$

Suppose that we are given a set of pairs $(x_k,\varepsilon_k)_{1\leq k\leq m}$. For every $1\leq k\leq m$, we define
$\overline{u}_{(x_k,\varepsilon_k)}$ by
\begin{equation}\label{testfcn}
\overline{u}_{(x_k,\varepsilon_k)}(y)=\varphi_{x_k}(y)\,\overline{U}_{(x_k,\varepsilon_k)}(y).
\end{equation}
Here
\begin{equation}\label{testfun2}
\overline{U}_{(x_k,\varepsilon_k)}(y)=\left(\frac{n(2n+2)}{r_\infty}\right)^{\frac{n}{2}}
\varepsilon_k^n
\left[\frac{\chi_\delta(\rho_{x_k}(y))}{(t^2+(\varepsilon_k^2+|z|^2)^2)^{\frac{n}{2}}}+
\Big(1-\chi_\delta(\rho_{x_k}(y))\Big)G_{x_k}(y)\right],
\end{equation}
where $\chi_\delta(s)=\chi(\frac{s}{\delta})$ and $\chi:\mathbb{R}\to[0,1]$ is a cut-off
function satisfying $\chi(s)=1$ for $s\leq 1$ and $\chi(s)=0$ for $s\geq 2$.
On the other hand, $\delta$ is a positive real number such that $\varepsilon_k\ll\delta$ for all $1\leq k\leq m$.

\begin{prop}\label{PropB.1}
For $n=1$, we have
\begin{equation*}
\begin{split}
&\left|\vphantom{\left(\frac{n(2n+2)}{r_\infty}\right)^{\frac{n}{2}}}(2+\frac{2}{n})\Delta_{\widehat{\theta}_{x_k}}\overline{U}_{(x_k,\varepsilon_k)}(y)
-R_{\widehat{\theta}_{x_k}}\overline{U}_{(x_k,\varepsilon_k)}(y)
+r_\infty\overline{U}_{(x_k,\varepsilon_k)}(y)^{1+\frac{2}{n}}\right.\\
&\left.
+\left(\frac{n(2n+2)}{r_\infty}\right)^{\frac{n}{2}}
\varepsilon_k^n A_{x_k}(2+\frac{2}{n})\Delta_{\widehat{\theta}_{x_k}}\chi_\delta(\rho_{x_k}(y))
\right|\\
&\leq C\left(\frac{\varepsilon_k^2}{t^2+(\varepsilon_k^2+|z|^2)^2}\right)^{\frac{n}{2}}\rho_{x_k}(y)^2\,1_{\{\rho_{x_k}(y)\leq 2\delta\}}
+C\frac{\varepsilon_k^n}{\delta}\,1_{\{\delta\leq\rho_{x_k}(y)\leq 2\delta\}}\\
&\hspace{4mm}
+C\left(\frac{\varepsilon_k^2}{t^2+(\varepsilon_k^2+|z|^2)^2}\right)^{\frac{n+2}{2}}1_{\{\rho_{x_k}(y)\geq\delta\}}.
\end{split}
\end{equation*}
\end{prop}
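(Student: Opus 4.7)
The plan is to estimate the quantity inside the absolute value pointwise by splitting the support into three regions governed by the cut-off $\chi_\delta$, namely $\Omega_1 := \{\rho_{x_k}(y) \leq \delta\}$, $\Omega_2 := \{\delta \leq \rho_{x_k}(y) \leq 2\delta\}$, and $\Omega_3 := \{\rho_{x_k}(y) \geq 2\delta\}$. Set $c_n := (n(2n+2)/r_\infty)^{n/2}$, $D := t^2+(\varepsilon_k^2+|z|^2)^2$, $U_{0,\varepsilon}(z,t) := \varepsilon^n/D^{n/2}$, and denote the conformal sub-Laplacian by $L := -(2+\tfrac{2}{n})\Delta_{\widehat\theta_{x_k}} + R_{\widehat\theta_{x_k}}$. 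The quantity under the absolute value is
$$E := -L(\overline U_{(x_k,\varepsilon_k)}) + r_\infty \overline U_{(x_k,\varepsilon_k)}^{1+\frac{2}{n}} + c_n\varepsilon_k^n A_{x_k}(2+\tfrac{2}{n})\Delta_{\widehat\theta_{x_k}}\chi_\delta,$$
and the strategy is to show that on each region all contributions match one of the three terms on the right-hand side of the proposition.

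On $\Omega_1$, $\chi_\delta \equiv 1$, so $\overline U_{(x_k,\varepsilon_k)} = c_n U_{0,\varepsilon_k}$ and the correction term vanishes. Since the Jerison--Lee bubble satisfies the CR Yamabe equation on the Heisenberg group, $-(2+\tfrac{2}{n})\Delta_{\theta_{\mathbb H^n}}(c_n U_{0,\varepsilon_k}) = r_\infty(c_n U_{0,\varepsilon_k})^{1+\frac{2}{n}}$, the quantity $E$ reduces to $(2+\tfrac{2}{n})(\Delta_{\theta_{\mathbb H^n}} - \Delta_{\widehat\theta_{x_k}})(c_n U_{0,\varepsilon_k}) - R_{\widehat\theta_{x_k}}(c_n U_{0,\varepsilon_k})$, which by (\ref{B.1}) and (\ref{B.1.5}) is bounded by $C\rho_{x_k}^2(\varepsilon_k^2/D)^{n/2}$. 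On $\Omega_3$, $\chi_\delta \equiv 0$, so $\overline U_{(x_k,\varepsilon_k)} = c_n\varepsilon_k^n G_{x_k}$ and the correction vanishes. By the Green's function identity (\ref{B.2}), $L(\overline U_{(x_k,\varepsilon_k)}) = 0$, hence $E = r_\infty \overline U_{(x_k,\varepsilon_k)}^{1+\frac{2}{n}}$, which using $G_{x_k} \leq C\rho_{x_k}^{-2n}$ from (\ref{B.3}) and $D \leq C\rho_{x_k}^4$ is bounded by $C(\varepsilon_k^2/D)^{(n+2)/2}$.

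The main work lies on the annulus $\Omega_2$. Writing $\overline U_{(x_k,\varepsilon_k)} = \chi_\delta\,c_n U_{0,\varepsilon_k} + (1-\chi_\delta)\,c_n\varepsilon_k^n G_{x_k}$ and applying the Leibniz rule, using $L(c_n\varepsilon_k^n G_{x_k}) = 0$ by (\ref{B.2}), one obtains
$$L(\overline U_{(x_k,\varepsilon_k)}) = \chi_\delta L(c_n U_{0,\varepsilon_k}) + (2+\tfrac{2}{n})\bigl[2\langle \nabla_{\widehat\theta}\chi_\delta,\nabla_{\widehat\theta}\tilde h\rangle + \tilde h\, \Delta_{\widehat\theta}\chi_\delta\bigr],$$
where $\tilde h := c_n\varepsilon_k^n(G_{x_k} - 1/D^{n/2})$. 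Combining (\ref{B.3}) with the elementary pointwise expansion $1/\rho_{x_k}^{2n} - 1/D^{n/2} = O(\varepsilon_k^2/\rho_{x_k}^{2n+2})$, valid for $\varepsilon_k \ll \rho_{x_k}$, gives
$$\tilde h = c_n\varepsilon_k^n A_{x_k} + O(\varepsilon_k^n\,\rho_{x_k}) + O(\varepsilon_k^{n+2}/\rho_{x_k}^{2n+2}).$$
The leading contribution $-(2+\tfrac{2}{n})c_n\varepsilon_k^n A_{x_k}\Delta_{\widehat\theta_{x_k}}\chi_\delta$ arising from $-\tilde h\Delta_{\widehat\theta}\chi_\delta$ is exactly cancelled by the correction term inserted into $E$. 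The remaining contributions from $\tilde h\,\Delta_{\widehat\theta}\chi_\delta$ and $\langle \nabla_{\widehat\theta}\chi_\delta,\nabla_{\widehat\theta}\tilde h\rangle$, estimated via $|\nabla_{\widehat\theta}\chi_\delta|\lesssim 1/\delta$, $|\Delta_{\widehat\theta}\chi_\delta|\lesssim 1/\delta^2$, the gradient bound (\ref{B.4}) for $G_{x_k}-\rho_{x_k}^{-2n}$, and $\rho_{x_k}\sim\delta$ on $\Omega_2$, are of orders $\varepsilon_k^n/\delta$ and $\varepsilon_k^{n+2}/\delta^{2n+4}$; the former matches the second term of the bound and the latter fits into the third term since $D\sim\delta^4$ on $\Omega_2$. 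Finally, $-\chi_\delta L(c_n U_{0,\varepsilon_k}) + r_\infty\overline U_{(x_k,\varepsilon_k)}^{1+\frac{2}{n}}$ decomposes, exactly as in the $\Omega_1$ analysis, into an $O(\rho_{x_k}^2\overline U_{(x_k,\varepsilon_k)})$ piece absorbed by the first term and a piece $r_\infty[\overline U_{(x_k,\varepsilon_k)}^{1+\frac{2}{n}} - \chi_\delta(c_n U_{0,\varepsilon_k})^{1+\frac{2}{n}}]$ absorbed by the third.

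The principal obstacle is the bookkeeping on $\Omega_2$: singling out, among the many terms produced by the Leibniz rule, exactly the coefficient $c_n\varepsilon_k^n A_{x_k}$ multiplying $\Delta_{\widehat\theta_{x_k}}\chi_\delta$, so that it cancels the correction inserted into $E$. This cancellation depends crucially on the fact that $1/D^{n/2}$ and $G_{x_k}$ share the same leading singular profile $\rho_{x_k}^{-2n}$ at the origin, so that their difference reduces to the CR mass $A_{x_k}$ modulo strictly smaller remainders whose scaling in $\varepsilon_k$, $\rho_{x_k}$, and $\delta$ is compatible with the three error terms in the stated bound.
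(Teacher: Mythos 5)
Your proposal is correct and follows essentially the same route as the paper's proof: the region-by-region bookkeeping you describe is just an explicit organization of the paper's decomposition into the terms $I_1,\dots,I_8$, with the same key ingredients (the exact bubble equation on $\mathbb{H}^n$ plus the metric/curvature errors from (\ref{B.0})--(\ref{B.1.5}) on the inner region, the Green's function equation (\ref{B.2}) outside, and on the annulus the expansion $G_{x_k}=\rho_{x_k}^{-2n}+A_{x_k}+O(\rho_{x_k})$ together with the comparison of $D^{-n/2}$ with $\rho_{x_k}^{-2n}$, which produces exactly the $A_{x_k}\Delta_{\widehat{\theta}_{x_k}}\chi_\delta$ cancellation). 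No gaps beyond the level of detail expected in a sketch.
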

\begin{proof}
By definition of $\overline{U}_{(x_k,\varepsilon_k)}$, we have
\begin{equation*}
\begin{split}
&(2+\frac{2}{n})\Delta_{\widehat{\theta}_{x_k}}\overline{U}_{(x_k,\varepsilon_k)}(y)
-R_{\widehat{\theta}_{x_k}}\overline{U}_{(x_k,\varepsilon_k)}(y)
+r_\infty\overline{U}_{(x_k,\varepsilon_k)}(y)^{1+\frac{2}{n}}\\
&
+\left(\frac{n(2n+2)}{r_\infty}\right)^{\frac{n}{2}}
\varepsilon_k^n A_{x_k}\cdot(2+\frac{2}{n})\Delta_{\widehat{\theta}_{x_k}}\chi_\delta(\rho_{x_k}(y))
\\
&=\left(\frac{n(2n+2)}{r_\infty}\right)^{\frac{n}{2}}
\varepsilon_k^n\,\big(I_1+I_2+I_3+I_4+I_5+I_6+I_7+I_8\big).
\end{split}
\end{equation*}
Here,
\begin{equation*}
\begin{split}
I_1&=\chi_\delta(\rho_{x_k}(y))\left[\vphantom{\left(\frac{1}{(t^2+(\varepsilon_k^2+|z|^2)^2)^{\frac{n}{2}}}\right)^{1+\frac{2}{n}}}
(2+\frac{2}{n})
\Delta_{\widehat{\theta}_{x_k}}\frac{1}{(t^2+(\varepsilon_k^2+|z|^2)^2)^{\frac{n}{2}}}\right.\\
&\left.\hspace{12mm}+(2n+2)n\varepsilon_k^2
\left(\frac{1}{(t^2+(\varepsilon_k^2+|z|^2)^2)^{\frac{n}{2}}}\right)^{1+\frac{2}{n}}\right],\\
I_2&=-\chi_\delta(\rho_{x_k}(y))R_{\widehat{\theta}_{x_k}}\frac{1}{(t^2+(\varepsilon_k^2+|z|^2)^2)^{\frac{n}{2}}},\\
I_3&=-(2+\frac{2}{n})\Delta_{\widehat{\theta}_{x_k}}\chi_\delta(\rho_{x_k}(y))\big(
G_{x_k}(y)-\rho_{x_k}(y)^{-2n}-A_{x_k}\big),\\
I_4&=(2+\frac{2}{n})\Delta_{\widehat{\theta}_{x_k}}\chi_\delta(\rho_{x_k}(y))\Big(
\frac{1}{(t^2+(\varepsilon_k^2+|z|^2)^2)^{\frac{n}{2}}}-\rho_{x_k}(y)^{-2n}\Big),\\
I_5&=-2(2+\frac{2}{n})\langle\nabla_{\widehat{\theta}_{x_k}}\chi_\delta(\rho_{x_k}(y)),\nabla_{\widehat{\theta}_{x_k}}(G_x(y)-\rho_x(y)^{-2n})\rangle_{\widehat{\theta}_{x_k}},\\
I_6&=2(2+\frac{2}{n})\Big\langle\nabla_{\widehat{\theta}_{x_k}}\chi_\delta(\rho_{x_k}(y)),\nabla_{\widehat{\theta}_{x_k}}
\Big(
\frac{1}{(t^2+(\varepsilon_k^2+|z|^2)^2)^{\frac{n}{2}}}-\rho_{x_k}(y)^{-2n}\Big)\Big\rangle_{\widehat{\theta}_{x_k}},\\
I_7&=(2n+2)n\varepsilon_k^2\left[
\left(\frac{\chi_\delta(\rho_{x_k}(y))}{(t^2+(\varepsilon_k^2+|z|^2)^2)^{\frac{n}{2}}}+
\Big(1-\chi_\delta(\rho_{x_k}(y))\Big)G_{x_k}(y)\right)^{1+\frac{2}{n}}\right.\\
&\hspace{12mm}\left.
-\chi_\delta(\rho_{x_k}(y))\left(\frac{1}{(t^2+(\varepsilon_k^2+|z|^2)^2)^{\frac{n}{2}}}\right)^{1+\frac{2}{n}}\right],\\
I_8&=\Big(1-\chi_\delta(\rho_{x_k}(y)\Big)\left[
(2+\frac{2}{n})\Delta_{\widehat{\theta}_{x_k}}G_{x_k}(y)-R_{\widehat{\theta}_{x_k}} G_{x_k}(y)\right].
\end{split}
\end{equation*}
Since
\begin{equation}\label{B.16}
\begin{split}
\Delta_{\theta_{\mathbb{H}^n}}\left(\frac{1}{(t^2+(\varepsilon_k^2+|z|^2)^2)^{\frac{n}{2}}}\right)
&=-\frac{n^2\varepsilon_k^2}{(t^2+(\varepsilon_k^2+|z|^2)^2)^{1+\frac{n}{2}}}
\end{split}
\end{equation}
we have
\begin{equation*}
\begin{split}
|I_1|&\leq
\chi_\delta(\rho_{x_k}(y))\left|\vphantom{\left(\frac{1}{(t^2+(\varepsilon_k^2+|z|^2)^2)^{\frac{n}{2}}}\right)^{1+\frac{2}{n}}}
(2+\frac{2}{n})
(\Delta_{\theta_{\mathbb{H}^n}}+O(\rho_{x_k}(y)^2))\left(\frac{1}{(t^2+(\varepsilon_k^2+|z|^2)^2)^{\frac{n}{2}}}\right)\right.\\
&\left.\hspace{12mm}+(2n+2)n\varepsilon_k^2
\left(\frac{1}{(t^2+(\varepsilon_k^2+|z|^2)^2)^{\frac{n}{2}}}\right)^{1+\frac{2}{n}}\right|\\
&\leq 1_{\{\rho_{x_k}(y)\leq 2\delta\}}\frac{C\rho_{x_k}(y)^2}{(t^2+(\varepsilon_k^2+|z|^2)^2)^{\frac{n}{2}}}
\end{split}
\end{equation*}
by (\ref{B.0}) and (\ref{B.4.5}).
By (\ref{B.1}), we have
\begin{equation*}
|I_2|\leq C\rho_{x_k}(y)^2
\frac{1}{(t^2+(\varepsilon_k^2+|z|^2)^2)^{\frac{n}{2}}}\,1_{\{\rho_{x_k}(y)\leq 2\delta\}}.
\end{equation*}
By (\ref{B.3}), we have
\begin{equation*}
\begin{split}
|I_3|&\leq (2+\frac{2}{n})|\Delta_{\widehat{\theta}_{x_k}}\chi_\delta(\rho_{x_k}(y))|\cdot\big|
G_{x_k}(y)-\rho_{x_k}(y)^{-2n}-A_{x_k}\big|\\
&\leq C\frac{1_{\{\delta\leq\rho_{x_k}(y)\leq 2\delta\}} }{\delta^2}\rho_{x_k}(y)
\leq \frac{C}{\delta}1_{\{\delta\leq\rho_{x_k}(y)\leq 2\delta\}}.
\end{split}
\end{equation*}
Note that
\begin{equation*}
\begin{split}
|I_4|&=(2+\frac{2}{n})|\Delta_{\widehat{\theta}_{x_k}}\chi_\delta(\rho_{x_k}(y))|\cdot\left|
\frac{1}{(t^2+(\varepsilon_k^2+|z|^2)^2)^{\frac{n}{2}}}-\rho_{x_k}(y)^{-2n}\right|\\
&\leq C\frac{1_{\{\delta\leq\rho_{x_k}(y)\leq 2\delta\}}}{\delta^2}
\cdot\left|
\frac{1}{(t^2+(\varepsilon_k^2+|z|^2)^2)^{\frac{n}{2}}}-\frac{1}{(t^2+|z|^4)^{\frac{n}{2}}}\right|\\
&\leq C\frac{1_{\{\delta\leq\rho_{x_k}(y)\leq 2\delta\}}}{\delta^2}\cdot
\frac{\varepsilon_k^{2n}}{\delta^{4n}}\\
&
\leq \frac{C}{\delta}1_{\{\delta\leq\rho_{x_k}(y)\leq 2\delta\}}
\end{split}
\end{equation*}
by (\ref{B.4.5}) and the assumption that $\varepsilon_k\ll\delta$.
Note also that
\begin{equation*}
\begin{split}
|I_6|&\leq C|\nabla_{\widehat{\theta}_{x_k}}\chi_\delta(\rho_{x_k}(y))|_{\widehat{\theta}_{x_k}}\cdot\left|
\nabla_{\widehat{\theta}_{x_k}}\Big(
\frac{1}{(t^2+(\varepsilon_k^2+|z|^2)^2)^{\frac{n}{2}}}-\rho_{x_k}(y)^{-2n}\Big)\right|_{\widehat{\theta}_{x_k}}\\
&\leq C\frac{1_{\{\delta\leq\rho_{x_k}(y)\leq 2\delta\}}}{\delta}
\cdot|z|^2
\frac{\varepsilon_k^2}{(t^2+|z|^4)^{\frac{n}{2}+1}}\\
&\leq C\frac{\varepsilon_k^2}{\delta^{2n+3}}1_{\{\delta\leq\rho_{x_k}(y)\leq 2\delta\}}
\leq C\frac{1_{\{\delta\leq\rho_{x_k}(y)\leq 2\delta\}}}{\delta}
\end{split}
\end{equation*}
by (\ref{B.0}), (\ref{B.4.5}), and the assumption that $\varepsilon_k\ll\delta$.
By (\ref{B.4}), we have
\begin{equation*}
|I_5|\leq C|\nabla_{\widehat{\theta}_{x_k}}\chi_\delta(\rho_{x_k}(y))|_{\widehat{\theta}_{x_k}}\big|
\nabla_{\widehat{\theta}_{x_k}}(
G_{x_k}(y)-\rho_{x_k}(y)^{-2n})\big|_{\widehat{\theta}_{x_k}}\leq C\frac{1_{\{\delta\leq\rho_{x_k}(y)\leq 2\delta\}}}{\delta}.
\end{equation*}
We also have
\begin{equation*}
\begin{split}
|I_7|&=(2n+2)n\varepsilon_k^2\left|
\left(\frac{\chi_\delta(\rho_{x_k}(y))}{(t^2+(\varepsilon_k^2+|z|^2)^2)^{\frac{n}{2}}}+
\Big(1-\chi_\delta(\rho_{x_k}(y))\Big)G_{x_k}(y)\right)^{1+\frac{2}{n}}\right.\\
&\hspace{12mm}\left.
-\chi_\delta(\rho_{x_k}(y))\left(\frac{1}{(t^2+(\varepsilon_k^2+|z|^2)^2)^{\frac{n}{2}}}\right)^{1+\frac{2}{n}}\right|\\
&\leq C\varepsilon_k^2\left(
\left(\frac{1}{(t^2+(\varepsilon_k^2+|z|^2)^2)^{\frac{n}{2}}}\right)^{1+\frac{2}{n}}+
G_{x_k}(y)^{1+\frac{2}{n}}\right)1_{\{\rho_{x_k}(y)\geq\delta\}}\\
&\leq C\frac{\varepsilon_k^2}{(t^2+(\varepsilon_k^2+|z|^2)^2)^{1+\frac{2}{n}}}1_{\{\rho_{x_k}(y)\geq\delta\}}
\end{split}
\end{equation*}
by (\ref{B.3}), and
$$I_8=0$$
by (\ref{B.2}).
From this, the assertion follows.
\end{proof}

\begin{prop}\label{PropB.1a}
When $M$ is spherical, we have
\begin{equation*}
\begin{split}
&\left|\vphantom{\left(\frac{n(2n+2)}{r_\infty}\right)^{\frac{n}{2}}}(2+\frac{2}{n})\Delta_{\widehat{\theta}_{x_k}}\overline{U}_{(x_k,\varepsilon_k)}(y)
-R_{\widehat{\theta}_{x_k}}\overline{U}_{(x_k,\varepsilon_k)}(y)
+r_\infty\overline{U}_{(x_k,\varepsilon_k)}(y)^{1+\frac{2}{n}}\right.\\
&\left.
+\left(\frac{n(2n+2)}{r_\infty}\right)^{\frac{n}{2}}
\varepsilon_k^n A_{x_k}(2+\frac{2}{n})\Delta_{\widehat{\theta}_{x_k}}\chi_\delta(\rho_{x_k}(y))
\right|\\
&\leq C\frac{\varepsilon_k^n}{\delta}\,1_{\{\delta\leq\rho_{x_k}(y)\leq 2\delta\}}
+C\left(\frac{\varepsilon_k^2}{t^2+(\varepsilon_k^2+|z|^2)^2}\right)^{\frac{n+2}{2}}1_{\{\rho_{x_k}(y)\geq\delta\}}.
\end{split}
\end{equation*}
\end{prop}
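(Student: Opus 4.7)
The plan is to follow the same decomposition as in the proof of Proposition \ref{PropB.1}, writing
\begin{equation*}
\begin{split}
&(2+\tfrac{2}{n})\Delta_{\widehat{\theta}_{x_k}}\overline{U}_{(x_k,\varepsilon_k)}
-R_{\widehat{\theta}_{x_k}}\overline{U}_{(x_k,\varepsilon_k)}
+r_\infty\overline{U}_{(x_k,\varepsilon_k)}^{1+\frac{2}{n}}\\
&\quad +\left(\tfrac{n(2n+2)}{r_\infty}\right)^{n/2}\varepsilon_k^n A_{x_k}(2+\tfrac{2}{n})\Delta_{\widehat{\theta}_{x_k}}\chi_\delta(\rho_{x_k}(y))
=\left(\tfrac{n(2n+2)}{r_\infty}\right)^{n/2}\varepsilon_k^n\sum_{j=1}^{8}I_j
\end{split}
\end{equation*}
with $I_1,\dots,I_8$ defined exactly as before. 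The key point is that when $M$ is spherical, we have by (\ref{B.0a}) the identity $\widehat{\theta}_{x_k}=\theta_{\mathbb{H}^n}$ in a neighborhood of $x_k$, hence $\Delta_{\widehat{\theta}_{x_k}}=\Delta_{\theta_{\mathbb{H}^n}}$ exactly on the support of $\chi_\delta(\rho_{x_k})$ (no $O(\rho_{x_k}^{2})$ error), and $R_{\widehat{\theta}_{x_k}}\equiv 0$ there.

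I would then treat the terms in two groups. First, the two leading-order terms vanish identically in the spherical setting: $I_1=0$ by the exact identity (\ref{B.16}) applied to the bubble (the cross terms in $\Delta_{\theta_{\mathbb{H}^n}}$ and $r_\infty \overline{U}^{1+2/n}$ cancel on the region $\{\rho_{x_k}(y)\le 2\delta\}$ where $\overline{U}_{(x_k,\varepsilon_k)}$ coincides with the Heisenberg bubble, up to the normalization factor $\big(\tfrac{n(2n+2)}{r_\infty}\big)^{n/2}\varepsilon_k^n$), and $I_2=0$ since $R_{\widehat{\theta}_{x_k}}\equiv 0$. This eliminates the $\rho_{x_k}(y)^2$ contribution that was unavoidable in Proposition \ref{PropB.1}.

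For the remaining terms $I_3,I_4,I_5,I_6$, each is supported in the annulus $\{\delta\le\rho_{x_k}(y)\le 2\delta\}$ and is bounded using (\ref{B.3}) and (\ref{B.4}), which hold in the spherical case by Lemma 5.1 of \cite{Cheng&Chiu&Yang}; the estimates are word-for-word as in Proposition \ref{PropB.1}, producing the $C\varepsilon_k^n/\delta\cdot 1_{\{\delta\le\rho_{x_k}(y)\le 2\delta\}}$ contribution (the extra $\varepsilon_k^n$ factor appears once the normalization in front of $\sum I_j$ is restored). The term $I_7$ is handled exactly as before using (\ref{B.3}) to produce the $((\varepsilon_k^2/(t^2+(\varepsilon_k^2+|z|^2)^2))^{(n+2)/2}\,1_{\{\rho_{x_k}(y)\ge\delta\}}$ bound, and $I_8=0$ by the defining equation (\ref{B.2}) of the Green function. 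Collecting these bounds yields exactly the stated estimate, which is sharper than Proposition \ref{PropB.1} by precisely the two omitted $\rho_{x_k}(y)^2$-type terms.

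There is no real obstacle here: the only thing to verify carefully is that the cancellation in $I_1$ is truly exact in the spherical case (rather than just up to $O(\rho_{x_k}^2)$), which follows because we can take $\widehat{\theta}_{x_k}$ to coincide with $\theta_{\mathbb{H}^n}$ on a full neighborhood of $x_k$, not merely to third order. Once this is observed, the whole argument of Proposition \ref{PropB.1} transcribes directly and gives the cleaner bound.
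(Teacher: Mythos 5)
Your argument is correct and is essentially the paper's own proof: the same decomposition into $I_1,\dots,I_8$ is used, with $I_1=0$ following exactly from (\ref{B.0a}) together with the identity (\ref{B.16}), $I_2=0$ since $R_{\widehat{\theta}_{x_k}}=R_{\theta_{\mathbb{H}^n}}\equiv 0$, and the remaining terms estimated verbatim as in Proposition \ref{PropB.1}. Nothing further is needed.
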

\begin{proof}
The proof is the same as the proof of Proposition \ref{PropB.1}, except we need to prove that $I_1=0$.
But this follows from (\ref{B.0a}) and (\ref{B.16}).
\end{proof}

\begin{cor}\label{CorB.2}
We have
\begin{equation*}
\begin{split}
&\left|(2+\frac{2}{n})\Delta_{\widehat{\theta}_{x_k}}\overline{U}_{(x_k,\varepsilon_k)}(y)
-R_{\widehat{\theta}_{x_k}}\overline{U}_{(x_k,\varepsilon_k)}(y)
+r_\infty\overline{U}_{(x_k,\varepsilon_k)}(y)^{1+\frac{2}{n}}\right|\\
&\leq C\left(\frac{\varepsilon_k^2}{t^2+(\varepsilon_k^2+|z|^2)^2}\right)^{\frac{n}{2}}\rho_{x_k}(y)^2\,1_{\{\rho_{x_k}(y)\leq 2\delta\}}
+C\frac{\varepsilon_k^n}{\delta^2}\,1_{\{\delta\leq\rho_{x_k}(y)\leq 2\delta\}}\\
&\hspace{4mm}
+C\left(\frac{\varepsilon_k^2}{t^2+(\varepsilon_k^2+|z|^2)^2}\right)^{\frac{n+2}{2}}1_{\{\rho_{x_k}(y)\geq\delta\}}.
\end{split}
\end{equation*}
\end{cor}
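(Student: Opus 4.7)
The plan is to deduce Corollary \ref{CorB.2} directly from Proposition \ref{PropB.1} (when $n=1$) and Proposition \ref{PropB.1a} (when $M$ is spherical) by moving the term
$$T_k(y) := \left(\frac{n(2n+2)}{r_\infty}\right)^{n/2} \varepsilon_k^n A_{x_k}\,(2+\tfrac{2}{n})\Delta_{\widehat{\theta}_{x_k}}\chi_\delta(\rho_{x_k}(y))$$
from the left-hand side of the pointwise identities in those propositions to the right-hand side, and then absorbing the resulting expression into the terms already present on the right-hand side. By the triangle inequality, the quantity inside the absolute value in Corollary \ref{CorB.2} is at most the quantity controlled in Proposition \ref{PropB.1}/\ref{PropB.1a} plus $|T_k(y)|$, so everything reduces to estimating $|T_k(y)|$.

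Next, I would estimate $T_k$. Since $\chi_\delta(s)=\chi(s/\delta)$ with $\chi$ fixed and smooth, and since $\chi'$ and $\chi''$ are supported in $[1,2]$, we have $|\chi_\delta'| \leq C/\delta$ and $|\chi_\delta''| \leq C/\delta^2$, both supported in $\{\delta \leq \rho_{x_k}(y) \leq 2\delta\}$. Combining this with the expansion (\ref{B.0}) of the frame $W_k$ in CR normal coordinates (which yields a bound $|\nabla_{\widehat{\theta}_{x_k}}\rho_{x_k}|_{\widehat{\theta}_{x_k}} \leq C$ and $|\Delta_{\widehat{\theta}_{x_k}}\rho_{x_k}| \leq C/\rho_{x_k}(y)$ on the annulus), and using the chain rule, one obtains
$$|\Delta_{\widehat{\theta}_{x_k}}\chi_\delta(\rho_{x_k}(y))| \leq \frac{C}{\delta^2}\,1_{\{\delta \leq \rho_{x_k}(y) \leq 2\delta\}}.$$
The mass factor $A_{x_k}$ is uniformly bounded because, as recalled in the paragraph preceding the test-function definition, $x\mapsto A_x$ is continuous on the compact manifold $M$. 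Consequently,
$$|T_k(y)| \leq C \frac{\varepsilon_k^n}{\delta^2}\,1_{\{\delta \leq \rho_{x_k}(y) \leq 2\delta\}}.$$

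Finally, adding $|T_k(y)|$ to the bound already provided by Proposition \ref{PropB.1} (respectively Proposition \ref{PropB.1a}) produces exactly the right-hand side of Corollary \ref{CorB.2}: the first and third terms are identical (the first term is absent in the spherical case, which is weaker than the asserted bound), while the new contribution $C\varepsilon_k^n/\delta^2$ dominates the term $C\varepsilon_k^n/\delta$ already present in Proposition \ref{PropB.1} (since $\delta$ is small and we are free to replace a smaller quantity by a larger one in an upper bound). No obstacles are anticipated; the only point requiring mild care is verifying the uniform bound on $A_{x_k}$ and checking that $1/\delta$ is indeed absorbed by $1/\delta^2$, which is routine.
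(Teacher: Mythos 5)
Your argument is correct and is essentially the paper's own proof: the authors likewise obtain the corollary by combining Proposition \ref{PropB.1} and Proposition \ref{PropB.1a} with the estimate $\bigl|\bigl(\tfrac{n(2n+2)}{r_\infty}\bigr)^{n/2}\varepsilon_k^n A_{x_k}(2+\tfrac{2}{n})\Delta_{\widehat{\theta}_{x_k}}\chi_\delta(\rho_{x_k}(y))\bigr|\leq C\varepsilon_k^n\delta^{-2}1_{\{\delta\leq\rho_{x_k}(y)\leq 2\delta\}}$, exactly as you propose. Your additional remarks on the boundedness of $A_{x_k}$ and the absorption of $C\varepsilon_k^n/\delta$ into $C\varepsilon_k^n/\delta^2$ are correct details that the paper leaves implicit.
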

\begin{proof}
Combining Proposition \ref{PropB.1},  Proposition \ref{PropB.1a} and the following estimate:
\begin{equation*}
\left|\left(\frac{n(2n+2)}{r_\infty}\right)^{\frac{n}{2}}
\varepsilon_k^n A_{x_k}(2+\frac{2}{n})\Delta_{\widehat{\theta}_{x_k}}\chi_\delta(\rho_{x_k}(y))\right|
\leq C\frac{\varepsilon_k^n}{\delta^2}\,1_{\{\delta\leq\rho_{x_k}(y)\leq 2\delta\}},
\end{equation*}
Corollary \ref{CorB.2} follows.
\end{proof}

\begin{prop}\label{PropB.3}
If
$\delta$ is sufficiently small, then
$$E(\overline{u}_{(x_k,\varepsilon_k)})\leq Y(S^{2n+1})-cA_{x_k}\varepsilon_k^{2n}
+C\delta^2\varepsilon_k^{2n}+C\delta\varepsilon_k^{2n}+C\delta^{-2n-2}\varepsilon_k^{2n+2}$$
for some $c>0$.
\end{prop}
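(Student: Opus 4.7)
The approach will be to compute the Yamabe quotient $E(\overline u_{(x_k,\varepsilon_k)})$ by expanding its numerator and denominator and isolating the mass contribution. First, the conformal invariance of the Yamabe energy, together with $\widehat\theta_{x_k}=\varphi_{x_k}^{2/n}\theta_0$ and $\overline u_{(x_k,\varepsilon_k)}=\varphi_{x_k}\overline U_{(x_k,\varepsilon_k)}$, gives
\begin{equation*}
E(\overline u_{(x_k,\varepsilon_k)})=\frac{\int_M\overline U\,L\overline U\,dV_{\widehat\theta_{x_k}}}{\bigl(\int_M\overline U^{2+2/n}dV_{\widehat\theta_{x_k}}\bigr)^{n/(n+1)}},
\end{equation*}
where $L=-(2+\tfrac{2}{n})\Delta_{\widehat\theta_{x_k}}+R_{\widehat\theta_{x_k}}$. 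For brevity let $\overline U=\overline U_{(x_k,\varepsilon_k)}$, $c_n=\bigl(n(2n+2)/r_\infty\bigr)^{n/2}$, $V(y)=(t^2+(\varepsilon_k^2+|z|^2)^2)^{-n/2}$, and $\rho=\rho_{x_k}$.

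Next, I would use Proposition \ref{PropB.1} (or \ref{PropB.1a} in the spherical case) to rewrite
\begin{equation*}
L\overline U=r_\infty\overline U^{1+2/n}+c_n\varepsilon_k^n A_{x_k}(2+\tfrac{2}{n})\Delta_{\widehat\theta_{x_k}}\chi_\delta+\mathcal E,
\end{equation*}
where $\mathcal E$ satisfies the three pointwise bounds stated there. Multiplying by $\overline U$ and integrating yields three pieces. The error integral $\int_M\overline U\,\mathcal E\,dV$ is handled directly using $\overline U\le Cc_n\varepsilon_k^n V$ on $\{\rho\le 2\delta\}$ and $\overline U\le Cc_n\varepsilon_k^n G_{x_k}$ on $\{\rho\ge\delta\}$, together with a change of variables $(z,t)=(\varepsilon_k z',\varepsilon_k^2 t')$ and the Green-function estimate (\ref{B.3}); its total contribution is then absorbed into $C\delta^2\varepsilon_k^{2n}+C\delta\varepsilon_k^{2n}+C\delta^{-2n-2}\varepsilon_k^{2n+2}$.

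The decisive step is the mass term. Integrating by parts twice (valid since $M$ is closed) gives $\int_M\overline U\,\Delta\chi_\delta\,dV=\int_M\chi_\delta\,\Delta\overline U\,dV$. Using the decomposition $\overline U/c_n\varepsilon_k^n=G_{x_k}+\chi_\delta(V-G_{x_k})$, the distributional identity $LG_{x_k}=\kappa\,\delta_{x_k}$ for some $\kappa>0$, and the expansion $V-G_{x_k}=-A_{x_k}+O(\rho)+O(\varepsilon_k^2/\rho^{2n+2})$ on $\{\rho\ge\delta\}$ coming from (\ref{B.3}), I would show
\begin{equation*}
c_n\varepsilon_k^n A_{x_k}(2+\tfrac{2}{n})\int_M\overline U\,\Delta\chi_\delta\,dV=-c\,A_{x_k}\varepsilon_k^{2n}+O(\delta\varepsilon_k^{2n}+\delta^{-2n-2}\varepsilon_k^{2n+2}),
\end{equation*}
where $c>0$ is a fixed constant; positivity of $A_{x_k}$ from the CR positive mass theorem is what makes the leading contribution strictly negative. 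For the denominator, splitting into $\{\rho\le\delta\}$ (where $\overline U=c_n\varepsilon_k^n V$ and $dV_{\widehat\theta_{x_k}}=(1+O(\rho))dV_{\theta_{\mathbb H^n}}$ by (\ref{B.0})) and $\{\rho>\delta\}$, and scaling to $\mathbb H^n$, yields $\int_M\overline U^{2+2/n}dV=K_0+O(\delta+\varepsilon_k^{2n+2}/\delta^{2n+2})$, with $r_\infty K_0^{1/(n+1)}=Y(S^{2n+1})$ by the Jerison--Lee extremal identity.

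Assembling the numerator and denominator and Taylor-expanding $x\mapsto x^{1/(n+1)}$ about $K_0$ yields the stated bound. The main obstacle is the rigorous extraction of the mass contribution: one must simultaneously control the metric discrepancy between $\widehat\theta_{x_k}$ and $\theta_{\mathbb H^n}$ from (\ref{B.0})--(\ref{B.1.5}), the $O(\rho)$ remainder in the Green-function expansion (\ref{B.3}), and the replacement of $V$ by its far-field $\rho^{-2n}$ with error $O(\varepsilon_k^2/\rho^{2n+2})$, so that the $A_{x_k}\varepsilon_k^{2n}$ term emerges with the correct (negative) sign and the residual pieces all fit into the stated error classes $C\delta^2\varepsilon_k^{2n}$, $C\delta\varepsilon_k^{2n}$, and $C\delta^{-2n-2}\varepsilon_k^{2n+2}$.
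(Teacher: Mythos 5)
Your overall strategy coincides with the paper's: pass to the quotient for $\overline{U}_{(x_k,\varepsilon_k)}$ with respect to $\widehat{\theta}_{x_k}$, invoke Proposition \ref{PropB.1} (resp.\ Proposition \ref{PropB.1a}), pair with $\overline{U}_{(x_k,\varepsilon_k)}$ and integrate, extract the mass from the $\Delta_{\widehat{\theta}_{x_k}}\chi_\delta$ term, and estimate $\int_M\overline{U}_{(x_k,\varepsilon_k)}^{2+\frac{2}{n}}dV_{\widehat{\theta}_{x_k}}$ separately. One remark on the mass term: the paper does not integrate by parts twice or use the distributional equation for $G_{x_k}$. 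Since $\Delta_{\widehat{\theta}_{x_k}}\chi_\delta$ is supported in the annulus $\{\delta\leq\rho_{x_k}\leq2\delta\}$, it replaces $\overline{U}_{(x_k,\varepsilon_k)}$ there by $\varepsilon_k^n\rho_{x_k}^{-2n}$ up to errors controlled by (\ref{B.3}) and (\ref{B.4.5}), integrates by parts once, and evaluates $2n\int\chi_\delta'(\rho)\rho^{-2n-1}|\nabla\rho|^2$ explicitly as a negative constant $-C_0$; this stays away from the singularity of $\Delta G_{x_k}$ at $x_k$ entirely. Your route would also yield the correct sign, but it is more delicate for no gain.

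The one place where your write-up does not close as stated is the final assembly. You claim $\int_M\overline{U}_{(x_k,\varepsilon_k)}^{2+\frac{2}{n}}dV=K_0+O(\delta+\varepsilon_k^{2n+2}\delta^{-2n-2})$ and then Taylor-expand $x\mapsto x^{1/(n+1)}$. An additive $O(\delta)$ error in the denominator propagates to an $O(\delta)$ error in $E(\overline{u}_{(x_k,\varepsilon_k)})$, which is \emph{not} of the allowed form $C\delta^2\varepsilon_k^{2n}+C\delta\varepsilon_k^{2n}+C\delta^{-2n-2}\varepsilon_k^{2n+2}$: every admissible error carries a factor $\varepsilon_k^{2n}$ and so vanishes as $\varepsilon_k\to0$ with $\delta$ fixed, whereas $O(\delta)$ does not, and it would swamp the mass contribution $-cA_{x_k}\varepsilon_k^{2n}$. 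You need either the sharper one-sided bound $\int_M\overline{U}_{(x_k,\varepsilon_k)}^{2+\frac{2}{n}}dV\leq(Y(S^{2n+1})/r_\infty)^{n+1}+C\delta^{-2n-2}\varepsilon_k^{2n+2}$, which is what (\ref{B.12})--(\ref{B.14}) establish (the cutoff and the Green's-function tail only enter at order $\delta^{-2n-2}\varepsilon_k^{2n+2}$), or, as the paper does, to avoid the Taylor expansion altogether by writing $\int\overline{U}^{2+\frac{2}{n}}=\big(\int\overline{U}^{2+\frac{2}{n}}\big)^{\frac{n}{n+1}}\big(\int\overline{U}^{2+\frac{2}{n}}\big)^{\frac{1}{n+1}}$ and bounding only the second factor by $Y(S^{2n+1})/r_\infty$ plus an admissible error, so that only an upper bound on the $L^{2+\frac{2}{n}}$ norm is ever required. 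With that correction your argument matches the paper's.
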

\begin{proof}
When $n=1$, it follows from Proposition \ref{PropB.1} and integration by parts that
\begin{equation}\label{B.5}
\begin{split}
&\int_M\Big((2+\frac{2}{n})|\nabla_{\widehat{\theta}_{x_k}}\overline{U}_{(x_k,\varepsilon_k)}|^2_{\widehat{\theta}_{x_k}}
+R_{\widehat{\theta}_{x_k}}\overline{U}_{(x_k,\varepsilon_k)}^2-r_\infty\overline{U}_{(x_k,\varepsilon_k)}^{2+\frac{2}{n}}\Big)dV_{\widehat{\theta}_{x_k}}\\
&=-\int_M\Big((2+\frac{2}{n})\Delta_{\widehat{\theta}_{x_k}}\overline{U}_{(x_k,\varepsilon_k)}
-R_{\widehat{\theta}_{x_k}}\overline{U}_{(x_k,\varepsilon_k)}+r_\infty\overline{U}_{(x_k,\varepsilon_k)}^{1+\frac{2}{n}}\Big)
\overline{U}_{(x_k,\varepsilon_k)}dV_{\widehat{\theta}_{x_k}}\\
&\leq\left(\frac{n(2n+2)}{r_\infty}\right)^{\frac{n}{2}}
\varepsilon_k^n A_{x_k}(2+\frac{2}{n})
\int_M\Delta_{\widehat{\theta}_{x_k}}\chi_\delta(\rho_{x_k}(y))\overline{U}_{(x_k,\varepsilon_k)}dV_{\widehat{\theta}_{x_k}}
\\
&\hspace{4mm}+ C\int_{\{\rho_{x_k}(y)\leq 2\delta\}}\left(\frac{\varepsilon_k^2}{t^2+(\varepsilon_k^2+|z|^2)^2}\right)^{\frac{n}{2}}\rho_{x_k}(y)^2
\overline{U}_{(x_k,\varepsilon_k)}dV_{\widehat{\theta}_{x_k}}\\
&\hspace{4mm}
+C\frac{\varepsilon_k^n}{\delta}\int_{\{\delta\leq\rho_{x_k}(y)\leq 2\delta\}}\overline{U}_{(x_k,\varepsilon_k)}dV_{\widehat{\theta}_{x_k}}\\
&\hspace{4mm}
+C\int_{\{\rho_{x_k}(y)\geq\delta\}}\left(\frac{\varepsilon_k^2}{t^2+(\varepsilon_k^2+|z|^2)^2}\right)^{\frac{n+2}{2}}
\overline{U}_{(x_k,\varepsilon_k)}dV_{\widehat{\theta}_{x_k}}.
\end{split}
\end{equation}
We are going to estimate each of the terms on the right hand side of (\ref{B.5}). Since
\begin{equation*}
\begin{split}
&\left|\int_M\Delta_{\widehat{\theta}_{x_k}}\chi_\delta(\rho_{x_k}(y))
\Big(\overline{U}_{(x_k,\varepsilon_k)}-\rho_{x_k}(y)^{-2n}\Big)dV_{\widehat{\theta}_{x_k}}\right|\\
&\leq \frac{C}{\delta^2}\int_{\{\delta\leq\rho_{x_k}(y)\leq 2\delta\}}
\left|\frac{1}{(t^2+(\varepsilon_k^2+|z|^2)^2)^{\frac{n}{2}}}-\frac{1}{(t^2+|z|^4)^{\frac{n}{2}}}\right|dV_{\widehat{\theta}_{x_k}}\\
&\hspace{4mm}+
\frac{C}{\delta^2}\int_{\{\delta\leq\rho_{x_k}(y)\leq 2\delta\}}|G_{x_k}(y)-\rho_{x_k}(y)^{-2n}|dV_{\widehat{\theta}_{x_k}}\\
&\leq \frac{C}{\delta^2}\int_\delta^{2\delta}\frac{\varepsilon_k^{2n}r^{2n-1}dr}{r^{4n}}
\leq C\delta^{-2n-2}\varepsilon_k^{2n}
\end{split}
\end{equation*}
by (\ref{B.3}) and (\ref{B.4.5}), we have
\begin{equation}\label{B.7}
\begin{split}
&\int_M\Delta_{\widehat{\theta}_{x_k}}\chi_\delta(\rho_{x_k}(y))\overline{U}_{(x_k,\varepsilon_k)}dV_{\widehat{\theta}_{x_k}}\\
&=-\int_M\big\langle\nabla_{\widehat{\theta}_{x_k}}\rho_{x_k}(y)^{-2n},
\nabla_{\widehat{\theta}_{x_k}}\chi_\delta(\rho_{x_k}(y))\big\rangle_{\widehat{\theta}_{x_k}}
dV_{\widehat{\theta}_{x_k}}
+C\delta^{-2n-2}\varepsilon_k^{2n}\\
&=2n\int_{\{\delta\leq\rho_{x_k}(y)\leq 2\delta\}}\chi'_\delta(\rho_{x_k}(y))\rho_{x_k}(y)^{-2n-1}|\nabla_{\widehat{\theta}_{x_k}}\rho_{x_k}(y)|^2_{\widehat{\theta}_{x_k}}
dV_{\widehat{\theta}_{x_k}}
+C\delta^{-2n-2}\varepsilon_k^{2n}\\
&=-\frac{C}{\delta}\int_\delta^{2\delta}
dr
+C\delta^{-2n-2}\varepsilon_k^{2n}=-C_0
+C\delta^{-2n-2}\varepsilon_k^{2n}
\end{split}
\end{equation}
by integration by parts. Here $C_0$ is a positive constant depending only on $n$.
On the other hand, it follows form the definition of $\overline{U}_{(x_k,\varepsilon_k)}$ that
\begin{equation}\label{B.8}
\begin{split}
&C\int_{\{\rho_{x_k}(y)\leq 2\delta\}}\left(\frac{\varepsilon_k^2}{t^2+(\varepsilon_k^2+|z|^2)^2}\right)^{\frac{n}{2}}\rho_{x_k}(y)^2
\overline{U}_{(x_k,\varepsilon_k)}dV_{\widehat{\theta}_{x_k}}\\
&\leq C\varepsilon_k^{2n}\int_{\{\rho_{x_k}(y)\leq 2\delta\}}\frac{\rho_{x_k}(y)^2}{(t^2+|z|^4)^{n}}
dV_{\widehat{\theta}_{x_k}}=C\delta^{4-2n}\varepsilon_k^{2n},
\end{split}
\end{equation}
where we have used the assumption that $n=1$. Note also that
\begin{equation}\label{B.9}
\begin{split}
&C\frac{\varepsilon_k^n}{\delta}\int_{\{\delta\leq\rho_{x_k}(y)\leq 2\delta\}}\overline{U}_{(x_k,\varepsilon_k)}dV_{\widehat{\theta}_{x_k}}\\
&\leq C\frac{\varepsilon_k^{2n}}{\delta}
\int_{\{\delta\leq\rho_{x_k}(y)\leq 2\delta\}}\Big(\rho_{x_k}(y)^{-2n}+A_{x_k}+C\rho_{x_k}(y)\Big)dV_{\widehat{\theta}_{x_k}}\\
&\leq C\frac{\varepsilon_k^{2n}}{\delta}\int_\delta^{2\delta}\frac{r^{2n+1}dr}{r^{2n}}=C\delta\varepsilon_k^{2n},
\end{split}
\end{equation}
and
\begin{equation}\label{B.10}
\begin{split}
&C\int_{\{\rho_{x_k}(y)\geq\delta\}}\left(\frac{\varepsilon_k^2}{t^2+(\varepsilon_k^2+|z|^2)^2}\right)^{\frac{n+2}{2}}
\overline{U}_{(x_k,\varepsilon_k)}dV_{\widehat{\theta}_{x_k}}\\
&\leq C\varepsilon_k^{2n+2}\int_\delta^\infty\frac{r^{2n+1}dr}{r^{4(\frac{n+2}{2})}\cdot r^{2n}}
=C\varepsilon_k^{2n+2}\int_\delta^\infty\frac{dr}{r^{2n+3}}
=C\delta^{-2n-2}\varepsilon_k^{2n+2},
\end{split}
\end{equation}
where we have used (\ref{B.3}) and the definition of $\overline{U}_{(x_k,\varepsilon_k)}$.
Combining (\ref{B.5})-(\ref{B.10}), we have
\begin{equation}\label{B.11}
\begin{split}
&\int_M\Big((2+\frac{2}{n})|\nabla_{\widehat{\theta}_{x_k}}\overline{U}_{(x_k,\varepsilon_k)}|^2_{\widehat{\theta}_{x_k}}
+R_{\widehat{\theta}_{x_k}}\overline{U}_{(x_k,\varepsilon_k)}^2\Big)dV_{\widehat{\theta}_{x_k}}\\
&\leq r_\infty
\int_M\overline{U}_{(x_k,\varepsilon_k)}^{2+\frac{2}{n}}dV_{\widehat{\theta}_{x_k}}
-\left(\frac{n(2n+2)}{r_\infty}\right)^{n}
\varepsilon_k^{2n} A_{x_k}(4+\frac{4}{n})C_0\\
&\hspace{4mm}
+C\delta^2\varepsilon_k^{2n}+C\delta\varepsilon_k^{2n}+C\delta^{-2n-2}\varepsilon_k^{2n+2}\\
&\leq r_\infty
\left(\int_M\overline{U}_{(x_k,\varepsilon_k)}^{2+\frac{2}{n}}dV_{\widehat{\theta}_{x_k}}\right)^{\frac{n}{n+1}}
\left(\int_M\overline{U}_{(x_k,\varepsilon_k)}^{2+\frac{2}{n}}dV_{\widehat{\theta}_{x_k}}\right)^{\frac{1}{n+1}}
\\
&\hspace{4mm}-\left(\frac{n(2n+2)}{r_\infty}\right)^{n}
\varepsilon_k^{2n} A_{x_k}(2+\frac{2}{n})C_0
+C\delta^2\varepsilon_k^{2n}+C\delta\varepsilon_k^{2n}+C\delta^{-2n-2}\varepsilon_k^{2n+2}
\end{split}
\end{equation}
where the last inequality follows from H\"{o}lder's inequality.
It follows from the definition of $\overline{U}_{(x_k,\varepsilon_k)}$ that
\begin{equation}\label{B.12}
\begin{split}
\int_M\overline{U}_{(x_k,\varepsilon_k)}^{2+\frac{2}{n}}dV_{\widehat{\theta}_{x_k}}
&\leq \left(\frac{n(2n+2)}{r_\infty}\right)^{n+1}\left(
\int_M\chi_\delta(\rho_{x_k}(y))
\left(\frac{\varepsilon_k^2}{t^2+(\varepsilon_k^2+|z|^2)^2}\right)^{n+1}dV_{\widehat{\theta}_{x_k}}\right.\\
&\hspace{4mm}\left.+C\vphantom{\frac{\varepsilon_k^2}{t^2+(\varepsilon_k^2+|z|^2)^2}}
\int_M(1-\chi_\delta(\rho_{x_k}(y)))\varepsilon_k^{2n+2}G_{x_k}(y)^{2+\frac{2}{n}}dV_{\widehat{\theta}_{x_k}}\right).
\end{split}
\end{equation}
Note that
\begin{equation}\label{B.13}
\begin{split}
&
\int_M(1-\chi_\delta(\rho_{x_k}(y)))\varepsilon_k^{2n+2}G_{x_k}(y)^{2+\frac{2}{n}}dV_{\widehat{\theta}_{x_k}}\\
&\leq \varepsilon_k^{2n+2}\int_{\rho_{x_k}(y)\geq\delta}\Big(\rho_{x_k}(y)^{-2n}+A_{x_k}+C\rho_{x_k}(y)\Big)^{2+\frac{2}{n}}dV_{\widehat{\theta}_{x_k}}\\
&=C\varepsilon_k^{2n+2}\int_\delta^\infty\frac{r^{2n+1}dr}{r^{2n(2+\frac{2}{n})}}
=C\varepsilon_k^{2n+2}\int_\delta^\infty\frac{dr}{r^{2n+3}}=C\delta^{-2n-2}\varepsilon_k^{2n+2}
\end{split}
\end{equation}
by (\ref{B.3}).
Note also that
\begin{equation}\label{B.14}
\begin{split}
&\int_M\chi_\delta(\rho_{x_k}(y))
\left(\frac{\varepsilon_k^2}{t^2+(\varepsilon_k^2+|z|^2)^2}\right)^{n+1}dV_{\widehat{\theta}_{x_k}}\\
&=\int_{\mathbb{H}^n}\left(\frac{1}{(t^2+(1+|z|^2)^2)^{\frac{n}{2}}}\right)^{2+\frac{2}{n}}dV_{\theta_{\mathbb{H}^n}}
+O(\varepsilon_k^{2n+2})\\
&=\left(\frac{Y(S^{2n+1})}{n(2n+2)}\right)^{n+1}+O(\varepsilon_k^{2n+2}),
\end{split}
\end{equation}
where we have used the change of variables $(\frac{t}{\varepsilon_k^2},\frac{z}{\varepsilon_k})\mapsto(t,z)$.
Combining (\ref{B.12})-(\ref{B.14}), we get
\begin{equation*}
\int_M\overline{U}_{(x_k,\varepsilon_k)}^{2+\frac{2}{n}}dV_{\widehat{\theta}_{x_k}}
\leq\left(\frac{Y(S^{2n+1})}{r_\infty}\right)^{n+1}+C\delta^{-2n-2}\varepsilon_k^{2n+2}.
\end{equation*}
Substituting this into (\ref{B.11}), we obtain
\begin{equation*}
\begin{split}
&\int_M\Big((2+\frac{2}{n})|\nabla_{\widehat{\theta}_{x_k}}\overline{U}_{(x_k,\varepsilon_k)}|^2_{\widehat{\theta}_{x_k}}
+R_{\widehat{\theta}_{x_k}}\overline{U}_{(x_k,\varepsilon_k)}^2\Big)dV_{\widehat{\theta}_{x_k}}\\
&\leq Y(S^{2n+1})
\left(\int_M\overline{U}_{(x_k,\varepsilon_k)}^{2+\frac{2}{n}}dV_{\widehat{\theta}_{x_k}}\right)^{\frac{n}{n+1}}
-\left(\frac{n(2n+2)}{r_\infty}\right)^{n}
\varepsilon_k^{2n} A_{x_k}(2+\frac{2}{n})C_0
\\
&\hspace{4mm}
+C\delta^2\varepsilon_k^{2n}+C\delta\varepsilon_k^{2n}+C\delta^{-2n-2}\varepsilon_k^{2n+2}.
\end{split}
\end{equation*}
This proves Proposition \ref{PropB.3} for the case when $n=1$.

When $M$ is spherical, it follows from Proposition \ref{PropB.1a}
and integration by parts that
\begin{equation}\label{B.5a}
\begin{split}
&\int_M\Big((2+\frac{2}{n})|\nabla_{\widehat{\theta}_{x_k}}\overline{U}_{(x_k,\varepsilon_k)}|^2_{\widehat{\theta}_{x_k}}
+R_{\widehat{\theta}_{x_k}}\overline{U}_{(x_k,\varepsilon_k)}^2-r_\infty\overline{U}_{(x_k,\varepsilon_k)}^{2+\frac{2}{n}}\Big)dV_{\widehat{\theta}_{x_k}}\\
&\leq\left(\frac{n(2n+2)}{r_\infty}\right)^{\frac{n}{2}}
\varepsilon_k^n A_{x_k}(2+\frac{2}{n})
\int_M\Delta_{\widehat{\theta}_{x_k}}\chi_\delta(\rho_{x_k}(y))\overline{U}_{(x_k,\varepsilon_k)}dV_{\widehat{\theta}_{x_k}}
\\
&\hspace{4mm}
+C\frac{\varepsilon_k^n}{\delta}\int_{\{\delta\leq\rho_{x_k}(y)\leq 2\delta\}}\overline{U}_{(x_k,\varepsilon_k)}dV_{\widehat{\theta}_{x_k}}\\
&\hspace{4mm}
+C\int_{\{\rho_{x_k}(y)\geq\delta\}}\left(\frac{\varepsilon_k^2}{t^2+(\varepsilon_k^2+|z|^2)^2}\right)^{\frac{n+2}{2}}
\overline{U}_{(x_k,\varepsilon_k)}dV_{\widehat{\theta}_{x_k}}.
\end{split}
\end{equation}
Combining (\ref{B.7}), (\ref{B.9}), (\ref{B.10}), and (\ref{B.5a}), we also obtain
(\ref{B.11}). Now we can follow the same argument as above to finish the proof.
\end{proof}

\begin{lem}\label{LemmaB.6}
We have
$$F(\overline{u}_{(x_{k},\varepsilon_{k})})=r_\infty+o(1).$$
\end{lem}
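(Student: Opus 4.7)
\textbf{Proof proposal for Lemma \ref{LemmaB.6}.} The plan is to reduce to a computation on $(M,\widehat{\theta}_{x_k})$ via conformal invariance and then reuse the estimates developed in Proposition \ref{PropB.3}. First, since $\overline{u}_{(x_k,\varepsilon_k)}=\varphi_{x_k}\,\overline{U}_{(x_k,\varepsilon_k)}$ and $\widehat{\theta}_{x_k}=\varphi_{x_k}^{2/n}\theta_0$, the CR conformal law for the sub-Laplacian gives
\begin{equation*}
F(\overline{u}_{(x_k,\varepsilon_k)})
=\frac{\int_M\bigl((2+\tfrac{2}{n})|\nabla_{\widehat{\theta}_{x_k}}\overline{U}_{(x_k,\varepsilon_k)}|^2_{\widehat{\theta}_{x_k}}
+R_{\widehat{\theta}_{x_k}}\overline{U}_{(x_k,\varepsilon_k)}^{\,2}\bigr)dV_{\widehat{\theta}_{x_k}}}
{\int_M\overline{U}_{(x_k,\varepsilon_k)}^{\,2+\frac{2}{n}}dV_{\widehat{\theta}_{x_k}}}.
\end{equation*}
Integrating by parts in the numerator, the identity
\begin{equation*}
F(\overline{u}_{(x_k,\varepsilon_k)})-r_\infty
=\frac{-\int_M\overline{U}_{(x_k,\varepsilon_k)}\bigl((2+\tfrac{2}{n})\Delta_{\widehat{\theta}_{x_k}}\overline{U}_{(x_k,\varepsilon_k)}
-R_{\widehat{\theta}_{x_k}}\overline{U}_{(x_k,\varepsilon_k)}
+r_\infty\overline{U}_{(x_k,\varepsilon_k)}^{\,1+\frac{2}{n}}\bigr)dV_{\widehat{\theta}_{x_k}}}
{\int_M\overline{U}_{(x_k,\varepsilon_k)}^{\,2+\frac{2}{n}}dV_{\widehat{\theta}_{x_k}}}
\end{equation*}
holds, and it suffices to show the right-hand side is $o(1)$ as $\nu\to\infty$ (so that $\varepsilon_k=\varepsilon_{k,\nu}\to 0$, with $\delta$ held fixed but small).

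For the denominator, the chain (\ref{B.12})--(\ref{B.14}) in the proof of Proposition \ref{PropB.3} already shows
\begin{equation*}
\int_M\overline{U}_{(x_k,\varepsilon_k)}^{\,2+\frac{2}{n}}dV_{\widehat{\theta}_{x_k}}
=\Bigl(\tfrac{Y(S^{2n+1})}{r_\infty}\Bigr)^{n+1}+O(\delta^{-2n-2}\varepsilon_k^{2n+2}),
\end{equation*}
so, for fixed $\delta$, the denominator is bounded below by a positive constant independent of $\nu$ once $\varepsilon_k$ is small enough. For the numerator I would apply Corollary \ref{CorB.2} pointwise and then integrate term by term, repeating the computations (\ref{B.8}), (\ref{B.9}), (\ref{B.10}) that already appear in the proof of Proposition \ref{PropB.3} (the only difference being the mildly worse factor $\delta^{-2}$ in the middle region, which is still harmless after multiplying by $\overline{U}_{(x_k,\varepsilon_k)}\lesssim \varepsilon_k^n\rho^{-2n}$ and integrating). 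When $n=1$ this yields a bound of the form $C\delta^{4-2n}\varepsilon_k^{2n}+C\varepsilon_k^{2n}+C\delta^{-2n-2}\varepsilon_k^{2n+2}$; when $M$ is spherical one uses Proposition \ref{PropB.1a} in place of Proposition \ref{PropB.1} and the first term drops out. In either case, with $\delta$ fixed and $\varepsilon_k\to 0$, the numerator tends to zero.

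The main (mild) obstacle is bookkeeping rather than analytic: one has to ensure that $\varepsilon_{k,\nu}\to 0$ so that all the $\varepsilon_k^{2n}$ and $\varepsilon_k^{2n+2}\delta^{-2n-2}$ terms actually vanish in the limit. This is guaranteed by the concentration-compactness statement Theorem \ref{Theorem4.1}, together with Proposition \ref{Prop5.2} (respectively Proposition \ref{Prop6.7}), which force $\varepsilon_{k,\nu}/\varepsilon_{k,\nu}^{*}\to 1$ while the bubble scales $\varepsilon_{k,\nu}^{*}$ tend to zero (otherwise the $S^2_1$-bubbles in (\ref{4.5}) would merge into the regular part $u_\infty$, contradicting the nontrivial bubble decomposition). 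Modulo this observation, the estimate is essentially a subset of the computation carried out in Proposition \ref{PropB.3}.
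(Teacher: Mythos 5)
Your proposal is correct and follows essentially the same route as the paper: the paper derives the upper bound $F(\overline{u}_{(x_k,\varepsilon_k)})\leq r_\infty+o(1)$ from the inequality (\ref{B.11}) established in the proof of Proposition \ref{PropB.3} and then obtains the lower bound by running the same arguments with the inequalities reversed, which is exactly the content of your two-sided estimate via Corollary \ref{CorB.2} combined with the denominator bound from (\ref{B.12})--(\ref{B.14}). The only difference is cosmetic: you package the two one-sided bounds into a single absolute-value estimate on the numerator of $F(\overline{u}_{(x_k,\varepsilon_k)})-r_\infty$, and your bookkeeping of the $\delta^{-2}$ versus $\delta^{-1}$ factor in the annular region and of $\varepsilon_{k,\nu}\to 0$ is accurate.
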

\begin{proof}
It follows from (\ref{B.11}) that
\begin{equation*}
\begin{split}
&\int_M\Big((2+\frac{2}{n})|\nabla_{\widehat{\theta}_{x_k}}\overline{U}_{(x_k,\varepsilon_k)}|^2_{\widehat{\theta}_{x_k}}
+R_{\widehat{\theta}_{x_k}}\overline{U}_{(x_k,\varepsilon_k)}^2\Big)dV_{\widehat{\theta}_{x_k}}\\
&\leq r_\infty
\int_M\overline{U}_{(x_k,\varepsilon_k)}^{2+\frac{2}{n}}dV_{\widehat{\theta}_{x_k}}
-\left(\frac{n(2n+2)}{r_\infty}\right)^{n}
\varepsilon_k^{2n} A_{x_k}(2+\frac{2}{n})C_0\\
&\hspace{4mm}
+C\delta^2\varepsilon_k^{2n}+C\delta\varepsilon_k^{2n}+C\delta^{-2n-2}\varepsilon_k^{2n+2},
\end{split}
\end{equation*}
which implies that
\begin{equation*}
\int_M\Big((2+\frac{2}{n})|\nabla_{\theta_0}\overline{u}_{(x_k,\varepsilon_k)}|^2_{\theta_0}
+R_{\theta_0}\overline{u}_{(x_k,\varepsilon_k)}^2\Big)dV_{\theta_0}\leq r_\infty
\int_M\overline{u}_{(x_k,\varepsilon_k)}^{2+\frac{2}{n}}dV_{\theta_0}
+o(1).
\end{equation*}
This implies that $F(\overline{u}_{(x_{k},\varepsilon_{k})})\leq r_\infty+o(1)$.
On the other hand, following the proof of Proposition \ref{PropB.3}, one can prove that
\begin{equation*}
\begin{split}
&\int_M\Big((2+\frac{2}{n})|\nabla_{\widehat{\theta}_{x_k}}\overline{U}_{(x_k,\varepsilon_k)}|^2_{\widehat{\theta}_{x_k}}
+R_{\widehat{\theta}_{x_k}}\overline{U}_{(x_k,\varepsilon_k)}^2\Big)dV_{\widehat{\theta}_{x_k}}\\
&\geq r_\infty
\int_M\overline{U}_{(x_k,\varepsilon_k)}^{2+\frac{2}{n}}dV_{\widehat{\theta}_{x_k}}
-\left(\frac{n(2n+2)}{r_\infty}\right)^{n}
\varepsilon_k^{2n} A_{x_k}(2+\frac{2}{n})C_0\\
&\hspace{4mm}
-C\delta^2\varepsilon_k^{2n}-C\delta\varepsilon_k^{2n}-C\delta^{-2n-2}\varepsilon_k^{2n+2},
\end{split}
\end{equation*}
by using the same arguments to obtain (\ref{B.11}). This implies that
\begin{equation*}
\int_M\Big((2+\frac{2}{n})|\nabla_{\theta_0}\overline{u}_{(x_k,\varepsilon_k)}|^2_{\theta_0}
+R_{\theta_0}\overline{u}_{(x_k,\varepsilon_k)}^2\Big)dV_{\theta_0}\geq r_\infty
\int_M\overline{u}_{(x_k,\varepsilon_k)}^{2+\frac{2}{n}}dV_{\theta_0}
+o(1),
\end{equation*}
which gives $F(\overline{u}_{(x_{k},\varepsilon_{k})})\geq r_\infty+o(1)$. This proves the assertion.
\end{proof}

\begin{lem}\label{LemmaB.4}
We have the estimate
$$\int_M\overline{u}_{(x_i,\varepsilon_i)}\overline{u}_{(x_j,\varepsilon_j)}^{1+\frac{2}{n}}dV_{\theta_0}
\leq C\left(\frac{\varepsilon_i^2\varepsilon_j^2}{\varepsilon_j^4+d(x_i,x_j)^4}\right)^{\frac{n}{2}}.$$
\end{lem}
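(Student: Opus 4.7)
The plan is to establish a pointwise upper bound on each test function and then directly estimate the resulting elementary double-bubble integral by splitting $M$ into a few geodesic annuli around $x_j$. The first step is to show the uniform estimate
\[
\overline{u}_{(x_k,\varepsilon_k)}(y)\;\leq\;C\,\frac{\varepsilon_k^n}{(\varepsilon_k^2+d(x_k,y)^2)^n}\qquad\text{for all }y\in M.
\]
In the region $\rho_{x_k}(y)\leq 2\delta$ this follows from the elementary inequality $t^2+(\varepsilon_k^2+|z|^2)^2\gtrsim (\varepsilon_k^2+\rho_{x_k}(y)^2)^2$ applied to the bubble profile in the definition of $\overline{U}_{(x_k,\varepsilon_k)}$, while in the complementary region it follows from the Green's function estimate $G_{x_k}\leq C\rho_{x_k}^{-2n}$ implied by (\ref{B.3}). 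The equivalence $\rho_{x_k}\sim d(x_k,\cdot)$ from (\ref{B.5.5}) together with the boundedness of $\varphi_{x_k}$ then yield the claim.

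With this bound in hand, writing $d_k(y):=d(x_k,y)$ and $D:=d(x_i,x_j)$, and noting that the right-hand side of the lemma is comparable to $(\varepsilon_i\varepsilon_j)^n/(\varepsilon_j^2+D^2)^n$, it suffices to establish
\[
\int_M\frac{dV_{\theta_0}(y)}{(\varepsilon_i^2+d_i(y)^2)^n\,(\varepsilon_j^2+d_j(y)^2)^{n+2}}\;\leq\;\frac{C}{\varepsilon_j^2\,(\varepsilon_j^2+D^2)^n}.
\]
I would then case-split on whether $D\geq\varepsilon_j$ or $D\leq\varepsilon_j$ and integrate in Heisenberg polar coordinates, where $dV\sim r^{2n+1}\,dr$. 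When $D\geq\varepsilon_j$, partition $M$ into $\{d_j\leq D/2\}$, $\{D/2\leq d_j\leq 2D\}$, and $\{d_j\geq 2D\}$: on the first, $d_i\geq D/2$ pins $\varepsilon_i^2+d_i^2\gtrsim D^2$ and the one-dimensional radial integral $\int_0^{D/2}r^{2n+1}(\varepsilon_j^2+r^2)^{-n-2}\,dr$ is $O(\varepsilon_j^{-2})$; on the middle annulus, a further subdivision by whether $d_i\leq D/2$, combined with the volume bound on $\{d_i\leq D/2\}$, closes the estimate; on the last set $d_i\sim d_j$ reduces matters to a one-dimensional tail integral. When $D\leq\varepsilon_j$, partition into $\{d_j\leq 2\varepsilon_j\}$ and its complement: on the inner region $d_i\leq 3\varepsilon_j$ and the $d_j$-denominator is pinned at $\varepsilon_j^{2n+4}$, reducing the problem to $\int_0^{3\varepsilon_j}r^{2n+1}(\varepsilon_i^2+r^2)^{-n}\,dr=O(\varepsilon_j^2)$ regardless of the ratio $\varepsilon_i/\varepsilon_j$, while the outer region is handled exactly as in the previous tail estimate.

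The main subtle point is the asymmetry of the claimed bound, which features $\varepsilon_j^4$ but not $\varepsilon_i^4$ in the denominator: a naive symmetric estimate of the type $(\varepsilon_i\varepsilon_j/(\varepsilon_i^2+D^2))^n$ would fail in the regime $\varepsilon_i\ll\varepsilon_j$ and $D\ll\varepsilon_j$. This asymmetry is forced by the exponent $1+\tfrac{2}{n}$ on $\overline{u}_{(x_j,\varepsilon_j)}$, which makes that factor behave like a smoothed mass of total weight $\sim\varepsilon_j^n$ concentrated on scale $\varepsilon_j$ around $x_j$; carefully exploiting this concentration on $\{d_j\leq 2\varepsilon_j\}$ is what produces the $\varepsilon_j^2+D^2$ scale (rather than $\varepsilon_i^2+D^2$) in the denominator. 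The remaining work is then routine radial integration.
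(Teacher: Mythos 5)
Your proposal is correct and follows essentially the same strategy as the paper's proof: a pointwise upper bound on the test functions (bubble profile inside the cut-off region, the Green's function estimate (\ref{B.3}) outside, together with (\ref{B.5.5})), followed by a decomposition of $M$ into regions on each of which one of the two factors is pinned and pulled out before a radial integration. The paper's bookkeeping is slightly leaner --- it uses only the single set $U=\{2d(x_i,y)\leq\varepsilon_j+d(x_i,x_j)\}$ and its complement, pinning the $j$-factor on $U$ and the $i$-factor on $M\setminus U$ --- but your finer annular decomposition around $x_j$, with the preliminary case split on $d(x_i,x_j)$ versus $\varepsilon_j$, closes the same way and correctly produces the asymmetric scale $\varepsilon_j^4+d(x_i,x_j)^4$ in the denominator.
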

\begin{proof}
It follows from the definition of $\overline{u}_{(x_i,\varepsilon_i)}$
that $\overline{u}_{(x_i,\varepsilon_i)}(y)=\varphi_{x_i}(y)\overline{U}_{(x_i,\varepsilon_i)}(y)$ and
\begin{equation*}
\overline{U}_{(x_i,\varepsilon_i)}(y)\leq\left(\frac{n(2n+2)}{r_\infty}\right)^{\frac{n}{2}}
\varepsilon_i^n
\left[\frac{\chi_\delta(\rho_{x_i}(y))}{(\varepsilon_i^4+d(x_i,y)^4)^{\frac{n}{2}}}+
\Big(1-\chi_\delta(\rho_{x_i}(y))\Big)G_{x_i}(y)\right].
\end{equation*}
On the set $U=:\{2d(x_i,y)\leq\varepsilon_j+d(x_i,x_j)\}$, we have
\begin{equation}\label{B.15}
\varepsilon_j+d(y,x_j)\geq \varepsilon_j+d(x_i,x_j)-d(x_i,y)
\geq\frac{1}{2}(\varepsilon_j+d(x_i,x_j)).
\end{equation}
Therefore, we have
\begin{equation*}
\begin{split}
&\int_M\left(\frac{\varepsilon_i^2}{\varepsilon_i^4+d(x_i,y)^4}\right)^{\frac{n}{2}}
\left(\frac{\varepsilon_j^2}{\varepsilon_j^4+d(y,x_j)^4}\right)^{1+\frac{n}{2}}dV_{\theta_0}\\
&\leq\left(\int_{U}+\int_{M\setminus U}\right)
\left(\frac{\varepsilon_i^2}{\varepsilon_i^4+d(x_i,y)^4}\right)^{\frac{n}{2}}
\left(\frac{\varepsilon_j^2}{\varepsilon_j^4+d(y,x_j)^4}\right)^{1+\frac{n}{2}}dV_{\theta_0}\\
&\leq C\frac{\varepsilon_i^n\varepsilon_j^n}{(\varepsilon_j^4+d(x_i,x_j)^4)^{\frac{n+1}{2}}}
\int_0^{\frac{\varepsilon_j+d(x_i,x_j)}{2}}\frac{r^{2n+1}dr}{(\varepsilon_i^4+r^4)^{\frac{n}{2}}}\\
&\hspace{4mm}
+C\frac{\varepsilon_i^n\varepsilon_j^{n+2}}{(\varepsilon_j^4+d(x_i,x_j)^4)^{\frac{n}{2}}}
\int_0^{\infty}\frac{r^{2n+1}dr}{(\varepsilon_j^4+r^4)^{1+\frac{n}{2}}}\\
&\leq C\left(\frac{\varepsilon_i^2\varepsilon_j^2}{\varepsilon_j^4+d(x_i,x_j)^4}\right)^{\frac{n}{2}}
\end{split}
\end{equation*}
because
\begin{equation*}
\begin{split}
\int_0^{\frac{\varepsilon_j+d(x_i,x_j)}{2}}\frac{r^{2n+1}dr}{(\varepsilon_i^4+r^4)^{\frac{n}{2}}}
\leq\int_0^{\frac{\varepsilon_j+d(x_i,x_j)}{2}}rdr
=\frac{(\varepsilon_j+d(x_i,x_j))^2}{8}
\leq (\varepsilon_j^4+d(x_i,x_j)^4)^\frac{1}{2}
\end{split}
\end{equation*}
and
\begin{equation*}
\begin{split}
\int_0^{\infty}\frac{r^{2n+1}dr}{(\varepsilon_j^4+r^4)^{1+\frac{n}{2}}}
&\leq\int_0^{\varepsilon_j}\frac{r^{2n+1}dr}{\varepsilon_j^4r^{4\cdot\frac{n}{2}}}
+\int_{\varepsilon_j}^{\infty}\frac{r^{2n+1}dr}{r^{4(1+\frac{n}{2}})}=\frac{C}{\varepsilon_j^2}.
\end{split}
\end{equation*}
This proves the assertion.
\end{proof}

\begin{lem}\label{LemmaB.5}
We have the estimate
\begin{equation*}
\begin{split}
&\int_M\overline{u}_{(x_i,\varepsilon_i)}\left|(2+\frac{2}{n})\Delta_{\theta_0}\overline{u}_{(x_j,\varepsilon_j)}
-R_{\theta_0}\overline{u}_{(x_j,\varepsilon_j)}+r_\infty\overline{u}_{(x_j,\varepsilon_j)}^{1+\frac{2}{n}}\right|dV_{\theta_0}\\
&\leq C(\delta^4+\delta^{2n}+\frac{\varepsilon_j^2}{\delta^2})\left(\frac{\varepsilon_i^2\varepsilon_j^2}{\varepsilon_j^4+d(x_i,x_j)^4}\right)^{\frac{n}{2}}.
\end{split}
\end{equation*}
\end{lem}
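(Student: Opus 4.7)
The plan is to combine the pointwise error bound of Corollary \ref{CorB.2} with a direct Heisenberg-coordinate computation in the same spirit as the proof of Lemma \ref{LemmaB.4}. First I will apply the CR conformal transformation law displayed just before equation (\ref{CRYE}) to rewrite $-(2+\frac{2}{n})\Delta_{\theta_0}\overline{u}_{(x_j,\varepsilon_j)}+R_{\theta_0}\overline{u}_{(x_j,\varepsilon_j)}-r_\infty\overline{u}_{(x_j,\varepsilon_j)}^{1+2/n}$ as $\varphi_{x_j}^{1+2/n}$ times the analogous expression built from $\overline{U}_{(x_j,\varepsilon_j)}$ and $\widehat{\theta}_{x_j}$, while simultaneously converting $dV_{\theta_0}=\varphi_{x_j}^{-(2+2/n)}dV_{\widehat{\theta}_{x_j}}$. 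Since $\varphi_{x_j}$ is smooth and bounded uniformly above and below, this reduces the lemma to estimating $\int_M\overline{u}_{(x_i,\varepsilon_i)}|E_j|\,dV_{\widehat{\theta}_{x_j}}$, where $E_j$ denotes the CR Yamabe equation error for $\overline{U}_{(x_j,\varepsilon_j)}$.

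Corollary \ref{CorB.2} decomposes $|E_j|$ as a sum of three contributions supported respectively on $\{\rho_{x_j}\leq 2\delta\}$, on the thin annulus $\{\delta\leq\rho_{x_j}\leq 2\delta\}$, and on $\{\rho_{x_j}\geq\delta\}$. I will estimate the integral of $\overline{u}_{(x_i,\varepsilon_i)}$ against each of these separately via the two-region decomposition used in the proof of Lemma \ref{LemmaB.4}, splitting the domain according to whether $2d(x_i,y)\leq\varepsilon_j+d(x_i,x_j)$ or not and applying the inequality (\ref{B.15}) on the latter region. For the first contribution, one uses $\rho_{x_j}^4\leq t^2+(\varepsilon_j^2+|z|^2)^2$ to bound the factor $\rho_{x_j}^2(\varepsilon_j^2/(t^2+(\varepsilon_j^2+|z|^2)^2))^{n/2}$ by $\varepsilon_j^n\rho_{x_j}^{2-2n}$; after passing to Heisenberg-type polar coordinates the radial integration $\int_0^{2\delta}r^3\,dr\sim\delta^4$ produces $C\delta^4$ times the desired interaction factor. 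For the annular contribution, the pointwise factor $\varepsilon_j^n/\delta^2$ combines with the volume $\sim\delta^{2n+2}$ of the annulus to produce $\varepsilon_j^n\delta^{2n}$, yielding $C\delta^{2n}$ times the interaction factor. For the exterior contribution, the integrand is bounded by $C\varepsilon_j^{n+2}\rho_{x_j}^{-(2n+4)}$ using $t^2+(\varepsilon_j^2+|z|^2)^2\geq\rho_{x_j}^4$, and the radial integration $\int_\delta^\infty r^{-3}\,dr\sim\delta^{-2}$ supplies exactly the factor $\varepsilon_j^2/\delta^2$.

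The main technical obstacle is the near-field regime where $d(x_i,x_j)$ is comparable to or smaller than $\delta$, in which case $\overline{u}_{(x_i,\varepsilon_i)}$ is no longer controlled by its asymptotic decay $\varepsilon_i^n/d(x_i,x_j)^{2n}$. There the two-region splitting of Lemma \ref{LemmaB.4} becomes essential: on the region where $2d(x_i,y)\leq\varepsilon_j+d(x_i,x_j)$ one controls the factors coming from $\overline{U}_{(x_j,\varepsilon_j)}$ uniformly and integrates $\overline{u}_{(x_i,\varepsilon_i)}$ directly, while on the complementary region one exploits (\ref{B.15}) to lock the singular locus of $\overline{u}_{(x_j,\varepsilon_j)}$ and reduces to a standard radial integral of type $\int r^{2n+1}(\varepsilon_i^4+r^4)^{-n/2}\,dr$. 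In each subregion the $\delta$-dependent gain computed above survives, and summing the three contributions yields the claimed bound $C(\delta^4+\delta^{2n}+\varepsilon_j^2/\delta^2)(\varepsilon_i^2\varepsilon_j^2/(\varepsilon_j^4+d(x_i,x_j)^4))^{n/2}$.
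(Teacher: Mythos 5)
Your proposal is correct and follows essentially the same route as the paper: both start from the pointwise bound of Corollary \ref{CorB.2}, split the domain according to whether $2d(x_i,y)\leq\varepsilon_j+d(x_i,x_j)$ (using (\ref{B.15}) and the analogous inequality on each piece, exactly as in Lemma \ref{LemmaB.4}), and the resulting radial integrals produce the factors $\delta^4$, $\delta^{2n}$ and $\varepsilon_j^2/\delta^2$. The only cosmetic difference is that the paper first absorbs the annulus term $\varepsilon_j^n/\delta^2$ into a single $(\delta^2+\delta^{2n-2})$-weighted bubble profile on $\{d(x_j,y)\leq 2c_0\delta\}$ before splitting, whereas you treat the three contributions of Corollary \ref{CorB.2} separately; the outcome is identical.
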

\begin{proof}
Recall $\widehat{\theta}_x=\varphi_x^{\frac{2}{n}}\theta_0$ for some smooth function $\varphi_x$ on $M$. Therefore,
we can find a positive constant $c_0$ such that
$c_0^{-n}\leq\varphi_x\leq c_0^{n},$
 which implies
$\frac{1}{c_0}d(x,y)\leq\rho_x(y)\leq c_0d(x,y)$ for all $x, y\in M$. Hence, it follows from Corollary \ref{CorB.2} that
\begin{equation*}
\begin{split}
&\left|(2+\frac{2}{n})\Delta_{\theta_0}\overline{u}_{(x_j,\varepsilon_j)}
-R_{\theta_0}\overline{u}_{(x_j,\varepsilon_j)}+r_\infty\overline{u}_{(x_j,\varepsilon_j)}^{1+\frac{2}{n}}\right|\\
&\leq
C(\delta^2+\delta^{2n-2})\left(\frac{\varepsilon_j^2}{\varepsilon_j^4+d(x_j,y)^4}\right)^{\frac{n}{2}}1_{\{d(x_j,y)\leq 2c_0\delta\}}
+C\left(\frac{\varepsilon_j^2}{\varepsilon_j^4+d(x_j,y)^4}\right)^{\frac{n+2}{2}}1_{\{d(x_j,y)\geq\frac{\delta}{c_0}\}}.
\end{split}
\end{equation*}
On the set $U:=\{2d(x_i,y)\leq \varepsilon_j+d(x_i,x_j)\}\cap\{d(y,x_j)\leq2c_0\delta\}$, we have
$$d(x_i,y)\leq\frac{1}{2}(\varepsilon_j+d(x_i,x_j))\leq \varepsilon_j+d(y,x_j)\leq 4c_0\delta.$$
From this, it follows that
\begin{equation*}
\begin{split}
&\int_{\{d(y,x_j)\leq2c_0\delta\}}
\left(\frac{\varepsilon_i^2}{\varepsilon_i^4+d(x_i,y)^4}\right)^{\frac{n}{2}}
\left(\frac{\varepsilon_j^2}{\varepsilon_j^4+d(y,x_j)^4}\right)^{\frac{n}{2}}dV_{\theta_0}\\
&\leq \left(\int_{U}
+
\int_{\{d(y,x_j)\leq2c_0\delta\}\setminus U}\right)
\left(\frac{\varepsilon_i^2}{\varepsilon_i^4+d(x_i,y)^4}\right)^{\frac{n}{2}}
\left(\frac{\varepsilon_j^2}{\varepsilon_j^4+d(y,x_j)^4}\right)^{\frac{n}{2}}dV_{\theta_0}\\
&\leq C\int_{\{d(x_i,y)\leq4c_0\delta\}}
\left(\frac{\varepsilon_i^2}{\varepsilon_i^4+d(x_i,y)^4}\right)^{\frac{n}{2}}
\left(\frac{\varepsilon_j^2}{\varepsilon_j^4+d(x_i,x_j)^4}\right)^{\frac{n}{2}}dV_{\theta_0}\\
&\hspace{4mm}+C\int_{\{d(y,x_j)\leq2c_0\delta\}}
\left(\frac{\varepsilon_i^2}{\varepsilon_j^4+d(x_i,x_j)^4}\right)^{\frac{n}{2}}
\left(\frac{\varepsilon_j^2}{\varepsilon_j^4+d(y,x_j)^4}\right)^{\frac{n}{2}}dV_{\theta_0}\\
&\leq C\delta^2\left(\frac{\varepsilon_i^2\varepsilon_j^2}{\varepsilon_j^4+d(x_i,x_j)^4}\right)^{\frac{n}{2}}.
\end{split}
\end{equation*}
Similarly, on the set $V:=\{2d(x_i,y)\leq \varepsilon_j+d(x_i,x_j)\}\cap\{d(y,x_j)\geq\frac{\delta}{c_0}\}$, we have
$$\varepsilon_j+d(y,x_j)\geq\varepsilon_j+d(x_i,x_j)-d(x_i,y)\geq\frac{1}{2}(\varepsilon_j+d(x_i,x_j)).$$
This implies
\begin{equation*}
\begin{split}
&\int_{\{d(y,x_j)\geq\frac{\delta}{c_0}\}}
\left(\frac{\varepsilon_i^2}{\varepsilon_i^4+d(x_i,y)^4}\right)^{\frac{n}{2}}
\left(\frac{\varepsilon_j^2}{\varepsilon_j^4+d(y,x_j)^4}\right)^{\frac{n+2}{2}}dV_{\theta_0}\\
&\leq \left(\int_{V}
+
\int_{\{d(y,x_j)\geq\frac{\delta}{c_0}\}\setminus V}\right)
\left(\frac{\varepsilon_i^2}{\varepsilon_i^4+d(x_i,y)^4}\right)^{\frac{n}{2}}
\left(\frac{\varepsilon_j^2}{\varepsilon_j^4+d(y,x_j)^4}\right)^{\frac{n+2}{2}}dV_{\theta_0}\\
&\leq C\int_{\{2d(x_i,y)\leq \varepsilon_j+d(x_i,x_j)\}}
\left(\frac{\varepsilon_i^2}{\varepsilon_i^4+d(x_i,y)^4}\right)^{\frac{n}{2}}
\frac{\varepsilon_j^{n+2}}{\delta^2(\varepsilon_i^4+d(x_i,x_j)^4)^{\frac{n+1}{2}}}dV_{\theta_0}\\
&\hspace{4mm}+C\int_{\{d(y,x_j)\geq\frac{\delta}{c_0}\}}
\left(\frac{\varepsilon_i^2}{\varepsilon_j^4+d(x_i,x_j)^4}\right)^{\frac{n}{2}}
\frac{\varepsilon_j^{n+2}}{(\varepsilon_j^4+d(y,x_j)^4)^{1+\frac{n}{2}}}dV_{\theta_0}\\
&\leq C\frac{\varepsilon_j^2}{\delta^2}\left(\frac{\varepsilon_i^2\varepsilon_j^2}{\varepsilon_j^4+d(x_i,x_j)^4}\right)^{\frac{n}{2}}
\end{split}
\end{equation*}
because
\begin{equation*}
\begin{split}
\int_0^{\frac{\varepsilon_j+d(x_i,x_j)}{2}}\frac{r^{2n+1}dr}{(\varepsilon_j^4+r^4)^{\frac{n}{2}}}
\leq \int_0^{\frac{\varepsilon_j+d(x_i,x_j)}{2}}\frac{r^{2n+1}dr}{r^{4\cdot\frac{n}{2}}}
\leq (\varepsilon_j^4+d(x_i,x_j)^4)^{\frac{1}{2}}
\end{split}
\end{equation*}
and
\begin{equation*}
\begin{split}
\int_{\frac{\delta}{c_0}}^{\infty}\frac{r^{2n+1}dr}{(\varepsilon_j^4+r^4)^{1+\frac{n}{2}}}
\leq \int_{\frac{\delta}{c_0}}^{\infty}\frac{r^{2n+1}dr}{r^{4(1+\frac{n}{2})}}=\frac{C}{\delta^2}.
\end{split}
\end{equation*}
This proves the assertion.
\end{proof}

\section{}\label{AppendixB}

Here we prove  Theorem \ref{Theorem4.1}. We discuss both of the cases when $n=1$ or $M$ is spherical.
For convenience, we denote the CR conformal sub-Laplacian of a contact form $\theta$ by
$L_\theta$, i.e.
$$L_\theta=-(2+\frac{2}{n})\Delta_{\theta}+R_{\theta}.$$
Let $\{u_\nu\}$ be a sequence of positive functions satisfying (\ref{4.1}) and (\ref{4.2}).
Note that $u_\nu$ is uniformly bounded in $S_1^2(M)$. To see this,
it follows from integration by parts and H\"{o}lder's inequality that
\begin{equation*}
\begin{split}
&\int_M\left((2+\frac{2}{n})|\nabla_{\theta_0}u_\nu|^2_{\theta_0}+R_{\theta_0}u_\nu^2\right) dV_{\theta_0}\\
&\leq \left(\int_M|L_{\theta_0}u_\nu-r_\infty u_\nu^{1+\frac{2}{n}}|^{\frac{2n+2}{n+2}} dV_{\theta_0}
\right)^{\frac{n+2}{2n+2}}\left(\int_Mu_\nu^{2+\frac{2}{n}}\right)^{\frac{n}{2n+2}}
+r_\infty\int_Mu_\nu^{2+\frac{2}{n}} dV_{\theta_0},
\end{split}
\end{equation*}
which is uniformly bounded by (\ref{4.1}) and (\ref{4.2}).
Thus, by passing to a subsequence if necessary, we assume that
$u_\nu$ converges to $u_\infty$ weakly in $S_1^2(M)$.
Since the Folland-Stein embedding $S_1^2(M)\hookrightarrow L^s(M)$ is compact for $1<s<2+\displaystyle\frac{2}{n}$
(see Proposition 5.6 in \cite{Jerison&Lee3}),
$u_\nu$ converges to $u_\infty$ in $L^s(M)$ for $1<s<2+\displaystyle\frac{2}{n}$.

\begin{prop}
The function
$u_\infty$ is a smooth nonnegative function satisfying $(\ref{4.3})$.
\end{prop}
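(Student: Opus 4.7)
The plan is to extract the limiting equation by testing against smooth functions, using the weak/strong convergence already in hand, and then bootstrap to smoothness.

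First, along a further subsequence, $u_\nu\to u_\infty$ almost everywhere on $M$. Since each $u_\nu>0$, this immediately gives $u_\infty\ge 0$ a.e. By weak lower semicontinuity of the $S_1^2$-norm, $u_\infty\in S_1^2(M)$, so in particular $u_\infty\in L^{2+\frac{2}{n}}(M)$ by the Folland-Stein embedding.

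Second, I would verify (\ref{4.3}) in the distributional sense. For any $\varphi\in C^\infty(M)$, integration by parts yields
$$\int_M u_\nu\,L_{\theta_0}\varphi\,dV_{\theta_0}
=\int_M r_\infty u_\nu^{1+\frac{2}{n}}\varphi\,dV_{\theta_0}
+\int_M\big(L_{\theta_0}u_\nu-r_\infty u_\nu^{1+\frac{2}{n}}\big)\varphi\,dV_{\theta_0}.$$
By (\ref{4.2}) and H\"older's inequality, the last term is $o(1)$. Since $u_\nu\rightharpoonup u_\infty$ weakly in $L^2(M)$ and $L_{\theta_0}\varphi\in L^2(M)$, the left-hand side converges to $\int_M u_\infty\,L_{\theta_0}\varphi\,dV_{\theta_0}$. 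For the first term on the right, strong convergence $u_\nu\to u_\infty$ in $L^{1+\frac{2}{n}}(M)$ (this exponent is strictly subcritical, so the compact Folland-Stein embedding applies) combined with the uniform bound of $u_\nu^{1+\frac{2}{n}}$ in $L^{\frac{2n+2}{n+2}}(M)$ (an equi-integrability statement) gives, via Vitali's theorem and the a.e.\ convergence, that $u_\nu^{1+\frac{2}{n}}\to u_\infty^{1+\frac{2}{n}}$ in $L^1(M)$. Passing to the limit against the bounded test function $\varphi$ produces (\ref{4.3}) in the sense of distributions, hence in the weak $S_1^2$ sense.

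Finally, for smoothness, I would first upgrade the integrability of $u_\infty$. A direct application of the subelliptic $L^p$-estimates of Bramanti-Brandolini and the Folland-Stein embedding (Proposition \ref{Theorem 2.1}) to $L_{\theta_0}u_\infty=r_\infty u_\infty^{1+\frac{2}{n}}$ starting from $u_\infty\in L^{2+\frac{2}{n}}$ fails to improve the exponent; this is the critical-exponent barrier. I would overcome it by a Brezis-Kato type truncation argument in the subelliptic setting (analogous to the treatment in \cite{Jerison&Lee3}): testing against $u_\infty\min\{u_\infty^{2\beta},L\}$ and letting $L\to\infty$ yields $u_\infty\in L^p(M)$ for every $p<\infty$. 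Once super-critical integrability is secured, iterating the Bramanti-Brandolini Schauder-type estimates and the Folland-Stein embedding bootstraps $u_\infty$ into $S_k^p(M)$ for every $k,p$, hence into $C^\infty(M)$.

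The main obstacle is precisely the critical Sobolev exponent in the nonlinear term, which prevents a naive subelliptic bootstrap; the Brezis-Kato truncation step is the only nontrivial part of the regularity argument, while everything else (identification of the weak limit, nonnegativity, and the final higher regularity iteration) is standard given the estimates already invoked earlier in the paper.
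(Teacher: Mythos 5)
Your argument is correct and follows essentially the same route as the paper: the limit equation is identified distributionally by testing against $\varphi\in C^\infty(M)$, with the error term killed by (\ref{4.2}) and H\"older, and the convergence of the nonlinear term obtained from the compact Folland--Stein embedding (the paper uses the elementary pointwise bound $|u_\nu^{1+\frac{2}{n}}-u_\infty^{1+\frac{2}{n}}|\le c\,|u_\nu-u_\infty|\,(|u_\nu|^{\frac{2}{n}}+|u_\infty|^{\frac{2}{n}})$ together with H\"older at a subcritical exponent, rather than your Vitali/equi-integrability argument, but both are valid). For smoothness the paper simply invokes Theorem 3.22 of \cite{Dragomir}, which is exactly the Brezis--Kato truncation and subelliptic bootstrap you sketch.
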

\begin{proof}
Note that for any $\varphi\in C^\infty(M)$
\begin{equation}\label{A1}
\int_M \varphi u_\nu^{1+\frac{2}{n}}dV_{\theta_0}\to\int_M \varphi u_\infty^{1+\frac{2}{n}}dV_{\theta_0}\hspace{2mm}\mbox{ as }\nu\to\infty.
\end{equation}
Indeed, there exists a constant $c$ such that
\begin{equation*}
\left|u_\nu^{1+\frac{2}{n}}-u_\infty^{1+\frac{2}{n}}\right|\leq c|u_\nu-u_\infty|\left(|u_\nu|^{\frac{2}{n}}+|u_\infty|^{\frac{2}{n}}\right)
\end{equation*}
Hence, it follows from H\"{o}lder's inequality that
\begin{equation}\label{A2}
\begin{split}
&\left|
\int_M \varphi u_\nu^{1+\frac{2}{n}}dV_{\theta_0}-\int_M \varphi u_\infty^{1+\frac{2}{n}}dV_{\theta_0}\right|\\
&\leq \int_M \left|u_\nu^{1+\frac{2}{n}}-u_\infty^{1+\frac{2}{n}}\right||\varphi|dV_{\theta_0}\\
&\leq c\|\varphi\|_{L^\infty(M)}\|u_\nu-u_\infty\|_{L^\beta(M)}
\left\||u_\nu|^{\frac{2}{n}}+|u_\infty|^{\frac{2}{n}}\right\|_{L^{\beta'}(M)}
\end{split}
\end{equation}
where $\beta$ are chosen
such that $1+\displaystyle\frac{2}{n}<\beta<2+\frac{2}{n}$
and $\displaystyle\frac{1}{\beta}+\frac{1}{\beta'}=1$.
Therefore,
$\displaystyle\frac{2}{n}\beta'<2+\frac{2}{n}$. Combining all these, we can conclude that
the right hand side of (\ref{A2}) tends to $0$ as $\nu\to\infty$. This proves (\ref{A1}).
On the other hand, for any $\varphi\in C^\infty(M)$, we have
\begin{equation}\label{A3}
\begin{split}
\int_M\varphi L_{\theta_0}u_\nu dV_{\theta_0}
=\int_M u_\nu L_{\theta_0}\varphi dV_{\theta_0}\to \int_M u_\infty L_{\theta_0}\varphi dV_{\theta_0}=\int_M\varphi L_{\theta_0}u_\infty dV_{\theta_0}
\end{split}
\end{equation}
as $\nu\to\infty$.
Therefore, it follows from (\ref{A1}) and (\ref{A3}) that for any $\varphi\in C^\infty(M)$
\begin{equation*}
\int_M\left(L_{\theta_0}u_\nu-r_\infty u_\nu^{1+\frac{2}{n}}\right)\varphi dV_{\theta_0}
\to \int_M\left(L_{\theta_0}u_\infty-r_\infty u_\infty^{1+\frac{2}{n}}\right)\varphi dV_{\theta_0}
\end{equation*}
as $\nu\to\infty$.
Combining this with (\ref{4.2}), we can conclude that
$u_\infty$ satisfies (\ref{4.3}).
Since $u_\nu$ is nonnegative, $u_\infty$ is also nonnegative.
It follows from  Theorem 3.22 in \cite{Dragomir}
that $u_\infty$ is smooth. This proves the assertion.
\end{proof}

\begin{prop}\label{propB.2.5}
There holds
\begin{equation}\label{A8}
u_\nu^{1+\frac{2}{n}}-u_\infty^{1+\frac{2}{n}}-|u_\nu-u_\infty|^{\frac{2}{n}}(u_\nu-u_\infty)
\to 0\hspace{2mm}\mbox{ in }L^{\frac{2n+2}{n+2}}(M)\mbox{ as }\nu\to\infty.
\end{equation}
\end{prop}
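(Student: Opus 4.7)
The plan is a Brezis--Lieb argument specialized to the odd nonlinearity $g(x)=|x|^{p-1}x$ with $p=1+\tfrac{2}{n}$. With $r=2+\tfrac{2}{n}$, the target exponent is $r/p=\tfrac{2n+2}{n+2}>1$. Since $u_\nu,u_\infty\geq 0$, the statement (\ref{A8}) is precisely the assertion $f_\nu:=g(u_\nu)-g(u_\infty)-g(u_\nu-u_\infty)\to 0$ in $L^{r/p}(M)$. The preliminary input comes from integration by parts against $u_\nu$ in (\ref{4.2}) combined with (\ref{4.1}) and H\"older's inequality, which shows that $\{u_\nu\}$ is uniformly bounded in $S^2_1(M)$; the compact Folland--Stein embedding $S^2_1(M)\hookrightarrow L^s(M)$ for every $1<s<r$ then provides, after extracting a subsequence, $u_\nu\to u_\infty$ in each such $L^s$ and almost everywhere on $M$, with $\sup_\nu\|u_\nu\|_{L^r}<\infty$.

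The first step will be a pointwise asymmetric estimate: for every $\varepsilon>0$ there exists $C_\varepsilon>0$ such that
$$|g(a+b)-g(a)-g(b)|\le\varepsilon|a|^p+C_\varepsilon|b|^p\qquad\text{for all }a,b\in\mathbb{R}.$$
I would deduce this from the mean-value bound $|g(a+b)-g(a)|\le C(|a|^{p-1}+|b|^{p-1})|b|$ (using $g'(x)=p|x|^{p-1}$ and $p>1$), followed by Young's inequality applied to the cross term $|a|^{p-1}|b|$ with conjugate exponents $p/(p-1)$ and $p$. Substituting $a=u_\nu-u_\infty$ and $b=u_\infty$ so that $a+b=u_\nu$ yields the pointwise estimate
$$|f_\nu|\le\varepsilon|u_\nu-u_\infty|^p+C_\varepsilon u_\infty^p.$$

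The second step is a truncation combined with dominated convergence. Define
$$h_{\nu,\varepsilon}:=\bigl(|f_\nu|-\varepsilon|u_\nu-u_\infty|^p\bigr)_+.$$
By the pointwise estimate, $0\le h_{\nu,\varepsilon}\le C_\varepsilon u_\infty^p$, so $h_{\nu,\varepsilon}^{r/p}\le C_\varepsilon^{r/p}u_\infty^r$, a fixed function in $L^1(M)$ (Fatou gives $u_\infty\in L^r$). Since $f_\nu\to 0$ and $u_\nu-u_\infty\to 0$ almost everywhere, $h_{\nu,\varepsilon}\to 0$ a.e., and dominated convergence yields $\int_M h_{\nu,\varepsilon}^{r/p}\,dV_{\theta_0}\to 0$ as $\nu\to\infty$ for each fixed $\varepsilon$. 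From $|f_\nu|\le h_{\nu,\varepsilon}+\varepsilon|u_\nu-u_\infty|^p$ together with the elementary bound $(x+y)^{r/p}\le C(x^{r/p}+y^{r/p})$ (which is valid because $r/p\ge 1$),
$$\int_M|f_\nu|^{r/p}\,dV_{\theta_0}\le C\int_M h_{\nu,\varepsilon}^{r/p}\,dV_{\theta_0}+C\varepsilon^{r/p}\int_M|u_\nu-u_\infty|^r\,dV_{\theta_0}.$$
Sending $\nu\to\infty$ kills the first integral, while the second is bounded by $C\varepsilon^{r/p}\sup_\nu\|u_\nu-u_\infty\|_{L^r}^r$; letting $\varepsilon\to 0$ then completes the argument.

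The only non-routine point will be the truncation: strong convergence of $u_\nu$ fails at the critical exponent $r$, so one cannot simply compare $f_\nu$ with $|u_\nu-u_\infty|^r$ and pass to the limit. The truncated quantity $(|f_\nu|-\varepsilon|u_\nu-u_\infty|^p)_+$ is exactly the device that decouples the non-vanishing $L^r$-mass (absorbed into the arbitrarily small factor $\varepsilon^{r/p}$) from the part dominated by the fixed $L^1$ function $u_\infty^r$ on which dominated convergence applies.
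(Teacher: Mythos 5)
Your argument is correct, but it takes a genuinely different route from the paper. The paper's proof is purely pointwise and quantitative: using the elementary inequality $\bigl||a+b|^{2/n}-|a|^{2/n}\bigr|\leq c\bigl(|b|^{2/n}+\max\{|a|,|b|\}^{2/n-1}|b|\bigr)$ twice, it shows the defect is pointwise $O\bigl(u_\infty^{2/n}|u_\nu-u_\infty|+|u_\nu-u_\infty|^{2/n}u_\infty\bigr)$, and then concludes directly from the smoothness (hence boundedness) of $u_\infty$ together with the strong convergence $u_\nu\to u_\infty$ in every subcritical $L^s(M)$, since both error terms involve only subcritical powers of $u_\nu-u_\infty$ measured in $L^{\frac{2n+2}{n+2}}$. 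You instead run the Brezis--Lieb machine: the $\varepsilon$-Young splitting $|f_\nu|\le\varepsilon|u_\nu-u_\infty|^p+C_\varepsilon u_\infty^p$, the truncation $h_{\nu,\varepsilon}$, dominated convergence, and absorption of the non-compact $L^r$-mass into $\varepsilon^{r/p}$. Your version is more robust --- it needs only $u_\infty\in L^r$, a uniform $L^r$ bound, and a.e.\ convergence, so it would survive without the smoothness of $u_\infty$ --- at the cost of a subsequence extraction for the a.e.\ convergence (harmless here, since one can argue by contradiction on subsequences, but worth stating). The paper's version buys an explicit bound on the defect in terms of subcritical norms of $u_\nu-u_\infty$ and avoids any measure-theoretic limit argument. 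Both proofs are complete and the two pointwise estimates at their cores are essentially equivalent in strength for this nonlinearity.
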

\begin{proof}
We use the following inequality:
\begin{equation}\label{A4}
\left||a+b|^{\frac{2}{n}}-|a|^{\frac{2}{n}}\right|\leq c\left(|b|^{\frac{2}{n}}+\max\{|a|,|b|\}^{\frac{2}{n}-1}|b|\right)\hspace{2mm}
\mbox{ for all }a,b.
\end{equation}
Applying (\ref{A4}), we get
\begin{equation}\label{A5}
\begin{split}
u_\nu^{1+\frac{2}{n}}&=u_\nu^{\frac{2}{n}}u_\infty+(u_\nu-u_\infty+u_\infty)^{\frac{2}{n}}(u_\nu-u_\infty)\\
&=u_\nu^{\frac{2}{n}}u_\infty+|u_\nu-u_\infty|^{\frac{2}{n}}(u_\nu-u_\infty)\\
&\hspace{4mm}+O\left(u_\infty^{\frac{2}{n}}|u_\nu-u_\infty|+\max\{|u_\nu-u_\infty|,|u_\infty|\}^{\frac{2}{n}-1}|u_\nu-u_\infty|u_\infty\right)\\
&=u_\nu^{\frac{2}{n}}u_\infty+|u_\nu-u_\infty|^{\frac{2}{n}}(u_\nu-u_\infty)
+O\left(u_\infty^{\frac{2}{n}}|u_\nu-u_\infty|+|u_\nu-u_\infty|^{\frac{2}{n}}u_\infty\right).
\end{split}
\end{equation}
Applying (\ref{A4}) again, we obtain
\begin{equation}\label{A6}
\begin{split}
u_\nu^{\frac{2}{n}}u_\infty
&=(u_\nu-u_\infty+u_\infty)^{\frac{2}{n}}u_\infty\\
&=u_\infty^{1+\frac{2}{n}}+O\left(u_\infty|u_\nu-u_\infty|^{\frac{2}{n}}+\max\{|u_\nu-u_\infty|,|u_\infty|\}^{\frac{2}{n}-1}|u_\nu-u_\infty|u_\infty\right)\\
&=u_\infty^{1+\frac{2}{n}}+O\left(u_\infty^{\frac{2}{n}}|u_\nu-u_\infty|+|u_\nu-u_\infty|^{\frac{2}{n}}u_\infty\right).
\end{split}
\end{equation}
Combining (\ref{A5}) and (\ref{A6}),
we get
\begin{equation}\label{A7}
u_\nu^{1+\frac{2}{n}}-u_\infty^{1+\frac{2}{n}}-|u_\nu-u_\infty|^{\frac{2}{n}}(u_\nu-u_\infty)
=O\left(u_\infty^{\frac{2}{n}}|u_\nu-u_\infty|+|u_\nu-u_\infty|^{\frac{2}{n}}u_\infty\right).
\end{equation}
Since $u_\infty$ is smooth and $u_\nu$ converges to $u_\infty$ in $L^s(M)$ for $1<s<2+\displaystyle\frac{2}{n}$,
(\ref{A8})  follows from (\ref{A7}).
\end{proof}

If $u_\nu$ converges to $u_\infty$ strongly in $S_1^2(M)$,
then Theorem \ref{Theorem4.1} follows.
Therefore, we assume that
$u_\nu$ converges to $u_\infty$ weakly in $S_1^2(M)$,
but not strongly in $S_1^2(M)$. On the other hand, it follows from (\ref{4.2}), (\ref{4.3}) and
(\ref{A8}) that
\begin{equation*}
\begin{split}
&\int_M\Big|L_{\theta_0}(u_\nu-u_\infty)-r_\infty |u_\nu-u_\infty|^{\frac{2}{n}}(u_\nu-u_\infty)\Big|^{\frac{2n+2}{n+2}}dV_{\theta_0}\\
&\leq C\int_M\Big|L_{\theta_0}u_\nu-r_\infty u_\nu^{1+\frac{2}{n}}\Big|^{\frac{2n+2}{n+2}}dV_{\theta_0}\\
&\hspace{4mm}+C\,
r_\infty\int_M\Big|u_\nu^{1+\frac{2}{n}}-u_\infty^{1+\frac{2}{n}}-|u_\nu-u_\infty|^{\frac{2}{n}}(u_\nu-u_\infty)\Big|^{\frac{2n+2}{n+2}}dV_{\theta_0}
\to 0
\end{split}
\end{equation*}
as $\nu\to \infty$.
That is to say, if we let $v_\nu:=u_\nu-u_\infty$,
then $\{v_\nu\}$ is a sequence of functions such that
$v_\nu$ converges to $0$ weakly in $S_1^2(M)$,
but not strongly in $S_1^2(M)$, and
satisfies
\begin{equation}\label{Assumption}
\int_M\Big|L_{\theta_0}v_\nu-r_\infty |v_\nu|^{\frac{2}{n}}v_\nu\Big|^{\frac{2n+2}{n+2}}dV_{\theta_0}
\to 0\hspace{2mm}\mbox{ as }\nu\to\infty.
\end{equation}

We are going to extract bubbles from $v_\nu$. To do this,
we mainly follow the proof of Proposition 8 in \cite{Gamara1}.
The argument is almost the same except with some small modifications.
However, there are some parts in the argument of \cite{Gamara1} which are not very precise.
So we are going to provide all the details of the proof.

As before, we denote
$$B_r(x)=\{y\in M:d(x,y)<r\},$$
where $d$ is the Carnot-Carath\'{e}odory distance on $M$
with respect to the contact form $\theta_0$.

\begin{lem}\label{lemA.3}
There exists $x\in M$ such that for every $\rho>0$,
there exists $\delta(\rho)>0$ such that
\begin{equation}\label{A9}
\liminf_{\nu\to\infty}\int_{B_\rho(x)}\left((2+\frac{2}{n})|\nabla_{\theta_0}v_\nu|_{\theta_0}^2+R_{\theta_0}v_\nu^2\right)dV_{\theta_0}
\geq \delta(\rho).
\end{equation}
\end{lem}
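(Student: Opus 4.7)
My plan is to prove the lemma by contradiction using a finite covering argument, leveraging the positivity of $R_{\theta_0}$ together with the fact that $v_\nu$ converges weakly but \emph{not} strongly to $0$ in $S^2_1(M)$.

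First, I would observe that on the compact manifold $M$, the assumption $R_{\theta_0} > 0$ from (\ref{11}) gives a uniform lower bound $R_{\theta_0} \geq c_1 > 0$, so the bilinear form
$$Q(v) := \int_M\Big((2+\tfrac{2}{n})|\nabla_{\theta_0}v|^2_{\theta_0} + R_{\theta_0} v^2\Big)\,dV_{\theta_0}$$
is equivalent to $\|v\|^2_{S^2_1(M)}$. Since $v_\nu = u_\nu - u_\infty$ converges weakly but not strongly to $0$ in $S^2_1(M)$, after passing to a further subsequence we may assume $\|v_\nu\|_{S^2_1(M)} \geq c_2 > 0$, and hence $Q(v_\nu) \geq c_3 > 0$ for all large $\nu$.

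Next, suppose for contradiction that no point $x$ with the stated property exists. Then for every $x \in M$ one can find a radius $\rho_x > 0$ with $\liminf_{\nu \to \infty}\int_{B_{\rho_x}(x)} \big((2+\tfrac{2}{n})|\nabla_{\theta_0}v_\nu|^2_{\theta_0} + R_{\theta_0}v_\nu^2\big)\,dV_{\theta_0} = 0$. By compactness of $M$, extract a finite subcover $\{B_{\rho_{x_i}/2}(x_i)\}_{i=1}^N$ from the family $\{B_{\rho_x/2}(x)\}_{x\in M}$. A standard diagonal extraction then yields a subsequence $\{\nu_k\}$ along which each of the $N$ integrals $\int_{B_{\rho_{x_i}}(x_i)} \big((2+\tfrac{2}{n})|\nabla_{\theta_0}v_{\nu_k}|^2_{\theta_0} + R_{\theta_0}v_{\nu_k}^2\big)\,dV_{\theta_0}$ tends to $0$ simultaneously. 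Summing over $i$ and using $\bigcup_i B_{\rho_{x_i}}(x_i) \supset M$ gives $Q(v_{\nu_k}) \to 0$, contradicting the lower bound $Q(v_\nu) \geq c_3$ from the previous paragraph.

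The only conceptual point of substance is the equivalence $Q(v) \asymp \|v\|^2_{S^2_1(M)}$, which is the one place where $R_{\theta_0} > 0$ enters in an essential way; the rest is a purely qualitative covering-and-diagonalization procedure, and in particular the subelliptic estimate (\ref{Assumption}) is not needed for this particular concentration statement. The only subtlety to watch is that one must extract a single subsequence working for all $N$ balls at once, which is handled by iterated passage to subsequences followed by a standard diagonal argument.
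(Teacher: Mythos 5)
Your argument follows the paper's proof of Lemma \ref{lemA.3} essentially verbatim: negate the statement, cover the compact manifold $M$ by finitely many balls on which the localized energy has vanishing liminf, extract a common subsequence, and contradict the failure of strong convergence via the equivalence of $Q$ with the $S^2_1$-norm (which is indeed the one place where $R_{\theta_0}>0$ enters). Your preliminary normalization $\|v_\nu\|_{S^2_1(M)}\geq c_2>0$ is in fact stated more carefully than in the paper.

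However, the step you dismiss as a ``subtlety handled by a standard diagonal argument'' is a genuine gap --- one which, to be fair, the paper's own one-line proof shares. Writing $a^{(i)}_\nu:=\int_{B_{\rho_{x_i}}(x_i)}\big((2+\frac{2}{n})|\nabla_{\theta_0}v_\nu|^2_{\theta_0}+R_{\theta_0}v_\nu^2\big)dV_{\theta_0}$, the hypotheses give $\liminf_\nu a^{(i)}_\nu=0$ for each $i=1,\dots,N$ \emph{separately}, and this does not produce a single subsequence along which all $N$ quantities tend to $0$: already for $N=2$ the energy can oscillate between two balls (model case $a^{(1)}_\nu=\nu\bmod 2$, $a^{(2)}_\nu=1-(\nu\bmod 2)$, where each liminf is $0$ while $a^{(1)}_\nu+a^{(2)}_\nu\equiv 1$). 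Iterated extraction fails at the second step, because restricting to the subsequence that realizes $\liminf a^{(1)}$ may strictly increase the liminf of $a^{(2)}$; a diagonal argument requires genuine convergence for each index, not merely a vanishing liminf. The standard repair is to work with the energy measures $\mu_\nu=\big((2+\frac{2}{n})|\nabla_{\theta_0}v_\nu|^2_{\theta_0}+R_{\theta_0}v_\nu^2\big)dV_{\theta_0}$: their total masses are uniformly bounded and bounded below by $c_3>0$, so after passing to a subsequence $\mu_\nu\rightharpoonup\mu$ weakly-$*$ with $\mu(M)\geq c_3>0$; choosing any $x\in\operatorname{supp}\mu$ and using lower semicontinuity of weak-$*$ limits on open sets yields $\liminf_\nu\mu_\nu(B_\rho(x))\geq\mu(B_\rho(x))=:\delta(\rho)>0$ for every $\rho>0$, which is the assertion (for the subsequence, as is permitted throughout the proof of Theorem \ref{Theorem4.1}).
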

\begin{proof}
Suppose that for all $x\in M$, there is $\rho(x)>0$ such that
$$\liminf_{\nu\to\infty}\int_{B_{\rho(x)}(x)}\left((2+\frac{2}{n})|\nabla_{\theta_0}v_\nu|_{\theta_0}^2+R_{\theta_0}v_\nu^2\right)dV_{\theta_0}
=0.$$
Since $M$ is compact,
we can cover $M$ with finite number of $B_{\rho(x_i)}(x_i)$ with $i=1,..., L$.
As a result, there exists a subsequence of $\{v_\nu\}$, which is still denoted by $\{v_\nu\}$, such that
$$\int_{M}\left((2+\frac{2}{n})|\nabla_{\theta_0}v_\nu|_{\theta_0}^2+R_{\theta_0}v_\nu^2\right)dV_{\theta_0}
\to 0$$
as $\nu\to\infty$. This contradicts
to the assumption that $v_\nu$ does not converge to $0$ strongly in $S_1^2(M)$.
This proves the assertion.
\end{proof}

\begin{lem}\label{lemA.4}
The constant $\delta(\rho)$ in Lemma \ref{lemA.3} can be taken
to be $a_0 r_\infty^{-n}
Y(M,\theta_0)^{n+1}$, where $a_0$ is any positive real number strictly less than $1$.
\end{lem}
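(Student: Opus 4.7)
The approach combines the Folland-Stein--Sobolev inequality underlying $Y(M,\theta_0)$ with a localization of the asymptotic equation~(\ref{Assumption}). Fix the point $x \in M$ from Lemma~\ref{lemA.3}, and for each $\rho > 0$ choose a smooth cutoff $\eta_\rho \in C_c^\infty(M)$ with $\eta_\rho \equiv 1$ on $B_\rho(x)$, $\operatorname{supp}\eta_\rho \subset B_{2\rho}(x)$, and $|\nabla_{\theta_0}\eta_\rho|_{\theta_0} \leq C/\rho$. The weak convergence $v_\nu \rightharpoonup 0$ in $S_1^2(M)$ combined with the compactness of the Folland-Stein embedding $S_1^2(M) \hookrightarrow L^2(M)$ gives $v_\nu \to 0$ strongly in $L^2(M)$, so that every term below involving an explicit $|\nabla_{\theta_0}\eta_\rho|$ factor will be $o(1)$ as $\nu \to \infty$.

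First I would test (\ref{Assumption}) against $\eta_\rho^2 v_\nu$ and integrate by parts: by H\"{o}lder's inequality and the strong $L^2$ convergence, the cross terms $\int_M \eta_\rho v_\nu \langle \nabla_{\theta_0}\eta_\rho, \nabla_{\theta_0} v_\nu\rangle_{\theta_0} dV_{\theta_0}$ vanish in the limit, yielding the localized energy identity
\begin{equation*}
\int_M \eta_\rho^2 \bigl[(2+\tfrac{2}{n})|\nabla_{\theta_0} v_\nu|_{\theta_0}^2 + R_{\theta_0} v_\nu^2 \bigr] dV_{\theta_0}
= r_\infty \int_M \eta_\rho^2 |v_\nu|^{2+\frac{2}{n}} dV_{\theta_0} + o(1).
\end{equation*}
Next I would apply the variational characterization~(\ref{12}) of $Y(M,\theta_0)$ to $\eta_\rho v_\nu \in S_1^2(M)$ and expand $|\nabla_{\theta_0}(\eta_\rho v_\nu)|_{\theta_0}^2$; the same strong $L^2$ convergence makes both the $|\nabla_{\theta_0}\eta_\rho|^2 v_\nu^2$ term and the cross term $o(1)$, yielding
\begin{equation*}
Y(M,\theta_0) \Bigl( \int_M \eta_\rho^{2+\frac{2}{n}} |v_\nu|^{2+\frac{2}{n}} dV_{\theta_0} \Bigr)^{n/(n+1)}
\leq r_\infty \int_M \eta_\rho^2 |v_\nu|^{2+\frac{2}{n}} dV_{\theta_0} + o(1).
\end{equation*}

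The remaining work is to combine these displays with a concentration argument. I would pass to weak limits of the Radon measures $\mu_\nu = [(2+\tfrac{2}{n})|\nabla_{\theta_0} v_\nu|_{\theta_0}^2 + R_{\theta_0} v_\nu^2] dV_{\theta_0}$ and $\lambda_\nu = r_\infty |v_\nu|^{2+\frac{2}{n}} dV_{\theta_0}$; the first identity shows $\mu(\phi) = \lambda(\phi)$ for all $\phi \in C_c(M)$, hence $\mu = \lambda$. Since $v_\nu$ does not converge strongly in $S_1^2(M)$, a Lions concentration-compactness dichotomy in the Heisenberg-group setting (a consequence of the compact subcritical Folland-Stein embedding) forces $\mu$ to carry a nontrivial atom, and I would recenter $x$ onto such an atom. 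As $\rho \to 0$, both $\eta_\rho^2$ and $\eta_\rho^{2+2/n}$ converge to $\mathbf{1}_{\{x\}}$ against $\lambda$, so the second display yields $Y(M,\theta_0)(\lambda(\{x\})/r_\infty)^{n/(n+1)} \leq \lambda(\{x\})$, i.e.\ $\mu(\{x\}) = \lambda(\{x\}) \geq Y(M,\theta_0)^{n+1}/r_\infty^n$. The Portmanteau inequality then gives $\liminf_{\nu\to\infty} \mu_\nu(B_\rho(x)) \geq \mu(B_\rho(x)) \geq \mu(\{x\})$ for every $\rho > 0$, which is exactly the desired bound; the factor $a_0 < 1$ absorbs the residual $o(1)$ errors accrued in the limiting procedure.

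The hard part will be executing the atom-extraction step cleanly: one must rule out the dichotomy branch in which the compactness defect spreads out over $M$, and this hinges on using the equality $\mu = \lambda$ (a consequence of the first display) to couple concentration of the $L^{2+2/n}$ mass $\lambda_\nu$ to concentration of the energy density $\mu_\nu$. Once a single-point atom of $\mu$ at $x$ is isolated, the Sobolev inequality above delivers the quantitative mass bound $Y(M,\theta_0)^{n+1}/r_\infty^n$ with any prescribed deficit $a_0 < 1$.
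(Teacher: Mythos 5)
Your two displays are, up to the choice of cutoff and the power to which it is raised, the same as the paper's (\ref{A25}) and (\ref{A28})--(\ref{A31}): localize the asymptotic equation (\ref{Assumption}) by testing against a cutoff times $v_\nu$, then apply the variational characterization (\ref{12}) of $Y(M,\theta_0)$ to the truncated function, with all commutator terms killed by the strong $L^2$ convergence of $v_\nu$. Where you genuinely diverge is in how the resulting self-improving inequality is closed. The paper works at a fixed point and fixed scale: it takes a cutoff of shrinking width $\delta_\nu=(\int_Mv_\nu^2\,dV_{\theta_0})^{1/4}$ and runs a pigeonhole over radii $\rho_\nu\in[\rho,2\rho]$ (cases (i) and (ii) at the end of its proof) to discard the annulus contribution in (\ref{A32}), after which $(1+o(1))E\le r_\infty Y(M,\theta_0)^{-\frac{n+1}{n}}E^{\frac{n+1}{n}}$ combined with $E\ge\delta(\rho)>0$ from Lemma \ref{lemA.3} yields $E\ge a_0r_\infty^{-n}Y(M,\theta_0)^{n+1}$ directly at the given point $x$. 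You instead pass to weak-$*$ limit measures and invoke Lions' atomic-decomposition lemma; note that the only external input you truly need is the abstract measure lemma (a reverse H\"{o}lder inequality between two finite measures forces atomicity of the first), since the reverse H\"{o}lder inequality itself is exactly your second display together with $\mu=\lambda$ --- so there is no circularity with Theorem \ref{Theorem4.1}. Each route buys something: yours avoids the case analysis and produces the constant cleanly; the paper's is self-contained and, more importantly, establishes the bound at \emph{every} point satisfying (\ref{A9}), which is what is used afterwards to define $\rho_{1,\nu}$ as an infimum in (\ref{A36}). Your recentering onto an atom only yields the existence form of the statement; to recover the stronger form you should add the observation that each atom of $\mu$ carries mass at least $Y(M,\theta_0)^{n+1}r_\infty^{-n}$ while $\mu(M)<\infty$, so the atoms are finitely many and form a closed set, whence any $x$ with $\mu(\overline{B_\rho(x)})\ge\delta(\rho)>0$ for all $\rho$ is itself an atom. (A further small caveat: the weak-$*$ convergence of $\mu_\nu,\lambda_\nu$ holds only along a subsequence, so your liminf bound is a priori subsequential; this is harmless here since the entire appendix works modulo passing to subsequences.)
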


Before we give the proof of Lemma \ref{lemA.4}, we remark that  our constant in Lemma \ref{lemA.4}
is different from that of \cite{Gamara1}. But one can see from the following arguments that a uniform
constant will be sufficient for our purpose.

\begin{proof}[Proof of Lemma \ref{lemA.4}]
The proof is similar to the proof of Lemma 10 in \cite{Gamara1}.
Let $\psi_\nu:\mathbb{R}\to[0,1]$ be a $C^1$ cut-off function such that
$$\psi_\nu(s)=\left\{
                \begin{array}{ll}
                  1, & \hbox{ for $s\leq \rho_\nu$;} \\
                  0, & \hbox{ for $s\geq \rho_\nu+\delta_\nu$,}
                \end{array}
              \right.
$$
where $\rho<\rho_\nu<2\rho$ and $0<\delta_\nu<\rho$.
Fix a point $x\in M$ where (\ref{A9}) holds for $x$.
Define the function $\phi_\nu$ by
$\phi_\nu(y)=\psi_\nu(d(x,y))$ where
$d$ is the Carnot-Carath\'{e}odory distance on $M$
with respect to the contact form $\theta_0$.
Then we have
\begin{equation}\label{A10}
|\nabla_{\theta_0}\phi_\nu|_{\theta_0}\leq \frac{C}{\delta_\nu}.
\end{equation}

By H\"{o}lder's inequality, we have
\begin{equation}\label{A11}
\int_M\Big(L_{\theta_0}v_\nu-r_\infty |v_\nu|^{\frac{2}{n}}v_\nu\Big)v_\nu\phi_\nu dV_{\theta_0}
\leq
\|\alpha_\nu\|_{L^{\frac{2n+2}{n+2}}(M)}\left(\int_M |v_\nu\phi_\nu|^{2+\frac{2}{n}}dV_{\theta_0}\right)^{\frac{n}{2n+2}}
\end{equation}
where
$\alpha_\nu=L_{\theta_0}v_\nu-r_\infty |v_\nu|^{\frac{2}{n}}v_\nu.$
By integration by parts, the left hand side of (\ref{A11}) can be written as
\begin{equation}\label{A12}
\begin{split}
&\int_M\Big(L_{\theta_0}v_\nu-r_\infty |v_\nu|^{\frac{2}{n}}v_\nu\Big)v_\nu\phi_\nu dV_{\theta_0}\\
&=(2+\frac{2}{n})\int_M|\nabla_{\theta_0} v_\nu|_{\theta_0}^2\phi_\nu dV_{\theta_0}
+(2+\frac{2}{n})\int_Mv_\nu\langle\nabla_{\theta_0} v_\nu,\nabla_{\theta_0} \phi_\nu\rangle_{\theta_0} dV_{\theta_0}\\
&\hspace{4mm}
+\int_MR_{\theta_0}v_\nu^2\phi_\nu dV_{\theta_0}-r_\infty\int_M|v_\nu|^{2+\frac{2}{n}}\phi_\nu dV_{\theta_0}.
\end{split}
\end{equation}
It follows from (\ref{A11}) and (\ref{A12}) that
\begin{equation}\label{A13}
\begin{split}
&(2+\frac{2}{n})\int_M|\nabla_{\theta_0} v_\nu|_{\theta_0}^2\phi_\nu dV_{\theta_0}
+(2+\frac{2}{n})\int_Mv_\nu\langle\nabla_{\theta_0} v_\nu,\nabla_{\theta_0} \phi_\nu\rangle_{\theta_0} dV_{\theta_0}
+\int_MR_{\theta_0}v_\nu^2\phi_\nu dV_{\theta_0}\\
&\hspace{4mm}
\leq r_\infty\int_M|v_\nu|^{2+\frac{2}{n}}\phi_\nu dV_{\theta_0}
+\|\alpha_\nu\|_{L^{\frac{2n+2}{n+2}}(M)}\left(\int_M |v_\nu\phi_\nu|^{2+\frac{2}{n}}dV_{\theta_0}\right)^{\frac{n}{2n+2}}.
\end{split}
\end{equation}
Also, by Folland-Stein embedding theorem in Proposition \ref{Theorem 2.1}, we have
\begin{equation}\label{A14}
\left(\int_M |v_\nu\phi_\nu|^{2+\frac{2}{n}}dV_{\theta_0}\right)^{\frac{n}{2n+2}}
\leq C\left(\int_M\left(|\nabla_{\theta_0}(v_\nu\phi_\nu)|^2_{\theta_0}+R_{\theta_0}(v_\nu\phi_\nu)^2\right)dV_{\theta_0}
\right)^{\frac{1}{2}}.
\end{equation}
By Cauchy-Schwarz inequality, we have
\begin{equation}\label{A15}
\left(\int_M|\nabla_{\theta_0}(v_\nu\phi_\nu)|^2_{\theta_0}dV_{\theta_0}\right)^{\frac{1}{2}}
\leq \left(\int_M|\nabla_{\theta_0}v_\nu|^2_{\theta_0}\phi_\nu^2dV_{\theta_0}\right)^{\frac{1}{2}}
+\left(\int_M|\nabla_{\theta_0}\phi_\nu|^2_{\theta_0}v_\nu^2dV_{\theta_0}\right)^{\frac{1}{2}}.
\end{equation}
Substituting (\ref{A14}) and (\ref{A15}) into (\ref{A13}), we get
\begin{equation}\label{A17}
\begin{split}
&(2+\frac{2}{n})\int_M|\nabla_{\theta_0} v_\nu|_{\theta_0}^2\phi_\nu dV_{\theta_0}
+(2+\frac{2}{n})\int_Mv_\nu\langle\nabla_{\theta_0} v_\nu,\nabla_{\theta_0} \phi_\nu\rangle_{\theta_0} dV_{\theta_0}
+\int_MR_{\theta_0}v_\nu^2\phi_\nu dV_{\theta_0}\\
&\hspace{4mm}
\leq r_\infty\int_M|v_\nu|^{2+\frac{2}{n}}\phi_\nu dV_{\theta_0}
+C\,\|\alpha_\nu\|_{L^{\frac{2n+2}{n+2}}(M)}\left[
\left(\int_M|\nabla_{\theta_0}v_\nu|^2_{\theta_0}\phi_\nu^2dV_{\theta_0}\right)^{\frac{1}{2}}\right.\\
&\hspace{8mm}\left.+\left(\int_M|\nabla_{\theta_0}\phi_\nu|^2_{\theta_0}v_\nu^2dV_{\theta_0}\right)^{\frac{1}{2}}
+\left(\int_MR_{\theta_0}(v_\nu\phi_\nu)^2dV_{\theta_0}\right)^{\frac{1}{2}}\right].
\end{split}
\end{equation}
By passing to a subsequence, we obtain from (\ref{A9}) that
\begin{equation}\label{A18}
\int_{M}\left((2+\frac{2}{n})|\nabla_{\theta_0}v_\nu|_{\theta_0}^2\phi_\nu+R_{\theta_0}v_\nu^2\phi_\nu\right)dV_{\theta_0}
\geq \frac{\delta(\rho)}{2}.
\end{equation}
Since $\{v_\nu\}$ is uniformly bounded in $S_1^2(M)$, it follows from (\ref{Assumption}) that
\begin{equation}\label{A19}
\|\alpha_\nu\|_{L^{\frac{2n+2}{n+2}}(M)}\left[\left(\int_M|\nabla_{\theta_0}v_\nu|^2_{\theta_0}\phi_\nu^2dV_{\theta_0}\right)^{\frac{1}{2}}
+\left(\int_MR_{\theta_0}(v_\nu\phi_\nu)^2dV_{\theta_0}\right)^{\frac{1}{2}}\right]
\to 0
\end{equation}
as $\nu\to\infty$. By (\ref{A18}) and (\ref{A19}),
we can rewrite (\ref{A17}) as follows:
\begin{equation}\label{A20}
\begin{split}
&(1+o(1))\int_{M}\left((2+\frac{2}{n})|\nabla_{\theta_0}v_\nu|_{\theta_0}^2\phi_\nu+R_{\theta_0}v_\nu^2\phi_\nu\right)dV_{\theta_0}\\
&+(2+\frac{2}{n})\int_Mv_\nu\langle\nabla_{\theta_0} v_\nu,\nabla_{\theta_0} \phi_\nu\rangle_{\theta_0} dV_{\theta_0}\\
&
\leq r_\infty\int_M|v_\nu|^{2+\frac{2}{n}}\phi_\nu dV_{\theta_0}
+C\,\|\alpha_\nu\|_{L^{\frac{2n+2}{n+2}}(M)}\left(\int_M|\nabla_{\theta_0}\phi_\nu|^2_{\theta_0}v_\nu^2dV_{\theta_0}\right)^{\frac{1}{2}}.
\end{split}
\end{equation}
Note that
\begin{equation}\label{A21}
\int_M|\nabla_{\theta_0}\phi_\nu|^2_{\theta_0}v_\nu^2dV_{\theta_0}
\leq \frac{C}{\delta_\nu^2}\int_Mv_\nu^2dV_{\theta_0}
\end{equation}
by (\ref{A10}).
Hence,
\begin{equation}\label{A22}
\begin{split}
&\left|\int_Mv_\nu\langle\nabla_{\theta_0} v_\nu,\nabla_{\theta_0} \phi_\nu\rangle_{\theta_0} dV_{\theta_0}\right|\\
&\leq \left(\int_M|\nabla_{\theta_0}v_\nu|^2_{\theta_0}dV_{\theta_0}\right)^{\frac{1}{2}}
 \left(\int_M|\nabla_{\theta_0}\phi_\nu|^2_{\theta_0}v_\nu^2dV_{\theta_0}\right)^{\frac{1}{2}}
\leq \frac{C}{\delta_\nu}\left(\int_Mv_\nu^2dV_{\theta_0}\right)^{\frac{1}{2}}
\end{split}
\end{equation}
by H\"{o}lder's inequality and
the fact that $\{v_\nu\}$ is uniformly bounded in $S_1^2(M)$.

If we choose $\delta_\nu=\displaystyle\left(\int_M v_\nu^2 dV_{\theta_0}\right)^{\frac{1}{4}}$, then
\begin{equation}\label{A23}
\frac{C}{\delta_\nu}\left(\int_Mv_\nu^2dV_{\theta_0}\right)^{\frac{1}{2}}
\to 0\hspace{2mm}\mbox{ as }\nu\to\infty,
\end{equation}
since $v_\nu\to 0$ in $L^2(M)$ as $\nu\to \infty$.
Using (\ref{A21})-(\ref{A23}), we can rewrite (\ref{A20})
as
\begin{equation}\label{A24}
(1+o(1))\int_{M}\left((2+\frac{2}{n})|\nabla_{\theta_0}v_\nu|_{\theta_0}^2\phi_\nu+R_{\theta_0}v_\nu^2\phi_\nu\right)dV_{\theta_0}
\leq r_\infty\int_M|v_\nu|^{2+\frac{2}{n}}\phi_\nu dV_{\theta_0}+o(1).
\end{equation}
In view of (\ref{A18}), we deduce from (\ref{A24}) that
\begin{equation}\label{A25}
(1+o(1))\int_{M}\left((2+\frac{2}{n})|\nabla_{\theta_0}v_\nu|_{\theta_0}^2\phi_\nu+R_{\theta_0}v_\nu^2\phi_\nu\right)dV_{\theta_0}
\leq r_\infty\int_M|v_\nu|^{2+\frac{2}{n}}\phi_\nu dV_{\theta_0}.
\end{equation}

We introduce $\gamma_\nu>0$ which will be chosen latter. If $\gamma_\nu$ is sufficiently small,
it follows from (\ref{A25}) that
\begin{equation}\label{A27}
(1+o(1))\int_{M}\left((2+\frac{2}{n})|\nabla_{\theta_0}v_\nu|_{\theta_0}^2\phi_\nu+R_{\theta_0}v_\nu^2\phi_\nu\right)dV_{\theta_0}
\leq r_\infty\int_{M}|v_\nu|^{2+\frac{2}{n}}(\phi_\nu+\gamma_\nu) dV_{\theta_0}.
\end{equation}
By the definition of the CR Yamabe constant in (\ref{12}), we have
$$\int_{M}
\left((2+\frac{2}{n})|\nabla_{\theta_0}w|_{\theta_0}^2
+R_{\theta_0}w^2\right)dV_{\theta_0}\geq
Y(M,\theta_0)\|w\|_{L^{2+\frac{2}{n}}(M)}^2.$$
Applying this to the function $w=|v_\nu|(\phi_\nu+\gamma_\nu)^{\frac{n}{2n+2}}$, we obtain
\begin{equation}\label{A28}
\begin{split}
&Y(M,\theta_0)
\left(\int_{M}|v_\nu|^{2+\frac{2}{n}}(\phi_\nu+\gamma_\nu) dV_{\theta_0}\right)^{\frac{n}{n+1}}\\
&\leq(2+\frac{2}{n})\int_{M}
\Big(|\nabla_{\theta_0}v_\nu|_{\theta_0}^2(\phi_\nu+\gamma_\nu)^{\frac{n}{n+1}}
+v_\nu^2\big|\nabla_{\theta_0}\big((\phi_\nu+\gamma_\nu)^{\frac{n}{2n+2}}\big)\big|_{\theta_0}^2\\
&\hspace{8mm}+2|v_\nu|(\phi_\nu+\gamma_\nu)^{\frac{n}{2n+2}}\langle\nabla_{\theta_0}v_\nu,\nabla_{\theta_0}\big((\phi_\nu+\gamma_\nu)^{\frac{n}{2n+2}}\big)\rangle_{\theta_0}
\Big)dV_{\theta_0}\\
&\hspace{4mm}+\int_{M}R_{\theta_0}v_\nu^2(\phi_\nu+\gamma_\nu)^{\frac{n}{n+1}}dV_{\theta_0}.
\end{split}
\end{equation}
By (\ref{A21}), (\ref{A23}) and the fact that $\displaystyle\frac{n}{n+1}-2<0$, we can
choose $\gamma_k$ sufficiently small such that
\begin{equation*}
\int_{M}v_\nu^2\gamma_\nu^{\frac{n}{n+1}-2}|\nabla_{\theta_0}\phi_\nu|_{\theta_0}^2=o(1),
\end{equation*}
which implies that
\begin{equation}\label{A29}
\begin{split}
\int_{M}
v_\nu^2\big|\nabla_{\theta_0}\big((\phi_\nu+\gamma_\nu)^{\frac{n}{2n+2}}\big)\big|_{\theta_0}^2dV_{\theta_0}
&=\int_{M}v_\nu^2(\phi_\nu+\gamma_\nu)^{\frac{n}{n+1}-2}|\nabla_{\theta_0}\phi_\nu|_{\theta_0}^2dV_{\theta_0}\\
&\leq \int_{M}v_\nu^2\gamma_\nu^{\frac{n}{n+1}-2}|\nabla_{\theta_0}\phi_\nu|_{\theta_0}^2dV_{\theta_0}=o(1).
\end{split}
\end{equation}
It follows from (\ref{A29}),  H\"{o}lder's inequality
and the fact that $\{v_\nu\}$ is uniformly bounded in $S_1^2(M)$ that
\begin{equation}\label{A30}
\begin{split}
&\left|\int_{M}
|v_\nu|(\phi_\nu+\gamma_\nu)^{\frac{n}{2n+2}}\langle\nabla_{\theta_0}v_\nu,\nabla_{\theta_0}\big((\phi_\nu+\gamma_\nu)^{\frac{n}{2n+2}}\big)\rangle_{\theta_0}dV_{\theta_0}\right|\\
&\leq\left(\int_{M}
|\nabla_{\theta_0}v_\nu|_{\theta_0}^2(\phi_\nu+\gamma_\nu)^{\frac{n}{n+1}}dV_{\theta_0}\right)^{\frac{1}{2}}\left(\int_{M}
v_\nu^2\big|\nabla_{\theta_0}\big((\phi_\nu+\gamma_\nu)^{\frac{n}{2n+2}}\big)\big|_{\theta_0}^2dV_{\theta_0}\right)^{\frac{1}{2}}=o(1).
\end{split}
\end{equation}
Putting (\ref{A29}) and (\ref{A30}) into (\ref{A28}), we get
\begin{equation*}
\begin{split}
&Y(M,\theta_0)
\left(\int_{M}|v_\nu|^{2+\frac{2}{n}}(\phi_\nu+\gamma_\nu) dV_{\theta_0}\right)^{\frac{n}{n+1}}\\
&\leq\int_{M}
\left((2+\frac{2}{n})|\nabla_{\theta_0}v_\nu|_{\theta_0}^2(\phi_\nu+\gamma_\nu)^{\frac{n}{n+1}}+
R_{\theta_0}v_\nu^2(\phi_\nu+\gamma_\nu)^{\frac{n}{n+1}}\right)dV_{\theta_0}+o(1).
\end{split}
\end{equation*}
Combining this with (\ref{A27}) and using (\ref{A18}), we obtain
\begin{equation}\label{A31}
\begin{split}
&(1+o(1))\int_{M}\left((2+\frac{2}{n})|\nabla_{\theta_0}v_\nu|_{\theta_0}^2\phi_\nu+R_{\theta_0}v_\nu^2\phi_\nu\right)dV_{\theta_0}\\
&\leq r_\infty
Y(M,\theta_0)^{-\frac{n+1}{n}}
\left(\int_{M}
\left((2+\frac{2}{n})|\nabla_{\theta_0}v_\nu|_{\theta_0}^2\phi_\nu^{\frac{n}{n+1}}+
R_{\theta_0}v_\nu^2\phi_\nu^{\frac{n}{n+1}}\right)dV_{\theta_0}\right)^{\frac{n+1}{n}}.
\end{split}
\end{equation}
It follows from (\ref{A31}) and the definition of $\phi_\nu$ that
\begin{equation}\label{A32}
\begin{split}
&(1+o(1))\int_{B_{\rho_\nu}(x)}\left((2+\frac{2}{n})|\nabla_{\theta_0}v_\nu|_{\theta_0}^2+R_{\theta_0}v_\nu^2\right)dV_{\theta_0}\\
&\leq r_\infty
Y(M,\theta_0)^{-\frac{n+1}{n}}
\left(\int_{B_{\rho_\nu}(x)}
\left((2+\frac{2}{n})|\nabla_{\theta_0}v_\nu|_{\theta_0}^2+
R_{\theta_0}v_\nu^2\right)dV_{\theta_0}\right)^{\frac{n+1}{n}}\\
&\hspace{4mm}
+r_\infty Y(M,\theta_0)^{-\frac{n+1}{n}}\left(\int_{B_{\rho_\nu+\delta_\nu}(x)-B_{\rho_\nu}(x)}
\left((2+\frac{2}{n})|\nabla_{\theta_0}v_\nu|_{\theta_0}^2+
R_{\theta_0}v_\nu^2\right)dV_{\theta_0}\right)^{\frac{n+1}{n}}\\
&:=r_\infty Y(M,\theta_0)^{-\frac{n+1}{n}}(I+II).
\end{split}
\end{equation}

It follows from (\ref{A18}) that $I\geq C\delta(\rho)^{\frac{n+1}{n}}$.
We have the following two cases:\\
Case (i). If $II=o(I)$, then it follows from (\ref{A32}) that
\begin{equation*}
\begin{split}
&(1+o(1))\int_{B_{\rho_\nu}(x)}\left((2+\frac{2}{n})|\nabla_{\theta_0}v_\nu|_{\theta_0}^2+R_{\theta_0}v_\nu^2\right)dV_{\theta_0}\\
&\leq r_\infty
Y(M,\theta_0)^{-\frac{n+1}{n}}
\left(\int_{B_{\rho_\nu}(x)}
\left((2+\frac{2}{n})|\nabla_{\theta_0}v_\nu|_{\theta_0}^2+
R_{\theta_0}v_\nu^2\right)dV_{\theta_0}\right)^{\frac{n+1}{n}},
\end{split}
\end{equation*}
which implies that
\begin{equation*}
\int_{B_{\rho_\nu}(x)}\left((2+\frac{2}{n})|\nabla_{\theta_0}v_\nu|_{\theta_0}^2+R_{\theta_0}v_\nu^2\right)dV_{\theta_0}\\
\geq a_0r_\infty^{-n}
Y(M,\theta_0)^{n+1}
\end{equation*}
where $a_0$ is any positive real number strictly less than $1$.\\
Case (ii).
Suppose that there exists a fixed constant $C$ such that
$II\geq C\delta(\rho)^{\frac{n+1}{n}}$
for all choices of $\rho_\nu\in [\rho, 2\rho]$ with
$\delta_\nu=\displaystyle\left(\int_M v_\nu^2 dV_{\theta_0}\right)^{\frac{1}{4}}$.
In fact, it can not occur since $\delta_\nu\rightarrow 0$ as $\nu\rightarrow 0$, and $\int_M\left((2+\frac{2}{n})|\nabla_{\theta_0}v_\nu|_{\theta_0}^2+R_{\theta_0}v_\nu^2\right)dV_{\theta_0}$ is uniformly bounded.
This proves Lemma \ref{lemA.4}.
\end{proof}

It follows from Lemma \ref{lemA.4} that for any $x\in M$ satisfying (\ref{A9})
and for any given $a_0<1$, and $\nu$ sufficiently large, there exists a  $\rho_\nu(x)$ such that
\begin{equation*}
\int_{B_{\rho_\nu(x)}(x)}\left((2+\frac{2}{n})|\nabla_{\theta_0}v_\nu|_{\theta_0}^2+R_{\theta_0}v_\nu^2\right)dV_{\theta_0}
= a_0 r_\infty^{-n}
Y(M,\theta_0)^{n+1}.
\end{equation*}
Which means for and $\rho>\rho_\nu(x)$, we have
\begin{equation*}
\int_{B_{\rho_\nu(x)}(x)}\left((2+\frac{2}{n})|\nabla_{\theta_0}v_\nu|_{\theta_0}^2+R_{\theta_0}v_\nu^2\right)dV_{\theta_0}
>a_0 r_\infty^{-n}
Y(M,\theta_0)^{n+1}.
\end{equation*}
Then for every $\nu$ sufficiently large, we define:
\begin{equation}\label{A36}
\rho_{1,\nu}=\inf\rho_\nu(x),
\end{equation}
where the infimum is taken among all $x\in M$  satisfying (\ref{A9}), which is a closed set. Thus the infimum is attained. That is, there exists
$x^*_{1,\nu}\in M$ such that
\begin{equation}\label{A36.5}
\rho_{1,\nu}=\rho_\nu(x^*_{1,\nu}).
\end{equation}
For any $\rho_0>0$, by the proof of Lemma \ref{lemA.4},
we have
\begin{equation*}
\int_{B_{\widetilde{\rho}}(x)}\left((2+\frac{2}{n})|\nabla_{\theta_0}v_\nu|_{\theta_0}^2+R_{\theta_0}v_\nu^2\right)dV_{\theta_0}
\geq a_0 r_\infty^{-n}
Y(M,\theta_0)^{n+1}
\end{equation*}
for all $\nu$ sufficiently large. Since $\rho_0$ is arbitrary, and $\rho_\nu(x)\leq \rho_0$, then we have the following lemma:
\begin{lem}\label{lemA.5}
The sequence $\{\rho_{1,\nu}\}$ converges to zero as $\nu\to\infty$.
\end{lem}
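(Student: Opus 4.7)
The plan is to exploit the energy concentration argument already developed in Lemma \ref{lemA.4}: applying it at a single fixed point at which (\ref{A9}) holds, with arbitrarily small radius, forces $\rho_\nu(x)\to 0$, and the infimum definition of $\rho_{1,\nu}$ then transfers this decay to $\rho_{1,\nu}$ itself.

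First I would fix any $x_0 \in M$ for which (\ref{A9}) holds; such a point exists by Lemma \ref{lemA.3}. For an arbitrary $\rho_0 > 0$, I would rerun the argument of Lemma \ref{lemA.4} at the point $x = x_0$ with $\rho = \rho_0/2$. The argument delivers, for every sufficiently large $\nu$, the bound
\begin{equation*}
\int_{B_{\rho_0}(x_0)}\left((2+\frac{2}{n})|\nabla_{\theta_0} v_\nu|_{\theta_0}^2 + R_{\theta_0} v_\nu^2\right) dV_{\theta_0} \;\geq\; a_0\, r_\infty^{-n}\, Y(M,\theta_0)^{n+1}.
\end{equation*}
Because the integrand is nonnegative, the map $\rho \mapsto \int_{B_\rho(x_0)}(\cdots)\, dV_{\theta_0}$ is monotone nondecreasing in $\rho$, and $\rho_\nu(x_0)$ is by construction the radius at which this quantity first equals $a_0\, r_\infty^{-n}\, Y(M,\theta_0)^{n+1}$. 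Therefore $\rho_\nu(x_0) \leq \rho_0$ for all large $\nu$; since $\rho_0 > 0$ was arbitrary, we deduce $\rho_\nu(x_0) \to 0$. The infimum in (\ref{A36}) is taken over all $x \in M$ satisfying (\ref{A9}), hence $\rho_{1,\nu} \leq \rho_\nu(x_0)$, and the lemma follows.

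The main point to confirm is that the proof of Lemma \ref{lemA.4} really applies at an arbitrarily small scale $\rho$, in particular that Case (ii) in the dichotomy following (\ref{A32}) is ruled out uniformly in $\rho$. This relies on $\delta_\nu = \left(\int_M v_\nu^2\, dV_{\theta_0}\right)^{1/4} \to 0$, which holds because $v_\nu \to 0$ in $L^2(M)$ by the compact Folland--Stein embedding $S_1^2(M) \hookrightarrow L^2(M)$, combined with the uniform $S_1^2$-bound on $\{v_\nu\}$: if Case (ii) persisted, one could select on the order of $\rho/\delta_\nu$ pairwise disjoint annuli of width $\delta_\nu$ centered at $x_0$, each carrying energy bounded below by a positive constant depending only on $\rho$, which would contradict the boundedness of $\int_M\bigl((2+\frac{2}{n})|\nabla_{\theta_0} v_\nu|_{\theta_0}^2 + R_{\theta_0} v_\nu^2\bigr) dV_{\theta_0}$ as $\nu \to \infty$. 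Once this technicality is verified, the conclusion $\rho_{1,\nu} \to 0$ is automatic from the preceding paragraph.
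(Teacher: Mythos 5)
Your proposal is correct and follows essentially the same route as the paper: apply the argument of Lemma \ref{lemA.4} at a fixed point satisfying (\ref{A9}) with arbitrarily small $\rho$, conclude $\rho_\nu(x_0)\leq\rho_0$ for large $\nu$ by monotonicity of the energy in the radius, and pass to $\rho_{1,\nu}$ via the infimum in (\ref{A36}). Your disjoint-annuli justification for ruling out Case (ii) uniformly in $\rho$ is a correct elaboration of the paper's terser remark that Case (ii) cannot occur because $\delta_\nu\to 0$ while the total energy stays bounded.
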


As explained in Appendix \ref{Appendix},
 we can find $\rho>0$ which is independent of $x$ (since $M$ is compact)
 with $\widehat{\theta}_x=\varphi_x^{\frac{2}{n}}\theta_0$
in a neighborhood $B_{3\delta}(x)$ of $x$ such that
(\ref{B.0}) and (\ref{B.1}) are satisfied when $n=1$
and (\ref{B.0a}) is satisfied when $M$ is spherical.
We define a sequence of functions $\{ \widetilde{v}_\nu\}$
in $B_{2\delta}(x)$ as follows:
$$\widetilde{v}_\nu=\varphi_x^{-1}v_\nu.$$
Since $\widehat{\theta}_x=\varphi_x^{\frac{2}{n}}\theta_0$, by the CR transformation law, we have
\begin{equation}\label{A37}
L_{\theta_0}v_\nu
=\varphi_x^{1+\frac{2}{n}}L_{\widehat{\theta}_x}\widetilde{v}_\nu,
\end{equation}
which holds in $B_{2\delta}(x)\subset M$.
It follows from (\ref{B.5.5}) that (\ref{A37}) holds
in the CR normal coordinates $\{(z,t):(t^2+|z|^4)^{\frac{1}{4}}<\widehat{\rho}\}\subset\mathbb{H}^n$, where $\widehat{\rho}>0$ is independent of $x$.
By (\ref{Assumption}) and (\ref{A37}), we have
\begin{equation}\label{A38}
\int_{\{(t^2+|z|^4)^{\frac{1}{4}}<\widehat{\rho}\}}\Big|L_{\widehat{\theta}_x}\widetilde{v}_\nu-r_\infty |\widetilde{v}_\nu|^{\frac{2}{n}}\widetilde{v}_\nu\Big|^{\frac{2n+2}{n+2}}dV_{\widehat{\theta}_x}
\to 0\hspace{2mm}\mbox{ as }\nu\to\infty.
\end{equation}
By the properties of $v_\nu$, we know that  $\widetilde{v}_\nu$
 is bounded in $L^{2+\frac{2}{n}}$
and $\widetilde{v}_\nu\to 0$ in $L^s$
for all $s<2+\displaystyle\frac{2}{n}$ as $\nu\to\infty$.

When $n=1$, it follows from (\ref{B.0}), (\ref{B.1.5}), (\ref{B.1})  and (\ref{A38})  that
\begin{equation}\label{A40}
\begin{split}
&\int_{\{(t^2+|z|^4)^{\frac{1}{4}}<\widehat{\rho}\}}\left|L_{\theta_{\mathbb{H}^n}}\widetilde{v}_\nu-r_\infty |\widetilde{v}_\nu|^{\frac{2}{n}}\widetilde{v}_\nu\right|^{\frac{2n+2}{n+2}}dV_{\theta_{\mathbb{H}^n}}\\
&\leq C\int_{\{(t^2+|z|^4)^{\frac{1}{4}}<\widehat{\rho}\}}\left|L_{\theta_{\mathbb{H}^n}}
\widetilde{v}_\nu-L_{\widehat{\theta}_x}\widetilde{v}_\nu\right|^{\frac{2n+2}{n+2}}dV_{\theta_{\mathbb{H}^n}}\\
&\hspace{4mm}
+C\int_{\{(t^2+|z|^4)^{\frac{1}{4}}<\widehat{\rho}\}}\left|L_{\widehat{\theta}_x}\widetilde{v}_\nu-r_\infty \widetilde{v}_\nu^{1+\frac{2}{n}}\right|^{\frac{2n+2}{n+2}}\Big|dV_{\theta_{\mathbb{H}^n}}-dV_{\widehat{\theta}_x}\Big|\\
&\hspace{4mm}
+C\int_{\{(t^2+|z|^4)^{\frac{1}{4}}<\widehat{\rho}\}}\left|L_{\widehat{\theta}_x}\widetilde{v}_\nu-r_\infty |\widetilde{v}_\nu|^{\frac{2}{n}}\widetilde{v}_\nu\right|^{\frac{2n+2}{n+2}}dV_{\widehat{\theta}_x}
\to 0
\end{split}
\end{equation}
as $\nu\to\infty$.
When $M$ is spherical, then it follows from
(\ref{B.0a}) and (\ref{A38}) that
\begin{equation}\label{A41}
\int_{\{(t^2+|z|^4)^{\frac{1}{4}}<\widehat{\rho}\}}\Big|L_{\theta_{\mathbb{H}^n}}\widetilde{v}_\nu-r_\infty |\widetilde{v}_\nu|^{\frac{2}{n}}\widetilde{v}_\nu\Big|^{\frac{2n+2}{n+2}}dV_{\theta_{\mathbb{H}^n}}
\to 0\hspace{2mm}\mbox{ as }\nu\to\infty,
\end{equation}
since $R_{\theta_{\mathbb{H}^n}}=0$.

Let $\widehat{\chi}$ be a cut off function
such that
$\widehat{\chi}(s)=1$ if $0\leq s\leq \displaystyle\frac{\widehat{\rho}}{2}$
and $0$  if $s\geq \widehat{\rho}$.
Let $\{\widetilde{V}_\nu\}$ be a sequence of functions in $\mathbb{H}^n$ defined by
\begin{equation}\label{A44}
\widetilde{V}_\nu(z,t)=
\left\{
  \begin{array}{ll}
    (\rho_{1,\nu})^n\,\widehat{\chi}\Big(\rho_{1,\nu}(t^2+|z|^4)^{\frac{1}{4}}\Big)\,\widetilde{v}_\nu\big(\rho_{1,\nu}z,(\rho_{1,\nu})^2t\big), & \hbox{ for $(t^2+|z|^4)^{\frac{1}{4}}<\displaystyle\frac{\widehat{\rho}}{\rho_{1,\nu}}$;} \\
    0, & \hbox{ otherwise,}
  \end{array}
\right.
\end{equation}
where $\rho_{1,\nu}$ is defined as in (\ref{A36}).
Then we have the following:
\begin{prop}\label{propA.6}
(i) For any fixed ball $B$ of $\mathbb{H}^n$, we have
$$\int_{B}\left|L_{\theta_{\mathbb{H}^n}}\widetilde{V}_\nu-r_\infty |\widetilde{V}_\nu|^{\frac{2}{n}}\widetilde{V}_\nu\right|^{\frac{2n+2}{n+2}}dV_{\theta_{\mathbb{H}^n}}
\to 0\hspace{2mm}\mbox{ as }\nu\to\infty.$$
(ii) There exists a constant $C$ such that
$$\int_{\mathbb{H}^n}\left(|\nabla_{\theta_{\mathbb{H}^n}}\widetilde{V}_\nu|^2_{\theta_{\mathbb{H}^n}}+ |\widetilde{V}_\nu|^{2+\frac{2}{n}}\right)dV_{\theta_{\mathbb{H}^n}}\leq C\hspace{2mm}
\mbox{
for all }\nu.$$
\end{prop}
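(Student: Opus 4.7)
The proof will be a straightforward scaling argument: I will show that the rescaling $u(z,t)\mapsto \rho_{1,\nu}^n u(\rho_{1,\nu}z,\rho_{1,\nu}^2 t)$ preserves the two scale-invariant quantities at play on $\mathbb{H}^n$ (the Folland--Stein Dirichlet energy and the $L^{2+2/n}$ norm), and that the inhomogeneous term $L_{\theta_{\mathbb{H}^n}}\widetilde v_\nu-r_\infty|\widetilde v_\nu|^{2/n}\widetilde v_\nu$ has scale-invariant $L^{(2n+2)/(n+2)}$ norm as well. Because $\rho_{1,\nu}\to 0$ by Lemma \ref{lemA.5}, the rescaled function $\widetilde V_\nu$ is supported in a region whose preimage under the scaling is contained in $\{(t^2+|z|^4)^{1/4}<\widehat\rho\}$, and everything reduces to (\ref{A40}) (when $n=1$) or (\ref{A41}) (when $M$ is spherical).

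First I would record the scaling behaviour on the Heisenberg group. For a function $u$ on $\mathbb{H}^n$ and $\lambda>0$, define $u_\lambda(z,t)=\lambda^n u(\lambda z,\lambda^2 t)$. A direct computation using $Z_j=\partial_{z_j}+\sqrt{-1}\,\overline z_j\partial_t$ gives $(Z_j u_\lambda)(z,t)=\lambda^{n+1}(Z_j u)(\lambda z,\lambda^2 t)$, and similarly for $\overline Z_j u_\lambda$, hence
\[
(\Delta_{\theta_{\mathbb{H}^n}}u_\lambda)(z,t)=\lambda^{n+2}(\Delta_{\theta_{\mathbb{H}^n}}u)(\lambda z,\lambda^2 t),\qquad |u_\lambda|^{2/n}u_\lambda=\lambda^{n+2}(|u|^{2/n}u)(\lambda z,\lambda^2 t).
\]
Since $R_{\theta_{\mathbb{H}^n}}\equiv 0$, the operator $L_{\theta_{\mathbb{H}^n}}$ on $\mathbb{H}^n$ scales with the same factor $\lambda^{n+2}$ as the nonlinearity. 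Combined with the change of variables $dV_{\theta_{\mathbb{H}^n}}=dz\,dt$, this yields the scale invariances
\[
\int_{\mathbb{H}^n}|u_\lambda|^{2+2/n}dV_{\theta_{\mathbb{H}^n}}=\int_{\mathbb{H}^n}|u|^{2+2/n}dV_{\theta_{\mathbb{H}^n}},\qquad \int_{\mathbb{H}^n}|\nabla_{\theta_{\mathbb{H}^n}}u_\lambda|^2_{\theta_{\mathbb{H}^n}}dV_{\theta_{\mathbb{H}^n}}=\int_{\mathbb{H}^n}|\nabla_{\theta_{\mathbb{H}^n}}u|^2_{\theta_{\mathbb{H}^n}}dV_{\theta_{\mathbb{H}^n}},
\]
and, for any measurable set $E\subset \mathbb{H}^n$,
\[
\int_{E}\bigl|L_{\theta_{\mathbb{H}^n}}u_\lambda-r_\infty|u_\lambda|^{2/n}u_\lambda\bigr|^{\frac{2n+2}{n+2}}dV_{\theta_{\mathbb{H}^n}}=\int_{\lambda E}\bigl|L_{\theta_{\mathbb{H}^n}}u-r_\infty|u|^{2/n}u\bigr|^{\frac{2n+2}{n+2}}dV_{\theta_{\mathbb{H}^n}}.
\]

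For part (ii), I apply these identities with $\lambda=\rho_{1,\nu}$ and $u=\widetilde v_\nu$ (extended by zero outside $\{(t^2+|z|^4)^{1/4}<\widehat\rho\}$), and then absorb the cutoff $\widehat\chi(\rho_{1,\nu}(t^2+|z|^4)^{1/4})$ present in the definition (\ref{A44}). The $L^{2+2/n}$ bound is immediate, because multiplying by $\widehat\chi\le 1$ can only decrease the integral, and the resulting quantity equals $\int_{\{(t^2+|z|^4)^{1/4}<\widehat\rho\}}\widehat\chi^{2+2/n}|\widetilde v_\nu|^{2+2/n}$, which is controlled by the uniform $L^{2+2/n}$ bound on $\widetilde v_\nu$ inherited from $v_\nu\in L^{2+2/n}(M)$ and (\ref{B.0}). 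For the gradient term I expand $\nabla_{\theta_{\mathbb{H}^n}}(\widehat\chi_\nu\widetilde v_{\nu,\lambda})$ by Leibniz; the main term $\widehat\chi_\nu\nabla\widetilde v_{\nu,\lambda}$ gives the scale-invariant bound, while the error $\widetilde v_{\nu,\lambda}\nabla\widehat\chi_\nu$ is handled using $|\nabla\widehat\chi(\rho_{1,\nu}|\cdot|)|\lesssim \rho_{1,\nu}$ and H\"older's inequality, which transfers to an $L^{2+2/n}$-times-$L^{2n+2}$ estimate that is bounded by the scale invariance together with the fact that the cutoff region shrinks as $\nu\to\infty$.

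For part (i), let $B$ be a fixed ball in $\mathbb{H}^n$. Since $\rho_{1,\nu}\to 0$, for all sufficiently large $\nu$ we have $\rho_{1,\nu}\cdot(t^2+|z|^4)^{1/4}<\widehat\rho/2$ for every $(z,t)\in B$, so the cutoff $\widehat\chi(\rho_{1,\nu}(t^2+|z|^4)^{1/4})$ is identically $1$ on $B$; hence on $B$ the function $\widetilde V_\nu$ coincides with $(\widetilde v_\nu)_{\rho_{1,\nu}}$. Applying the scaling identity for the $L^{(2n+2)/(n+2)}$ norm of the equation defect with $\lambda=\rho_{1,\nu}$ and $E=B$, I obtain
\[
\int_{B}\bigl|L_{\theta_{\mathbb{H}^n}}\widetilde V_\nu-r_\infty|\widetilde V_\nu|^{2/n}\widetilde V_\nu\bigr|^{\frac{2n+2}{n+2}}dV_{\theta_{\mathbb{H}^n}}=\int_{\rho_{1,\nu}B}\bigl|L_{\theta_{\mathbb{H}^n}}\widetilde v_\nu-r_\infty|\widetilde v_\nu|^{2/n}\widetilde v_\nu\bigr|^{\frac{2n+2}{n+2}}dV_{\theta_{\mathbb{H}^n}},
\]
and the right-hand side tends to zero by (\ref{A40}) or (\ref{A41}) since $\rho_{1,\nu}B\subset\{(t^2+|z|^4)^{1/4}<\widehat\rho\}$ for $\nu$ large.

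The only real issue is the presence of the cutoff $\widehat\chi_\nu$ in the definition of $\widetilde V_\nu$: on any fixed ball it disappears (which trivializes (i)), but in (ii) the Leibniz error terms must be controlled globally. The mildly subtle step will therefore be bookkeeping the exponents and verifying that the commutator terms $[\widehat\chi_\nu,L_{\theta_{\mathbb{H}^n}}]$ and gradient interactions with $\widehat\chi_\nu$ are harmless; they are, because $|\nabla_{\theta_{\mathbb{H}^n}}\widehat\chi_\nu|$ and $|\Delta_{\theta_{\mathbb{H}^n}}\widehat\chi_\nu|$ are supported in the annulus $\{\widehat\rho/(2\rho_{1,\nu})\le(t^2+|z|^4)^{1/4}\le \widehat\rho/\rho_{1,\nu}\}$ and carry factors $\rho_{1,\nu}$, $\rho_{1,\nu}^2$ respectively that compensate the measure of that annulus after rescaling.
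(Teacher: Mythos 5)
Your proposal is correct and follows essentially the same route as the paper: part (i) is exactly the observation that the cutoff is identically $1$ on any fixed ball once $\rho_{1,\nu}$ is small, followed by the dilation change of variables and (\ref{A40})/(\ref{A41}); part (ii) is the same scale-invariance computation, with the cutoff error terms controlled by the uniform $S_1^2$ bound on $\widetilde v_\nu$ (the paper performs the change of variables first so that $\widehat\chi$ becomes a fixed cutoff with bounded gradient, which is equivalent to your bookkeeping of the factor $\rho_{1,\nu}$ against the measure of the annulus).
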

\begin{proof}
To prove (i), we fix $\rho>0$. Since $\rho_{1,\nu}\to 0$ as $\nu\to\infty$ by Lemma \ref{lemA.5},
there exists $N$ such that
$\displaystyle\rho\leq \frac{\widehat{\rho}}{2\rho_{1,\nu}}$
when $\nu\geq N$.
Hence,
if $\nu\geq N$,  we have
\begin{equation*}
\begin{split}
&\int_{\{(t^2+|z|^4)^{\frac{1}{4}}<\rho\}}\left|L_{\theta_{\mathbb{H}^n}}\widetilde{V}_\nu(z,t)-r_\infty |\widetilde{V}_\nu(z,t)|^{\frac{2}{n}}\widetilde{V}_\nu(z,t)\right|^{\frac{2n+2}{n+2}}dV_{\theta_{\mathbb{H}^n}}\\
&=\int_{\{(t^2+|z|^4)^{\frac{1}{4}}<\rho\}}\left|L_{\theta_{\mathbb{H}^n}}
\Big((\rho_{1,\nu})^n\,\widetilde{v}_\nu\big(\rho_{1,\nu}z,(\rho_{1,\nu})^2t\big)\Big)\right.\\
&\hspace{20mm}\left.-r_\infty \Big|(\rho_{1,\nu})^n\,\widetilde{v}_\nu\big(\rho_{1,\nu}z,(\rho_{1,\nu})^2t\big)\Big|^{\frac{2}{n}}
(\rho_{1,\nu})^n\,\widetilde{v}_\nu\big(\rho_{1,\nu}z,(\rho_{1,\nu})^2t\big)\right|^{\frac{2n+2}{n+2}}dV_{\theta_{\mathbb{H}^n}}\\
&=\int_{\{(\tilde{t}^2+|\tilde{z}|^4)^{\frac{1}{4}}<\rho\rho_{1,\nu}\}}
\left|L_{\theta_{\mathbb{H}^n}}
\widetilde{v}_\nu(\tilde{z},\tilde{t})
-r_\infty |\widetilde{v}_\nu(\tilde{z},\tilde{t})|^{\frac{2}{n}}\widetilde{v}_\nu(\tilde{z},\tilde{t})\right|^{\frac{2n+2}{n+2}}dV_{\theta_{\mathbb{H}^n}}
=o(1),
\end{split}
\end{equation*}
where the first equality follows from (\ref{A44})
and the fact that $\displaystyle\rho\leq \frac{\widehat{\rho}}{2\rho_{1,\nu}}$,
the second equality follows from the change of variables
$(\tilde{z},\tilde{t})=\big(\rho_{1,\nu}z,(\rho_{1,\nu})^2t\big)$,
and the last equality follows from (\ref{A40}), (\ref{A41})
and the fact that $\displaystyle\rho\,\rho_{1,\nu}\leq \frac{\widehat{\rho}}{2}$.
This proves (i).
For (ii),
we have
\begin{equation*}
\begin{split}
&\int_{\mathbb{H}^n}\left((2+\frac{2}{n})|\nabla_{\theta_{\mathbb{H}^n}}\widetilde{V}_\nu|^2_{\theta_{\mathbb{H}^n}}+
r_\infty |\widetilde{V}_\nu|^{2+\frac{2}{n}}\right)dV_{\theta_{\mathbb{H}^n}}\\
&=\int_{\{(t^2+|z|^4)^{\frac{1}{4}}<\frac{\widehat{\rho}}{\rho_{1,\nu}}\}}\left((2+\frac{2}{n})|\nabla_{\theta_{\mathbb{H}^n}}\widetilde{V}_\nu|^2_{\theta_{\mathbb{H}^n}}+
r_\infty |\widetilde{V}_\nu|^{2+\frac{2}{n}}\right)dV_{\theta_{\mathbb{H}^n}}\\
&=\int_{\{(\tilde{t}^2+|\tilde{z}|^4)^{\frac{1}{4}}<\widehat{\rho}\}}\left((2+\frac{2}{n})
\left|\nabla_{\theta_{\mathbb{H}^n}}\Big(\widehat{\chi}\big((\tilde{t}^2+|\tilde{z}|^4)^{\frac{1}{4}}\big)
\widetilde{v}_\nu(\tilde{z},\tilde{t})\Big)\right|^2_{\theta_{\mathbb{H}^n}}\right.\\
&\hspace{20mm}\left.+
r_\infty \Big|\widehat{\chi}\big((\tilde{t}^2+|\tilde{z}|^4)^{\frac{1}{4}}\big)
\widetilde{v}_\nu(\tilde{z},\tilde{t})\Big|^{2+\frac{2}{n}}\right)dV_{\theta_{\mathbb{H}^n}}\\
&\leq
\int_{\{(\tilde{t}^2+|\tilde{z}|^4)^{\frac{1}{4}}<\widehat{\rho}\}}
(2+\frac{2}{n})
\widetilde{v}_\nu(\tilde{z},\tilde{t})^2
\left|\nabla_{\theta_{\mathbb{H}^n}}\widehat{\chi}\big((\tilde{t}^2+|\tilde{z}|^4)^{\frac{1}{4}}\big)
\right|^2_{\theta_{\mathbb{H}^n}}dV_{\theta_{\mathbb{H}^n}}\\
&\hspace{4mm}+
\int_{\{(\tilde{t}^2+|\tilde{z}|^4)^{\frac{1}{4}}<\widehat{\rho}\}}\left((2+\frac{2}{n})
\Big(\widehat{\chi}\big((\tilde{t}^2+|\tilde{z}|^4)^{\frac{1}{4}}\big)\Big)^2
|\nabla_{\theta_{\mathbb{H}^n}}
\widetilde{v}_\nu(\tilde{z},\tilde{t})|^2_{\theta_{\mathbb{H}^n}}\right.\\
&\hspace{20mm}\left.+
r_\infty \Big|\widehat{\chi}\big((\tilde{t}^2+|\tilde{z}|^4)^{\frac{1}{4}}\big)
\widetilde{v}_\nu(\tilde{z},\tilde{t})\Big|^{2+\frac{2}{n}}\right)dV_{\theta_{\mathbb{H}^n}}\\
&\leq \int_{\{(\tilde{t}^2+|\tilde{z}|^4)^{\frac{1}{4}}<\widehat{\rho}\}}
(2+\frac{2}{n})
\widetilde{v}_\nu(\tilde{z},\tilde{t})^2dV_{\theta_{\mathbb{H}^n}}\\
&\hspace{4mm}+
\int_{\{(\tilde{t}^2+|\tilde{z}|^4)^{\frac{1}{4}}<\widehat{\rho}\}}\left((2+\frac{2}{n})
|\nabla_{\theta_{\mathbb{H}^n}}
\widetilde{v}_\nu(\tilde{z},\tilde{t})|^2_{\theta_{\mathbb{H}^n}}+
r_\infty |\widetilde{v}_\nu(\tilde{z},\tilde{t})|^{2+\frac{2}{n}}\right)dV_{\theta_{\mathbb{H}^n}}\\
&\leq C,
\end{split}
\end{equation*}
where the first equality follows from (\ref{A44}),
the second equality follows from the change of variables
$(\tilde{z},\tilde{t})=\big(\rho_{1,\nu}z,(\rho_{1,\nu})^2t\big)$,
the third inequality follows from the property of the cut-off function $\widehat{\chi}$,
and the last inequality follows from the fact that $\widetilde{v}_\nu$ is uniformly bounded in $S_1^2(\mathbb{H}^n)$.
This proves (ii).
\end{proof}

It follows from Proposition \ref{propA.6}(ii)
that, by passing to subsequence if necessary, $\widetilde{V}_\nu$ converges to $\widetilde{V}$ weakly in $S_1^2(B)$ as $\nu\to\infty$
on each ball $B$ of $\mathbb{H}^n$.
Since the Folland-Stein embedding $S_1^2(B)\hookrightarrow L^s(B)$ is compact for $1<s<2+\displaystyle\frac{2}{n}$
on each ball $B$ of $\mathbb{H}^n$,
$\widetilde{V}_\nu$ converges to $\widetilde{V}$  in $L^s(B)$ for $1<s<2+\displaystyle\frac{2}{n}$.
On the other hand, it follows from Proposition \ref{propA.6}(i)
that $\widetilde{V}$ satisfies
\begin{equation}\label{A45}
(2+\frac{2}{n})\Delta_{\theta_{\mathbb{H}^n}}\widetilde{V}=r_\infty |\widetilde{V}|^{\frac{2}{n}}\widetilde{V}\hspace{2mm}\mbox{ in }\mathbb{H}^n.
\end{equation}

\begin{lem}\label{lemA.8}
(i)
For every ball $B$ in $\mathbb{H}^n$, there holds
$$\int_B\Big|L_{\theta_{\mathbb{H}^n}}(\widetilde{V}_\nu-\widetilde{V})-r_\infty |\widetilde{V}_\nu-\widetilde{V}|^{\frac{2}{n}}(\widetilde{V}_\nu-\widetilde{V})\Big|^{\frac{2n+2}{n+2}}dV_{\theta_{\mathbb{H}^n}}
\to 0\mbox{
as }\nu\to\infty.$$\\
(ii) There exists a constant $C$ such that
$$\int_{\mathbb{H}^n}\left(|\nabla_{\theta_{\mathbb{H}^n}}(\widetilde{V}_\nu-\widetilde{V})|^2_{\theta_{\mathbb{H}^n}}+ |\widetilde{V}_\nu-\widetilde{V}|^{2+\frac{2}{n}}\right)dV_{\theta_{\mathbb{H}^n}}\leq C\hspace{2mm}
\mbox{
for all }\nu.$$
\end{lem}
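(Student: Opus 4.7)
}
The plan is to adapt the Br\'ezis--Lieb-type argument of Proposition \ref{propB.2.5} to the rescaled sequence $\{\widetilde{V}_\nu\}$ on $\mathbb{H}^n$. First I would dispose of part (ii): Proposition \ref{propA.6}(ii) gives a uniform bound on $\{\widetilde{V}_\nu\}$ in $S_1^2(\mathbb{H}^n)\cap L^{2+\frac{2}{n}}(\mathbb{H}^n)$, and the weak-$*$ convergence $\widetilde{V}_\nu\rightharpoonup\widetilde{V}$ (on every ball, hence on all of $\mathbb{H}^n$ by a diagonal argument) combined with lower semicontinuity of the norms gives the same bound for $\widetilde{V}$. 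The triangle inequality together with $|a-b|^p\leq 2^p(|a|^p+|b|^p)$ applied with $p=2$ and $p=2+\frac{2}{n}$ then yields (ii).

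For part (i), I would first use CR elliptic regularity applied to the limit equation (\ref{A45})---a Moser iteration built on the Folland--Stein embedding in Proposition \ref{Theorem 2.1}---to conclude that $\widetilde{V}\in L^\infty_{\rm loc}(\mathbb{H}^n)$, so that on any fixed ball $B$ one has $\|\widetilde{V}\|_{L^\infty(B)}\leq C_B$. The key algebraic ingredient is then the pointwise inequality
$$\big| |a+b|^{\frac{2}{n}}(a+b)-|a|^{\frac{2}{n}}a-|b|^{\frac{2}{n}}b\big|\leq C\big(|a|^{\frac{2}{n}}|b|+|a||b|^{\frac{2}{n}}\big),$$
applied with $a=\widetilde{V}_\nu-\widetilde{V}$ and $b=\widetilde{V}$. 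On the ball $B$, using $\widetilde{V}\in L^\infty(B)$ together with the strong convergence $\widetilde{V}_\nu-\widetilde{V}\to 0$ in $L^s(B)$ for every $1<s<2+\frac{2}{n}$ (obtained by applying Rellich--Kondrachov in the Folland--Stein sense to the weakly $S_1^2$-convergent sequence $\widetilde{V}_\nu-\widetilde{V}$), both terms on the right hand side tend to zero in $L^{\frac{2n+2}{n+2}}(B)$ by H\"older's inequality.

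To finish I would write
\begin{equation*}
\begin{split}
&L_{\theta_{\mathbb{H}^n}}(\widetilde{V}_\nu-\widetilde{V})-r_\infty|\widetilde{V}_\nu-\widetilde{V}|^{\frac{2}{n}}(\widetilde{V}_\nu-\widetilde{V})\\
&=\big(L_{\theta_{\mathbb{H}^n}}\widetilde{V}_\nu-r_\infty|\widetilde{V}_\nu|^{\frac{2}{n}}\widetilde{V}_\nu\big)
-\big(L_{\theta_{\mathbb{H}^n}}\widetilde{V}-r_\infty|\widetilde{V}|^{\frac{2}{n}}\widetilde{V}\big)\\
&\hspace{4mm}+r_\infty\big(|\widetilde{V}_\nu|^{\frac{2}{n}}\widetilde{V}_\nu-|\widetilde{V}|^{\frac{2}{n}}\widetilde{V}-|\widetilde{V}_\nu-\widetilde{V}|^{\frac{2}{n}}(\widetilde{V}_\nu-\widetilde{V})\big).
\end{split}
\end{equation*}
The first parenthesis tends to $0$ in $L^{\frac{2n+2}{n+2}}(B)$ by Proposition \ref{propA.6}(i), the second parenthesis vanishes identically by (\ref{A45}), and the third was just shown to vanish in the same norm. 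Summing the three contributions gives (i).

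The main obstacle is the local $L^\infty$ bound on $\widetilde{V}$; although it is a standard consequence of Moser iteration, in the Heisenberg setting one needs to carry out the iteration with the Folland--Stein Sobolev inequality in place of the Euclidean one. Once this is available, the remainder of the argument is a routine Br\'ezis--Lieb estimate that mirrors Proposition \ref{propB.2.5}.
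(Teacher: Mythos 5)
Your proposal is correct and follows essentially the same route as the paper: the same three-term decomposition of $L_{\theta_{\mathbb{H}^n}}(\widetilde{V}_\nu-\widetilde{V})-r_\infty|\widetilde{V}_\nu-\widetilde{V}|^{\frac{2}{n}}(\widetilde{V}_\nu-\widetilde{V})$, with the first term controlled by Proposition \ref{propA.6}(i), the second annihilated by (\ref{A45}), and the third handled by the Br\'ezis--Lieb estimate of Proposition \ref{propB.2.5}, while (ii) follows from Proposition \ref{propA.6}(ii) and a bound on $\widetilde{V}$. The only (harmless) difference is that you explicitly supply the local boundedness of $\widetilde{V}$ needed to run the Br\'ezis--Lieb step, a point the paper leaves implicit in its appeal to ``the same proof of Proposition \ref{propB.2.5}'', where the corresponding bound came from the smoothness of $u_\infty$.
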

\begin{proof}
By the same proof of Proposition \ref{propB.2.5}, we have
\begin{equation*}
\widetilde{V}_\nu^{1+\frac{2}{n}}-\widetilde{V}^{1+\frac{2}{n}}-|\widetilde{V}_\nu-\widetilde{V}|^{\frac{2}{n}}(\widetilde{V}_\nu-\widetilde{V})
\to 0\hspace{2mm}\mbox{ in }L^{\frac{2n+2}{n+2}}(B)\mbox{ as }\nu\to\infty.
\end{equation*}
This together with
(\ref{A45}) and Proposition \ref{propA.6}(i) implies that
\begin{equation*}
\begin{split}
&\int_B\Big|L_{\theta_{\mathbb{H}^n}}(\widetilde{V}_\nu-\widetilde{V})-r_\infty |\widetilde{V}_\nu-\widetilde{V}|^{\frac{2}{n}}(\widetilde{V}_\nu-\widetilde{V})\Big|^{\frac{2n+2}{n+2}}dV_{\theta_{\mathbb{H}^n}}\\
&\leq C\int_B\Big|L_{\theta_{\mathbb{H}^n}}\widetilde{V}_\nu-r_\infty |\widetilde{V}_\nu|^{\frac{2}{n}}\widetilde{V}_\nu\Big|^{\frac{2n+2}{n+2}}dV_{\theta_{\mathbb{H}^n}}\\
&\hspace{4mm}+C\,
r_\infty\int_B\Big|
\widetilde{V}_\nu^{1+\frac{2}{n}}-\widetilde{V}^{1+\frac{2}{n}}-|\widetilde{V}_\nu-\widetilde{V}|^{\frac{2}{n}}(\widetilde{V}_\nu-\widetilde{V})\Big|^{\frac{2n+2}{n+2}}
dV_{\theta_{\mathbb{H}^n}}
\to 0
\end{split}
\end{equation*}
as $\nu\to \infty$.
This proves (i).
On the other hand, (ii) follows from (\ref{A45}) and Proposition \ref{propA.6}(ii).
This proves the assertion.
\end{proof}

\begin{lem}\label{lemA.9}
For every ball $B$ in $\mathbb{H}^n$,
$\widetilde{V}_\nu$ converges to $\widetilde{V}$ strongly in $S_1^2(B)$ as $\nu\to\infty$.
\end{lem}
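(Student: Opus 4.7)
The plan is to establish a local ``small-energy'' $L^{2+2/n}$ bound on $\widetilde{V}_\nu$ inherited from the minimality of $\rho_{1,\nu}$, then combine it with the sharp Folland--Stein inequality on $\mathbb{H}^n$ and a cut-off testing of the approximate CR Yamabe equation satisfied by $w_\nu := \widetilde{V}_\nu - \widetilde{V}$, to deduce strong $S_1^2$ convergence on small balls; finally cover $B$ by finitely many.

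By Lemma~\ref{lemA.8}, $w_\nu$ converges to $0$ weakly in $S_1^2(\mathbb{H}^n)$, strongly in $L^s_{\mathrm{loc}}$ for every $1\leq s<2+2/n$, and satisfies $L_{\theta_{\mathbb{H}^n}} w_\nu = r_\infty |w_\nu|^{2/n} w_\nu + \beta_\nu$ with $\beta_\nu \to 0$ in $L^{(2n+2)/(n+2)}_{\mathrm{loc}}$. Given $q \in B$ and $\rho \in (0,1/4]$, pick $\eta \in C_c^\infty(B_{2\rho}(q))$ with $\eta \equiv 1$ on $B_\rho(q)$. Testing the equation against $\eta^2 w_\nu$, integrating by parts (recall $R_{\theta_{\mathbb{H}^n}}=0$), using the strong $L^2_{\mathrm{loc}}$ convergence of $w_\nu$ to absorb the cutoff term $\int w_\nu^2|\nabla \eta|^2_{\theta_{\mathbb{H}^n}}\,dV_{\theta_{\mathbb{H}^n}}$, and H\"older against $\beta_\nu$ to absorb the right-hand-side error, one obtains
\[
(2+\tfrac{2}{n})\!\int |\nabla_{\theta_{\mathbb{H}^n}}(\eta w_\nu)|^2_{\theta_{\mathbb{H}^n}}\,dV_{\theta_{\mathbb{H}^n}} = r_\infty\!\int \eta^2 |w_\nu|^{2+\frac{2}{n}}\,dV_{\theta_{\mathbb{H}^n}} + o(1).
\]
Chaining this with H\"older's inequality $\int \eta^2|w_\nu|^{2+\frac{2}{n}} \leq \|\eta w_\nu\|_{L^{2+2/n}}^2\,\|w_\nu\|_{L^{2+2/n}(B_{2\rho}(q))}^{2/n}$ and the sharp Folland--Stein inequality $Y(S^{2n+1})\|\eta w_\nu\|_{L^{2+2/n}}^2 \leq (2+\tfrac{2}{n})\int|\nabla_{\theta_{\mathbb{H}^n}}(\eta w_\nu)|^2_{\theta_{\mathbb{H}^n}}\,dV_{\theta_{\mathbb{H}^n}}$ on $\mathbb{H}^n$, we arrive at
\[
\Bigl(1-\tfrac{r_\infty}{Y(S^{2n+1})}\|w_\nu\|_{L^{2+2/n}(B_{2\rho}(q))}^{2/n}\Bigr)(2+\tfrac{2}{n})\!\int|\nabla_{\theta_{\mathbb{H}^n}}(\eta w_\nu)|^2_{\theta_{\mathbb{H}^n}}\,dV_{\theta_{\mathbb{H}^n}} \leq o(1).
\]
Hence strong $S_1^2(B_\rho(q))$ convergence follows once we show the strict threshold
\begin{equation}
\limsup_{\nu\to\infty}\|w_\nu\|_{L^{2+2/n}(B_{2\rho}(q))}^{2+\frac{2}{n}} < \bigl(Y(S^{2n+1})/r_\infty\bigr)^{n+1}.
\label{A.thresh}
\end{equation}

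To prove (\ref{A.thresh}), observe that for every $y \in M$ the minimality $\rho_{1,\nu} = \inf_x \rho_\nu(x)$ from Lemma~\ref{lemA.4}, together with the decay alternative at points $y$ failing (\ref{A9}), gives $\int_{B_{\rho_{1,\nu}}(y)} ((2+\tfrac{2}{n})|\nabla_{\theta_0} v_\nu|^2_{\theta_0}+R_{\theta_0}v_\nu^2)\,dV_{\theta_0} \leq a_0\,r_\infty^{-n}Y(M,\theta_0)^{n+1}+o(1)$. Transferring this via the conformal rescaling (\ref{A44}) and the comparisons (\ref{B.0})--(\ref{B.1.5}) (resp.\ (\ref{B.0a})) between $\widehat{\theta}_{x^*_{1,\nu}}$ and $\theta_{\mathbb{H}^n}$, we obtain the uniform energy bound
\[
\int_{B_{2\rho}(q)} |\nabla_{\theta_{\mathbb{H}^n}}\widetilde{V}_\nu|^2_{\theta_{\mathbb{H}^n}}\,dV_{\theta_{\mathbb{H}^n}} \leq \frac{a_0\,Y(M,\theta_0)^{n+1}}{(2+\tfrac{2}{n})r_\infty^n}+o(1).
\]
Applying the sharp Folland--Stein inequality to a cut-off of $\widetilde{V}_\nu$ (the cross-terms and cutoff error are $o(1)$ via $\widetilde{V}_\nu\to\widetilde{V}$ in $L^2_{\mathrm{loc}}$), then Brezis--Lieb to exchange $\widetilde{V}_\nu$ for $w_\nu$, yields
\[
\limsup_{\nu\to\infty}\|w_\nu\|_{L^{2+2/n}(B_{2\rho}(q))}^{2+\frac{2}{n}} \leq a_0^{(n+1)/n}\Bigl(\frac{Y(M,\theta_0)}{Y(S^{2n+1})}\Bigr)^{\!\!(n+1)^2/n}\Bigl(\frac{Y(S^{2n+1})}{r_\infty}\Bigr)^{\!\!n+1}.
\]
Since $a_0<1$ and $Y(M,\theta_0) < Y(S^{2n+1})$ strictly (as $M$ is not CR equivalent to $S^{2n+1}$), the prefactor is strictly less than $1$, giving (\ref{A.thresh}). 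Covering $B$ by finitely many balls $B_{1/4}(q_j)$ completes the proof.

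The main obstacle is the clean transfer of the $M$-side energy bound into a $\theta_{\mathbb{H}^n}$-side bound on $\widetilde{V}_\nu$: one must track the cut-off $\widehat{\chi}$ in (\ref{A44}), the conformal factor $\varphi_{x^*_{1,\nu}}$ via the CR transformation law, and the metric corrections in (\ref{B.0})--(\ref{B.1.5}), all of which contribute only $o(1)$ but require careful book-keeping to isolate the leading constant $a_0\,Y(M,\theta_0)^{n+1}/[(2+\tfrac{2}{n})r_\infty^n]$.
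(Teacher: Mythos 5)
Your route is genuinely different from the paper's. The paper argues by contradiction: if $\widetilde{V}_\nu\not\to\widetilde{V}$ strongly, then by Lemma \ref{lemA.8} the sequence $\widetilde{V}_\nu-\widetilde{V}$ inherits all the hypotheses of Lemma \ref{lemA.4}, hence concentrates at least $\frac{1+a_0}{2}r_\infty^{-n}Y(S^{2n+1})^{n+1}$ of energy on a ball of radius $\widetilde{\rho}_\nu\to0$; adding back $\widetilde{V}$ via weak convergence and pulling the estimate down to $M$ produces a point $\widetilde{\widetilde{x}}_\nu$ with $\rho_\nu(\widetilde{\widetilde{x}}_\nu)\le\widetilde{\rho}_\nu\rho_{1,\nu}<\rho_{1,\nu}$, contradicting the minimality in (\ref{A36}). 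You instead use that minimality \emph{positively}, to extract a unit-scale small-energy bound on $\widetilde{V}_\nu$, and close with the sharp Folland--Stein constant. Both proofs rest on the same two pillars (minimality of $\rho_{1,\nu}$ and $Y(\mathbb{H}^n,\theta_{\mathbb{H}^n})=Y(S^{2n+1})\ge Y(M,\theta_0)$), and your coercivity computation in the first two displays is correct.

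There is, however, a concrete flaw in your derivation of the threshold inequality. You apply the Folland--Stein inequality to a cut-off of $\widetilde{V}_\nu$ and assert the cut-off terms are $o(1)$ ``via $\widetilde{V}_\nu\to\widetilde{V}$ in $L^2_{\mathrm{loc}}$''. But $\widetilde{V}\neq0$: for a cut-off $\zeta$ the error $\int\widetilde{V}_\nu^2|\nabla_{\theta_{\mathbb{H}^n}}\zeta|^2_{\theta_{\mathbb{H}^n}}\,dV_{\theta_{\mathbb{H}^n}}$ converges to the \emph{positive} constant $\int\widetilde{V}^2|\nabla_{\theta_{\mathbb{H}^n}}\zeta|^2_{\theta_{\mathbb{H}^n}}\,dV_{\theta_{\mathbb{H}^n}}$, and this unquantified additive term can a priori destroy the strict inequality you need. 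The repair is to run this step on $\widetilde{V}_\nu-\widetilde{V}$ itself: weak convergence gives
$\int_{B_{3\rho}(q)}|\nabla_{\theta_{\mathbb{H}^n}}(\widetilde{V}_\nu-\widetilde{V})|^2_{\theta_{\mathbb{H}^n}}\,dV_{\theta_{\mathbb{H}^n}}
\le\int_{B_{3\rho}(q)}|\nabla_{\theta_{\mathbb{H}^n}}\widetilde{V}_\nu|^2_{\theta_{\mathbb{H}^n}}\,dV_{\theta_{\mathbb{H}^n}}+o(1)$,
and the cut-off errors for $\widetilde{V}_\nu-\widetilde{V}$ genuinely vanish since it tends to $0$ in $L^2_{\mathrm{loc}}$; this also renders the Brezis--Lieb step unnecessary. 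Two smaller points: (i) you do not need the deep strict inequality $Y(M,\theta_0)<Y(S^{2n+1})$ --- since $Y(M,\theta_0)\le Y(S^{2n+1})$ always holds, $a_0<1$ alone makes your prefactor strictly less than $1$; (ii) your claim that \emph{every} $y\in M$ satisfies $\int_{B_{\rho_{1,\nu}}(y)}\big((2+\frac{2}{n})|\nabla_{\theta_0}v_\nu|^2_{\theta_0}+R_{\theta_0}v_\nu^2\big)dV_{\theta_0}\le a_0r_\infty^{-n}Y(M,\theta_0)^{n+1}+o(1)$ requires an argument at points where (\ref{A9}) fails, because the infimum in (\ref{A36}) is taken only over points satisfying (\ref{A9}); one should note that $\rho_\nu(\cdot)$ extends to all of $M$ with the same minimality, as the paper itself implicitly uses in its contradiction step.
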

\begin{proof}
By contradiction, we assume that
$\widetilde{V}_\nu$ does not converge to $\widetilde{V}$ strongly in $S_1^2(B)$ as $\nu\to\infty$
for some $B$ in $\mathbb{H}^n$.
Therefore, it follows from Lemma \ref{lemA.8} that
the sequence
$\{\widetilde{V}_\nu-\widetilde{V}\}$
satisfied the same properties of the sequence
$\{v_\nu\}$. In particular, it follows from Lemma \ref{lemA.4} that
there exists a sequence $\big(\widetilde{x}_\nu,\widetilde{\rho}_\nu)$ with
$\widetilde{x}_\nu\in B_{\rho_{1,\nu}}(x^*_{1,\nu})$ and $\widetilde{\rho}_\nu\to 0$ as $\nu\to\infty$ such that
\begin{equation}\label{A46}
(2+\frac{2}{n})\int_{\exp_{x^*_{1,\nu}}^{-1}\big(B_{\widetilde{\rho}_\nu}(\widetilde{x}_\nu)\big)}|\nabla_{\theta_{\mathbb{H}^n}}(\widetilde{V}_\nu-\widetilde{V})|^2_{\theta_{\mathbb{H}^n}}
dV_{\theta_{\mathbb{H}^n}}\geq \frac{1+a_0}{2} r_\infty^{-n}
Y(B,\theta_{\mathbb{H}^n})^{n+1}
\end{equation}
for $\nu$ sufficiently large.
It follows from Lemma \ref{lemA.8} that
\begin{equation}\label{A47}
\int_{\exp_{x^*_{1,\nu}}^{-1}\big(B_{\widetilde{\rho}_\nu}(\widetilde{x}_\nu)\big)}
\langle\nabla_{\theta_{\mathbb{H}^n}}\widetilde{V},\nabla_{\theta_{\mathbb{H}^n}}(\widetilde{V}_\nu-\widetilde{V})\rangle_{\theta_{\mathbb{H}^n}}
dV_{\theta_{\mathbb{H}^n}}\to 0
\end{equation}
as $\nu\to\infty$.
Combining (\ref{A46}) and (\ref{A47}), we get
\begin{equation}\label{A48}
\begin{split}
&(2+\frac{2}{n})\int_{\exp_{x^*_{1,\nu}}^{-1}\big(B_{\widetilde{\rho}_\nu}(\widetilde{x}_\nu)\big)}|\nabla_{\theta_{\mathbb{H}^n}}\widetilde{V}_\nu|^2_{\theta_{\mathbb{H}^n}}
dV_{\theta_{\mathbb{H}^n}}\\
&=(2+\frac{2}{n})\int_{\exp_{x^*_{1,\nu}}^{-1}\big(B_{\widetilde{\rho}_\nu}(\widetilde{x}_\nu)\big)}|\nabla_{\theta_{\mathbb{H}^n}}(\widetilde{V}_\nu-\widetilde{V})|^2_{\theta_{\mathbb{H}^n}}
dV_{\theta_{\mathbb{H}^n}}\\
&\hspace{4mm}
+(2+\frac{2}{n})\int_{\exp_{x^*_{1,\nu}}^{-1}\big(B_{\widetilde{\rho}_\nu}(\widetilde{x}_\nu)\big)}|\nabla_{\theta_{\mathbb{H}^n}}\widetilde{V}|^2_{\theta_{\mathbb{H}^n}}
dV_{\theta_{\mathbb{H}^n}}
\\
&\hspace{4mm}-2(2+\frac{2}{n})\int_{\exp_{x^*_{1,\nu}}^{-1}\big(B_{\widetilde{\rho}_\nu}(\widetilde{x}_\nu)\big)}
\langle\nabla_{\theta_{\mathbb{H}^n}}\widetilde{V},\nabla_{\theta_{\mathbb{H}^n}}(\widetilde{V}_\nu-\widetilde{V})\rangle_{\theta_{\mathbb{H}^n}}
dV_{\theta_{\mathbb{H}^n}}\\
&\geq \frac{1+a_0}{2} r_\infty^{-n}
Y(B,\theta_{\mathbb{H}^n})^{n+1}+o(1)\\
&\geq a_0 r_\infty^{-n}
Y(M,\theta_0)^{n+1}
\end{split}
\end{equation}
for $\nu$ sufficiently large,
where the last inequality follows from
$$Y(B,\theta_{\mathbb{H}^n})=Y(\mathbb{H}^n,\theta_{\mathbb{H}^n})=Y(S^{2n+1},\theta_{S^{2n+1}})\geq Y(M,\theta_0).$$
Since $\widetilde{V}_\nu$
has support in $\{(t^2+|z|^4)^{\frac{1}{4}}<\frac{\widehat{\rho}}{\rho_{1,\nu}}\}$,
it follows from (\ref{A48})
that there exists $\widetilde{\widetilde{x}}_\nu\in B_{\rho_{1,\nu}}(x^*_{1,\nu})$
such that
\begin{equation*}
\int_{B_{\widetilde{\rho}_\nu\rho_{1,\nu}}(\widetilde{\widetilde{x}}_\nu)}\left((2+\frac{2}{n})|\nabla_{\theta_0}v_\nu|_{\theta_0}^2+R_{\theta_0}v_\nu^2\right)dV_{\theta_0}
\geq a_0 r_\infty^{-n}
Y(M,\theta_0)^{n+1}
\end{equation*}
for $\nu$ sufficiently large.
This implies that
$$\rho_{\nu}(\widetilde{\widetilde{x}}_\nu)\leq \widetilde{\rho}_\nu\rho_{1,\nu}
<\rho_{1,\nu},$$
where we have used the fact that $\widetilde{\rho}_\nu\to 0$ as $\nu\to\infty$.
But this contradicts to (\ref{A36}). This proves Lemma \ref{lemA.9}.
\end{proof}

Since $\widetilde{V}$ satisfies
(\ref{A45}), it follows from the result of Jerison and Lee in \cite{Jerison&Lee1}
that
there exists $(z_0,t_0)\in \mathbb{H}^n$ and $\gamma_1>0$ such that
$$\widetilde{V}(z,t)=W\circ T_{(z_0,t_0)}(z,t),$$
where
$$W(z,t)=\left(\frac{n(2n+2)}{r_\infty}\right)^{\frac{n}{2}}\left(\frac{\gamma_1^2}{\gamma_1^4t^2+(\gamma_1^2|z|^2+1)^2}\right)^{\frac{n}{2}}$$
and
$$T_{(z_0,t_0)}(z,t)=(z+z_0,t+t_0+2Im(z\cdot z_0))\mbox{ for }(z,t)\in\mathbb{H}^n$$
is the translation in $\mathbb{H}^n$.
By the optimality of $(x^*_{1,\nu}, \rho_{1,\nu})$, we can conclude that
$(z_0,t_0)=(0,0)$; for if $(z_0,t_0)\neq (0,0)$, we can
find $(\widetilde{x}^*_{1,\nu}, \widetilde{\rho}_{1,\nu})$ with
$\widetilde{\rho}_{1,\nu}<\rho_{1,\nu}$ such that
\begin{equation*}
\int_{B_{\widetilde{\rho}_{1,\nu}}(\widetilde{x}^*_{1,\nu})}\left((2+\frac{2}{n})|\nabla_{\theta_0}v_\nu|_{\theta_0}^2+R_{\theta_0}v_\nu^2\right)dV_{\theta_0}
\geq a_0 r_\infty^{-n}
Y(M,\theta_0)^{n+1}.
\end{equation*}
Therefore, we have
\begin{equation}\label{A56}
\widetilde{V}(z,t)=\left(\frac{n(2n+2)}{r_\infty}\right)^{\frac{n}{2}}\left(\frac{\gamma_1^2}{\gamma_1^4t^2+(\gamma_1^2|z|^2+1)^2}\right)^{\frac{n}{2}}.
\end{equation}
We remark that it was claimed in \cite{Gamara1} that
$\gamma_1=1$ (see the last line in P.146 in \cite{Gamara1}), which
does not seem to be true. In fact, we will show that $\gamma_1\geq C_1$. Here $C_1$ is a positive constant depending only on $a_0$, $r_{\infty}$, and $M$. We need the following:
\begin{prop}\label{propA.10}
For any $x\in M$ and for any $r>0$, there holds
\begin{equation*}
\int_{B_r(x)}v_\nu \Delta_{\theta_0}v_\nu dV_{\theta_0}
=\int_{B_r(x)}|\nabla_{\theta_0}v_\nu|^2_{\theta_0}dV_{\theta_0}+o(1).
\end{equation*}
\end{prop}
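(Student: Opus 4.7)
The plan is to establish the identity by approximating the indicator $\mathbf{1}_{B_r(x)}$ by a smooth cutoff and applying the integration-by-parts identity for $\Delta_{\theta_0}$ recorded in Section~\ref{section2}. Take $\phi_\delta \in C^\infty(M)$ with $\phi_\delta \equiv 1$ on $B_{r-\delta}(x)$, $\mathrm{supp}(\phi_\delta) \subset B_r(x)$, and $|\nabla_{\theta_0}\phi_\delta|_{\theta_0} \leq C/\delta$. Integrating by parts against $\phi_\delta v_\nu$ yields
\begin{equation*}
\int_M \phi_\delta v_\nu \Delta_{\theta_0} v_\nu \, dV_{\theta_0} = -\int_M \phi_\delta |\nabla_{\theta_0} v_\nu|^2_{\theta_0} \, dV_{\theta_0} - \int_M v_\nu \langle \nabla_{\theta_0}\phi_\delta, \nabla_{\theta_0} v_\nu\rangle_{\theta_0} \, dV_{\theta_0}.
\end{equation*}
This is the integration-by-parts identity one expects (up to the sign convention appearing in the statement, which matches the paper's definition of $\Delta_{\theta_0}$). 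The proposition then reduces to showing that the cross term is $o(1)$ and that the smooth cutoff $\phi_\delta$ can be replaced by $\mathbf{1}_{B_r(x)}$ without loss, for an appropriate choice of $\delta=\delta_\nu\to 0$.

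For the cross term, Cauchy--Schwarz gives an upper bound of $(C/\delta)\,\|v_\nu\|_{L^2(B_r(x))}\,\|\nabla_{\theta_0} v_\nu\|_{L^2(M)}$. Since $\{v_\nu\}$ is bounded in $S^2_1(M)$ and converges to $0$ strongly in $L^2(M)$ by compactness of the Folland--Stein embedding (Proposition~\ref{Theorem 2.1}), setting $\delta_\nu := \|v_\nu\|_{L^2(M)}^{1/2}\to 0$ makes this bound of order $\|v_\nu\|_{L^2(M)}^{1/2} = o(1)$. Removing the cutoff then amounts to showing
\begin{equation*}
\int_{B_r(x)\setminus B_{r-\delta_\nu}(x)}\bigl(|\nabla_{\theta_0} v_\nu|^2_{\theta_0} + |v_\nu\, \Delta_{\theta_0} v_\nu|\bigr)\,dV_{\theta_0} \to 0.
\end{equation*}
Using \eqref{Assumption} to substitute $\Delta_{\theta_0} v_\nu$ by $\tfrac{1}{2+2/n}(R_{\theta_0} v_\nu - r_\infty |v_\nu|^{2/n} v_\nu)$ up to an $L^{(2n+2)/(n+2)}$ vanishing remainder, H\"older's inequality further reduces the claim to smallness of $\int_{B_r\setminus B_{r-\delta_\nu}}(|v_\nu|^{2+2/n} + |\nabla v_\nu|^2)\,dV_{\theta_0}$.

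The main obstacle is controlling those annular integrals when bubbles of $v_\nu$ may concentrate near $\partial B_r(x)$: at a generic radius $r$ the annular mass vanishes, but if a concentration point of $v_\nu$ lies exactly at distance $r$ from $x$ the limit can be positive. I would resolve this by a co-area / mean-value argument, averaging over $r' \in [r-\delta_\nu, r+\delta_\nu]$ to select a good radius $r_\nu$ along which the boundary traces of $v_\nu$ and $\nabla_{\theta_0} v_\nu$ on $\partial B_{r_\nu}(x)$ are small; the divergence theorem applies cleanly there, and transferring the identity back from $B_{r_\nu}(x)$ to $B_r(x)$ costs only an annular error of mass $o(1)$, using that $\delta_\nu$ is chosen to shrink slower than all relevant bubble scales $\rho_{1,\nu}$ so that only finitely many bubble tails can populate an arbitrarily thin annulus about $\partial B_r(x)$.
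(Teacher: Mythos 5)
Your overall strategy coincides with the paper's: multiply by a smooth cutoff at scale $\delta_\nu=\|v_\nu\|_{L^2(M)}^{1/2}$, integrate by parts, and kill the cross term by Cauchy--Schwarz together with $v_\nu\to 0$ in $L^2(M)$. Your version of the integration by parts (testing $\Delta_{\theta_0}v_\nu$ against $\phi_{\delta}v_\nu$) is in fact slightly cleaner than the paper's expansion of $\int\chi_\nu v_\nu\Delta_{\theta_0}(\chi_\nu v_\nu)$ in (\ref{A62}), since it avoids the $\Delta_{\theta_0}\chi_\nu$ and $|\nabla_{\theta_0}\chi_\nu|^2$ terms. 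You also correctly isolate the one genuinely delicate point, namely the annular integrals of $|\nabla_{\theta_0}v_\nu|^2_{\theta_0}$ and $|v_\nu|^{2+\frac{2}{n}}$ near $\partial B_r(x)$; the paper disposes of these by bare assertion (cf.\ (\ref{A66}) and the estimate of $I'$ in its proof, which invoke only boundedness of $v_\nu$ in $S_1^2(M)$ and the shrinking of the annulus --- an argument that does not by itself exclude concentration on the sphere $\partial B_r(x)$).

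The gap is in your proposed repair. Averaging over $r'\in[r-\delta_\nu,r+\delta_\nu]$ does produce a radius $r'_\nu$ along which the boundary term is small (the average of $\int_{\partial B_{r'}(x)}|v_\nu|\,|\nabla_{\theta_0}v_\nu|\,dS$ over that interval is at most $C\delta_\nu^{-1}\|v_\nu\|_{L^2(M)}\|\nabla_{\theta_0}v_\nu\|_{L^2(M)}=O(\|v_\nu\|_{L^2(M)}^{1/2})$), so the identity holds on $B_{r'_\nu}(x)$. But transferring it back to $B_r(x)$ requires precisely $\int_{B_{r+\delta_\nu}(x)\setminus B_{r-\delta_\nu}(x)}|\nabla_{\theta_0}v_\nu|^2_{\theta_0}\,dV_{\theta_0}=o(1)$, which is the statement in doubt: the argument is circular. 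Moreover the justification offered --- that $\delta_\nu$ shrinks more slowly than the bubble scales, so only ``tails'' meet the annulus --- points the wrong way: if a profile of scale $\varepsilon\ll\delta_\nu$ concentrates at a point within $O(\varepsilon)$ of $\partial B_r(x)$, the annulus contains essentially the entire bubble and hence a fixed positive amount of Dirichlet energy, however thin the annulus is relative to $r$. At this stage nothing has been proved that excludes such concentration at distance exactly $r$ from a prescribed $x$ (the sequence is only known to converge weakly to $0$ in $S_1^2(M)$, strongly in $L^s(M)$ for $s<2+\frac{2}{n}$, and to satisfy (\ref{Assumption})). So your argument establishes the identity on $B_{r'_\nu}(x)$ for a well-chosen nearby radius, but not on $B_r(x)$ itself; to conclude for every $r$ one must either supply the missing annular estimate or, as the paper implicitly does, accept it without proof.
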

\begin{proof}
We consider the following sequence of cut-off functions:
\begin{equation}\label{A60}
\chi_\nu(y)=\left\{
          \begin{array}{ll}
            1, & \hbox{ if $d(x,y)\leq r$;} \\
            0, & \hbox{ if $d(x,y)\geq r+r_\nu$,}
          \end{array}
        \right.
\end{equation}
such that
\begin{equation}\label{A61}
0\leq \chi_\nu\leq 1,\hspace{2mm} |\nabla_{\theta_0}\chi_\nu|_{\theta_0}\leq \frac{C}{r_\nu}\hspace{2mm}\mbox{
and }\hspace{2mm}|\Delta_{\theta_0}\chi_\nu|\leq \frac{C}{r_\nu^2},
\end{equation}
where $r_\nu$ will be chosen later.
Since the function $\chi_\nu v_\nu$ has support in
$B_{r+2r_\nu}(x)$, it follows from integration by parts that
\begin{equation}\label{A62}
\begin{split}
\int_{B_{r+2r_\nu}(x)}\chi_\nu v_\nu \Delta_{\theta_0}(\chi_\nu v_\nu) dV_{\theta_0}
=\int_{B_{r+2r_\nu}(x)}|\nabla_{\theta_0}(\chi_\nu v_\nu)|^2_{\theta_0} dV_{\theta_0}.
\end{split}
\end{equation}

We are going to expand the left and right hand side of (\ref{A62}).
By (\ref{A60}), the left hand side of (\ref{A62}) can be written as
\begin{equation*}
\begin{split}
&\int_{B_{r+2r_\nu}(x)}\chi_\nu v_\nu \Delta_{\theta_0}(\chi_\nu v_\nu) dV_{\theta_0}\\
&=\int_{B_{r}(x)}v_\nu \Delta_{\theta_0}v_\nu dV_{\theta_0}
+\int_{B_{r+2r_\nu}(x)-B_{r}(x)}\chi_\nu^2 v_\nu \Delta_{\theta_0} v_\nu dV_{\theta_0}\\
&\hspace{4mm}
+\int_{B_{r+2r_\nu}(x)-B_{r}(x)}\chi_\nu v_\nu^2 \Delta_{\theta_0} \chi_\nu dV_{\theta_0}
+2\int_{B_{r+2r_\nu}(x)-B_{r}(x)}\chi_\nu v_\nu \langle\nabla_{\theta_0} \chi_\nu,
\nabla_{\theta_0} v_\nu\rangle_{\theta_0} dV_{\theta_0}\\
&:=\int_{B_{r}(x)}v_\nu \Delta_{\theta_0}v_\nu dV_{\theta_0}
+I+II+III.
\end{split}
\end{equation*}
By (\ref{A60}) and (\ref{A61}), we have
\begin{equation*}
|II|\leq\int_{B_{r+2r_\nu}(x)-B_{r}(x)}\left|\chi_\nu v_\nu^2 \Delta_{\theta_0} \chi_\nu\right| dV_{\theta_0}\leq \frac{C}{r_\nu^2}\int_M v_\nu^2dV_{\theta_0}.
\end{equation*}
Since $v_\nu$ converges to $0$ in $L^2(M)$ as $\nu\to\infty$, if we choose
\begin{equation}\label{A64}
r_\nu=\left(\int_M v_\nu^2dV_{\theta_0}\right)^{\frac{1}{4}}\to 0\hspace{2mm}\mbox{ as }\nu\to\infty,
\end{equation}
then $|II|\to 0$ as $\nu\to\infty$.
Since $v_\nu$ converges to $0$ in $L^2(M)$ as $\nu\to\infty$, we have
\begin{equation}\label{A65}
\int_M \left|R_{\theta_0}v_\nu^2 \right|dV_{\theta_0}\to 0 \hspace{2mm}\mbox{ as }\nu\to\infty.
\end{equation}
Since $r_\nu\to 0$ as $\nu\to\infty$ by (\ref{A64})
and $v_\nu$ is uniformly bounded in $S_1^2(M)\hookrightarrow L^{2+\frac{2}{n}}(M)$,
we have
\begin{equation}\label{A66}
\int_{B_{r+2r_\nu}(x)-B_{r}(x)} |v_\nu|^{2+\frac{2}{n}} dV_{\theta_0}\to 0 \hspace{2mm}\mbox{ as }\nu\to\infty.
\end{equation}
Therefore, we have
\begin{equation*}
\begin{split}
\left|(2+\frac{2}{n})I\right|
&\leq \left(\int_{B_{r+2r_\nu}(x)-B_{r}(x)} v_\nu^{2+\frac{2}{n}}dV_{\theta_0}\right)^{\frac{n}{2n+2}}
\left(\int_M\left|L_{\theta_0} v_\nu -r_\infty |v_\nu|^{\frac{2}{n}}v_\nu\right|^{\frac{2n+2}{n+2}}dV_{\theta_0}\right)^{\frac{n+2}{2n+2}}\\
&\hspace{4mm}+\int_M \left|R_{\theta_0}v_\nu^2 \right|dV_{\theta_0}+r_\infty \int_{B_{r+2r_\nu}(x)-B_{r}(x)} |v_\nu|^{2+\frac{2}{n}} dV_{\theta_0}
\\
&=o(1),
\end{split}
\end{equation*}
where we have used (\ref{Assumption}), (\ref{A65}) and (\ref{A66}) in the last equality.
By (\ref{A64}), Cauchy-Schwarz inequality
and the fact that $v_\nu$ is bounded in $S_1^2(M)$, we have
\begin{equation}\label{A68}
\begin{split}
|III|&\leq
2\int_{B_{r+2r_\nu}(x)-B_{r}(x)}\left|\chi_\nu v_\nu \langle\nabla_{\theta_0} \chi_\nu,
\nabla_{\theta_0} v_\nu\rangle_{\theta_0}\right| dV_{\theta_0}\\
&\leq \frac{C}{r_\nu}\left(\int_M v_\nu^2dV_{\theta_0}\right)^{\frac{1}{2}}
\left(\int_M |\nabla_{\theta_0} v_\nu|^2_{\theta_0}dV_{\theta_0}\right)^{\frac{1}{2}}
\to 0\hspace{2mm}\mbox{ as }\nu\to\infty.
\end{split}
\end{equation}
Combining all these, we can see that
the left hand side of (\ref{A62}) is equal to
\begin{equation}\label{A67}
\int_{B_{r+2r_\nu}(x)}\chi_\nu v_\nu \Delta_{\theta_0}(\chi_\nu v_\nu) dV_{\theta_0}=\int_{B_{r}(x)}v_\nu \Delta_{\theta_0}v_\nu dV_{\theta_0}+o(1).
\end{equation}

On the other hand, the right hand side of (\ref{A62}) can be written as
\begin{equation*}
\begin{split}
&\int_{B_{r+2r_\nu}(x)}|\nabla_{\theta_0}(\chi_\nu v_\nu)|^2_{\theta_0} dV_{\theta_0}\\
&=\int_{B_{r}(x)}|\nabla_{\theta_0} v_\nu|^2_{\theta_0} dV_{\theta_0}
+\int_{B_{r+2r_\nu}(x)-B_r(x)}\chi_\nu^2|\nabla_{\theta_0}v_\nu|^2_{\theta_0} dV_{\theta_0}\\
&\hspace{4mm}+\int_{B_{r+2r_\nu}(x)-B_r(x)}v_\nu^2|\nabla_{\theta_0}\chi_\nu|^2_{\theta_0} dV_{\theta_0}
+\int_{B_{r+2r_\nu}(x)-B_r(x)}\chi_\nu v_\nu\langle\nabla_{\theta_0}\chi_\nu,\nabla_{\theta_0}v_\nu\rangle_{\theta_0} dV_{\theta_0}\\
&:=\int_{B_{r}(x)}|\nabla_{\theta_0} v_\nu|^2_{\theta_0} dV_{\theta_0}
+I'+II'+III'.
\end{split}
\end{equation*}
Since $r_\nu\to 0$ as $\nu\to\infty$ by (\ref{A64})
and $v_\nu$ is uniformly bounded in $S_1^2(M)$, we have
$$|I'|\leq
\int_{B_{r+2r_\nu}(x)-B_r(x)}|\nabla_{\theta_0}v_\nu|^2_{\theta_0} dV_{\theta_0}
\to 0\hspace{2mm}\mbox{ as }\nu\to\infty.$$
By (\ref{A61}) and (\ref{A64}), we have
\begin{equation*}
|II'|=\int_{B_{r+2r_\nu}(x)-B_r(x)}\chi_\nu^2|\nabla_{\theta_0}v_\nu|^2_{\theta_0} dV_{\theta_0}
\leq \frac{C}{r_\nu^2}\int_M v_\nu^2dV_{\theta_0}
\to 0\hspace{2mm}\mbox{ as }\nu\to\infty.
\end{equation*}
Since $III=III'$, it follows from
(\ref{A68}) that
$|III'|\to 0$ as $\nu\to\infty$.
Combining all these, we can see that
the right hand side of (\ref{A62}) is equal to
\begin{equation}\label{A69}
\int_{B_{r+2r_\nu}(x)}|\nabla_{\theta_0}(\chi_\nu v_\nu)|^2_{\theta_0} dV_{\theta_0}
=\int_{B_{r}(x)}|\nabla_{\theta_0} v_\nu|^2_{\theta_0} dV_{\theta_0}+o(1).
\end{equation}

The assertion follows from combining (\ref{A62}), (\ref{A67}) and (\ref{A69}).
\end{proof}

\begin{prop}\label{propA.11}
There exists a positive constant $C_1$ depending only on $a_0$, $r_\infty$ and $M$ such that
$$\gamma_1\geq C_1.$$
\end{prop}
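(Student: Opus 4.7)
The plan is to translate the defining equation
$$\int_{B_{\rho_{1,\nu}}(x^*_{1,\nu})}\Big((2+\tfrac{2}{n})|\nabla_{\theta_0}v_\nu|^2_{\theta_0}+R_{\theta_0}v_\nu^2\Big)dV_{\theta_0}=a_0r_\infty^{-n}Y(M,\theta_0)^{n+1}$$
(which follows from \eqref{A36}--\eqref{A36.5} and the minimality of $\rho_{1,\nu}$) into an explicit identity that involves $\gamma_1$ alone. First I would combine Proposition \ref{propA.10} with the near-equation \eqref{Assumption}: using H\"older's inequality and the uniform bound on $\|v_\nu\|_{L^{2+2/n}(M)}$, the identity
$\int v_\nu L_{\theta_0}v_\nu\,dV_{\theta_0}=r_\infty\int|v_\nu|^{2+2/n}dV_{\theta_0}+o(1)$
holds on the ball $B_{\rho_{1,\nu}}(x^*_{1,\nu})$, and since $v_\nu\to 0$ in $L^2(M)$ the curvature term $R_{\theta_0}v_\nu^2$ is $o(1)$. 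Consequently the defining equation reduces to
$$r_\infty\int_{B_{\rho_{1,\nu}}(x^*_{1,\nu})}|v_\nu|^{2+2/n}\,dV_{\theta_0}=a_0r_\infty^{-n}Y(M,\theta_0)^{n+1}+o(1).$$

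Next I would pass to the Heisenberg rescaling. Writing $\widetilde{v}_\nu=\varphi_{x^*_{1,\nu}}^{-1}v_\nu$ in the CR normal chart at $x^*_{1,\nu}$ and using $(z,t)\mapsto(\rho_{1,\nu}z,\rho_{1,\nu}^2 t)$ together with \eqref{B.0} and the quasi-isometry \eqref{B.5.5}, the ball $B_{\rho_{1,\nu}}(x^*_{1,\nu})\subset M$ pulls back to a region in $\mathbb{H}^n$ that converges to $B^{\mathbb{H}^n}_1(0)$, and one obtains
$$\int_{B_{\rho_{1,\nu}}(x^*_{1,\nu})}|v_\nu|^{2+2/n}dV_{\theta_0}=\int_{B^{\mathbb{H}^n}_1(0)}|\widetilde{V}_\nu|^{2+2/n}dV_{\theta_{\mathbb{H}^n}}+o(1).$$
The conformal factor $\varphi_{x^*_{1,\nu}}$ contributes a $1+O(\rho_{1,\nu})$ multiplicative error which disappears in the limit. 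By Lemma \ref{lemA.9}, $\widetilde{V}_\nu\to\widetilde{V}$ strongly in $S^2_1(B^{\mathbb{H}^n}_1)$, hence in $L^{2+2/n}(B^{\mathbb{H}^n}_1)$ by the continuous Folland--Stein embedding. Passing to the limit yields
$$r_\infty\int_{B^{\mathbb{H}^n}_1(0)}\widetilde{V}^{2+2/n}dV_{\theta_{\mathbb{H}^n}}=a_0r_\infty^{-n}Y(M,\theta_0)^{n+1}.$$

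Finally, I would compute the left-hand side in closed form. Using \eqref{A56} and the dilation $(z',t')=(\gamma_1 z,\gamma_1^2 t)$, which has Jacobian $\gamma_1^{2n+2}$ and maps $B^{\mathbb{H}^n}_1(0)$ onto $B^{\mathbb{H}^n}_{\gamma_1}(0)$, a direct calculation gives
$$\int_{B^{\mathbb{H}^n}_1(0)}\widetilde{V}^{2+2/n}dV_{\theta_{\mathbb{H}^n}}=\Big(\tfrac{n(2n+2)}{r_\infty}\Big)^{n+1}I(\gamma_1),$$
where $I(\gamma):=\displaystyle\int_{B^{\mathbb{H}^n}_\gamma(0)}\frac{dV_{\theta_{\mathbb{H}^n}}}{(t'^2+(|z'|^2+1)^2)^{n+1}}$ is continuous and strictly increasing in $\gamma$, with $I(0)=0$ and (by \eqref{B.14}) $\lim_{\gamma\to\infty}I(\gamma)=(Y(S^{2n+1})/(n(2n+2)))^{n+1}$. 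The displayed identity then forces
$$I(\gamma_1)=\frac{a_0}{(n(2n+2))^{n+1}}\,Y(M,\theta_0)^{n+1}>0,$$
which, by strict monotonicity of $I$, gives $\gamma_1\geq C_1:=I^{-1}\!\Big(\frac{a_0 Y(M,\theta_0)^{n+1}}{(n(2n+2))^{n+1}}\Big)>0$, and $C_1$ depends only on $a_0$, $r_\infty$, and $M$ (through $Y(M,\theta_0)$).

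The main technical obstacle is the careful identification of $B_{\rho_{1,\nu}}(x^*_{1,\nu})\subset M$ with $B^{\mathbb{H}^n}_1(0)$ after rescaling, so that the passage to the limit in the $L^{2+2/n}$-integral is legitimate. This rests on the quasi-isometry \eqref{B.5.5}, the smoothness of $\varphi_x$, and the fact that $\rho_{1,\nu}\to 0$ by Lemma \ref{lemA.5}, all of which produce only multiplicative $1+o(1)$ factors; none of the other steps (H\"older, Proposition \ref{propA.10}, Lemma \ref{lemA.9}) presents a real difficulty.
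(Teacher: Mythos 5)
Your overall strategy is the paper's: reduce the normalization $\int_{B_{\rho_{1,\nu}}(x^*_{1,\nu})}\big((2+\tfrac2n)|\nabla_{\theta_0}v_\nu|^2+R_{\theta_0}v_\nu^2\big)dV_{\theta_0}=a_0r_\infty^{-n}Y(M,\theta_0)^{n+1}$ to a lower bound on $r_\infty\int_{B_{\rho_{1,\nu}}}|v_\nu|^{2+2/n}$ via Proposition \ref{propA.10} and (\ref{Assumption}), transfer it to $\widetilde V_\nu$ by rescaling, pass to the limit with Lemma \ref{lemA.9}, and evaluate the limit integral by the dilation $(\tilde z,\tilde t)=(\gamma_1 z,\gamma_1^2 t)$, concluding that smallness of $\gamma_1$ would make the integral too small. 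This is exactly the computation in (\ref{A70})--(\ref{A73}).

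There is, however, one step that is false as stated. You claim that under the rescaling $(z,t)\mapsto(\rho_{1,\nu}z,\rho_{1,\nu}^2t)$ the ball $B_{\rho_{1,\nu}}(x^*_{1,\nu})\subset M$ pulls back to a region converging to the unit Heisenberg ball, with the quasi-isometry (\ref{B.5.5}) contributing only $1+o(1)$ factors. The constant $C_0$ in (\ref{B.5.5}) is a fixed constant $\geq 1$, not $1+o(1)$: the Carnot--Carath\'eodory distance $d$ for $\theta_0$ and the normal-coordinate gauge $\rho_x$ differ by the (non-trivial, non-vanishing) conformal factor $\varphi_x$, so the rescaled ball is only sandwiched between the Heisenberg balls of radii $1/C_0$ and $C_0$. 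Consequently your exact identity $I(\gamma_1)=a_0(n(2n+2))^{-(n+1)}Y(M,\theta_0)^{n+1}$ is not justified. The same remark applies to your claim that $\varphi_{x^*_{1,\nu}}$ contributes a $1+O(\rho_{1,\nu})$ error: the correct statement is that $|v_\nu|^{2+2/n}dV_{\theta_0}=|\widetilde v_\nu|^{2+2/n}dV_{\widehat\theta_{x^*_{1,\nu}}}$ exactly (conformal invariance of the critical $L^p$ norm), and then $dV_{\widehat\theta_x}=(1+O(\rho_x))dV_{\theta_{\mathbb H^n}}$ by (\ref{B.0}). The repair is simply to keep only the one-sided containment you actually need, as the paper does in (\ref{A70}): the rescaled image of $B_{\rho_{1,\nu}}(x^*_{1,\nu})$ lies inside the Heisenberg ball of radius $C_0$, which yields $\int_{\{(t^2+|z|^4)^{1/4}<C_0\}}|\widetilde V_\nu|^{2+2/n}\,dV_{\theta_{\mathbb H^n}}\ge a_0r_\infty^{-n-1}Y(M,\theta_0)^{n+1}+o(1)$, and after the limit and the $\gamma_1$-dilation one gets $I(\gamma_1C_0)\ge a_0(n(2n+2))^{-(n+1)}Y(M,\theta_0)^{n+1}>0$; since $I(\gamma)\to0$ as $\gamma\to0^+$, this still forces $\gamma_1\ge C_1>0$. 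With the equalities weakened to these inequalities, your proof coincides with the paper's.
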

\begin{proof}
Since $\rho_{1,\nu}\to 0$ as $\nu\to\infty$ by Lemma \ref{lemA.5},
we can find $N$ such that
\begin{equation}\label{A57}
C_0\leq \frac{\widehat{\rho}}{2\rho_{1,\nu}}\hspace{2mm}\mbox{ whenever }\nu\geq N,
\end{equation}
where $C_0$ is the uniform constant given in (\ref{B.5.5}).
Note that
\begin{equation}\label{A70}
\begin{split}
&\int_{\{(t^2+|z|^4)^{\frac{1}{4}}<C_0\}}|\widetilde{V}_\nu(z,t)|^{2+\frac{2}{n}} dV_{\theta_{\mathbb{H}^n}}\\
&=\int_{\{(t^2+|z|^4)^{\frac{1}{4}}<C_0\}}
\Big|(\rho_{1,\nu})^n\,\widetilde{v}_\nu\big(\rho_{1,\nu}z,(\rho_{1,\nu})^2t\big)\Big|^{2+\frac{2}{n}}
dV_{\theta_{\mathbb{H}^n}}\\
&=\int_{\{\rho_{x^*_{1,\nu}}(y)<C_0\rho_{1,\nu}\}}|\widetilde{v}_\nu(y)|^{2+\frac{2}{n}} dV_{\widehat{\theta}_{x^*_{1,\nu}}}+o(1)\\
&\geq\int_{\{d(x^*_{1,\nu},y)<\rho_{1,\nu}\}}|v_\nu(y)|^{2+\frac{2}{n}} dV_{\theta_0}+o(1),\\
\end{split}
\end{equation}
where the first equality follows from (\ref{A57}) and the definition of $\widetilde{V}_\nu$
in (\ref{A44}), and
the second equality follows
from (\ref{B.0}) and the change of variables
$(\tilde{z},\tilde{t})=\big(\rho_{1,\nu}z,(\rho_{1,\nu})^2t\big)$,
and the last inequality follows from (\ref{B.5.5}) and the definition of $\widetilde{v}_\nu$.
For sufficiently large $\nu$, we have
\begin{equation}\label{A71}
\begin{split}
&r_\infty\int_{B_{\rho_{1,\nu}}(x^*_{1,\nu})}|v_\nu|^{2+\frac{2}{n}} dV_{\theta_0}\\
&\geq \int_{B_{\rho_{1,\nu}}(x^*_{1,\nu})}v_\nu L_{\theta_0}v_\nu dV_{\theta_0}\\
&\hspace{4mm}-
\left(\int_{M}\left|L_{\theta_0}v_\nu-r_\infty |v_\nu|^{\frac{2}{n}}v_\nu\right|^{\frac{2n+2}{n+2}}dV_{\theta_0}\right)^{\frac{n+2}{2n+2}}
\left(\int_M|v_\nu|^{2+\frac{2}{n}} dV_{\theta_0}\right)^{\frac{n}{2n+2}}\\
&=\int_{B_{\rho_{1,\nu}}(x^*_{1,\nu})}\left((2+\frac{2}{n})|\nabla_{\theta_0}v_\nu|^2_{\theta_0}+R_{\theta_0}v_\nu^{2+\frac{2}{n}} \right)dV_{\theta_0}+o(1)\\
&\geq a_0 r_\infty^{-n}
Y(M,\theta_0)^{n+1} +o(1),
\end{split}
\end{equation}
where the first inequality follows from H\"{o}lder's inequality,
 the last equality follows from Proposition \ref{propA.10}, (\ref{Assumption})
and the fact that $v_\nu$ is bounded in $L^{2+\frac{2}{n}}(M)$,
and the last inequality follows from
the definition of $(x^*_{1,\nu},\rho_{1,\nu})$ in
(\ref{A36.5}).

Combining (\ref{A70}) and (\ref{A71}), we obtain
$$\int_{\{(t^2+|z|^4)^{\frac{1}{4}}<C_0\}}|\widetilde{V}_\nu(z,t)|^{2+\frac{2}{n}} dV_{\theta_{\mathbb{H}^n}}
\geq a_0 r_\infty^{-n}
Y(M,\theta_0)^{n+1} +o(1).$$
Combining this with Lemma \ref{lemA.9}, we get
\begin{equation}\label{A72}
\int_{\{(t^2+|z|^4)^{\frac{1}{4}}<C_0\}}\widetilde{V}(z,t)^{2+\frac{2}{n}} dV_{\theta_{\mathbb{H}^n}}
\geq a_0 r_\infty^{-n}
Y(M,\theta_0)^{n+1} +o(1).
\end{equation}
We compute
\begin{equation}\label{A73}
\begin{split}
&\int_{\{(t^2+|z|^4)^{\frac{1}{4}}<C_0\}}\widetilde{V}(z,t)^{2+\frac{2}{n}} dV_{\theta_{\mathbb{H}^n}}\\
&=
\left(\frac{n(2n+2)}{r_\infty}\right)^{n+1}
\int_{\{(t^2+|z|^4)^{\frac{1}{4}}<C_0\}}\left(\frac{\gamma_1^2}{\gamma_1^4t^2+(\gamma_1^2|z|^2+1)^2}\right)^{n+1}dV_{\theta_{\mathbb{H}^n}}\\
&=
\left(\frac{n(2n+2)}{r_\infty}\right)^{n+1}
\int_{\{(\tilde{t}^2+|\tilde{z}|^4)^{\frac{1}{4}}<\gamma_1 C_0\}}\left(\frac{1}{\tilde{t}^2+(|\tilde{z}|^2+1)^2}\right)^{n+1}dV_{\theta_{\mathbb{H}^n}}
\end{split}
\end{equation}
where the first equality follows from (\ref{A56})
and
the second equality follows from change of variables
$(\tilde{z},\tilde{t})=(\gamma_1z,\gamma_1^2t)$.
Note that the last term in (\ref{A73}) tends to zero as $\gamma_1\to 0^+$.
Hence, combining (\ref{A72}) and (\ref{A73}), we can conclude that $\gamma_1$ is bounded below by a positive constant
$C_1$
depending only on $a_0$, $r_\infty$, and $M$.
This proves the assertion.
\end{proof}

For any $(x,\lambda)\in M\times (0,\infty)$, we can find a unique solution $\widehat{\omega}(x,\lambda)$ of the following equation:
\begin{equation}\label{A74}
L_{\theta_0}\widehat{\omega}(x,\lambda)=r_\infty\omega'(x,\lambda)^{1+\frac{2}{n}} \hspace{2mm}\mbox{ in }M,
\end{equation}
where $\omega'(x,\lambda)$ is defined as
\begin{equation*}
\omega'(x,\lambda)(y)=
\left\{
  \begin{array}{ll}
    \chi_\delta(\rho_x(y))\varphi_x(y)\omega(x,\lambda)(y), & \hbox{ for $y\in B_{2\delta}(x)$;} \\
    0, & \hbox{ otherwise.}
  \end{array}
\right.
\end{equation*}
Here $\chi_\delta$ is the cut-off function defined in (\ref{testfun2}),
$\varphi_x$ is the conformal factor such that
$\widehat{\theta}_x=\varphi_x^{\frac{2}{n}}\theta_0$ in a neighborhood $B_{3\delta}(x)$ of $x$.
Moreover, $\omega(x,\lambda)(y)$ is given by
\begin{equation}\label{A41.4}
\omega(x,\lambda)(y)=\left(\frac{n(2n+2)}{r_\infty}\right)^{\frac{n}{2}}\left(\frac{\lambda^2}{\lambda^4t^2+(\lambda^2|z|^2+1)^2}\right)^{\frac{n}{2}},
\end{equation}
where $(z,t)$ is CR normal coordinates of $y$ centered at $x$.
It follows from the definition of $\widehat{\omega}(x,\lambda)$ that
\begin{equation}\label{A49}
\widehat{\omega}(x,\lambda)(y)=\omega'(x,\lambda)(y)=0\hspace{2mm}\hbox{ for }y\in M-B_{2\delta}(x).
\end{equation}
When $n=1$, there holds (see Proposition 1 in \cite{Gamara2})
\begin{equation}\label{A50}
|\widehat{\omega}(x,\lambda)(y)-\omega'(x,\lambda)(y)|\leq C\lambda^{-1}(1+|\log(\lambda^{-2}+\rho_x(y)^2)|)\hspace{2mm}\mbox{ for }y\in B_{2\delta}(x)
\end{equation}
and (see (3.6) in \cite{Gamara2})
\begin{equation}\label{A51}
\big|L_{\theta_0}\big(\widehat{\omega}(x,\lambda)(y)-\omega'(x,\lambda)(y)\big)\big|\leq
\inf\Big\{1,\frac{C}{\rho_x(y)^2+\lambda^{-2}}\Big\} \hspace{2mm}\mbox{ for }y\in B_{2\delta}(x).
\end{equation}
It follows from (\ref{A49})-(\ref{A51}) that
\begin{equation}\label{A41.3}
\|\widehat{\omega}(x,\lambda)-\omega'(x,\lambda)\|_{S_1^2(M)}\to 0\hspace{2mm}\mbox{ as }\lambda\to\infty.
\end{equation}
Similarly, when $M$ is spherical, it follows from Lemma 3 and Lemma 4 in \cite{Gamara1} that
\begin{equation}\label{A41.6}
|\widehat{\omega}(x,\lambda)(y)-\omega'(x,\lambda)(y)|\leq\frac{C}{\lambda^n}\hspace{2mm}\mbox{ for }y\in B_{2\delta}(x)
\end{equation}
and
\begin{equation}\label{A41.5}
\|\widehat{\omega}(x,\lambda)-\omega'(x,\lambda)\|_{S_1^2(M)}=O(\frac{1}{\lambda}).
\end{equation}

We have the following:

\begin{prop}\label{propA.12}
There holds
$$
\int_{B_{\rho_{1,\nu}}(x^*_{1,\nu})}
\Big|v_\nu-\omega'(x^*_{1,\nu},\frac{\gamma_1}{\rho_{1,\nu}})\Big|^{2+\frac{2}{n}}dV_{\theta_0}
\to 0\hspace{2mm}\mbox{ as }\nu\to\infty. $$
\end{prop}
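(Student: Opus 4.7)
The plan is to reduce the integral over $B_{\rho_{1,\nu}}(x^*_{1,\nu})$ to one over a fixed coordinate ball in $\mathbb{H}^n$, where the desired convergence follows directly from the strong $S_1^2$ convergence established in Lemma \ref{lemA.9}. First, since $\rho_{1,\nu}\to 0$ by Lemma \ref{lemA.5}, for large $\nu$ we have $\rho_{1,\nu}<\delta$, so on $B_{\rho_{1,\nu}}(x^*_{1,\nu})$ the cut-off $\chi_\delta\equiv 1$ and hence $\omega'(x^*_{1,\nu},\gamma_1/\rho_{1,\nu})=\varphi_{x^*_{1,\nu}}\,\omega(x^*_{1,\nu},\gamma_1/\rho_{1,\nu})$. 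Recalling that $\widetilde{v}_\nu=\varphi_{x^*_{1,\nu}}^{-1}v_\nu$ and that $\widehat{\theta}_{x^*_{1,\nu}}=\varphi_{x^*_{1,\nu}}^{2/n}\theta_0$ gives $dV_{\theta_0}=\varphi_{x^*_{1,\nu}}^{-(2+2/n)}dV_{\widehat{\theta}_{x^*_{1,\nu}}}$, the conformal factors cancel and
$$\int_{B_{\rho_{1,\nu}}(x^*_{1,\nu})}\bigl|v_\nu-\omega'\bigr|^{2+\frac{2}{n}}dV_{\theta_0}=\int_{B_{\rho_{1,\nu}}(x^*_{1,\nu})}\bigl|\widetilde{v}_\nu-\omega(x^*_{1,\nu},\gamma_1/\rho_{1,\nu})\bigr|^{2+\frac{2}{n}}dV_{\widehat{\theta}_{x^*_{1,\nu}}}.$$

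Next, I would apply the parabolic change of variables $(z',t')=(\rho_{1,\nu}z,\rho_{1,\nu}^2t)$ in CR normal coordinates at $x^*_{1,\nu}$. A direct substitution in (\ref{A41.4}) with $\lambda=\gamma_1/\rho_{1,\nu}$ shows that, at the point $y$ whose CR normal coordinates are $(z',t')$,
$$\omega(x^*_{1,\nu},\gamma_1/\rho_{1,\nu})(y)=\rho_{1,\nu}^{-n}\,\widetilde{V}(z,t),$$
while by the definition (\ref{A44}) of $\widetilde{V}_\nu$, and since $\widehat{\chi}\equiv 1$ on any fixed bounded set for $\nu$ large, one has $\widetilde{v}_\nu(y)=\rho_{1,\nu}^{-n}\widetilde{V}_\nu(z,t)$. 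By (\ref{B.5.5}) the ball $B_{\rho_{1,\nu}}(x^*_{1,\nu})$ is contained in the coordinate set $\{\rho_{x^*_{1,\nu}}<C_0\rho_{1,\nu}\}$, which maps under the rescaling to the fixed ball $B_0=\{(t^2+|z|^4)^{1/4}<C_0\}\subset\mathbb{H}^n$. Using (\ref{B.0}), namely $dV_{\widehat{\theta}_{x^*_{1,\nu}}}=(1+O(\rho_{x^*_{1,\nu}}))dV_{\theta_{\mathbb{H}^n}}$, and the Jacobian factor $dz'\,d\bar{z}'\,dt'=\rho_{1,\nu}^{2n+2}\,dz\,d\bar{z}\,dt$, the explicit powers of $\rho_{1,\nu}$ cancel and
$$\int_{B_{\rho_{1,\nu}}(x^*_{1,\nu})}\bigl|\widetilde{v}_\nu-\omega\bigr|^{2+\frac{2}{n}}dV_{\widehat{\theta}_{x^*_{1,\nu}}}\leq(1+o(1))\int_{B_0}\bigl|\widetilde{V}_\nu-\widetilde{V}\bigr|^{2+\frac{2}{n}}dV_{\theta_{\mathbb{H}^n}}.$$

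Finally, Lemma \ref{lemA.9} gives $\widetilde{V}_\nu\to\widetilde{V}$ strongly in $S_1^2(B_0)$, and by the continuous Folland-Stein embedding $S_1^2(B_0)\hookrightarrow L^{2+2/n}(B_0)$ (Proposition \ref{Theorem 2.1}), this upgrades to strong convergence in $L^{2+2/n}(B_0)$. Hence the right-hand side tends to zero, completing the proof. The only real subtlety is the careful bookkeeping of three concurrent transformations — the conformal factor $\varphi_{x^*_{1,\nu}}$ that intertwines $\theta_0$ with $\widehat{\theta}_{x^*_{1,\nu}}$, the parabolic rescaling by $\rho_{1,\nu}$, and the coordinate passage from $\widehat{\theta}_{x^*_{1,\nu}}$ to $\theta_{\mathbb{H}^n}$ — but each correction is either identically accounted for by conformal covariance or produces a multiplicative $1+o(1)$ because $\rho_{1,\nu}\to 0$.
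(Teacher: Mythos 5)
Your proposal is correct and follows essentially the same route as the paper: restrict to large $\nu$ so that $\chi_\delta\equiv 1$ on $B_{\rho_{1,\nu}}(x^*_{1,\nu})$, cancel the conformal factor to pass to $\widehat{\theta}_{x^*_{1,\nu}}$, use (\ref{B.5.5}) to enclose the ball in the coordinate set $\{\rho_{x^*_{1,\nu}}\leq C_0\rho_{1,\nu}\}$, rescale to the fixed ball $\{(t^2+|z|^4)^{1/4}\leq C_0\}$ where the integrand becomes $|\widetilde{V}_\nu-\widetilde{V}|^{2+\frac{2}{n}}$, and conclude by Lemma \ref{lemA.9}. Your explicit mention of the Folland--Stein embedding to upgrade strong $S_1^2$ convergence to $L^{2+\frac{2}{n}}$ convergence is a step the paper leaves implicit, but it is the same argument.
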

\begin{proof}
By Lemma \ref{lemA.5}, we can find $N$ such that
$$C_0 \rho_{1,\nu}\leq \delta\hspace{2mm}\mbox{ for }\nu\geq N,$$
where $C_0$ is the constant in (\ref{B.5.5}).
Therefore, for $\nu\geq N$, we have
\begin{equation}\label{A78}
\begin{split}
&\int_{B_{\rho_{1,\nu}}(x^*_{1,\nu})}
\Big|v_\nu-\omega'(x^*_{1,\nu},\frac{\gamma_1}{\rho_{1,\nu}})\Big|^{2+\frac{2}{n}}dV_{\theta_0}\\
&=\int_{B_{\rho_{1,\nu}}(x^*_{1,\nu})}
\Big|\widetilde{v}_\nu(x)-
\chi_\delta(\rho_{x^*_{1,\nu}}(x))\omega(x^*_{1,\nu},\frac{\gamma_1}{\rho_{1,\nu}})(x)
\Big|^{2+\frac{2}{n}} dV_{\widehat{\theta}_{x^*_{1,\nu}}}\\
&\leq
\int_{\{(|z|^4+t^2)^{\frac{1}{4}}\leq C_0\rho_{1,\nu}\}}
\Big|\widetilde{v}_\nu(z,t)-
\omega(x^*_{1,\nu},\frac{\gamma_1}{\rho_{1,\nu}})(z,t)\Big|^{2+\frac{2}{n}}
dV_{\theta_{\mathbb{H}^n}}\\
&=\int_{\{(|z|^4+t^2)^{\frac{1}{4}}\leq C_0\}}
\Big|\widetilde{V}_\nu(\tilde{z},\tilde{t})-\widetilde{V}_\nu(\tilde{z},\tilde{t})\Big|^{2+\frac{2}{n}}
dV_{\theta_{\mathbb{H}^n}},
\end{split}
\end{equation}
where the first inequality follows from (\ref{B.5.5}),
the first equality from the definition of $\omega'(x^*_{1,\nu},\frac{\gamma_1}{\rho_{1,\nu}})$ in (\ref{A74}),
and the last equality follows from  (\ref{A56}), (\ref{A41.4}) and
 the change of variables $(\tilde{z},\tilde{t})=(\frac{z}{\rho_{1,\nu}},\frac{t}{(\rho_{1,\nu})^2})$.
Thanks to Lemma \ref{lemA.9}, the last expression in (\ref{A78}) tends to zero as $\nu\to\infty$.
This proves the assertion.
\end{proof}

We can now extract from $\{v_\nu\}$ the first bubble
and consider the following new sequence of functions:
$$v_\nu^1(x)=v_\nu(x)-\widehat{\omega}(x^*_{1,\nu},\frac{\gamma_1}{\rho_{1,\nu}})(x).$$

\begin{lem}\label{lemA.10}
(i)
The sequence $\{v_\nu^1\}$ satisfies
\begin{equation*}
\int_M\Big|L_{\theta_0}v_\nu^1-r_\infty |v_\nu^1|^{\frac{2}{n}}v_\nu^1\Big|^{\frac{2n+2}{n+2}}dV_{\theta_0}
\to 0\hspace{2mm}\mbox{ as }\nu\to\infty
\end{equation*}
(ii) There exists a constant $C$ such that
$$
\int_{M}\left(|\nabla_{\theta_0}v^1_\nu|_{\theta_0}^2+(v^1_\nu)^2\right)dV_{\theta_0}\leq C\hspace{2mm}
\mbox{ for all }\nu.$$
\end{lem}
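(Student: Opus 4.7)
The strategy is to decompose
$$L_{\theta_0}v_\nu^1-r_\infty|v_\nu^1|^{\tfrac{2}{n}}v_\nu^1 \;=\; \big[L_{\theta_0}v_\nu-r_\infty|v_\nu|^{\tfrac{2}{n}}v_\nu\big] \;+\; r_\infty\,\mathrm{I}_\nu \;+\; r_\infty\,\mathrm{II}_\nu ,$$
where, writing $w_\nu:=\widehat{\omega}(x^*_{1,\nu},\gamma_1/\rho_{1,\nu})$ and using (\ref{A74}) in the form $L_{\theta_0}w_\nu=r_\infty(\omega'_\nu)^{1+2/n}$, I set
$$\mathrm{I}_\nu := |v_\nu|^{\tfrac{2}{n}}v_\nu - |v_\nu^1|^{\tfrac{2}{n}}v_\nu^1 - w_\nu^{1+\tfrac{2}{n}},\qquad \mathrm{II}_\nu := w_\nu^{1+\tfrac{2}{n}} - (\omega_\nu')^{1+\tfrac{2}{n}}.$$
The first bracket tends to zero in $L^{(2n+2)/(n+2)}(M)$ by (\ref{Assumption}). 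For $\mathrm{II}_\nu$, estimates (\ref{A41.3}) and (\ref{A41.5}) give $w_\nu-\omega_\nu'\to 0$ in $S_1^2(M)$, hence (by Folland--Stein) in $L^{2+2/n}(M)$; combining the elementary bound $\big||x|^{2/n}x-|y|^{2/n}y\big|\le C(|x|^{2/n}+|y|^{2/n})|x-y|$ with H\"{o}lder, as in the proof of Proposition~\ref{propB.2.5}, then yields convergence to zero in $L^{(2n+2)/(n+2)}(M)$, the uniform $L^{2+2/n}$-bounds on $w_\nu$ and $\omega'_\nu$ being automatic.

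The main task is $\mathrm{I}_\nu$. The elementary pointwise inequality
$$\big||a|^{\tfrac{2}{n}}a-|a-b|^{\tfrac{2}{n}}(a-b)-|b|^{\tfrac{2}{n}}b\big|\le C\big(|b|^{\tfrac{2}{n}}|a-b|+|b|\,|a-b|^{\tfrac{2}{n}}\big)$$
(applied with $a=v_\nu$, $b=w_\nu$) reduces matters to proving
$$\int_M w_\nu^{\tfrac{4(n+1)}{n(n+2)}}|v_\nu^1|^{\tfrac{2(n+1)}{n+2}}dV_{\theta_0} + \int_M w_\nu^{\tfrac{2(n+1)}{n+2}}|v_\nu^1|^{\tfrac{4(n+1)}{n(n+2)}}dV_{\theta_0}\longrightarrow 0.$$
For a large constant $R$ I would split each integral over $B_{R\rho_{1,\nu}}(x^*_{1,\nu})$ and its complement. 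On the exterior, the bubble $w_\nu$ is concentrated at scale $\rho_{1,\nu}/\gamma_1$, so $\|w_\nu\|_{L^{2+2/n}(M\setminus B_{R\rho_{1,\nu}}(x^*_{1,\nu}))}\to 0$ uniformly as $R\to\infty$; H\"{o}lder's inequality against the uniform $L^{2+2/n}$-bound for $v_\nu^1$ yields an error which tends to zero as $R\to\infty$. On the interior, I rescale by $(\tilde z,\tilde t)=(z/\rho_{1,\nu},t/\rho_{1,\nu}^2)$ in CR normal coordinates centered at $x^*_{1,\nu}$; under this rescaling $v_\nu$ becomes $\widetilde V_\nu$ (cf.~(\ref{A44})) while $w_\nu$ becomes (a cutoff of) the limit bubble $\widetilde V$ from (\ref{A56}), up to an error of $S_1^2$-norm $o(1)$ controlled by (\ref{A41.3})/(\ref{A41.5}). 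By Lemma~\ref{lemA.9}, $\widetilde V_\nu\to \widetilde V$ strongly in $S_1^2(B_R)$, and hence in $L^{2+2/n}(B_R)$. A final H\"{o}lder estimate using this strong convergence drives the rescaled (and therefore the original) interior integral to zero for each fixed $R$. Taking $R\to\infty$ afterwards closes the estimate. This is the delicate step: the strong $S_1^2$-convergence of Lemma~\ref{lemA.9}, rather than mere weak convergence, is essential for handling the cross terms in $\mathrm{I}_\nu$ in the concentration region.

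Part~(ii) is then routine. By the Minkowski inequality together with (\ref{A41.3})/(\ref{A41.5}), the quantity $\|w_\nu\|_{S_1^2(M)}$ is uniformly bounded (it is essentially $\|\omega'_\nu\|_{S_1^2(M)}$, which itself is controlled by the fixed $S_1^2(\mathbb{H}^n)$-norm of the bubble $\widetilde V$ via a rescaling computation using (\ref{B.0})--(\ref{B.4.5})). Combined with the uniform $S_1^2(M)$-bound on $v_\nu$ already established before Lemma~\ref{lemA.3}, we obtain the claimed bound for $v_\nu^1=v_\nu-w_\nu$.
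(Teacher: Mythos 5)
Your decomposition is exactly the one the paper uses: the first bracket is controlled by (\ref{Assumption}), your $\mathrm{II}_\nu$ is the paper's term (\ref{A76}) (the discrepancy between $L_{\theta_0}\widehat{\omega}$ and $r_\infty\widehat{\omega}^{1+2/n}$, controlled by (\ref{A49})--(\ref{A41.5})), and your $\mathrm{I}_\nu$ is the paper's term (\ref{A75}); part (ii) is likewise the same Cauchy--Schwarz argument based on a uniform $S_1^2$ bound for the bubble. The only genuine difference is in how the cross term $\mathrm{I}_\nu$ is closed. The paper first swaps $\widehat{\omega}$ for $\omega'$ pointwise and then invokes Proposition \ref{propA.12}, i.e.\ the smallness of $\|v_\nu-\omega'\|_{L^{2+2/n}}$ on the single ball $B_{\rho_{1,\nu}}(x^*_{1,\nu})$, whereas you perform an interior/exterior splitting at the larger scale $R\rho_{1,\nu}$: on the interior you rescale and use the strong local convergence $\widetilde V_\nu\to\widetilde V$ of Lemma \ref{lemA.9} on $B_{CR}$ (the same mechanism as Proposition \ref{propA.12}, just on a dilated ball), and on the exterior you use the bubble tail bound $\|w_\nu\|_{L^{2+2/n}(M\setminus B_{R\rho_{1,\nu}})}\to 0$ as $R\to\infty$ together with H\"{o}lder against the uniform $L^{2+2/n}$ bound on $v_\nu^1$ (here Proposition \ref{propA.11}, $\gamma_1\geq C_1$, is needed so that the concentration scale is really comparable to $\rho_{1,\nu}$). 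What this buys is a cleaner accounting of the region of integration: the cross term lives on the whole support $B_{2\delta}(x^*_{1,\nu})$ of $\widehat{\omega}$, not merely on $B_{\rho_{1,\nu}}(x^*_{1,\nu})$ as the display (\ref{A77}) suggests, and your tail estimate handles the annulus $B_{2\delta}\setminus B_{R\rho_{1,\nu}}$ explicitly rather than leaving it implicit. Your H\"{o}lder exponents check out ($\tfrac{n+2}{2}$ and $\tfrac{n+2}{n}$ place both factors in $L^{2+2/n}$), so the argument is complete.
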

\begin{proof}
We follow the proof of Lemma 14 in \cite{Gamara1}.
By Lemma \ref{lemA.5}, we can choose $N$ such that
$\rho_{1,\nu}\leq 2\delta$ for $\nu\geq N.$
For $\nu\geq N$, it follows from (\ref{A49}) that
\begin{equation}\label{A77}
\begin{split}
&\int_M\Big|L_{\theta_0}v_\nu^1-r_\infty |v_\nu^1|^{\frac{2}{n}}v_\nu^1\Big|^{\frac{2n+2}{n+2}}dV_{\theta_0}\\
&\leq C\int_{M}\Big|L_{\theta_0}v_\nu-r_\infty |v_\nu|^{\frac{2}{n}}v_\nu\Big|^{\frac{2n+2}{n+2}}dV_{\theta_0}\\
&\hspace{4mm}+C\int_{B_{\rho_{1,\nu}}(x^*_{1,\nu})}\Big|L_{\theta_0}\widehat{\omega}(x^*_{1,\nu},\frac{\gamma_1}{\rho_{1,\nu}})-r_\infty \widehat{\omega}(x^*_{1,\nu},\frac{\gamma_1}{\rho_{1,\nu}})^{1+\frac{2}{n}}\Big|^{\frac{2n+2}{n+2}}dV_{\theta_0}\\
&\hspace{4mm}+C\,r_\infty\int_{B_{\rho_{1,\nu}}(x^*_{1,\nu})}\left||v_\nu|^{\frac{2}{n}}v_\nu
-\widehat{\omega}(x^*_{1,\nu},\frac{\gamma_1}{\rho_{1,\nu}})^{1+\frac{2}{n}}\right.\\
&\hspace{20mm}\left.
-\Big|v_\nu-\widehat{\omega}(x^*_{1,\nu},\frac{\gamma_1}{\rho_{1,\nu}})\Big|^{\frac{2}{n}}
\Big(v_\nu-\widehat{\omega}(x^*_{1,\nu},\frac{\gamma_1}{\rho_{1,\nu}})\Big)\right|^{\frac{2n+2}{n+2}}dV_{\theta_0}.
\end{split}
\end{equation}
It follows from (\ref{A49}), (\ref{A50}), (\ref{A41.6}),
Lemma \ref{lemA.5} and
 the definition of $\widehat{\omega}(x,\lambda)$ that
\begin{equation}\label{A76}
\begin{split}
&\int_M\Big|L_{\theta_0}\widehat{\omega}(x^*_{1,\nu},\frac{\gamma_1}{\rho_{1,\nu}})-r_\infty \widehat{\omega}(x^*_{1,\nu},\frac{\gamma_1}{\rho_{1,\nu}})^{1+\frac{2}{n}}\Big|^{\frac{2n+2}{n+2}}dV_{\theta_0}\\
&=r_\infty\int_M\Big|\omega'(x^*_{1,\nu},\frac{\gamma_1}{\rho_{1,\nu}})^{1+\frac{2}{n}}- \widehat{\omega}(x^*_{1,\nu},\frac{\gamma_1}{\rho_{1,\nu}})^{1+\frac{2}{n}}\Big|^{\frac{2n+2}{n+2}}dV_{\theta_0}
=O(\rho_{1,\nu})=o(1).
\end{split}
\end{equation}
On the other hand,
\begin{equation*}
\begin{split}
&\left||v_\nu|^{\frac{2}{n}}v_\nu
-\widehat{\omega}(x^*_{1,\nu},\frac{\gamma_1}{\rho_{1,\nu}})^{1+\frac{2}{n}}-\Big|v_\nu-\widehat{\omega}(x^*_{1,\nu},\frac{\gamma_1}{\rho_{1,\nu}})\Big|^{\frac{2}{n}}
\Big(v_\nu-\widehat{\omega}(x^*_{1,\nu},\frac{\gamma_1}{\rho_{1,\nu}})\Big)\right|\\
&\leq
\left||v_\nu|^{\frac{2}{n}}v_\nu
-\omega'(x^*_{1,\nu},\frac{\gamma_1}{\rho_{1,\nu}})^{1+\frac{2}{n}}-\Big|v_\nu-\omega'(x^*_{1,\nu},\frac{\gamma_1}{\rho_{1,\nu}})\Big|^{\frac{2}{n}}
\Big(v_\nu-\omega'(x^*_{1,\nu},\frac{\gamma_1}{\rho_{1,\nu}})\Big)\right|\\
&\hspace{4mm}
+\left|\omega'(x^*_{1,\nu},\frac{\gamma_1}{\rho_{1,\nu}})^{1+\frac{2}{n}}
-\widehat{\omega}(x^*_{1,\nu},\frac{\gamma_1}{\rho_{1,\nu}})^{1+\frac{2}{n}}\right|+\left|
\omega'(x^*_{1,\nu},\frac{\gamma_1}{\rho_{1,\nu}})-\widetilde{\omega}(x^*_{1,\nu},\frac{\gamma_1}{\rho_{1,\nu}})\right|^{1+\frac{2}{n}}\\
&\hspace{4mm}+\left|
\Big|v_\nu-\omega'(x^*_{1,\nu},\frac{\gamma_1}{\rho_{1,\nu}})\Big|^{\frac{2}{n}}
\Big(\widetilde{\omega}(x^*_{1,\nu},\frac{\gamma_1}{\rho_{1,\nu}})-\omega'(x^*_{1,\nu},\frac{\gamma_1}{\rho_{1,\nu}})\Big)\right|\\
&\hspace{4mm}+\left|
\Big|\widetilde{\omega}(x^*_{1,\nu},\frac{\gamma_1}{\rho_{1,\nu}})-\omega'(x^*_{1,\nu},\frac{\gamma_1}{\rho_{1,\nu}})\Big|^{\frac{2}{n}}
\Big(v_\nu-\omega'(x^*_{1,\nu},\frac{\gamma_1}{\rho_{1,\nu}})\Big)\right|\\
&:=I+II+III+IV+V.
\end{split}
\end{equation*}
It follows from the proof of (\ref{A7}) that
$$I=O\left(|v_\nu|^{\frac{2}{n}}
\Big|v_\nu-\omega'(x^*_{1,\nu},\frac{\gamma_1}{\rho_{1,\nu}})\Big|
+\Big|v_\nu-\omega'(x^*_{1,\nu},\frac{\gamma_1}{\rho_{1,\nu}})\Big|^{\frac{2}{n}}
|v_\nu|\right).$$
This together with H\"{o}lder's inequality implies that
\begin{equation*}
\begin{split}
&\int_{B_{\rho_{1,\nu}}(x^*_{1,\nu})}
\left||v_\nu|^{\frac{2}{n}}v_\nu
-\omega'(x^*_{1,\nu},\frac{\gamma_1}{\rho_{1,\nu}})^{1+\frac{2}{n}}-\Big|v_\nu-\omega'(x^*_{1,\nu},\frac{\gamma_1}{\rho_{1,\nu}})\Big|^{\frac{2}{n}}
\Big(v_\nu-\omega'(x^*_{1,\nu},\frac{\gamma_1}{\rho_{1,\nu}})\Big)\right|^{\frac{2n+2}{n+2}}dV_{\theta_0}\\
&\leq C\left(\int_M|v_\nu|^{2+\frac{2}{n}}\right)^{\frac{2}{n+2}}\left(
\int_{B_{\rho_{1,\nu}}(x^*_{1,\nu})}\Big|v_\nu-\omega'(x^*_{1,\nu},\frac{\gamma_1}{\rho_{1,\nu}})\Big|^{2+\frac{2}{n}}dV_{\theta_0}\right)^{\frac{n}{n+2}}\\
&\hspace{4mm}
+C\left(
\int_{B_{\rho_{1,\nu}}(x^*_{1,\nu})}\Big|v_\nu-\omega'(x^*_{1,\nu},\frac{\gamma_1}{\rho_{1,\nu}})\Big|^{2+\frac{2}{n}}dV_{\theta_0}\right)^{\frac{2}{n+2}}
\left(\int_M|v_\nu|^{2+\frac{2}{n}}\right)^{\frac{n}{n+2}}\\
&=o(1),
\end{split}
\end{equation*}
where the last equality follows from Proposition \ref{propA.12}
and the fact that $v_\nu$ is bounded in $L^{2+\frac{2}{n}}(M)$.
Also, it follows from (\ref{A49}), (\ref{A50}) and (\ref{A41.6}) that
\begin{equation*}
II\leq C\frac{\rho_{1,\nu}}{\gamma_1}\hspace{2mm}\mbox{ and }\hspace{2mm}III\leq C\frac{\rho_{1,\nu}}{\gamma_1}.
\end{equation*}
It follows from (\ref{A49}), (\ref{A50}), (\ref{A41.6}) and H\"{o}lder's inequality that
\begin{equation*}
IV\leq C\frac{\rho_{1,\nu}}{\gamma_1}\Big|v_\nu-\omega'(x^*_{1,\nu},\frac{\gamma_1}{\varepsilon^*_{1,\nu}})\Big|^{\frac{2}{n}}\hspace{2mm}\mbox{ and }\hspace{2mm}V\leq C\Big(\frac{\varepsilon^*_{1,\nu}}{\gamma_1}\Big)^{\frac{2}{n}}
\Big|v_\nu-\omega'(x^*_{1,\nu},\frac{\gamma_1}{\rho_{1,\nu}})\Big|.
\end{equation*}
Combining all these with
Lemma \ref{lemA.5}, Proposition \ref{propA.11} and Proposition \ref{propA.12}, we can conclude that
\begin{equation}\label{A75}
\begin{split}
&\int_{B_{\rho_{1,\nu}}(x^*_{1,\nu})}\left||v_\nu|^{\frac{2}{n}}v_\nu
-\widehat{\omega}(x^*_{1,\nu},\frac{\gamma_1}{\rho_{1,\nu}})^{1+\frac{2}{n}}\right.\\
&\hspace{16mm}\left.
-\Big|v_\nu-\widehat{\omega}(x^*_{1,\nu},\frac{\gamma_1}{\rho_{1,\nu}})\Big|^{\frac{2}{n}}
\Big(v_\nu-\widehat{\omega}(x^*_{1,\nu},\frac{\gamma_1}{\rho_{1,\nu}})\Big)\right|^{\frac{2n+2}{n+2}}dV_{\theta_0}
=o(1).
\end{split}
\end{equation}
Now (i) follows from (\ref{Assumption}), (\ref{A77}), (\ref{A76}) and (\ref{A75}).

For (ii), note that
\begin{equation}\label{A52}
\begin{split}
&\int_M\left((2+\frac{2}{n})|\nabla_{\theta_0}\widehat{\omega}(x,\lambda)|_{\theta_0}^2+R_{\theta_0}\widehat{\omega}(x,\lambda)^2\right)dV_{\theta_0}\\
&=r_\infty\int_M\omega'(x,\lambda)^{1+\frac{2}{n}}\widehat{\omega}(x,\lambda)dV_{\theta_0}\\
&=r_\infty\int_M\omega'(x,\lambda)^{2+\frac{2}{n}}dV_{\theta_0}+O(\frac{1}{\lambda})
\int_M\omega'(x,\lambda)^{1+\frac{2}{n}}dV_{\theta_0}\\
&\leq r_\infty\int_M\omega'(x,\lambda)^{2+\frac{2}{n}}dV_{\theta_0}+O(\frac{1}{\lambda})
\left(\int_M\omega'(x,\lambda)^{2+\frac{2}{n}}dV_{\theta_0}\right)^{\frac{n+2}{2n+2}}
\end{split}
\end{equation}
where the first equality follows from the definition of $\widehat{\omega}(x,\lambda)$,
the second equality follows from (\ref{A49}), (\ref{A50}) and (\ref{A41.6}),
the last inequality follows from the H\"{o}lder's inequality.
Note also that
it follows from the definition of $\omega'(x,\lambda)$ that
\begin{equation}\label{A53}
\begin{split}
\int_M\omega'(x,\lambda)^{2+\frac{2}{n}}dV_{\theta_0}
\leq\int_{B_{2\delta}(x)}\omega(x,\lambda)^{2+\frac{2}{n}}dV_{\theta_0}
\leq \int_M\omega(x,1)^{2+\frac{2}{n}}dV_{\theta_0},
\end{split}
\end{equation}
where the last inequality  follows from the change of variables $(\tilde{z},\tilde{t})=(\lambda z,\lambda^2t)$
in the CR normal coordinates.
We derive from (\ref{A52}) and (\ref{A53})
that there exists a uniform constant $C$ such that
\begin{equation}\label{A54}
\int_M\left((2+\frac{2}{n})|\nabla_{\theta_0}\widehat{\omega}(x,\lambda)|_{\theta_0}^2+R_{\theta_0}\widehat{\omega}(x,\lambda)^2\right)dV_{\theta_0}\leq
C
\end{equation}
when $\lambda$ is sufficiently large.
By Cauchy-Schwarz inequality, we have
\begin{equation}\label{A55}
\begin{split}
&\int_{M}\left(|\nabla_{\theta_0}v^1_\nu|_{\theta_0}^2+(v^1_\nu)^2\right)dV_{\theta_0}\\
&\leq 2\int_{M}\left(|\nabla_{\theta_0}v_\nu|_{\theta_0}^2+v_\nu^2\right)dV_{\theta_0}
+2\int_{M}\left(|\nabla_{\theta_0}\widehat{\omega}(x^*_{1,\nu},\frac{\gamma_1}{\rho_{1,\nu}})|_{\theta_0}^2+
\widehat{\omega}(x^*_{1,\nu},\frac{\gamma_1}{\rho_{1,\nu}})^2\right)dV_{\theta_0}.
\end{split}
\end{equation}
Now (ii) follows from combining (\ref{A54}), (\ref{A55})
and the assumption that $\{v_\nu\}$ is uniformly bounded in $S_1^2(M)$.
This proves Lemma \ref{lemA.10}.
\end{proof}

Iterating the above procedure, either $v_\nu^1$ converges strongly to $0$  in $S_1^2(M)$ as $\nu\to\infty$,
or we can find a new sequence $(x^*_{2,\nu},\rho_{2,\nu})$
and extract another bubble by defining
$$v_\nu^2(x)=v_\nu^1(x)-\widehat{\omega}(x^*_{2,\nu},\frac{\gamma_2}{\rho_{2,\nu}})(x)$$
and show that
\begin{equation*}
\int_M\Big|L_{\theta_0}v_\nu^2-r_\infty |v_\nu^2|^{\frac{2}{n}}v_\nu^2\Big|^{\frac{2n+2}{n+2}}dV_{\theta_0}
\to 0\hspace{2mm}\mbox{ as }\nu\to\infty.
\end{equation*}
On the other hand, it can be shown that (see Lemma 15 and Lemma 16 in \cite{Gamara1})
\begin{equation}\label{A42}
\rho_{2,\nu}\geq\frac{1}{2}\rho_{1,\nu}\hspace{2mm}\mbox{ and }\hspace{2mm}
\frac{\rho_{2,\nu}}{\rho_{1,\nu}}+\frac{d(x^*_{1,\nu},x^*_{2,\nu})^2}{\rho_{1,\nu}\,\rho_{2,\nu}}
\to\infty
\end{equation}
as $\nu\to\infty$. Here
 $d$ is Carnot-Carath\'{e}odory distance on $M$
with respect to the contact form $\theta_0$.
This argument can be iterated as long as the new sequence $\{v_\nu^l\}$ does not coverage strongly to $0$
in $S_1^2(M)$.
And we claim that the iteration must terminate in finite steps. To see this, note that
\begin{equation*}
\begin{split}
&\int_M \left((2+\frac{2}{n})|\nabla_{\theta_0}v_\nu^{l}|^2_{\theta_0}
+R_{\theta_0}(v_\nu^{l})^2\right)dV_{\theta_0}\\
&=\int_M \left((2+\frac{2}{n})\Big|\nabla_{\theta_0}\Big(v_\nu^{l-1}-\widehat{\omega}(x^*_{l,\nu},\frac{\gamma_{l}}{\rho_{l,\nu}})\Big)\Big|^2_{\theta_0}
+R_{\theta_0}\Big(v_\nu^{l-1}-\widehat{\omega}(x^*_{l,\nu},\frac{\gamma_{l}}{\rho_{l,\nu}})\Big)^2\right)dV_{\theta_0}\\
&=\int_M \left((2+\frac{2}{n})|\nabla_{\theta_0}v_\nu^{l-1}|^2_{\theta_0}
+R_{\theta_0}(v_\nu^{l-1})^2\right)dV_{\theta_0}\\
&\hspace{4mm}
+r_\infty\int_M\widehat{\omega}(x^*_{l,\nu},\frac{\gamma_{l}}{\rho_{l,\nu}})
\omega'(x^*_{l,\nu},\frac{\gamma_{l}}{\rho_{l,\nu}})^{1+\frac{2}{n}}dV_{\theta_0}
+o(1),
\end{split}
\end{equation*}
where the first equality follows from the fact
that $v_\nu^{l-1}$ converges to $0$ weakly in $S_1^2(M)$ as $\nu\to\infty$,
and the  last equality follows from (\ref{A74}).
We compute
\begin{equation*}
\begin{split}
&\int_M\widehat{\omega}(x^*_{l,\nu},\frac{\gamma_{l}}{\rho_{l,\nu}})
\omega'(x^*_{l,\nu},\frac{\gamma_{l}}{\rho_{l,\nu}})^{1+\frac{2}{n}}dV_{\theta_0}\\
&\geq\int_{B_{\delta}(x^*_{l,\nu})}\Big(\varphi_{x^*_{l,\nu}}(y)\omega(x^*_{l,\nu},\frac{\gamma_{l}}{\rho_{l,\nu}})(y)\Big)^{2+\frac{2}{n}}dV_{\theta_0}+o(1)\\
&\geq \int_{\{(|z|^4+t^2)^{\frac{1}{4}}\leq \frac{\delta}{C_0}\}}\omega(x^*_{l,\nu},\frac{\gamma_{l}}{\rho_{l,\nu}})(z,t)^{2+\frac{2}{n}}dV_{\theta_{\mathbb{H}^n}}+o(1)\\
&=\left(\frac{n(2n+2)}{r_\infty}\right)^{n+1}
\int_{\{(|\tilde{z}|^4+\tilde{t}^2)^{\frac{1}{4}}\leq \frac{\delta}{C_0\rho_{l,\nu}}\}}
\left(\frac{\gamma_{l}^2}{\gamma_{l}^4\tilde{t}^2+(\gamma_{l}^2|\tilde{z}|^2+1)^2}\right)^{n+1}dV_{\theta_{\mathbb{H}^n}}+o(1)\\
&=\left(\frac{n(2n+2)}{r_\infty}\right)^{n+1}
\int_{\{(|\widehat{z}|^4+\widehat{t}^2)^{\frac{1}{4}}\leq \frac{\gamma_{l}\delta}{C_0\rho_{l,\nu}}\}}
\left(\frac{1}{\widehat{t}^2+(|\widehat{z}|^2+1)^2}\right)^{n+1}dV_{\theta_{\mathbb{H}^n}}+o(1)\\
&\geq\left(\frac{n(2n+2)}{r_\infty}\right)^{n+1}
\int_{\{(|\widehat{z}|^4+\widehat{t}^2)^{\frac{1}{4}}\leq \frac{C_1\delta}{C_0}\}}
\left(\frac{1}{\widehat{t}^2+(|\widehat{z}|^2+1)^2}\right)^{n+1}dV_{\theta_{\mathbb{H}^n}}+o(1),
\end{split}
\end{equation*}
where the first inequality follows from (\ref{A49}), (\ref{A50}), (\ref{A41.6}) and the definition of $\omega'(x^*_{l,\nu},\frac{\gamma_{l}}{\rho_{l,\nu}})$,
the second inequality follows from (\ref{B.0}) and (\ref{B.5.5}),
the first equality follows from
the change of variables
$(\tilde{z},\tilde{t})=(\frac{z}{\rho_{l,\nu}},\frac{t}{(\rho_{l,\nu})^2})$,
the  second equality follows from the change of variables
$(\widehat{z},\widehat{t})=(\gamma_l\tilde{z},\gamma_{l}^2\tilde{t})$,
and the last inequality follows from Proposition \ref{propA.11} and Lemma \ref{lemA.5}.
 Hence, if we let
$$C_2=r_\infty\left(\frac{n(2n+2)}{r_\infty}\right)^{n+1}
\int_{\{(|\widehat{z}|^4+\widehat{t}^2)^{\frac{1}{4}}\leq \frac{C_1\delta}{C_0}\}}
\left(\frac{1}{\widehat{t}^2+(|\widehat{z}|^2+1)^2}\right)^{n+1}dV_{\theta_{\mathbb{H}^n}},$$
then it follows from the above computation that
\begin{equation*}
\begin{split}
&\int_M \left((2+\frac{2}{n})|\nabla_{\theta_0}v_\nu^{l}|^2_{\theta_0}
+R_{\theta_0}(v_\nu^{l})^2\right)dV_{\theta_0}\\
&\geq \int_M \left((2+\frac{2}{n})|\nabla_{\theta_0}v_\nu^{l-1}|^2_{\theta_0}
+R_{\theta_0}(v_\nu^{l-1})^2\right)dV_{\theta_0}+C_2+o(1).
\end{split}
\end{equation*}
That is to say, the quantity
$\displaystyle\int_M \left((2+\frac{2}{n})|\nabla_{\theta_0}v_\nu^{l-1}|^2_{\theta_0}
+R_{\theta_0}(v_\nu^{l-1})^2\right)dV_{\theta_0}$
at the $l$-th step decreases by at least $C_2$ after extraction of a bubble.
Therefore, the iteration must stop after finite steps.

Therefore, there exists an integer $m$
and a sequence of $m$-tuples $(x^*_{k,\nu},\varepsilon^*_{k,\nu})_{1\leq k\leq m}$
where
$\varepsilon^*_{k,\nu}=\displaystyle\frac{\rho_{k,\nu}}{\gamma_k}$
such that
$$
\varepsilon^*_{k,\nu}\to 0\hspace{2mm}\mbox{ as }\nu\to\infty\mbox{
for all }1\leq k\leq m,$$
by Lemma \ref{lemA.5} and Proposition\ref{propA.11}.
Also, we have
\begin{equation}\label{A43}
\Big\|v_\nu-\sum_{k=1}^m\widehat{\omega}(x^*_{k,\nu},\frac{1}{\varepsilon^*_{k,\nu}})\Big\|_{S^2_1(M)}\rightarrow
0\hspace{2 mm}\mbox{ as }\hspace{2 mm}\nu\rightarrow\infty.
\end{equation}
Now (\ref{4.4}) follows from (\ref{A42}) and Proposition \ref{propA.11}.
On the other hand, we have
\begin{equation*}
\begin{split}
&\Big\|u_\nu-u_\infty-\sum_{k=1}^m\overline{u}_{(x^*_{k,\nu},\varepsilon^*_{k,\nu})}\Big\|_{S^2_1(M)}\\
&=\left\|v_\nu-\sum_{k=1}^m\widehat{\omega}(x^*_{k,\nu},\frac{1}{\varepsilon^*_{k,\nu}})
-\sum_{k=1}^m\left(\omega'(x^*_{k,\nu},\frac{1}{\varepsilon^*_{k,\nu}})-\widehat{\omega}(x^*_{k,\nu},\frac{1}{\varepsilon^*_{k,\nu}})\right)\right.\\
&\hspace{6mm}\left.
-\sum_{k=1}^m
\varphi_{x^*_{k,\nu}}(y)
\left(\frac{n(2n+2)}{r_\infty}\right)^{\frac{n}{2}}
(\varepsilon^*_{k,\nu})^n
\Big(1-\chi_\delta(\rho_{x^*_{k,\nu}}(y))\Big)G_{x^*_{k,\nu}}(y)\right\|_{S^2_1(M)}=o(1)
\end{split}
\end{equation*}
where the first equality follows from (\ref{testfcn}) and (\ref{testfun2}),
and the last equality follows from (\ref{A41.4}), (\ref{A41.3}), (\ref{A41.5}), (\ref{A43}),
Lemma \ref{lemA.5} and the fact that
the Green's function $G_{x^*_{k,\nu}}(y)$ is bounded in $S_1^2(K)$
for any compact set $K\subset M-\{x^*_{k,\nu}\}$ (see (\ref{B.3}) and (\ref{B.4})).
This proves (\ref{4.5}) and this completes the proof of Theorem \ref{Theorem4.1}.

\bibliographystyle{amsplain}

\begin{thebibliography}{30}

\bibitem{Aubin0}  T. Aubin, \'{E}quations diff\'{e}rentielles non lin\'{e}aires et probl\`{e}me de Yamabe concernant
la courbure scalaire. \textit{J. Math. Pures Appl. (9)} \textbf{55} (1976), 269--296.

\bibitem{Bahri&Coron}
A. Bahri and J. M. Coron, On a nonlinear elliptic equation involving the critical
Sobolev exponent: The effect of the topology of the domain. \textit{Comm. Pure Appl.
Math.} \textbf{41} (1988), 253--294.


\bibitem{Bramanti} M. Bramanti and L. Brandolini, Schauder estimates for parabolic
nondivergence operators of H\"{o}rmander type. \textit{J.
Differential Equations} \textbf{234} (2007), 177--245.

\bibitem{Bramanti1}
M. Bramanti and L. Brandolini,
$L^p$ estimate for nonvariational hypoelliptic operators with VMO coefficients.
 \textit{Trans. Amer. Math. Soc.} \textbf{352} (2000), 781--822.


\bibitem{Brendle4} S. Brendle, Convergence of the Yamabe flow for arbitrary initial
energy. \textit{J. Differential Geom.} \textbf{69} (2005), 217--278.

\bibitem{Brendle5} S. Brendle, Convergence of the Yamabe flow in dimension 6 and higher. \textit{Invent.
Math.} \textbf{170} (2007),  541--576.



 \bibitem{Chang&Cheng} S. C. Chang and J. H. Cheng, The Harnack estimate for the Yamabe flow on CR manifolds
of dimension 3. \textit{Ann. Global Anal. Geom.} \textbf{21} (2002),
111--121.


\bibitem{Chang&Chiu&Wu} S. C. Chang, H. L. Chiu, and C. T. Wu,
The Li-Yau-Hamilton inequality for Yamabe flow on a closed CR 3-manifold.
 \textit{Trans. Amer. Math. Soc.} \textbf{362} (2010), 1681--1698.

 \bibitem{CCY}
S. Chanillo and H. L. Chiu, ans P. Yang,
Embeddability for 3-dimensional Cauchy-Riemann manifolds and CR Yamabe invariants,
\textit{Duke Math. J.} \textbf{161} (2012), 2909--2921.


\bibitem{Cheng&Chiu&Yang} J. H. Cheng, H. L. Chiu,  and P. Yang,
Uniformization of spherical CR manifolds. \textit{Adv. Math.} \textbf{255} (2014), 182--216.

\bibitem{Cheng&Malchiodi&Yang} J. H. Cheng, A. Malchiodi and P. Yang,
A positive mass theorem in three dimensional Cauchy-Riemann geometry. \textit{Adv. Math.}  \textbf{308}  (2017),
276--347.





\bibitem{Chow} B. Chow, The Yamabe flow on locally conformally flat manifolds
with positive Ricci curvature. \textit{Comm. Pure Appl. Math.}
\textbf{45} (1992), 1003--1014.

\bibitem{Citti} G. Citti, Semilinear Dirichlet problem involving critical exponent for the Kohn Laplacian,
\textit{Ann. Mat. Pura Appl. (4)} \textbf{169} (1995), 375--392.

\bibitem{Citti&Uguzzoni} G. Citti and F. Uguzzoni, Critical semilinear equations on the Heisenberg group: The
effect of the topology of the domain. \textit{Nonlinear Anal.} \textbf{46} (2001), 399--417.

\bibitem{Dragomir} S. Dragomir and G. Tomassini, Differential Geometry and Analysis
on CR Manifolds, Progress in Mathematics, \textbf{246}, Birkh\"{a}user
Boston, Boston, MA, (2006).

\bibitem{Folland}  G. Folland, The tangential Cauchy-Riemann complex on spheres. \textit{Trans. Amer. Math. Soc.}
\textbf{171} (1972), 83--133.



\bibitem{Folland&Stein} G. Folland and E. Stein, Estimates for the $\bar\partial
\sb{b}$ complex and analysis on the Heisenberg group. \textit{Comm.
Pure Appl. Math.} \textbf{27} (1974), 429--522.




\bibitem{Gamara2} N. Gamara, The CR Yamabe conjecture---the case $n=1$. \textit{J.
Eur. Math. Soc.} \textbf{3} (2001), 105--137.





\bibitem{Gamara1} N. Gamara and R. Yacoub, CR Yamabe conjecture---the
conformally flat case. \textit{Pacific J. Math.} \textbf{201}
(2001), 121--175.

\bibitem{Habermann} L. Habermann, Conformal metrics of constant mass.
\textit{Calc. Var. Partial Differential Equations}  \textbf{12} (2001) 259--279.

\bibitem{Habermann&Jost}
L. Habermann and J. Jost, Green functions and conformal geometry. \textit{J. Differential Geom.} \textbf{53} (1999) 405--433.

\bibitem{Hamilton} R. S. Hamilton, The Ricci flow on surfaces.
Contemp. Math., \textbf{71},  Amer. Math. Soc., Providence, RI, (1988), 237--262.



\bibitem{Ho2} P. T. Ho,
Result related to prescribing pseudo-Hermitian scalar
curvature. \textit{Int. J. Math.} \textbf{24} (2013), 29pp.



\bibitem{Ho} P. T. Ho, The long time existence and convergence of the CR
Yamabe flow. \textit{Commun. Contemp. Math.} \textbf{14} (2012), 50 pp.

\bibitem{Ho1} P. T. Ho,
The Webster scalar curvature flow on CR sphere. Part I. \textit{Adv. Math.} \textbf{268} (2015), 758--835.


\bibitem{Jerison&Lee1} D. Jerison and J. M. Lee, Extremals for the Sobolev inequality on
the Heisenberg group and the CR Yamabe problem. \textit{J. Amer.
Math. Soc.} \textbf{1} (1988), 1--13.

\bibitem{Jerison&Lee2} D. Jerison and J. M. Lee,
Intrinsic CR normal coordinates and the CR Yamabe problem.
\textit{J. Differential Geom.} \textbf{29} (1989), 303--343.

\bibitem{Jerison&Lee3} D. Jerison and J. M. Lee, The Yamabe problem on CR manifolds. \textit{J.
Differential Geom.} \textbf{25} (1987), 167--197.

\bibitem{Lee} J. M. Lee,
The Fefferman metric and pseudo-Hermitian invariants. \textit{Trans. Amer. Math. Soc.}  \textbf{296}  (1986),  411--429.

\bibitem{Lee&Parker} J. M. Lee and T. H. Parker, The Yamabe problem.
\textit{Bull. Amer. Math. Soc. (N.S.)} \textbf{17} (1987), 37--91.


\bibitem{Schoen} R. Schoen, Conformal deformation of a Riemannian metric to constant scalar curvature.
\textit{J. Differential Geom.} \textbf{20} (1984), 479--495.


\bibitem{Schwetlick&Struwe} H. Schwetlick and M. Struwe, Convergence of the
Yamabe flow for ``large" energies.  \textit{J. Reine Angew. Math.}
\textbf{562} (2003), 59--100.

\bibitem{Simon} L. Simon, Asymptotics for a class of non-linear evolution equations with applications to
geometric problems. \textit{Ann. of Math. (2)} \textbf{118} (1983), 525--571.

\bibitem{Struwe} M. Struwe, A global compactness result for elliptic boundary value problems involving
limiting nonlinearities. \textit{Math. Z.} \textbf{187} (1984), 511--517.

\bibitem{T}
N. Tanaka,
A differential geometric study on strongly pseudo-convex manifolds, Kinokuniya, (1975) Tokyo.


\bibitem{Trudinger} N. S. Trudinger, Remarks concerning the conformal deformation of Riemannian structures
on compact manifolds. \textit{Ann. Scuola Norm. Sup. Pisa (3)} \textbf{22} (1968), 265--274.



\bibitem{Wang} W. Wang, Canonical contact forms on spherical CR manifolds.
\textit{J. Eur. Math. Soc.}  \textbf{5}  (2003),   245--273.

\bibitem{W}
S. M. Webster,
Pseudohermitian structures on a real hypersurface,
\textit{J. Differential Geom.} \textbf{13} (1978), 25--41.

\bibitem{Yamabe} H. Yamabe, On a deformation of Riemannian structures on compact manifolds. \textit{Osaka
Math. J.} \textbf{12} (1960), 21--37.

\bibitem{Ye} R. Ye, Global existence and convergence of Yamabe flow.
\textit{J. Differential Geom.} \textbf{39} (1994), 35--50.

\bibitem{Zhang} Y. Zhang, The contact Yamabe flow. Ph.D. thesis, University of Hanover (2006).

\end{thebibliography}

\end{document}